\newcommand{\proofstep}[1]{%
  \par
  \addvspace{\medskipamount}
  \noindent\textbf{#1\@addpunct{.}}\enspace\ignorespaces
}
\def\R{\mathcal{R}}
\def\ord{\mathrm{ord}}
\def\mbf{\mathbf}
\def\bfe{\mathbf{e}}
\def\bft{\mathbf{t}}
\def\TmapA{f:(S^2,A)\righttoleftarrow}
\def\band{(t;e_1,e_2)}
\def\bandp{(t';e'_1,e'_2)}
\def\bandm{(t^m;e_1^m,e_2^m)}
\def\bandn{(t^n;e_1^n,e_2^n)}
\def\pcf{post-critically finite~}
\def\hCbb{\hat{\mathbb{C}}}
\DeclareMathOperator{\Edge}{Edge}
\DeclareMathOperator{\Aut}{Aut}
\DeclareMathOperator{\V}{Vert}
\newtheorem{thm}{Theorem}[section]
\newtheorem{coro}[thm]{Corollary}
\newtheorem{lem}[thm]{Lemma}
\newtheorem{prop}[thm]{Proposition}
\newtheorem{quest}[thm]{Question}
\newtheorem{caut}[thm]{Caution}
\theoremstyle{definition}
\newtheorem{defn}[thm]{Definition}
\newtheorem{eg}[thm]{Example}
\theoremstyle{remark}
\newtheorem{rem}[thm]{Remark}
\theoremstyle{definition}
\title{Levy and Thurston obstructions of finite subdivision rules}
\author{Insung Park}
\address{Department of Mathematics, Indiana University, Bloomington,
IN 47405, USA}
\email{park433@iu.edu}
\date{}
\begin{document}

\maketitle
\begin{abstract}
For a \pcf branched covering of the sphere that is a subdivision map of a finite subdivision rule, we define non-expanding spines which determine the existence of a Levy cycle in a non-exhaustive semi-decidable algorithm. Especially when a finite subdivision rule has polynomial growth of edge subdivisions, the algorithm terminates very quickly, and the existence of a Levy cycle is equivalent to the existence of a Thurston obstruction. In order to show the equivalence between Levy and Thurston obstructions, we generalize the arcs intersecting obstruction theorem by Pilgrim and Tan to a graph intersecting obstruction theorem. As a corollary, we prove that for a pair of \pcf polynomials, if at least one polynomial has core entropy zero, then their mating has a Levy cycle if and only if the mating has a Thurston obstruction.\end{abstract}
\tableofcontents

\section{Introduction}

\indent Obstructions for topological objects to have geometric structures are important subjects of study in topology and geometry. For example, the Geometrization Theorem is about topological obstructions for a 3-manifold to have one out of eight geometries. W.\@ Thurston, who conjectured and proved a large part of the Geometrization Theorem, also proved a geometrization theorem, named {\it Thurston's characterization}, in complex dynamics. He found obstructions, called {\it Thurston obstructions}, for a \pcf branched covering of the $2$-sphere to be isotopic to a rational map \cite{Thurston_obs}. Levy cycles were introduced at first as simple cases of Thurston obstructions in the study of the mating problem \cite{Levy_Thesis, Tan_quad_mating}. Recently, it turned out that a Levy cycle itself is an obstruction for a \pcf branched covering to be isotopic to an expanding dynamical system \cite{BartholdiDudko_expanding}. Therefore it is important to determine the existence of a Levy cycle as well as a Thurston obstruction for \pcf branched coverings. In this paper, we investigate a new method to detect the existence of a Levy cycle for a broad family of branched coverings, called subdivision maps of finite subdivision rules.

\proofstep{Obstructions of \pcf topological branched self-coverings of the sphere}
A continuous map $f:S^2 \rightarrow S^2$ is a {\it topological branched covering} if it locally looks like $z \mapsto z^d$ for some integer $d>0$. A point $x \in S^2$ is a {\it critical point} if $f$ is not locally injective at $x$. The collection of the critical points $\Omega_f$ is the {\it critical set} of $f$ and its forward orbit $P_f:=\bigcup_{k=1}^\infty f^{\circ k}(\Omega_f)$ is the {\it post-critical set}. If $P_f$ is finite, $f$ is a {\it post-critically finite} branched covering, or simply a {\it Thurston map}. A {\it marked \pcf branched covering}, is a map $f:(S^2,A)\righttoleftarrow$ such that $A \supset P_f$, $|A|<\infty$, and $f(A) \subset A$. Every element $a\in A$ is called a {\it marked point} and $A$ is called the {\it set of marked points} of $f:(S^2,A)\righttoleftarrow$. Since $f:(S^2,A)\righttoleftarrow$ contains the information of being \pcf and the set of marked points, we often abbreviate it just as a {\it branched covering} and write more words when they are necessary.

Two branched coverings $f:(S^2,A)\righttoleftarrow$ and $g:(S^2,B)\righttoleftarrow$ are {\it combinatorially equivalent (\,by $\phi_0$ and $\phi_1$)} if there exist homeomorphisms $\phi_0,\phi_1:(S^2,A) \rightarrow (S^2,B)$ such that (1) $\phi_0(A)=\phi_1(A)=B$, (2) $\phi_1$ is homotopic relative to $A$ to $\phi_0$, and (3) the following diagram commutes.
\[
\begin{tikzcd}
    (S^2,P_f) \arrow[r,"\phi_0"] \arrow[d,"f"]
& (S^2,P_g) \arrow[d,"g"] \\
(S^2,P_f) \arrow[r,"\phi_1"]
& (S^2,P_g)
\end{tikzcd}
\]
A \pcf topological branched covering which is not doubly covered by a torus endomorphism is combinatorially equivalent to a post-critically finite rational map if and only if it does not have a Thurston obstruction \cite{Thurston_obs}, see Section \ref{sec:graph int obs}.

\begin{defn}[Levy cycle]
A {\it Levy cycle}, or a {\it Levy obstruction}, of a \pcf branched covering $f:(S^2,A)\righttoleftarrow$ is a collection of simple closed curves $\{\gamma_1,\gamma_2,\dots,\gamma_n\}$ that are essential relative to $A$ with the following property: For each $1\le i \le n$ there is a connected component $\gamma_i'$ of $f^{-1}(\gamma_i)$ which is isotopic to $\gamma_{i+1}$ relative to $A$, and $f|_{\gamma_i'}:\gamma_i' \to \gamma_i$ is a homeomorphism.
\end{defn}

Since a Levy cycle is a homeomorphically periodic cycle, a branched covering cannot be expanding along a Levy cycle. Schwartz lemma implies that every \pcf rational map $g:(\hCbb,P_g)\righttoleftarrow$ is expanding with respect to the conformal metric on $\hCbb \setminus P_g$, except for a few cases. Therefore, Levy cycle is an example of Thurston obstruction. Shishikura and Tan found an example of mating of cubic polynomials that has a Thurston obstruction but does not have a Levy cycle \cite{mating_cubic}. Although Shishikura and Tan's example is not conjugate to a rational map, it has an expanding metric, and many objects in the study of rational maps, such as Julia sets, are still well defined. These branched coverings are called {\it B\"{o}ttcher expanding maps}, see \cite{BartholdiDudko_expanding} for B\"{o}ttcher expanding maps. Rational maps are B\"{o}ttcher expanding maps by Schwartz lemma. Recently, it was shown that a \pcf topological branched covering which is not doubly covered by a torus endomorphism is combinatorially equivalent to a B\"{o}ttcher expanding map if and only if it does not have a Levy cycle \cite{BartholdiDudko_expanding}. Therefore, Thurston and Levy obstructions can be viewed as obstructions for conformal structures and expanding dynamics on branched coverings of the sphere, respectively.

\proofstep{Analogy with surface diffeomorphisms}
There are analogues between surface diffeomorphisms and branched coverings of the sphere. Pseudo-Anosov maps are geometric in a sense that they are affine maps expanding along one dimension and contracting along the other one dimension with respect to appropriate conned Euclidean structures; rational maps are conformal geometric and B\"{o}ttcher expanding maps are metric geometrically defined. In a pseudo-Anosov mapping class, there is a unique pseudo-Anosov map up to conjugation; 
in an isotopy class of \pcf topological branched coverings, a rational map or a B\"{o}ttcher expanding map is unique up to conjugation if exists \cite{Thurston_obs, BartholdiDudko_expanding}. For non-periodic mapping classes, reducing multicurves are obstructions to pseudo-Anosov mapping classes; Thurston obstructions and Levy cycles are also multicurves, which are obstructions to being isotopic to rational maps and B\"{o}ttcher expanding maps respectively.

In spite of this analogy, however, algorithms to determine the existence of obstructions for branched coverings of the sphere are relatively less studied compared with surface diffeomorphisms. Let us review some results on algorithms about branched coverings of the sphere. Exhaustive searches for Levy cycles or Thurston obstructions are decidable \cite{BBY_ThuEqDecid}, \cite{BartholdiDudko_expanding}. For topological polynomials, a non-exhaustive algorithm, that finds either Levy cycles if exist or Hubbard trees otherwise, was developed in \cite{LiftingTrees}.  D.\@ Thurston's positive characterization also gives a non-exhaustive algorithm to detect both Levy cycles and Thurston obstructions for hyperbolic \pcf branched coverings \cite{Thu_poschac}. Although these algorithms work efficiently for many examples in practice, no theoretical upper bound of the complexity is known for any of these algorithms. An upper bound for the computational complexity was studied for nearly Euclidean Thurston maps in \cite{RatlDeciNET}. Poirier proved that an abstract Hubbard tree $H$ is a Hubbard tree of a polynomial if and only if $H$ is expanding \cite{Poirier_HubbardTrees}. This gives an efficient algorithm to check whether a Thurston obstruction (equivalently a Levy cycle in this case) exists, and one can easily find an upper bound for the complexity of this algorithm, though it is not stated in \cite{Poirier_HubbardTrees}. In this paper, Theorem \ref{thm:nonexpspine} provides a new non-exhaustive algorithm to detect Levy cycles when \pcf branched coverings are given as subdivision maps of finite subdivision rules. When edges have polynomial growth of subdivisions, Theorem \ref{thm:nonexpspine_polyedgesubdiv} implies that this algorithm terminates very quickly, and the complexity is polynomial about the number of cells. But we do not compute the complexity in this paper. 

\begin{figure*}[h!]
    \centering
    \begin{subfigure}[b]{0.25\textwidth}
        \centering
        \includegraphics[width=\textwidth]{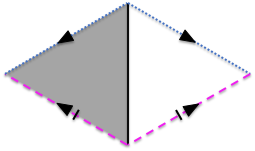}
        \caption*{$S_\R$}
    \end{subfigure}
    \hspace{20pt}
    \begin{subfigure}[b]{0.25\textwidth}  
        \centering 
        \includegraphics[width=\textwidth]{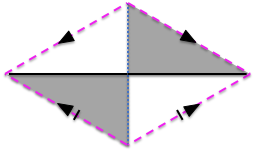}
        \caption*{$\R(S_\R)$}
    \end{subfigure}
    \hspace{20pt}
    \begin{subfigure}[b]{0.25\textwidth}  
        \centering 
        \includegraphics[width=\textwidth]{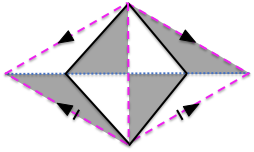}
        \caption*{$\R^2(S_\R)$}
    \end{subfigure}
    \vskip\baselineskip
    \begin{subfigure}[b]{0.25\textwidth}   
        \centering 
        \includegraphics[width=\textwidth]{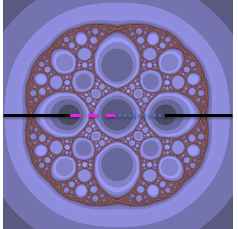}
        \caption*{$S_\R$ with Julia set}
    \end{subfigure}
    \hspace{20pt}
    \begin{subfigure}[b]{0.25\textwidth}   
        \centering 
        \includegraphics[width=\textwidth]{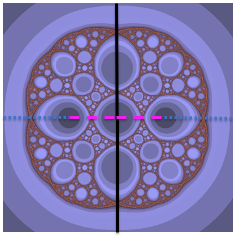}
        \caption*{$\R(S_\R)$ with Jula set}
    \end{subfigure}
    \hspace{20pt}
    \begin{subfigure}[b]{0.25\textwidth}  
        \centering 
        \includegraphics[width=\textwidth]{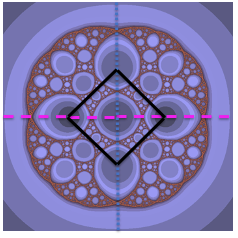}
        \caption*{$\R^2(S_\R)$ with Julia set}
    \end{subfigure}
    
    \caption{A finite subdivision rule of $z\mapsto \frac{z^2-1}{z^2+1}$. The sphere is decomposed into two triangles in $S_\R$. Each triangle subdivides into two triangles under the subdivision $\R$. The subdivision map $f:\R(S_\R)\to S_\R$ sends each shaded or unshaded triangle in $\R(S_\R)$ to the shaded or unshaded triangle in $S_\R$ respectively.} 
    \label{fig:fsrIntro}
\end{figure*}

\proofstep{Finite subdivision rules} A {\it finite subdivision rule $\R$} consists of a partition $S_\R$ of $S^2$ into polygons and its subdivision $\R(S_\R)$ such that a subdivision map $f:\R(S_\R)\to S_\R$ is homeomorpic on each open cell, see Figure \ref{fig:fsrIntro} for an example and Section \ref{sec:fsr} for a precise definition. One can also see a finite subdivision rule as a sort of Markov partition. Because $P_f \subset \V(S_\R)$, the subdivision map is a \pcf topological branched covering. By iterating subdivisions, we have a further subdivision $\R^n(S_\R)$ and an iterated map $f^n:\R^n(S_\R) \to S_\R$ for each $n \in \Nbb$. It is an open question to determine which topological \pcf branched coverings are isotopic to subdivision maps of finite subdivision rules. See Section \ref{sec:thurston maps as subdiv map} for a list of topological branched coverings that can be represented as subdivision maps.

To detect a Levy cycle, for each $n\ge 0$ we define a {\it level-$n$ non-expanding spine $N^n$} which is a graph with a train-track structure encoding non-expanding parts of $\R^n(S_\R)$, see Section~\ref{sec:nonexpspine}. A finite set $A\subset \V(S_\R)$ is called a {\it set of marked points of $\R$} if $P_f\cup f(A)\subset A$. A point $a\in A$ is called a {\it Fatou point} if its forward orbit contains a periodic critical point. Otherwise, $a\in A$ is called a {\it Julia point}. We say that the level-$n$ non-expanding spine $N^n$ is {\it essential relative to $A$} if it contains (more precisely carries as a train-track) a closed curve that is homotopic relative to $A$ neither to a point nor to some iterate of a peripheral loop of a Julia point in $A$.

\newtheorem*{mainthm1}{Theorem \ref{thm:nonexpspine}}
\begin{mainthm1}
{\it
Let $\R$ be a finite subdivision rule and $f:\R(S_\R)\to S_\R$ be its subdivision map which is not doubly covered by a torus endomorphism. Let $A\subset \V(S_\Rcal)$ be a set of marked points, i.e., $P_f \cup f(A)\subset A$. Then the \pcf branched covering $\TmapA$ has a Levy cycle if and only if the level-$n$ non-expanding spine $N^n$ is essential relative to $A$ for every $n\ge0$.
}
\end{mainthm1}

We first prove the equivalence between the existence of a Levy cycle and the existence of a sequence curves with certain properties in Section \ref{sec:alg aspect of Levy cycle} using the theory of self-similar groups. Then we show in Section \ref{sec:nonexpspine} the equivalence between the existence of such a sequence of curves and the level-$n$ non-expanding spine being essential at every level $n\ge0$.

\proofstep{Algorithmic implication}
Theorem \ref{thm:nonexpspine} improves \cite[Algorithm 5.5] {BartholdiDudko_expanding} by replacing the exhaustive semi-decidable search for nuclei of orbisphere bisets by checking if the non-expanding spines are essential, which terminates in finite time if there is no Levy cycle. There is an example showing that an arbitrarily higher level of non-expanding spine is required to be checked, see Proposition \ref{prop:fsr_needbackite_} in Section \ref{sec:fsr_needbackite_}.

\begin{quest}
Is there an upper bound function $U:\Zbb_+ \to \Zbb_+$ such that $f:(S^2,A)\righttoleftarrow$ has a Levy cycle if and only if $N^n$ is essential relative to $A$ for every $n<U(k)$ where $k$ is the number of tiles in $\R$?
\end{quest}

\proofstep{Finite subdivision rules with polynomial growth of subdivisions} We will see that the growth of the subdivision of an edge is either exponential or polynomial in Theorem \ref{thm:expsubexp} and Proposition \ref{prop:subexpedgesubdiv}. If every edge has polynomial growth of subdivisions, then the level-$n$ non-expanding spines $N^n$ are independent of $n \ge 0$. Hence the existence of a Levy cycle is decidable very quickly.

\newtheorem*{mainthm2}{Theorem \ref{thm:nonexpspine_polyedgesubdiv}}
\begin{mainthm2}
{\it
Let $\R$ be a finite subdivision rule with polynomial growth of edge subdivisions and $f$ be its subdivision map which is not doubly covered by a torus endomorphism. Let $A\subset \V(S_\R)$ be a set of marked point, i.e., $f(A) \cup P_f \subset A$. Then the followings are equivalent.
\begin{enumerate}
    \item[(1)] The branched covering $f:(S^2,A)\righttoleftarrow$ does not have a Levy cycle.
    \item[(2)] The level-$0$ non-expanding spine $N^0$ is essential relative to $A$.
    \item[(3)] The branched covering $f:(S^2,A)\righttoleftarrow$ is combinatorially equivalent to a unique rational map up to conjugation by M\"{o}bius transformations. 
\end{enumerate}
}
\end{mainthm2}

\proofstep{Equivalence between Levy cycles and Thurston obstructions} Another important implication of Theorem \ref{thm:nonexpspine_polyedgesubdiv} is the equivalence between the existence of a Levy cycle and the existence of a Thurston obstruction. As explained earlier, there are topological branched coverings which do not have a Levy cycle but have a Thurston obstruction \cite{mating_cubic}. For some families of \pcf topological branched coverings, e.g., post-critically finite topological polynomials or branched coverings of degree $2$, the existence of a Thurston obstruction implies the existence of a Levy cycle, by Levy, Rees, Tan, and Berstein \cite{Tan_quad_mating, hubbard_vol2}. We add two new families to this list: subdivision maps with polynomial growth of edge subdivisions (Theorem \ref{thm:nonexpspine_polyedgesubdiv}) and matings of polynomials one of which has core entropy zero (Corollary \ref{cor:mating}).

\newtheorem*{mainthm4}{Corollary \ref{cor:mating}}
\begin{mainthm4}
{\it Let $f$ and $g$ be post-critically finite hyperbolic (resp.\@ possibly non-hyperbolic) polynomials such that at least one of $f$ and $g$ has core entropy zero. Then $f$ and $g$ are mateable if and only if the formal mating (resp.\@ degenerate mating) does not have a Levy cycle.}
\end{mainthm4}

The equivalence between the existence of a Levy cycle and of a Thurston obstruction follows from the {\it graph intersecting obstruction theorem}, which is a generalization of the {\it arcs intersecting obstruction theorem} by Pilgrim and Tan \cite{PilgrimTan}. Here, $h_{top}$ indicates the topological entropy.

\newtheorem*{mainthm3}{Theorem \ref{thm:graph intersecting obs}}
\begin{mainthm3}[Graph intersecting obstruction]
{\it Let $\TmapA$ be a \pcf branched covering and $G$ be a forward invariant graph such that $h_{top}(f|_G)=0$. Then every irreducible Thurston obstruction intersecting $G$ is a Levy cycle.}
\end{mainthm3}

\proofstep{Examples: Critically fixed anti-holomorphic maps} In Section \ref{sec:eg}, we define an orientation reversing finite subdivision rule with no edge subdivision from every $2$-vertex-connected planar graph $G$. Then $f^2:(S^2,\V(G)) \righttoleftarrow$ and $f_\tau:=\tau \circ f:(S^2,\V(G))\righttoleftarrow$ are \pcf topological branched coverings, where $\tau$ is an orientation-reversing automorphism of $G$. Then we show in Theorem \ref{thm:faceinversion} that these maps do not have Levy cycles (or equivalently, Thurston obstructions) if and only if $G$ is $3$-edge-connected. While this article was being written, two papers \cite{LLMM_GasketSchwRef} and \cite{Geyer_CritFixAnti} were published where it is shown that every critically fixed anti-holomorphic map is constructed in this way and a theorem almost same as Theorem \ref{thm:faceinversion} is proved.

\proofstep{Notation for integer intervals}
We introduce a non-standard but intuitive notation for integer intervals to distinguish them from real intervals. For $a<b\in \Zbb$,
\begin{itemize}
    \item [$\cdot$] $[a,b]_\Zbb:=\{a,a+1,\dots, b\}$
    \item [$\cdot$] $[a,\infty]_\Zbb:=\{a,a+1,\dots\}\cup \{\infty\}$
    \item [$\cdot$] $[-\infty,b]_\Zbb:=\{-\infty\} \cup \{\cdots,b-1,b\}$ 
    \item [$\cdot$] $[-\infty,\infty]_\Zbb:=\Zbb$
\end{itemize}
The interval $[a,b]$ without the subscript $_\Zbb$ indicates the real interval $\{x\in\Rbb~|~a\le x \le b\}$.

\vspace{10pt}
\paragraph{{\bf Acknowledgements.}} The author thanks Dylan Thurston and Kevin Pilgrim for helpful comments and discussions. Without their support and suggestions, this work would not have existed. The author also thanks Dzmitry Dudko for critical comments on the previous version of this article. The author thanks the reviewer for helpful comments and suggestions, which inspired the author to write Section \ref{sec:EdgeEdgeExpVsEdgeSubdiv}. The author also thanks the developers of Xaos and Mathematica, which were used to draw Julia sets in the present article.

\section{Monotonicity of lengths under subdivisions}\label{sec:subdiv of cw-complex}

In this section, we see combinatorial properties of CW-complex without dynamics. We follow some terminology defined in \cite{finite_subdivision_rule_exp1}. Let $\Tcal$ be a finite CW-complex structure on $S^2$. A {\it $n$-gon}, or a {\it polygon} if $n$ is not specified, is a $2$-dimensional CW-complex structure on the closed $2$-disc $D^2$ whose $1$-skeleton consists of $n$ edges on $\partial D^2$. For every closed $2$-cell $t$ of $\Tcal$, there is a polygon $\bft$ and a characteristic map $\phi_t:\bft \to \Tcal$ such that $\phi_t$ is cell-wise homeomorphic and $\phi_t(\bft)=t$.

\begin{defn}[Bands and bones]\label{defn:BandSpine}
A {\it band} of $\Tcal$ is a triple $\band$, where $t$ is a closed $2$-cell and $e_1$ and $e_2$ are edges on the boundary of $t$. We allow $e_1=e_2(=e)$ only when two boundary edges of a polygon $\bft$ is are identified to $e$ by the characteristic map $\phi_t:\bft\to \Tcal$. We say that $e_1$ and $e_2$ are the {\it sides} of the band. The {\it bone of $\band$} is the homotopy class (or ambiguously a representative of the class) of curves which are properly embedded into $(t,\partial t)$ with endpoints on the interiors of $e_1$ and $e_2$.
\begin{figure}[h!]
    \centering
        \def\svgwidth{0.6\textwidth}
\begingroup%
  \makeatletter%
  \providecommand\color[2][]{%
    \errmessage{(Inkscape) Color is used for the text in Inkscape, but the package 'color.sty' is not loaded}%
    \renewcommand\color[2][]{}%
  }%
  \providecommand\transparent[1]{%
    \errmessage{(Inkscape) Transparency is used (non-zero) for the text in Inkscape, but the package 'transparent.sty' is not loaded}%
    \renewcommand\transparent[1]{}%
  }%
  \providecommand\rotatebox[2]{#2}%
  \newcommand*\fsize{\dimexpr\f@size pt\relax}%
  \newcommand*\lineheight[1]{\fontsize{\fsize}{#1\fsize}\selectfont}%
  \ifx\svgwidth\undefined%
    \setlength{\unitlength}{553.15197991bp}%
    \ifx\svgscale\undefined%
      \relax%
    \else%
      \setlength{\unitlength}{\unitlength * \real{\svgscale}}%
    \fi%
  \else%
    \setlength{\unitlength}{\svgwidth}%
  \fi%
  \global\let\svgwidth\undefined%
  \global\let\svgscale\undefined%
  \makeatother%
  \begin{picture}(1,0.3365317)%
    \lineheight{1}%
    \setlength\tabcolsep{0pt}%
    \put(0,0){\includegraphics[width=\unitlength,page=1]{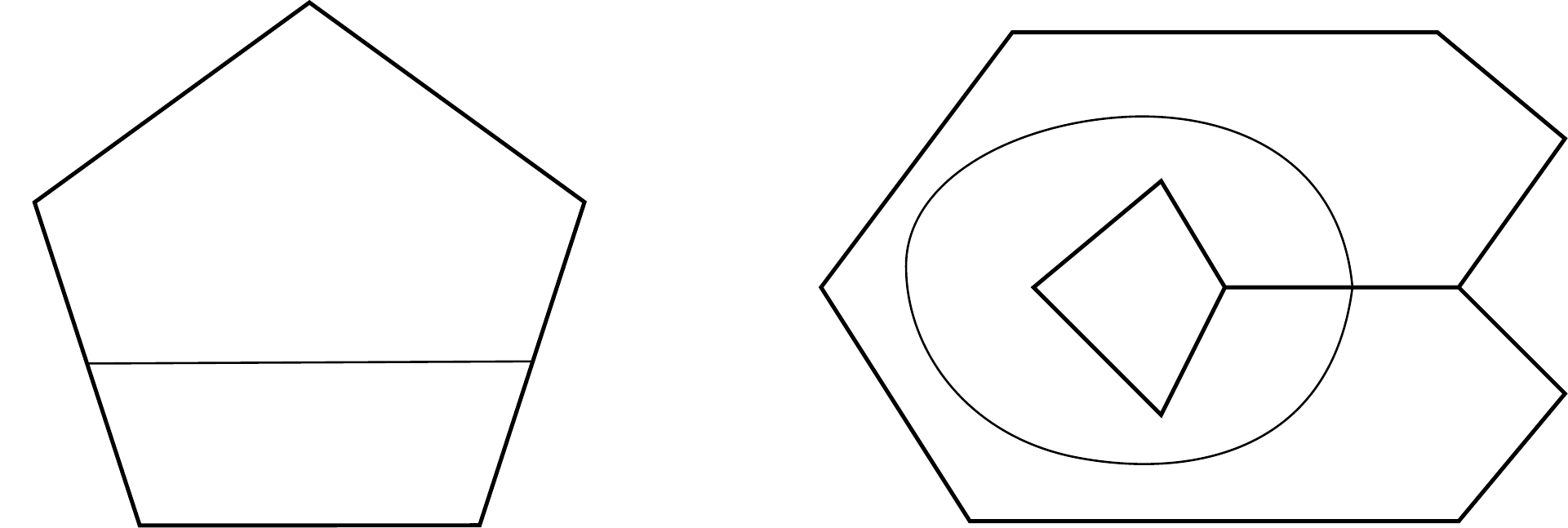}}%
    \put(-0.00361497,0.08844745){\color[rgb]{0,0,0}\makebox(0,0)[lt]{\lineheight{1.25}\smash{\begin{tabular}[t]{l}$e_1$\end{tabular}}}}%
    \put(0.35627044,0.08844745){\color[rgb]{0,0,0}\makebox(0,0)[lt]{\lineheight{1.25}\smash{\begin{tabular}[t]{l}$e_2$\end{tabular}}}}%
    \put(0.86878802,0.12198922){\color[rgb]{0,0,0}\makebox(0,0)[lt]{\lineheight{1.25}\smash{\begin{tabular}[t]{l}$e$\end{tabular}}}}%
  \end{picture}%
\endgroup%

	\caption{Bones of bands. The figure on the right shows the case when two sides of the band are the same.}
\end{figure}
\end{defn}

Let $\Tcal^{(n)}$ denote the $n$-skeleton of $\Tcal$. Any curve $\gamma \subset S^2\setminus \Tcal^{(0)}$ transverse to $\Tcal^{(1)}$ is subdivided by $\Tcal^{(1)}$ into consecutive subcurves $\gamma_1,\gamma_2, \dots ,\gamma_k$ such that each $\gamma_i$ is a maximal subcurve embedded in a closed $2$-cell. The set $\{\gamma_1,\dots,\gamma_k\}$ is the {\it $\Tcal$-decomposition of $\gamma$} and each curve $\gamma_i$ is a {\it $\Tcal$-segment of $\gamma$}. 

If $\gamma$ is not closed, then $\gamma_2,\dots,\gamma_{k-1}$ are called {\it inner $\Tcal$-segments}. The terminal $\Tcal$-segment $\gamma_1$ or $\gamma_k$ is an {\it outer $\Tcal$-segment} if one of its endpoint is in the interior of a closed 2-cell; if both endpoints are on the 1-skeleton, then we still call them inner $\Tcal$-segments. If $\gamma$ is closed, all segments are called inner segments. A curve $\gamma$ is {\it $\Tcal$-taut} if every inner $\Tcal$-segment is the bone of a band, i.e., it cannot be pushed away from the $2$-cell it is contained by an isotopy relative to $\Tcal^{(0)}$.

\begin{defn}\label{defn:CombiEq}
Two curves in $S^2\setminus \Tcal^{(0)}$ are {\it combinatorially equivalent relative to $\Tcal$}, or simply {\it $\Tcal$-combinatorially equivalent}, if they are isotopic by a cellular isotopy of $\Tcal$, i.e., a isotopy from the identity map whose restriction to each cell $X$ is also an isotopy on $X$.
\end{defn}

Define the {\it $\Tcal$-length of $\gamma$}, denoted by $l_\Tcal(\gamma)$, to be the number of inner $\Tcal$-segments. The $\Tcal$-length of a curve is an invariant of a combinatorial equivalence class. The following criterion is straightforward from the bigon criterion \cite{Primer_MCG}.

\begin{prop}
Let $\Tcal$ be a finite CW-complex structure on $S^2$. Let $\gamma$ be a curve in $S^2\setminus \Tcal^{(0)}$ transverse to $\Tcal^{(1)}$. Then $l_\Tcal(\gamma)$ is minimized in its homotopy class within $S^2\setminus \Tcal^{(0)}$, relative to endpoints if $\gamma$ is not closed, if and only if $\gamma$ is taut. Moreover, in the homotopy class, the taut curve is unique up to $\Tcal$-combinatorial equivalence.
\end{prop}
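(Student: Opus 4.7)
The plan is to establish the biconditional in two steps and then deduce uniqueness from a cell-by-cell isotopy argument. To begin, I would unpack the definition of taut: an inner $\Tcal$-segment $\gamma_i$ contained in a closed $2$-cell $t$ with endpoints on edges $e_a,e_b\subset\partial t$ fails to be the bone of some band $\band$ exactly when $e_a=e_b=e$ and the lift of $\gamma_i$ to the polygon $\bft$ has both endpoints on the same preimage of $e$; equivalently, $\gamma_i$ and a subarc of $e$ cobound a subdisk $D\subset t$ whose interior meets no vertex of $\Tcal^{(0)}$.

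For the direction \emph{not taut} $\Rightarrow$ \emph{not minimal}, given a non-bone inner segment $\gamma_i\subset t$ as above, I would perform an ambient isotopy of $\gamma$ supported in a regular neighborhood of the subdisk $D$ that pushes $\gamma_i$ across $e$ into the adjacent $2$-cell. This fuses $\gamma_i$ with its two neighboring segments into a single segment (or eliminates a short loop in the closed case), strictly decreasing the count of inner $\Tcal$-segments and showing that $\gamma$ did not minimize $l_\Tcal$ in its class.

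For the converse \emph{not minimal} $\Rightarrow$ \emph{not taut}, I would appeal to the bigon criterion from \cite{Primer_MCG}. If $l_\Tcal(\gamma)$ is not minimal then $\gamma$ and the $1$-skeleton $\Tcal^{(1)}$ do not meet in minimal position inside $S^2\setminus \Tcal^{(0)}$, so the criterion produces an embedded bigon: a disk whose boundary is a subarc of $\gamma$ concatenated with a subarc of a single edge $e$, and whose interior avoids $\Tcal^{(0)}$. Choosing such a bigon to be innermost (its interior disjoint from $\gamma\cup\Tcal^{(1)}$), it lies inside one closed $2$-cell $t$, and its $\gamma$-side is a single inner $\Tcal$-segment whose lift to $\bft$ has both endpoints on the same preimage of $e$ — precisely a non-bone segment, so $\gamma$ is not taut. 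The main obstacle is invoking the bigon criterion rigorously for a possibly non-simple curve against a $1$-complex and handling potential edge self-identifications in $\bft$; my plan is to lift to the universal cover of $S^2\setminus\Tcal^{(0)}$, where the standard argument applies to the lifted curve and the lifted $1$-skeleton (which becomes a forest of trees in each connected lift), and then descend.

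For uniqueness, suppose $\gamma_1,\gamma_2$ are two taut curves in the same homotopy class. Passing again to the universal cover, taut lifts share endpoints (or realise the same deck translation for closed curves) and meet the preimage of $\Tcal^{(1)}$ minimally, so they cross the same ordered sequence of edge-lifts. Descending, the $j$-th $\Tcal$-segments of $\gamma_1$ and $\gamma_2$ are bones of the same band $(t_j;e_j,e_j')$, hence they are isotopic within that $2$-cell relative to neighborhoods of their endpoints on the prescribed edges because a disk is simply connected. Patching these cell-wise isotopies yields a cellular isotopy from $\gamma_1$ to $\gamma_2$, establishing $\Tcal$-combinatorial equivalence.
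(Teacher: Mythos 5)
The paper gives essentially no proof here: it merely declares the proposition ``straightforward from the bigon criterion \cite{Primer_MCG}.'' Your plan is correct and is the natural way to unpack that citation --- pushing non-bone segments across edges for one direction, the bigon criterion (made rigorous in the universal cover) for the converse, and a cell-by-cell patching argument for uniqueness. A few places where the sketch should be tightened before it becomes a complete argument: (i) the central assertion in the uniqueness step, that two taut lifts ``cross the same ordered sequence of edge-lifts,'' deserves its own sentence --- each lift of an open edge is a properly embedded separating arc in the planar universal cover, a taut lift meets each such arc at most once (otherwise an innermost subarc would cobound a bigon and descend to a length-reducing isotopy), so the edge-lifts crossed are exactly those separating the two endpoints (or the two ideal endpoints of the invariant line in the closed case), and the crossing order is forced by the linear nesting of these separating arcs; (ii) to apply the bigon criterion you should also record that $l_\Tcal(\gamma)$ differs from the number of transverse intersections of $\gamma$ with $\Tcal^{(1)}$ only by an additive constant determined by the endpoint configuration (zero for closed curves, one when both endpoints lie in open $2$-cells), so minimizing $l_\Tcal$ over the homotopy class is the same as minimizing the intersection number --- that is the statement the bigon criterion actually controls; and (iii) when you push a non-bone inner segment $\gamma_i$ across $e$ you should note explicitly that this merges $\gamma_{i-1},\gamma_i,\gamma_{i+1}$ into a single segment (or removes the whole curve from $\Tcal^{(1)}$ in the two-segment closed case), which is what guarantees a strict drop in $l_\Tcal$ regardless of whether the adjacent segments are inner or outer. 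None of these are gaps in the idea, only in the write-up.
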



The following lemma is immediate.

\begin{lem}\label{lem:FinUptoCombiEq}
Let $\Tcal$ be a finite CW-complex structure on $S^2$. For every $l>0$, there are only finitely many, possibly closed or non-closed, curves $\delta$ in $S^2\setminus \Tcal^{(0)}$ with $l_\Tcal(\delta)<l$ up to combinatorial equivalence relative to $\Tcal$.
\end{lem}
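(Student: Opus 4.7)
The plan is to attach to each cellular isotopy class of curves in $S^2\setminus\Tcal^{(0)}$ a finite combinatorial word over a finite alphabet whose length is controlled by $l_\Tcal(\delta)$, and then count such words. Since $\Tcal$ is a finite CW-complex, there are only finitely many bands $(t;e_1,e_2)$ and only finitely many pairs $(t,e)$ with $e$ a boundary edge of a 2-cell $t$. I would first verify that up to cellular isotopy of a 2-cell $t$, any properly embedded arc in $t$ with endpoints on two prescribed boundary edges $e_1,e_2$ (avoiding $\Tcal^{(0)}$) is unique: this is essentially the fact that a disc is simply connected, so any two such arcs are joined by an ambient isotopy of $t$ fixing the $0$-skeleton and restricting on each edge to an endpoint-fixing interval isotopy. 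The same argument shows that an outer $\Tcal$-segment with free endpoint in $\interior(t)$ and other endpoint on an edge $e\subset\partial t$ is unique up to cellular isotopy once the pair $(t,e)$ is fixed.

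Next, I would assign to each curve $\delta$ the word obtained by recording, in order of traversal, the band of each inner $\Tcal$-segment and the $(t,e)$-symbol of each outer $\Tcal$-segment (if any). This word is an invariant of the cellular isotopy class of $\delta$, since such an isotopy preserves each cell setwise and thus carries segments to segments with the same band or $(t,e)$ data. Conversely, two curves with the same word are cellularly isotopic, because the local isotopies from the previous paragraph can be glued along shared edges---the gluing works because the space of endpoint-fixing orientation-preserving self-homeomorphisms of an interval is contractible, so the local isotopies on adjacent 2-cells can be interpolated to agree on the common edge. Thus the word gives an injection from combinatorial equivalence classes of curves into an explicit set of admissible words.

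Finally, if $l_\Tcal(\delta)<l$, the word has at most $l-1$ band symbols and at most two outer-segment symbols, so its total length is bounded independently of $\delta$. Since the alphabet is finite, the set of admissible words, and hence of combinatorial equivalence classes, is finite. The main technical point I would be careful about is the case of 2-cells whose characteristic map $\phi_t:\bft\to\Tcal$ identifies boundary edges or vertices (as in the right-hand example of the bands figure, where $e_1=e_2$ as edges of $\Tcal$ but correspond to distinct sides of $\bft$): the arc-uniqueness claim should be proved upstairs in the polygon $\bft$, where the boundary is a genuine circle with labeled arcs, and then pushed down through $\phi_t$.
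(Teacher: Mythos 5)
The paper offers no argument at all for this lemma --- it is introduced with ``The following lemma is immediate'' --- so there is no proof to compare yours against; I can only assess your argument on its own. The approach is the natural one and is essentially correct: since $\Tcal$ is finite, the alphabet of bands and $(t,e)$-symbols is finite, each $\Tcal$-combinatorial equivalence class is recorded by a word of length at most $l+1$, and there are finitely many such words. Two remarks on rigor. First, the per-cell uniqueness you establish is only for a \emph{single} properly embedded arc in $\bft$; but a curve with $l_\Tcal(\delta)<l$ may meet one $2$-cell in several segments, and the gluing argument implicitly uses the stronger statement that a \emph{finite disjoint collection} of properly embedded arcs in the polygon $\bft$ (endpoints in the interiors of prescribed boundary edges, disjoint from the vertices) is determined up to cellular isotopy of $\bft$ by its multiset of band types. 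This multi-arc version is what forces the nesting of parallel segments and ensures that the matching dictated by the traversal order agrees with the matching dictated by the unique parallel configuration in each cell; it follows from a standard innermost-arc induction, but it deserves to be stated, since without it the ``glue local isotopies along shared edges'' step does not quite parse. Second, for closed curves the word is only defined up to cyclic rotation and reversal of orientation, which of course does not affect the finiteness count but is worth noting so the map to words is genuinely well defined on equivalence classes. With those two points made explicit the proof is complete, and your closing caveat about $2$-cells with identified boundary edges --- working in $\bft$ and pushing down through $\phi_t$ --- is exactly the right precaution.
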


\begin{defn}[Subbands]
Assume $\Tcal$ is a finite CW-complex structure on $S^2$ and $\Tcal'$ is its subdivision. A band $\bandp$ of $\Tcal'$ is a {\it subband} of $\band$ of $\Tcal$ if $t' \subset t$ and $e_i' \subset e_i$ for $i=1,2$.
\end{defn}

\begin{prop}[Monotonicity of lengths under refinements of CW-complexes] \label{prop:monoton length}
    Let $\Tcal$ and $\Tcal'$ be finite CW-complex structures of the $2$-sphere $S^2$ such that $\Tcal'$ is a subdivision of $\Tcal$.
    Let $\gamma$ be a $\Tcal$-taut curve and $\gamma'$ be a $\Tcal'$-taut curve such that $\gamma$ and $\gamma'$ are $\Tcal$-combinatorially equivalent. Then
    \[
        l_\Tcal(\gamma) \le l_{\Tcal'}(\gamma').
    \]
    Let $\gamma_1,\dots,\gamma_k$ be the inner $\Tcal$-segments of $\gamma$ and $\gamma'_1,\dots,\gamma'_{k'}$ be the inner $\Tcal'$-segments of $\gamma'$. Then the equality holds if and only if, under a proper reordering of indices, $\gamma_i$ is a bone of $(t_i;e_{i,1},e_{i,2})$ and $\gamma_i'$ is a bone of $(t_i';e_{i,1}',e_{i,2}')$ such that $(t_i';e_{i,1}',e_{i,2}')$ is a subband of $(t_i;e_{i,1},e_{i,2})$ for any $1 \le i \le k=k'$
\end{prop}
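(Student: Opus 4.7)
The plan is to compare both lengths by working with the single curve $\gamma'$, reducing the proposition to the two statements $l_\Tcal(\gamma) = l_\Tcal(\gamma')$ and $l_\Tcal(\gamma') \le l_{\Tcal'}(\gamma')$. The first equality is immediate from $\Tcal$-combinatorial equivalence: a $\Tcal$-cellular isotopy carrying $\gamma$ to $\gamma'$ fixes every cell of $\Tcal$ setwise, hence maps the inner $\Tcal$-segments of $\gamma$ bijectively onto those of $\gamma'$ while preserving, for each $i$, the ambient band $(t_i;e_{i,1},e_{i,2})$ whose bone the segment represents. In particular $\gamma'$ is itself $\Tcal$-taut with the same $\Tcal$-band data as $\gamma$, so only the inequality $l_\Tcal(\gamma') \le l_{\Tcal'}(\gamma')$ and its equality case require attention, and the characterization of equality will follow by combining $\Tcal$-tautness and $\Tcal'$-tautness of the single curve $\gamma'$.

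For the inequality I would use $\Tcal^{(1)} \subseteq \Tcal'^{(1)}$. Each connected component of $\gamma' \setminus \Tcal'^{(1)}$ lies in a unique connected component of $\gamma' \setminus \Tcal^{(1)}$, defining a surjection $\Phi$ from $\Tcal'$-segments onto $\Tcal$-segments. An inner $\Tcal$-segment $\delta$ decomposes into $s+1$ $\Tcal'$-segments, where $s$ is the number of transverse intersections of $\delta$ with $\Tcal'^{(1)}\setminus \Tcal^{(1)}$; since both endpoints of $\delta$ and all $s$ interior crossings lie on $\Tcal'^{(1)}$, every preimage under $\Phi$ is itself inner. An outer $\Tcal$-segment decomposes analogously into one outer $\Tcal'$-segment and several inner ones, after the generic-position observation that the endpoints of $\gamma$, lying in $S^2\setminus\Tcal^{(0)}$, can be moved off $\Tcal'^{(0)}$ by an arbitrarily small isotopy that disturbs neither $\Tcal$- nor $\Tcal'$-tautness. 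Summing contributions yields $l_\Tcal(\gamma') \le l_{\Tcal'}(\gamma')$, with the gap equal to the total number of new-edge crossings of $\gamma'$.

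For the equality case, the vanishing of this gap forces $\Phi$ to be a bijection, so each inner $\Tcal$-segment $\gamma_i'$ coincides with a single inner $\Tcal'$-segment. By the reduction step it is a bone of the $\Tcal$-band $(t_i;e_{i,1},e_{i,2})$, and by $\Tcal'$-tautness of $\gamma'$ the same arc is a bone of a $\Tcal'$-band $(t_i';e_{i,1}',e_{i,2}')$; because $\Tcal'$ subdivides $\Tcal$, the closed $2$-cell $t_i'$ is contained in $t_i$ and each $\Tcal'$-edge $e_{i,j}'$ meeting the endpoint of the segment is a subdivision piece of $e_{i,j}$, giving the desired subband relation. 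The converse is immediate: if all inner segments realize the subband pattern, then refining $\Tcal$ to $\Tcal'$ produces no additional crossings and the two lengths coincide. The main obstacle I expect is the fiddly bookkeeping of inner versus outer segments for open curves and the generic-position justification near $\Tcal'^{(0)}$; the underlying geometric content is mild once these technicalities are settled.
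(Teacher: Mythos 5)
The step that fails is the very first one, where you read the hypothesis literally and conclude that $\gamma'$ inherits $\Tcal$-tautness from $\gamma$ (``$\gamma'$ is itself $\Tcal$-taut with the same $\Tcal$-band data as $\gamma$''). Formally that inference is sound if $\gamma$ and $\gamma'$ really are $\Tcal$-cellularly isotopic, but that cannot be the intended hypothesis: it would render the proposition nearly vacuous (you yourself note that the whole thing then collapses to a counting exercise), and it is inconsistent with how the proposition is used in the proof of Theorem~\ref{thm:LevyHomoInf}, where one knows only that the two curves are isotopic rel $A$, not $S_\R$-combinatorially equivalent. In general a $\Tcal'$-taut curve is \emph{not} $\Tcal$-taut: inside a $\Tcal$-tile $t$ a $\Tcal'$-taut arc may enter and leave through two distinct $\Tcal'$-subedges of the same $\Tcal$-edge while crossing interior $\Tcal'$-edges of $t$ in between, so the induced $\Tcal$-segment is not a bone. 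This is exactly the configuration the paper's argument has to deal with.

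The idea you are missing is the reduction step. One should take the $\Tcal$-segments $\delta_1,\dots,\delta_l$ of $\gamma'$ (these are the $\Phi$-fibers in your notation, so $l\le k'$) and, whenever some $\delta_j$ has both endpoints on a single $\Tcal$-edge, push it off its $2$-cell by an isotopy rel $\Tcal^{(0)}$, merging it with its neighbors; each such move decreases $l_\Tcal$ by at least one. Iterating this yields a $\Tcal$-taut curve $\bar\gamma$ homotopic to $\gamma'$, hence to $\gamma$, with $\Tcal$-segments $\bar\gamma_1,\dots,\bar\gamma_m$ and $m\le l$. Since the $\Tcal$-taut representative is unique up to $\Tcal$-combinatorial equivalence (the unnumbered proposition following Definition~\ref{defn:CombiEq}), $\bar\gamma$ and $\gamma$ have $m=k$ and, after reordering, the same $\Tcal$-band data. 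This gives $k=m\le l\le k'$, and equality forces both that no reduction was performed ($m=l$, i.e.\ $\gamma'$ was already $\Tcal$-taut with $\delta_j$ a bone of the band $(t_j;e_{j,1},e_{j,2})$) and that each $\delta_j$ is a single $\Tcal'$-segment, giving your subband characterization. Your $\Phi$-bookkeeping, the observation that inner $\Tcal$-segments split into inner $\Tcal'$-segments, and the generic-position perturbation near $\Tcal'^{(0)}$ are all fine and are essentially the paper's ingredients; what you need to add is the reduction to a $\Tcal$-taut curve together with the appeal to uniqueness of taut representatives in a homotopy class.
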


\begin{proof}

Take unions of consecutive $\gamma_i'$'s to get $\delta_1,\dots,\delta_{l}$ such that each $\delta_j$ is a $\Tcal$-segment of $\gamma'$. If endpoints of $\delta_i$ are on the same edge of $\Tcal$ as below, then remove it by an isotopy pushing $\delta_i$ away from the $2$-cell that it was contained so as to make $\delta_{i-1}\cup \delta_i \cup \delta_{i+1}$ be properly embedded into the $2$-cell where the subcurves $\delta_{i-1}$ and $\delta_{i+1}$ were properly embedded as shown below.

\begin{figure}[h!]
    \centering
        \def\svgwidth{0.4\textwidth}
\begingroup%
  \makeatletter%
  \providecommand\color[2][]{%
    \errmessage{(Inkscape) Color is used for the text in Inkscape, but the package 'color.sty' is not loaded}%
    \renewcommand\color[2][]{}%
  }%
  \providecommand\transparent[1]{%
    \errmessage{(Inkscape) Transparency is used (non-zero) for the text in Inkscape, but the package 'transparent.sty' is not loaded}%
    \renewcommand\transparent[1]{}%
  }%
  \providecommand\rotatebox[2]{#2}%
  \newcommand*\fsize{\dimexpr\f@size pt\relax}%
  \newcommand*\lineheight[1]{\fontsize{\fsize}{#1\fsize}\selectfont}%
  \ifx\svgwidth\undefined%
    \setlength{\unitlength}{466.50028535bp}%
    \ifx\svgscale\undefined%
      \relax%
    \else%
      \setlength{\unitlength}{\unitlength * \real{\svgscale}}%
    \fi%
  \else%
    \setlength{\unitlength}{\svgwidth}%
  \fi%
  \global\let\svgwidth\undefined%
  \global\let\svgscale\undefined%
  \makeatother%
  \begin{picture}(1,0.32394748)%
    \lineheight{1}%
    \setlength\tabcolsep{0pt}%
    \put(0,0){\includegraphics[width=\unitlength,page=1]{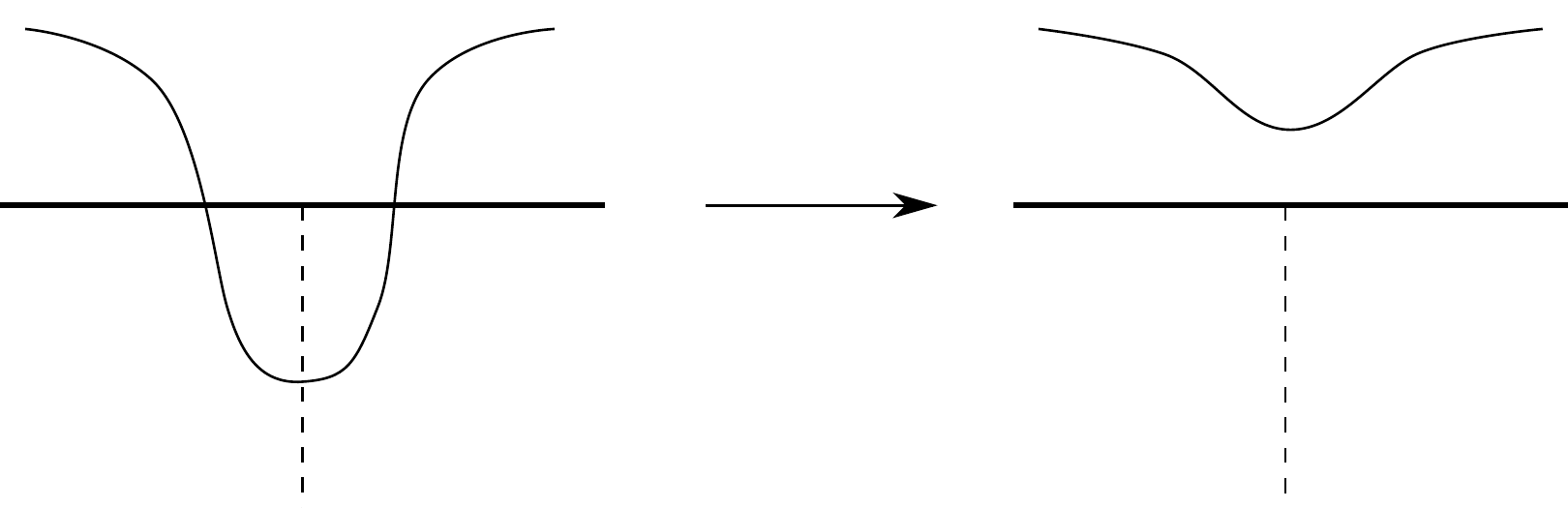}}%
    \put(0.06197102,0.30342484){\color[rgb]{0,0,0}\makebox(0,0)[lt]{\lineheight{1.25}\smash{\begin{tabular}[t]{l}$\delta_{i-1}$\end{tabular}}}}%
    \put(0.2303219,0.07375111){\color[rgb]{0,0,0}\makebox(0,0)[lt]{\lineheight{1.25}\smash{\begin{tabular}[t]{l}$\delta_i$\end{tabular}}}}%
    \put(0.28130943,0.24370967){\color[rgb]{0,0,0}\makebox(0,0)[lt]{\lineheight{1.25}\smash{\begin{tabular}[t]{l}$\delta_{i+1}$\end{tabular}}}}%
  \end{picture}%
\endgroup%

    \caption{The bold edge is an edge of $\Tcal$ and the dotted edge is an edge of $\Tcal'$ that is not an edge of $\Tcal$.}
\end{figure}

Repeating this reduction, we obtain a $\Tcal$-taut curve $\bar{\gamma}$ homotopic to $\gamma$. Let $\bar{\gamma}_1,\cdots, \bar{\gamma}_m$ be its subcurves with respect to $\Tcal$ and $\bar{\gamma}_i$ be properly embedded into a band $(\bar{t}_i;\bar{e}_{i,1},\bar{e}_{i,2})$. Since taut curves are unique in the homotopy class up to combinatorial equivalence, after reordering indices, we have $k=m$ and $(\bar{t}_i;\bar{e}_{i,1},\bar{e}_{i,2})=(t_i;e_{i,1},e_{i,2})$. Then $k=m \le l \le  k'$. The equality condition immediately follows from the constructions of $\delta_i$ and $\bar{\gamma}_i$.
\end{proof}

\section{Directed graphs and topological entropy of graph maps}\label{sec:DirectedGraphsandTopEnt}

A directed graph will be used throughout this article to understand the dynamics of branched coverings. In this section, we review basic notions of directed graphs and prove properties that we need in subsequent sections.

Let $G$ be a finite directed graph. A {\it path} is a sequence of edges $(e_1, e_2, \dots, e_n)$ such that the terminal vertex of $e_i$ is equal to the initial vertex of $e_{i+1}$ for every $i\in[1,n-1]_\Zbb$. The {\it length} of path is the number of edges in the sequence. The initial vertex of $e_1$ is the {\it initial vertex of the path} and the terminal vertex of $e_n$ is the {\it terminal vertex of the path}. If the initial and terminal vertices of a path $p$ are $v$ and $w$, then we call $p$ a {\it path from $v$ to $w$}. Let $p$ and $p'$ be paths of length $n$ and $n'$ with $n'>n$. If the first $n$ subsequence of edges of $p'$ is equal to the sequence of edges of $p$, we say that $p'$ is an {\it extension of $p$} and $p$ is the {\it first $n$-restriction of $p'$.}

A {\it cycle} is a path whose initial and terminal vertices coincide.
A vertex is {\it periodic} if it is contained in a cycle and {\it preperiodic} if it is not periodic but there is a path from the vertex to a periodic vertex. For a subset $W \subset \V(G)$, the {\it subgraph generated by $W$} is the subgraph of $G$ consisting of $W$ and edges connecting vertices in $W$.

\begin{defn}[Recurrent paths]
For a periodic vertex $v\in \V(G)$, a path $p$ from $v$ is {\it recurrent} if there exists a path from the terminal vertex $w$ of $p$ to $v$, i.e., $v$ and $w$ are contained in one cycle. We also consider a periodic vertex as a recurrent path of length $0$.
\end{defn} 

\begin{defn}[Ideals]
Let $G$ be a directed graph. A subset $X \subset \V(G)$ is an {\it ideal} if the following condition hold: For every $v\in X$, if there is a path from $v$ to $w$ for some $w\in \V(G)$, then $w\in X$.
\end{defn}

For $v \in \V(G)$, the {\it ideal  generated by $v$}, denoted by $\left< v\right>$, is the collection of vertices $w\in \V(G)$ where there is a path from $v$ to $w$.


\begin{eg}
Assume we have a directed graph as below.
\[
\begin{tikzcd}
v_1 \arrow[d,"e_1"] & v_2 \arrow[l,"e_2"] \arrow[d,"e_3"] & v_3 \arrow [l,"e_4"] & v_7 \arrow[l,"e_7"]\\
v_4 & v_5 \arrow[r,"e_5"] & v_6 \arrow [u,"e_6"] &
\end{tikzcd}
\]
Vertices $v_1$ and $v_4$ are neither periodic or preperiodic; $v_2,v_3,v_5,$ and $v_6$ are periodic; $v_7$ is preperiodic. Starting from $v_3$, the paths $(e_4,e_3,e_5)$ and $(e_4,e_3,e_5,e_6,e_4,e_3)$ are recurrent, but the paths $(e_4,e_2,e_1)$ and $(e_4,e_3,e_5,e_6,e_4,e_2)$ are not recurrent. There are only $4$ ideals: $\{v_4\}, \{v_1,v_4\}, \{v_1,v_2,v_3,v_4,v_5,v_6\},$ and $\V(G)$.
\end{eg}

\subsection{Adjacency matrices}
Let $G$ be a finite directed graph and $\V(G)=\{v_1,v_2,\dots,v_n\}$. The {\it adjacency matrix $A$} of $G$ is defined by
\[
    A_{ij}=\mathrm{the~number~of~edges~from~}v_i\mathrm{~to~}v_j,
\]
where the $A_{ij}$ is the entry of $i^{th}$-row and $j^{th}$-column of $A$. The adjacency matrix depends on the choice of indices of $v_i$'s, and matrices defined by different choices of indices differ by conjugations by permutation matrices. In particular, if the index satisfies $i<j$ only when there exists a path from $v_i$ to $v_j$, then the adjacency matrix is an upper triangular block matrix.

\begin{equation}\label{eqn:uppper_tri_block}\tag{UTB-form}
A=
\left(
\begin{array}{ccccc}
A_1 & * & \dots & * & *\\
0 & A_2 & \dots & * & *\\
\vdots & \vdots & \ddots& \vdots & \vdots\\
0& 0 & \dots & A_{k-1} & *\\
0& 0 & \dots & 0 & A_k 
\end{array}
\right),
\end{equation}
where $A_i$ is irreducible or a $0$ matrix. A non-negative $m \times m$ square matrix $M$ is {\it irreducible} if for every $1 \le i,j \le m$, there exists $k\ge1$ such that $(M^k)_{ij}>0$. An irreducible non-negative matrix $M$ has a simple eigenvalue, called the {\it Perron-Frobenius eigenvalue}, which is a positive real number and equal to the spectral radius of $M$. The spectral radius of $A$ is equal to the maximum of Perron-Frobenius eigenvalues of the irreducible $A_i$. See \cite[Chapter 2]{nonnegMat}.

\proofstep{Asymptotic growth of entries of $A^n$.} Let $B$ be a non-negative irreducible matrix and denote by $v$ and $w^T$ the right and the left eigenvectors of $B$ which are normalized by $w^T v=1$. By the Perron-Frobenius theorem, for the Perron-Frobenius eigenvalue $\rho$ of $B$, we have
\[
    \lim_{n\to \infty} B^n/\rho^n=v w^T.
\]
Hence the $(i,j)$-entry of $B^n$ is asymptotic to $\rho^n\cdot v_i \cdot w_j$. This implies the following proposition. The proof is left to the reader.

\begin{prop}\label{prop:AsymRowColumnSum}
Let $A$ be an $N\times N$ non-negative upper triangular block matrix such that $A_1,\dots A_k$ are blocks on the diagonal like \eqref{eqn:uppper_tri_block}. Let $A_i$ be an $N_i \times N_i$ matrix so that $N=\sum_{i=1}^k N_i$. For $i\in \{1,\dots,N\}$, let $l_i \in [1,k]_\Zbb$ such that $A_{l_i}$ contains the $(i,i)$-entry of $A$. Let $R_{n,i}$ (resp. $C_{n,i}$) be the $i^{th}$-row (resp. column) sum of $A^n$, i.e., the sum of the entries in the $i^{th}$-row (resp. column) of $A^n$. Then
\[
\left\{
\begin{array}{l}
    \lim_{n\to \infty} \frac{1}{n} \cdot \log (R_{n,i})=\max_{l_i \le j \le k} \log \rho_j\\[2pt]
    \lim_{n\to \infty} \frac{1}{n} \cdot \log (C_{n,i})=\max_{1 \le j \le l_i} \log \rho_j
\end{array}
\right.
\]
where $\rho_j$ is the spectral radius of $A_j$.
\end{prop}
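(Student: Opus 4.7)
The plan is to expand $A^n$ using the upper triangular block structure and then apply Perron--Frobenius to each irreducible diagonal block. For each irreducible $A_m$ with spectral radius $\rho_m > 0$, Perron--Frobenius gives $\lim_{n \to \infty} \frac{1}{n} \log (A_m^n)_{pq} = \log \rho_m$ for every entry that is positive infinitely often; a zero block is handled by $\rho_m = 0$ with $\log 0 = -\infty$. Writing $B_{m m'}$ for the off-diagonal block of $A$ from block $m$ to block $m'$ with $m < m'$, the block-upper-triangular structure expands any power of $A$ as
\[
(A^n)_{ij} = \sum_{l_i = m_0 < m_1 < \cdots < m_p = b} \ \sum_{\substack{n_0 + \cdots + n_p = n - p \\ n_s \ge 0}} \bigl( A_{m_0}^{n_0} B_{m_0 m_1} A_{m_1}^{n_1} \cdots B_{m_{p-1} m_p} A_{m_p}^{n_p} \bigr)_{ij}
\]
for an entry index $i$ in block $l_i$ and $j$ in block $b$, with at most $2^k$ strictly increasing block-chains in the outer sum.

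For the upper bound on $R_{n,i}$, each entry of a single term in the expansion is bounded by $C \cdot (\max_s \rho_{m_s})^{n-p}$ up to a polynomial-in-$n$ factor coming from the number of compositions $(n_0, \dots, n_p)$. Summing over the finitely many block-chains and over all columns gives
\[
R_{n,i} \le \mathrm{poly}(n) \cdot \Bigl(\max_{l_i \le m \le k} \rho_m\Bigr)^n,
\]
so $\limsup_{n \to \infty} \frac{1}{n} \log R_{n,i} \le \max_{l_i \le m \le k} \log \rho_m$. For the matching lower bound, let the maximum on the right be achieved at block index $m^*$, and choose a block-chain $l_i = m_0 < \cdots < m_p = m^*$ along which the off-diagonal blocks $B_{m_{s-1} m_s}$ are all nonzero (such a chain exists precisely when $m^*$ is reachable from $l_i$ in the DAG of strongly connected components, and the proposition is to be read with this reachability convention). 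Retaining only the single term with $n_{m^*} = n - p$ and all other $n_s = 0$ contributes at least $c \cdot u^{T} A_{m^*}^{n-p} \mathbf{1}$ to $R_{n,i}$ for a strictly positive row vector $u$, which grows at rate $\rho_{m^*}^n$ by Perron--Frobenius applied to $A_{m^*}$.

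The column-sum statement follows by applying the row-sum result to $A^{T}$, which is block-upper-triangular in the reversed block ordering; this interchanges $[l_i, k]_\Zbb$ with $[1, l_i]_\Zbb$. The main technical point is the periodicity of irreducible blocks: the entrywise convergence $A_{m^*}^n / \rho_{m^*}^n \to v w^{T}$ fails pointwise in $n$ when $A_{m^*}$ has period $d > 1$, but the positive bilinear form $u^{T} A_{m^*}^n \mathbf{1}$ still satisfies $\frac{1}{n} \log u^{T} A_{m^*}^n \mathbf{1} \to \log \rho_{m^*}$ by Gelfand's formula combined with strict positivity of $u$ and $\mathbf{1}$ along the Perron eigenspaces, so the log-limit in the lower bound is undisturbed.
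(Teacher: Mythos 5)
The paper leaves the proof to the reader, so there is no paper proof to compare against; the question is whether your argument stands on its own. Your block-chain expansion of $A^n$ and the upper bound are fine. The lower bound, however, has a genuine gap: you retain the single term with $n_0 = \cdots = n_{p-1} = 0$ and $n_p = n - p$ and claim the row vector $u^T = (B_{m_0 m_1}\cdots B_{m_{p-1}m_p})_{i,\cdot}$ is strictly positive. That is false in general; even when every $B_{m_{s-1}m_s}$ is nonzero, the product can have a zero $i$-th row. Concretely, take $A_1=\begin{pmatrix}0&1\\1&0\end{pmatrix}$, $A_2=(2)$, $B_{12}=\begin{pmatrix}0\\1\end{pmatrix}$: the first row of $B_{12}$ vanishes, so your chosen term contributes $0$ to $R_{n,1}$, yet $R_{n,1}$ grows like $2^n$ along the path $1\to 2\to 3\to 3\to\cdots$. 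The problem is that the path realizing the lower bound may need to wander inside the earlier blocks before transitioning, which forces $n_s>0$ for some $s<p$.

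A clean fix avoids the chain bookkeeping entirely. Since $m^*$ is reachable from $v_i$ (the reachability convention you correctly flag as necessary; without it, $A=(0)\oplus(2)$ is a counterexample for $i=1$), pick a path of some length $L$ from $v_i$ to a vertex $w$ in block $m^*$, so $(A^L)_{iw}>0$. Then for $n>L$,
\[
R_{n,i}\ \ge\ \sum_{w'\in\,\text{block }m^*}(A^n)_{iw'}\ \ge\ (A^L)_{iw}\,\bigl(A_{m^*}^{\,n-L}\mathbf{1}\bigr)_w,
\]
and $(A_{m^*}^{m}\mathbf{1})_w \ge c\,\rho_{m^*}^{m}$ because the right Perron vector $v$ of $A_{m^*}$ is strictly positive, so $\mathbf{1}\ge c' v$ componentwise and hence $A_{m^*}^m\mathbf{1}\ge c'\rho_{m^*}^m v$. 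This also sidesteps the periodicity discussion: once you compare against the Perron vector rather than entrywise limits, periodicity causes no difficulty, and the appeal to Gelfand's formula is unnecessary.
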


\subsection{The growth rate of the number of paths}$ $

\proofstep{Polynomial and exponential growth rate} Suppose we have a sequence $\{a_n~|~a_n \in \Rbb_{> 0},~n\ge1\}$. We consider sequences which are uniformly bounded or diverge to the infinity when $n$ tends to the infinity. We say that the sequence {\it has exponential growth}, or {\it grows exponentially fast}, if there exists $C>D>1$ such that for any sufficiently large $n>0$ we have
\[
    D<|\log a_n|<C,
\]
and the sequence {\it has polynomial growth of degree $d$}, or {\it grows polynomially fast with degree $d$}, for a non-negative integer $d$ if there exists $C>D>0$ such that for any sufficiently large $n>0$ we have
\[
    D\cdot n^d < a_n < C \cdot n^d.
\]

\begin{lem}\label{lem:polygrowth}
Suppose $\{a_n~|~n\ge1,~a_n\ge0 \}$ is a non-decreasing sequence. If there exist $k,l,m>0$ such that $a_{k\cdot n+l}\ge m \cdot n^d$ (resp.\@ $a_{k\cdot n+l}\le m \cdot n^d$) for every $n>0$, then there exists $C>0$ such that $a_n>C\cdot n^d$ (resp.\@ $a_n<C\cdot n^d$) for any sufficiently large $n$. 
\end{lem}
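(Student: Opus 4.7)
The plan is to sandwich an arbitrary large index $N$ between two consecutive terms of the arithmetic progression $\{kn+l\}_{n\ge 1}$ and exploit monotonicity of the sequence. Since $n \sim N/k$ as $N \to \infty$, the polynomial estimate at the arithmetic subsequence transfers to $a_N$ up to a multiplicative constant.

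For the lower bound direction, suppose $a_{kn+l} \ge m \cdot n^d$ for all $n > 0$. Given large $N$, set $n = n(N) := \lfloor (N-l)/k \rfloor$, so that $kn + l \le N$. By the non-decreasing hypothesis,
\[
a_N \ge a_{kn+l} \ge m \cdot n^d \ge m \cdot \left(\frac{N-l}{k} - 1\right)^d.
\]
For $N$ sufficiently large the right-hand side is bounded below by $C \cdot N^d$ for any $C < m/k^d$, which gives the claim.

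For the upper bound direction, suppose $a_{kn+l} \le m \cdot n^d$ for all $n > 0$. Given large $N$, set $n = n(N) := \lceil (N-l)/k \rceil$, so that $kn + l \ge N$ (assuming $N > l$). Monotonicity now gives
\[
a_N \le a_{kn+l} \le m \cdot n^d \le m \cdot \left(\frac{N-l}{k} + 1\right)^d,
\]
and again the right-hand side is at most $C \cdot N^d$ for any $C > m/k^d$ provided $N$ is large enough.

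There is really no hard step here: the lemma is a routine change-of-index argument, and its content is purely the observation that arithmetic-progression sampling does not distort polynomial growth rates. The only thing to be careful about is keeping the inequalities in the correct direction when applying monotonicity (choose the floor in the lower-bound case so that $kn+l \le N$, and the ceiling in the upper-bound case so that $kn+l \ge N$), and handling the trivial regime $N \le l$ by absorbing it into ``sufficiently large $n$''.
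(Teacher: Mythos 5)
Your proof is correct and follows essentially the same change-of-index-plus-monotonicity argument the paper uses (the paper floors $n/(k+1)$ after first passing to the subsequence $a_{(k+1)n}$, while you floor/ceiling $(N-l)/k$ directly, but the underlying idea is identical). You are, if anything, slightly more careful than the paper in spelling out the upper-bound direction, which the paper dismisses with ``the same argument works,'' even though that direction requires a ceiling rather than a floor.
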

\begin{proof}
Since the sequence $\{a_n\}$ is non-decreasing, we have $a_{(k+1)\cdot n}\ge m \cdot n^d$ for any $n>l/k$. Then, for any sufficiently large $n>0$, we have
\[
a_n \ge a_{(k+1)\cdot\lfloor \frac{n}{k+1} \rfloor}\ge m\cdot {\left\lfloor\frac{n}{k+1}\right\rfloor}^d>\frac{m}{(k+2)^d} \cdot n^d,
\]
where $\lfloor x \rfloor$ means the largest integer less than or equal to $x$. The same argument also works for the inverse direction.
\end{proof}

Let $G$ be a directed graph. Two paths in $G$ are {\it different} if their sequences of edges are different. Let $P(v,n)$ be the number of different paths of length exactly $n$ starting from $v$. The following criterion on the growth rate of $P(v,n)$ is well-known. See \cite{Sidki_growth, Ufn_growth}. We restate the criterion with a slight improvement for the case of polynomial growth. 

\begin{thm}\label{thm:expsubexp}
 Let $G$ be a directed graph and $v \in \V(G)$.
 
 \begin{itemize}
     \item [(1)] If there exist two different cycles passing through $v$, then $P(v,n)$ has exponential growth.
     \item [(2)] If there exists $w\in \V(G)$ such that there are two different cycles starting from $w$ and a path from $v$ to $w$, then $P(v,n)$ has exponential growth.
     \item [(3)] If $v$ does not satisfy $(1)$ or $(2)$, then $P(v,n)$ has polynomial growth. Moreover, if the maximum number of (disjoint) cycles that a path from $v$ can intersect is $d+1$, then the degree of the polynomial growth of $P(v,n)$ is $d$. When the maximum number of cycles is zero, then $P(v,n)=0$ for every sufficiently large $n$ and we define the degree of polynomial growth to be $-1$.
\end{itemize}

\end{thm}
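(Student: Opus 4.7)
The strategy is to prove the trivial exponential upper bound $P(v,n)\le M^n$, where $M$ is the maximum out-degree of $G$, and then supply lower bounds for parts (1) and (2) and two-sided polynomial bounds for part (3). For part (1), if $C_1, C_2$ are distinct cycles through $v$ of lengths $\ell_1, \ell_2$ and $L=\max(\ell_1,\ell_2)$, then for each word $b\in\{1,2\}^k$ the concatenation $C_{b_1}\cdots C_{b_k}$ is a cycle from $v$, and since $C_1\neq C_2$ as edge sequences, distinct words give distinct paths. Grouping by total length and applying pigeonhole, there is some $n\le kL$ with $P(v,n)\ge 2^k/(k+1)$; a padding argument using one extra loop of $C_1$ extends this to an exponential lower bound at all sufficiently large $n$. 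Part (2) then follows from part (1) by prepending any fixed path from $v$ to $w$.

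For part (3), the key structural observation is that if no vertex reachable from $v$ admits two different cycles, then every strongly connected component $S$ of the reachable subgraph is either a single vertex without a self-loop, or consists of a single simple cycle $C_S$; any additional edge inside $S$ would produce a second cycle through some vertex of $S$, contradicting the hypothesis. The condensation of the reachable subgraph is therefore a directed acyclic graph whose cyclic nodes are simple cycles connected by transit paths. Every path from $v$ decomposes canonically as (a) a directed walk $S_0,S_1,\dots,S_k$ in the condensation, (b) for each cycle-SCC visited, a nonnegative integer $n_i$ counting extra loops around $C_{S_i}$, and (c) bounded data recording entry and exit points together with connector subpaths between consecutive SCCs. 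Given (a) and (c), the total length is $c + \sum_i n_i|C_{S_i}|$ for a constant $c$; hence the number of admissible tuples $(n_i)$ with prescribed total is a lattice-point count in a $k$-dimensional simplex, which is $O(n^{k-1})$. Summing over the finitely many choices of (a) and (c) yields $P(v,n)=O(n^d)$ when the longest walk from $S_0$ in the condensation passes through $d+1$ cycle-SCCs.

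For the matching lower bound, fix a condensation walk realizing $d+1$ cycle-SCCs together with compatible entry and exit data; restricting to $n_i\ge 1$ produces at least a constant multiple of $m^d$ paths of length $k\cdot m + l$ for suitable constants $k$ and $l$, and Lemma \ref{lem:polygrowth} promotes this to $P(v,n)\ge Cn^d$ for all sufficiently large $n$. The degenerate case of zero reachable cycles corresponds to $d=-1$: every path from $v$ has length bounded by the diameter of the reachable subgraph, so $P(v,n)=0$ for large $n$.

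The main obstacle is the structural claim in part (3) that SCCs are genuinely single simple cycles under the hypothesis, together with the uniformity of constants when summing the lattice-point estimates over the finitely many skeleton choices. Converting the polynomial bound from an arithmetic progression to all $n$ is precisely the role played by Lemma \ref{lem:polygrowth}, which accounts for the ``slight improvement'' advertised in the theorem statement.
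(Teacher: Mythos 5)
Your proposal is correct and follows essentially the same route as the paper: a crude exponential upper bound $P(v,n)\le M^n$, a lower bound for (1) by concatenating the two cycles in all possible orders, reduction of (3) to a chain of disjoint cycles (your SCC/condensation language matches the paper's ``collapse each cycle to a vertex'' construction), lattice-point counting $\binom{n+d}{d}$ in the simplex $\{n_1+\cdots+n_d\le n\}$ to pin down the degree, and Lemma~\ref{lem:polygrowth} to convert an estimate along an arithmetic progression into a two-sided polynomial bound for all large $n$. The one place where your write-up and the paper's differ noticeably is in part (1): the paper avoids your pigeonhole-over-lengths step by concatenating blocks of uniform length $pq$ (each block is either $C_1$ traversed $q$ times or $C_2$ traversed $p$ times), which directly gives $2^n$ paths of length exactly $npq$; your version works but the claimed bound $2^k/(k+1)$ should really be $2^k$ divided by the number of achievable lengths in $[k\ell_{\min},kL]$, i.e.\ roughly $2^k/(k(L-\ell_{\min})+1)$. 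Also note that both your argument and the paper's appeal to Lemma~\ref{lem:polygrowth} tacitly use monotonicity of $P(v,n)$, which holds after reducing to a chain of cycles (every path is then extendable) but deserves a remark.
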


\begin{proof}
The number of paths of length $\le n$ is counted in \cite{Ufn_growth} and we slightly modify it. If two cycles of lengths $p$ and $q$ pass through $v$, then there are at least $2^n$ paths of length $npq$ starting from $v$. Let $M$ be the maximal number of outgoing edges from one vertex in $G$. Then the number of paths with length $n$ is less than $M^n$. This proves $(1)$ and $(2)$ is immediate from $(1)$. 

Assume that $v$ does not satisfy $(1)$ or $(2)$. Let $W=\left< v \right>$ be the ideal generated by $v$. There exist totally ordered finite subsets $W_1,\cdots W_k$ such that $v \in W_i$ for each $i$ and $W=\bigcup_{i=1}^k W_i$. More precisely, we think of a directed graph $H$ obtained by collapsing each cycle in $G$ to a vertex. Since $v$ does not satisfy (1) or (2), the graph $H$ does not have any cycle. Let $v'$ be the vertex of $H$ corresponding to $v$. Then $v'$ is the only minimal element of $\V(H)$. Let $w_1,w_2,\dots,w_m$ be the maximal elements of $\V(H)$. Choose any $w\in \V(W)$.

Then the subgraph $G_i$ generated by $W_i$ is isomorphic to the graph in Figure \ref{fig:polynomialgrowth}. Since every path from $v$ is supported in some $G_i$, it suffices to show $(3)$ when $G$ is the graph of the type shown in Figure \ref{fig:polynomialgrowth}.

\begin{figure}[h!]
    \centering
        \def\svgwidth{0.7\textwidth}
\begingroup%
  \makeatletter%
  \providecommand\color[2][]{%
    \errmessage{(Inkscape) Color is used for the text in Inkscape, but the package 'color.sty' is not loaded}%
    \renewcommand\color[2][]{}%
  }%
  \providecommand\transparent[1]{%
    \errmessage{(Inkscape) Transparency is used (non-zero) for the text in Inkscape, but the package 'transparent.sty' is not loaded}%
    \renewcommand\transparent[1]{}%
  }%
  \providecommand\rotatebox[2]{#2}%
  \newcommand*\fsize{\dimexpr\f@size pt\relax}%
  \newcommand*\lineheight[1]{\fontsize{\fsize}{#1\fsize}\selectfont}%
  \ifx\svgwidth\undefined%
    \setlength{\unitlength}{850.55669364bp}%
    \ifx\svgscale\undefined%
      \relax%
    \else%
      \setlength{\unitlength}{\unitlength * \real{\svgscale}}%
    \fi%
  \else%
    \setlength{\unitlength}{\svgwidth}%
  \fi%
  \global\let\svgwidth\undefined%
  \global\let\svgscale\undefined%
  \makeatother%
  \begin{picture}(1,0.14226053)%
    \lineheight{1}%
    \setlength\tabcolsep{0pt}%
    \put(0,0){\includegraphics[width=\unitlength,page=1]{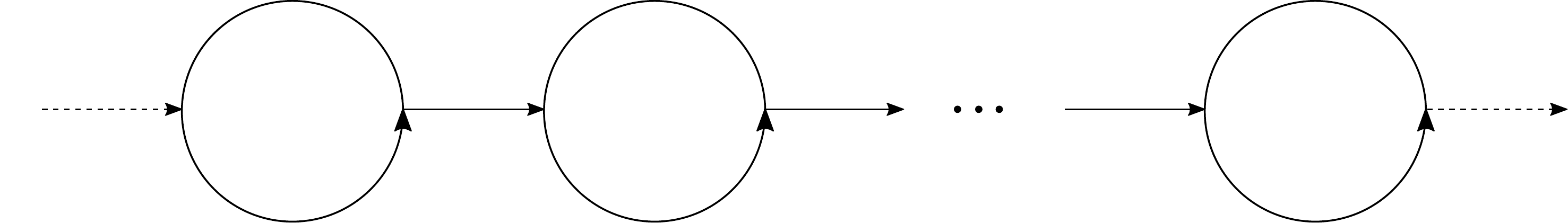}}%
    \put(-0.00266944,0.06457992){\color[rgb]{0,0,0}\makebox(0,0)[lt]{\lineheight{1.25}\smash{\begin{tabular}[t]{l}$v$\end{tabular}}}}%
  \end{picture}%
\endgroup%

	\caption{A graph with polynomial growth rate of $P(v,n)$. Any arrow indicates paths, any dotted arrow means it may not exist but if exists it indicates a path, and any circle indicates a cycle. In each cycle, the incoming vertex and the outgoing vertex could be the same.}
    \label{fig:polynomialgrowth}
\end{figure}

Assume $G$ is a graph of the type in Figure \ref{fig:polynomialgrowth}. Let it have $d+1$ cycles of lengths $p_1,p_2,\dots, p_{d+1}$. Let $p_M$ and $p_m$ be the maximum and minimum of $\{p_1,\dots, p_d\}$ and $K$ and $L$ be the number of vertices and edges of $G$ respectively. Let $X(n)$ be the set of $d$-tuples $(n_1,\dots,n_d)$ of non-negative integers satisfying $n_1 + \dots +n_d \le n$. The set $X(n)$ has $\binom{n+d}{d}$ elements.

\vspace{5pt}
\noindent{\it Claim: For any $n\ge 1$, $P(v,p_m\cdot n)/K \le \binom{n+d}{d} \le P(v,p_M \cdot n +L)$.}
\begin{proof}[Proof of claim] For any $n$, we define an injective map from $X(n)$ to the set of paths from $v$ of length $p_M\cdot n+L$ as follow. For any $(n_1,\dots,n_d)\in X(n)$, we have a path from $v$ which goes around the $i$-th cycle $n_i$-times for $i \le d$. The path has length not greater than $p_M\cdot n + L$. There is a unique extension of the path which (1) does not further go around the first d cycles, i.e., the additional rotations occur only in the last cycle, and (2) has length exactly equal to $p_M\cdot n + L$. We assign the extended path to the element $(n_1,\dots,n_d)\in X(n)$. Hence we have $\binom{n+d}{d} \le P(v,p_M\cdot n+L)$.

Similarly, we define another map from the set of paths of length $p_m\cdot n$ to $X(n)$ by assigning the numbers of times that each path goes around the first $d$ cycles. Suppose that two such paths $p_1$ and $p_2$ have the same image in $X(n)$. Since $p_1$ and $p_2$ start from the same vertex $v$ and have the same length, there terminal vertex are the same if and only if $p_1$ and $p_2$ are the same. It follows that the number of preimage of the map is less than $K$.
\end{proof}

As $\binom{n+d}{d}$ is a degree $d$ polynomial in $n$, it follows from Lemma \ref{lem:polygrowth} that the $P(v,n)$ has polynomial growth of degree $d$.as a sequence in $n$.
\end{proof}

The polynomial growth rate of $P(v,n)$ implies the recurrent extension is unique.

\begin{prop}[Extension of recurrent paths]\label{prop:ExtensionRecurrentPath}
Let $G$ be a directed graph and $v\in \V(G)$. Let $\gamma$ be a path from $v$ of length $m>0$. If $\gamma$ is recurrent, then for any $n>m$, $\gamma$ has at least one extension to a recurrent path of length $n$. Moreover, if $P(v,n)$ grows polynomially fast as $n$ tends to $\infty$, then the extension is unique.
\end{prop}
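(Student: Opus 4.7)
The plan is to treat existence and uniqueness separately. For existence, let $w$ be the terminal vertex of $\gamma$; recurrence provides a path $\delta$ from $w$ to $v$, so $C := \gamma \cdot \delta$ is a closed walk at $v$. The infinite iterate $C \cdot C \cdot C \cdots$ is a well-defined edge sequence beginning with $\gamma$, and for any $n > m$ its length-$n$ prefix extends $\gamma$ and terminates at some vertex on $C$, from which the remainder of the walk continues back to $v$. Hence this prefix is a recurrent extension of $\gamma$ of length $n$.

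For uniqueness under polynomial growth of $P(v,n)$, let $\gamma' = \gamma \cdot (e_1, \dots, e_{n-m})$ be any recurrent extension with intermediate vertices $w = u_0, u_1, \dots, u_{n-m}$. Since each $u_i$ reaches $u_{n-m}$ along the tail of $\gamma'$, and $u_{n-m}$ reaches $v$ by recurrence, each $u_i$ reaches $v$; the reverse reachability is immediate from the prefix of $\gamma'$. Hence every $u_i$ lies on a cycle through $v$. By Theorem \ref{thm:expsubexp}(3), polynomial growth forbids any vertex reachable from $v$ from admitting two different prime cycles, so there is a unique simple cycle $C_v$ through $v$, and each $u_i$ must be a vertex of $C_v$. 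Next I will argue that each edge $e_i$ is the $C_v$-outgoing edge at $u_{i-1}$: if $e_i$ were off $C_v$, then $e_i$ followed by the $C_v$-arc from $u_i$ back to $u_{i-1}$ would be a cycle at $u_{i-1}$ distinct from the $u_{i-1}$-based version of $C_v$, again violating polynomial growth. Inductively this fixes $e_i$ and $u_i$, forcing $\gamma'$ to be the unique extension.

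The main delicacy lies in the "two different cycles" condition of Theorem \ref{thm:expsubexp}(1), which must be interpreted to exclude cyclic shifts and iterations of a single prime cycle. In the two splitting arguments above (reducing a non-simple cycle at $v$ to a simple one, and producing an extra cycle at $u_{i-1}$ from an off-$C_v$ edge), the two cycles produced share a starting vertex but have either different first outgoing edges or different lengths that are incompatible with a common iteration, so they are genuinely different in the required sense. Verifying this combinatorial fact is the technical crux; everything else is a straightforward assembly of the reachability observations and the structural consequences of Theorem \ref{thm:expsubexp}.
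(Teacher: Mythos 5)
Your proof is correct and follows the same approach as the paper's four-sentence argument: for existence, close $\gamma$ up into a cycle via a return path and iterate it; for uniqueness, use that polynomial growth of $P(v,n)$ forces a single cycle through $v$ (and through every vertex in its strongly connected component), so a recurrent extension is forced to follow that cycle edge by edge. Your write-up is considerably more detailed — the assertion that each $u_i$ lies on $C_v$ really rests on the fact that every vertex of the strongly connected component of $v$ has a unique out-edge inside that component, which you defer to the closing paragraph rather than prove in line — and your final remark correctly flags a subtlety the paper leaves implicit, namely that ``two different cycles'' in Theorem~\ref{thm:expsubexp} must mean cycles that are not both iterates of a common prime cycle, a reading your first-outgoing-edge observations indeed secure.
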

\begin{proof}
Since $\gamma$ is recurrent, the initial and the terminal points of $\gamma$ belong to one cycle $C$ of $G$. Then we can extend $\gamma$ by repeatedly travelling along $C$. If $P(v,n)$ grows polynomially fast, then $C$ is the only cycle that passes through $v$. Hence travelling along $C$ is the only way to extend $\gamma$.
\end{proof}


\subsection{Graph maps with zero topological entropy}
Let $G$ be a finite graph and $\V(G)$ be its vertex set. A continuous map $f:G \to G$  {\it Markov} if $f(\V(G)) \subset \V(G)$ and $f$ is a homeomorphism or constant on each component of $G \setminus f^{-1}(\V(G))$. Then the edges form a Markov partition of $f$. Denote by $e_1,e_2,\dots$ the edges of $G$. Note that every edge is mapped to a union of edges. The {\it adjacency matrix} $A_f$ of $f:G \to G$ is defined in such a way that $f(e_i)$ covers $e_j$ as many as the $(i,\,j)$-entry of $A_f$. Under a suitable choice of indexing $e_i$'s, we may assume that $A_f$ is an upper triangular block matrix. Let $A_1,\dots A_k$ be the block matrices on the diagonal as in \eqref{eqn:uppper_tri_block}.

The spectral radius $\lambda$ of $A_f$ is either equal to zero if every $A_i$ is zero or equal to the maximum of Perron-Frobenius eigenvalues of the irreducible $A_i$'s. The {\it topological entropy $h_{top}(f)$} of $f$ is equal to zero if $\lambda=0$ or equal to $\log(\lambda)$ if $\lambda>0$. The relationship between the topological entropy and the Perron-Frobenius eigenvalue follows from \cite{MisiuSzlenk_EntropyMonotoneMapping}.

There is a directed graph $\Dcal_f$ such that $\V(\Dcal_f) = \Edge(G)$ and the directed edges are defined as follow: For $e_i,e_j\in \Edge(G)$, if $f(e_i)$ covers $e_j$ exactly $k$-times, then we draw $k$ directed edges of $\Dcal_f$ from $e_i$ to $e_j$. Then $A_f$ is equal to the adjacency matrix of $\Dcal_f$. We refer to $\Dcal_f$ as the directed graph of the Markov map $f:G\to G$.

\vspace{5pt}
The following lemma is elementary, but we prove it for the sake of self-containedness. 

\begin{lem}\label{lem:integerirredmatrix}
Let $M$ be an irreducible non-negative integer matrix and $\lambda(M)$ be its Perron-Frobenius eigenvalue. Then $\lambda(M) \ge 1$ and the equality holds if and only if $M$ is a permutation matrix.
\end{lem}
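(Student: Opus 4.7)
The plan is to exploit the positive Perron--Frobenius eigenvector of $M$. For the lower bound, I would invoke the Collatz--Wielandt estimate $\lambda(M) \ge \min_i \sum_j M_{ij}$: irreducibility makes the digraph underlying $M$ strongly connected, so every row contains at least one positive integer entry, forcing each row sum to be $\ge 1$. Hence $\lambda(M)\ge 1$ immediately.

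For the characterization of equality, suppose $\lambda(M)=1$ and pick a Perron eigenvector $v>0$ with $Mv=v$. I would analyze the set $S = \{i : v_i = \min_\ell v_\ell\}$ of indices where $v$ attains its minimum. For $i\in S$, the identity
\[
v_i \;=\; \sum_j M_{ij} v_j \;\ge\; v_i \sum_j M_{ij}
\]
combined with the row-sum lower bound from the first step squeezes $\sum_j M_{ij}$ to equal $1$; since the entries are non-negative integers, row $i$ has exactly one entry equal to $1$ and the remaining entries are $0$. The equality case of the same inequality also forces $v_j = v_i$ whenever $M_{ij}>0$, so the unique out-edge from $i\in S$ lands back in $S$. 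Thus $S$ is forward-invariant in the digraph of $M$, and strong connectivity (irreducibility) then promotes $S$ to the full vertex set. Consequently $v$ is constant and every row of $M$ has exactly one $1$.

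Having shown that every row has exactly one $1$, $M$ is the adjacency matrix of a functional digraph with out-degree $1$ at every vertex; such a digraph is strongly connected if and only if it is a single directed cycle, which is precisely the condition that $M$ be a (cyclic) permutation matrix. Conversely, any permutation matrix $P$ satisfies $P\mathbf{1}=\mathbf{1}$ and is non-negative, so its Perron--Frobenius eigenvalue is $1$. I expect the forward-invariance step for $S$ to be the only place needing genuine care, since both the row-sum conclusion and the location of the unique $1$ must be extracted simultaneously from the single Perron identity; the remaining reductions are routine Perron--Frobenius or elementary digraph combinatorics.
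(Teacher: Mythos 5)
Your proof is correct, and it takes a genuinely different route from the paper's. For the lower bound $\lambda(M)\ge 1$, the paper observes that the characteristic polynomial of $M$ is monic with integer coefficients, so the (nonzero) product of eigenvalues is a nonzero integer, whence the spectral radius is at least $1$; you instead use the Collatz--Wielandt bound $\lambda(M)\ge\min_i\sum_j M_{ij}$ together with the observation that irreducibility rules out a zero row. Your version is a bit more robust, since the paper's phrasing ``the absolute value of the product of eigenvalues is a positive integer'' needs a word of justification when $\det(M)=0$ (one must pass to the product of the nonzero eigenvalues). For the characterization of equality, the paper stays close to its own combinatorial machinery: it passes to the digraph $H$ with adjacency matrix $M$ and invokes Theorem~\ref{thm:expsubexp} to show that two distinct cycles through a common vertex force exponential growth of path counts, hence $\lambda(M)>1$; irreducibility then forces $H$ to be a single Hamiltonian cycle. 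You instead run the classical Perron--Frobenius equality analysis: starting from $Mv=v$ with $v>0$, examining the minimizing set $S=\{i:v_i=\min_\ell v_\ell\}$, squeezing the row sums of $S$-rows to $1$, showing $S$ is forward-invariant, and using strong connectivity to conclude $S$ is everything and $v$ is constant, after which the functional-digraph argument gives a single cycle. Both routes are sound; yours is self-contained within standard Perron--Frobenius theory, while the paper's rides on the path-growth dichotomy already established earlier and therefore fits the article's overall framework more snugly.
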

\begin{proof}
Because the characteristic polynomial of $M$ is a monic polynomial with integer coefficients, the absolute value of the product of eigenvalues is a positive integer. Hence the spectral radius $\lambda(M)$ is at least one.
If $M$ is a permutation matrix, then trivially $\lambda(M)=1$. Assume $\lambda(M)=1$. Let $H$ be a directed graph whose adjacency matrix is $M$. Since $M$ is irreducible, for any pair $(v,w)$ of vertices of $H$ there exists a path $p_1$ from $v$ to $w$ and a path $p_2$ from $w$ to $v$. If there exists a vertex $x\neq v,w$ through which both paths $p_1$ and $p_2$ pass, there are two different cycles passing through $x$. By Theorem \ref{thm:expsubexp}, the number of length-$n$ paths $P(x,n)$ grows exponentially fast. Since $P(x,n)$ is the sum of entries in the row of $M^n$ corresponding to the vertex $v$, it follows that $\lambda(M)>1$. So $p_1$ and $p_2$ form a cycle which passes through vertices exactly once. If there is a vertex $x$ that is not contained in this cycle, then there is a path from $x$ to $v$ and a path from $v$ to $x$. Then two different cycles pass through $v$ so $P(v,n)$ grows exponentially fast and $\lambda(M)>1$. Hence $H$ is a cycle passing through every vertex exactly once, and $M$ is a permutation matrix.
\end{proof}

\begin{prop}\label{prop:entropyzerograph}
Let $f:G \to G$ be a Markov map. Then the followings are equivalent.
\begin{itemize}
    \item [(1)] The topological entropy $h_{top}(f)$ is zero.
    \item [(2)] Every irreducible block $A_i$ of the upper-triangular block form of the adjacency matrix 
    $A_f$ is a permutation matrix.
    \item [(3)] The directed graph $\Dcal_f$ of the adjacency matrix $A_f$ has disjoint cycles, i.e., every pair of different cycles have disjoint vertices.
    \item [(4)] There exists a positive integer $d$ such that $({A_f}^n)_{ij}=O(n^d)$ for all $i,\,j$.
\end{itemize}

\end{prop}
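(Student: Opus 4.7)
The plan is to establish the chain $(1) \Leftrightarrow (2) \Leftrightarrow (3) \Leftrightarrow (4)$, with each equivalence handled by a different tool already built up in this section.

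For $(1) \Leftrightarrow (2)$, I will use that $h_{top}(f) = \log \lambda$ where $\lambda$ is the spectral radius of $A_f$, namely the maximum Perron--Frobenius eigenvalue of the irreducible diagonal blocks $A_i$ (or $0$ when every $A_i$ vanishes). Lemma \ref{lem:integerirredmatrix} pins each such eigenvalue at $\ge 1$ with equality precisely for permutation matrices, so $h_{top}(f) = 0$ iff $\lambda \le 1$ iff every irreducible $A_i$ is a permutation matrix.

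For $(2) \Leftrightarrow (3)$, I will translate between the diagonal blocks of $A_f$ and the strongly connected components (SCCs) of $\mathcal{D}_f$: each irreducible $A_i$ is the adjacency matrix of one SCC, and every cycle of $\mathcal{D}_f$ lies inside a single SCC. If each irreducible $A_i$ is a cyclic permutation matrix, then its SCC is a simple directed cycle, so the unique cycle of one SCC is disjoint from every cycle of every other SCC. Conversely, assume $(3)$ and let $A_i$ be irreducible. Within its SCC $C$, any two vertices $u,v$ lie on a common cycle (concatenate a path $u \to v$ with a path $v \to u$); $(3)$ forces this cycle to be unique and to cover every vertex of $C$, and it also rules out chords and multi-edges in $C$ since either would produce a second cycle sharing vertices with the first. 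Hence $C$ is a simple directed cycle and $A_i$ is a cyclic permutation matrix.

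For $(3) \Leftrightarrow (4)$, I will use that $(A_f^n)_{ij}$ equals the number of length-$n$ paths in $\mathcal{D}_f$ from $e_i$ to $e_j$, so $P(e_i, n) = \sum_j (A_f^n)_{ij}$. Under $(3)$ no vertex of $\mathcal{D}_f$ satisfies cases $(1)$ or $(2)$ of Theorem \ref{thm:expsubexp}, hence each $P(e_i, n)$ grows polynomially of some degree $d_i$; taking $d = \max_i d_i$ yields $(A_f^n)_{ij} = O(n^d)$ uniformly in $i, j$. Conversely, if $(4)$ holds then every $P(e_i, n)$ is bounded by a polynomial in $n$, so no $e_i$ can reach a vertex lying on two distinct cycles of $\mathcal{D}_f$; in particular no vertex itself lies on two distinct cycles, which is exactly condition $(3)$.

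The main delicate point is the reverse direction of $(2) \Leftrightarrow (3)$: one must rule out both chords and multi-edges inside an SCC whose single simple cycle has already been identified. Both alternatives, however, immediately produce a second cycle vertex-intersecting the first, contradicting $(3)$. All remaining implications are direct consequences of Lemma \ref{lem:integerirredmatrix} and Theorem \ref{thm:expsubexp} together with the block-SCC correspondence, so I do not anticipate any further obstacles.
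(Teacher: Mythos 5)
Your proposal is correct and follows essentially the same route as the paper: it, too, treats $(2)\Leftrightarrow(3)$ and $(2)\Rightarrow(1)$ as routine, derives $(4)\Leftrightarrow(3)$ from Theorem \ref{thm:expsubexp}, and handles $(1)\Rightarrow(2)$ via Lemma \ref{lem:integerirredmatrix}. You merely flesh out the block/SCC dictionary for $(2)\Leftrightarrow(3)$, which the paper calls trivial, so no new ideas or tools appear.
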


\begin{proof}
$(3) \Leftrightarrow (2)\Rightarrow(1)$ is trivial and $(4) \Leftrightarrow (3)$ is immediate from Theorem \ref{thm:expsubexp}. Assume $h_{top}(f)=0$. Then the Perron-Frobenius eigenvalue of every irreducible block $A_i$ of the adjacency matrix $A_f$ is one. $(1) \Rightarrow (2)$ follows from Lemma \ref{lem:integerirredmatrix}.

\end{proof}

\section{Finite subdivision rules}\label{sec:fsr}

A finite subdivision rule $\R$ consists of the following:

\begin{itemize}
    \item [(1)] a {\it subdivision complex} $S_\R$ which is a $2$-dimensional finite CW-complex such that the underlying space is the union of its closed $2$-cells, i.e., every $0$ or $1$-cell is on the boundary of a $2$-cell,
    \item [(2)] a {\it subdivision} $\R(S_\R)$ of $S_\R$, that is a CW-complex for which every open cell is contained in an open cell of $S_\R$, and
    \item [(3)] a {\it subdivision map} $f:\R(S_\R) \rightarrow S_\R$ which is continuous and {\it cell-wise homeomorphic}, i.e., its restriction to each open cell is a homeomorphism onto an open cell.
\end{itemize}

We say that $\R$ is {\it orientation-preserving} if every $2$-cell can be oriented in such a way that $f$ preserves the orientation. Similarly, $\R$ is {\it orientation-reversing} if $f$ reverses the orientation on every cell. For any closed $2$-cell $t$ of $S_\R$, there exist a {\it $n$-gon} $\bf{t}$ and the characteristic map $\phi_t:\mathbf{t} \to t$ is cell-wise homeomorphic. The CW-complex $\mathbf{t}$ is called the {\it tile type} of the closed $2$-cell $t$. Similarly, for a characteristic map of a $1$-cell $\phi_e:\mathbf{e} \to e$, $\mathbf{e}$ is the {\it edge type} of a closed $1$-cell $e$.

A $2$-dimensional CW-complex $X$ is an {\it $\R$-complex} if it is the union of its closed $2$-cells and there is a continuous cell-wise homeomorphism $g:X \rightarrow S_\R$. By pulling back the subdivision $\R^n(S_\R)$ of $S_\R$ through $g$ for each $n>0$, we also have a subdivision $\R^n(X)$ of $X$ and a cell-wise homeomorphism $f^{\circ n} \circ g:\R^n(X) \rightarrow S_\R$. For example, $S_\R$ itself and any tile type $\mbf{t}$ are $\R$-complexes, so for any $n \in 
\Nbb$ their {\it level-$n$ subdivisions} $\R^n(S_\R)$ and $\R^n(\mbf{t})$ are defined. For any edge type $\mbf{e}$, its level-$n$ subdivision $\R^n(\mbf{e})$ is also similarly defined.

We call a closed $2$-cell (resp.\@ a closed $1$-cell, a $0$-cell) a {\it tile} (resp.\@ an {\it edge}, a {\it vertex}) of a $2$-dimensional complex. Every level-$0$ tile or edge of an $\R$-complex is also a $\R$-complex. See \cite{finite_suvdivision_rule_first} for more details on finite subdivision rules.


\proofstep{Notation.} As we wrote in the previous paragraph, we use bold fonts for the domains of characteristic maps and normal fonts for the corresponding closed cells in the CW-complexes. For example, for a closed $2$-cell $t$ in a CW-complex $X$, we write $\phi_t:\bft\to t$ for the characteristic map. Thus $\bft$ is always homeomorphic to the closed $2$-disk, but $t$ may not. 

\begin{rem}\label{rem:monogonbigon}
Unlike in other articles on finite subdivision rules, every tile type is not assumed to have at least three vertices in this article. 
This modification allows the graphs in Theorem \ref{thm:faceinversion} to have bigon faces, see Example \ref{eg:fsrwithbigon}.
\end{rem}

\proofstep{Subdivision maps as \pcf branched coverings} Throughout this article, we assume $S_\R$ is homeomorphic to the $2$-sphere $S^2$. Considering $\R(S_\R)$ and $S_\R$ as different complexes on the same underlying $2$-sphere, we may think of the subdivision map $f:\R(S_\R) \rightarrow S_\R$ as a topological branched self covering of $S^2$. Since the set of critical points $\Omega_f$ is a subset of the set of vertices of $\R(S_\R)$, $f$ is post-critically finite.

A {\it set of marked points $A$} of $\R$ is a subsets of $\V(S_\R)$ with $P_f \cup f(A) \subset A$. With a choice of a set of marked points $A$, the subdivision map can be considered as a marked \pcf branched covering $\TmapA$.

\subsection{Branched coverings represented as subdivision maps}\label{sec:thurston maps as subdiv map}

If $f$ is a subdivision map, then the $1$-skeleton $S_\R^{(1)}$ is a graph such that (1) it contains $P_f$, (2) it is connected, and (3) it is forward invariant under $f$. Conversely, if there is a graph satisfying the three conditions, then it defines a finite subdivision rule. Below is a list of some forward invariant graphs that are known to exist.

\begin{itemize}
    \item Spiders of polynomials \cite{spider}.
    
    \item Hubbard trees can be augmented to be invariant trees \cite{spanning_tree_quad}.
    
    \item Jordan curves \cite{bonk_meyer_exp_thurston} and trees \cite{hlushchanka_thesis} of expanding Thurston maps.
    \item Tischler graphs of critically fixed rational maps \cite{PilgrimTan, Hlushchanka_criticallyfixed}.
    \item Jordan curves \cite{GHMZ_JCorveCarpetJulia} and trees \cite{hlushchanka_thesis} of Sierpi\'{n}ski Carpet rational maps. 
    \item Extended Newton graphs for post-critically finite Newton maps \cite{lodge_newtonmaps}.
    
    \item A sufficiently large iterate $f^n$ of any \pcf branched covering $f$ without a Levy cycle is homotopic to a subdivision map \cite{finite_subdivision_rule_exp2}. Its 1-skeleton is invariant up to homotopy.
\end{itemize}

\noindent There are \pcf branched coverings whose any iterates cannot be represented as subdivision maps \cite[Section 4]{finite_subdivision_rule_exp2}.

\subsection[alternative title goes here]{Combinatorial properties of \texorpdfstring{$\Rcal$}{~} and  Levy and Thurston obstructions}

A finite subdivision rule $\R$ is {\it edge-separating} if for every tile type $\bft$ and pair of disjoint closed edges $e$ and $e'$ of $\bft$, there exists a positive integer $n$ such that no subtile of $\bft$ in $\R^n(\bft)$ contains both a subedge of $e$ and a subedge of $e'$. Similarly, $\R$ is {\it vertex-separating} if for every tile type $\bft$ and pair of vertices $v$ and $w$ of $\bft$, there exists a positive integer $n$ such that no subtile of $\bft$ in $\R^n(\bft)$ contains both $v$ and $w$. These two separating conditions are a part of sufficient condition for a subdivision map not having a Levy cycle or a Thurston obstruction.

\begin{itemize}
    \item If $\R$ is vertex-separating and edge-separating, then $f$ does not have a Levy cycle \cite[Proposition 5.1]{finite_subdivision_rule_exp1}. There is a finite subdivision rule which is neither edge-separating nor vertex-separating but does not have a Levy cycle, see Example \ref{eg:fsr unsep no levy}.
    \item If $\Rcal$ is vertex-separating, edge-separating and conformal (we do not define this conformality in the article), then $f$ does not have a Thurston obstruction \cite{const_ratlmap_from_fsr}. There is an example \cite[Example 4.6]{const_ratlmap_from_fsr} of finite subdivision rule which is not conformal but does not have a Thurston obstruction, thus it is combinatorially equivalent to a rational map. See \cite{const_ratlmap_from_fsr} for a definition of conformal finite subdivision rules.
\end{itemize}

\begin{eg}\label{eg:fsr unsep no levy}
The finite subdivision rule $R$ given in Figure \ref{fig:expandingmap} is Example 5.3 of \cite{finite_subdivision_rule_exp1}. Its CW-complex $\Scal_\R$ of the $2$-sphere consists of two square tiles. The edges of white and shaded tiles are glued to form a pillowcase. Since the shaded tile does not subdivide, $\R$ is neither edge-separating or vertex-separating. However, it easily follows from Theorem \ref{thm:nonexpspine} that the subdivision map does not have a Levy cycle.
\begin{figure}[h!]
    \centering
        \def\svgwidth{0.6\textwidth}
        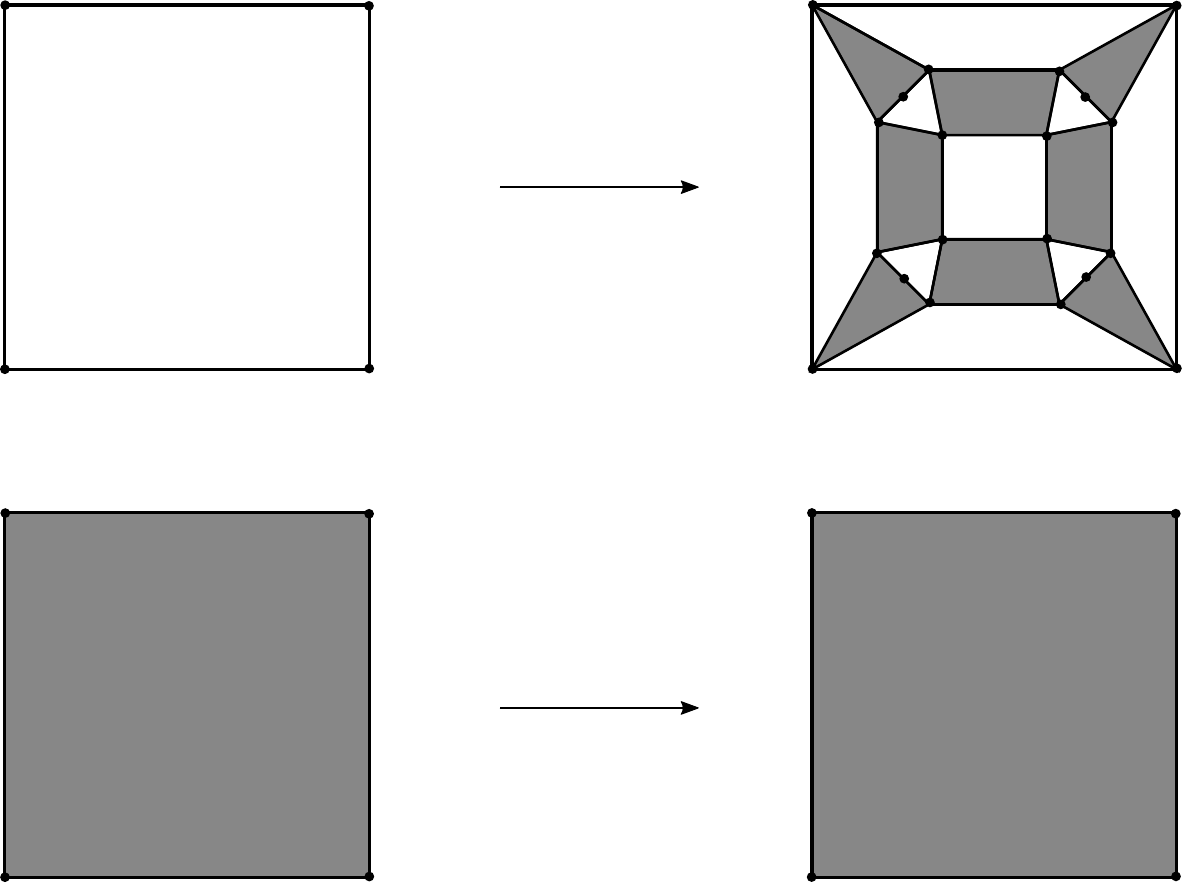
	\caption{An example of expanding finite subdivision rule which is neither edge-separating nor vertex-separating.}
    \label{fig:expandingmap}
\end{figure}
\end{eg}

\subsection{Edges, bands, bones, and curves of subdivision complexes}
For $n\ge 0$, a {\it level-$n$ tile, edge, or band} of $\R$ is a tile, edge or band of $\R^n(S_\R)$. See Definition \ref{defn:BandSpine} for definitions of bands and their bones. There is a bijection between level-$0$ tiles (resp.\@ edge) and tile types (resp.\@ edge types); a level-$0$ tile $t$ is the image of the tile type $\bft$ under the characteristic map $\phi_t:\bft\to t$.

We will use superscripts to indicate the level of tiles, edges, etc. Since frequently considering level-$0$ objects, we sometimes omit the superscript $^0$ for simplicity.

For $n>m$, a level-$n$ tile $t^n$ is a {\it subtile} of a level-$m$ tile $t^m$ if $t^{n} \subset t^{m}$. Let $\bft$ be a tile type and $t$ be the corresponding level-$0$ tile. A level-$n$ tile $t^n$ is {\it of type $\bft$} if $f^n(t^n)=t$. Subedges and their types are similarly defined. A {\it band type} is a level-$0$ band. For a band type $\band$, a level-$n$ band $\bandn$ is {\it of type $\band$} if the $f^n$-image of its bone is the bone of $\band$ (or,  equivalently, if $t^n$ is of type $\bft$ and $e_i^n$ is of type $\bfe_i$ for $i=1,2$. For $n>m$, a level-$n$ band $\bandn$ is a {\it subband} of a level-$m$ band $\bandm$ if $t^n\subset t^m$ and $e_i^n \subset e_i^m$ for $i=1,2$. If $\deg(f)=d$, there are $d^n$ level-$n$ tiles, edges, and bands of the same type.

\begin{defn}[Abbreviations for level-$n$ bands and bones]
 There are many level-$n$ bands that are not subbands of level-$0$ bands. However the only level-$n$ bands that we consider are level-$n$ subbands of level-$0$ bands. Since these objects will be very frequently used, for the sake of simple notation, by a level-$n$ band we mean a level-$n$ subband of a level-$0$ band. Similarly, by a level-$n$ bone we mean the bone of a level-$n$ subband of a level-$0$ band. 
\end{defn}

\begin{defn}[Non-expanded level-$n$ curves]\label{defn:LevelNCurve}
Let $\R$ be a finite subdivision rule. Let $I$ be a closed interval $[k,l]$, $(-\infty,k]$, $[k,\infty)$, or $(-\infty,\infty)$ for $k<l\in \Zbb$. For $n\ge0$, a curve $\gamma^n:I\to \R^n(S_\R)$ is a {\it non-expanded level-$n$ curve}, if $\gamma^n([i,i+1])$ is a level-$n$ bone for every $i\in \Zbb$ with $[i,i+1]\subset I$. A non-expanded level-$n$ curve is {\it recurrent} if it consists of level-$n$ bones that are recurrent. The recurrent bands and bones are defined in Definition \ref{defn:RecurrentBands}.
\end{defn}

\subsection{Two directed graphs defined from finite subdivision rules}\label{sec:DirectedGraphs}

\subsubsection{Directed graphs of edge subdivisions}

Let $\Ecal$ be a directed graph such that $\V(\Ecal)$ is the same as the set of level-$0$ edges. To avoid confusion, we denote by $[e]$ the vertex of $\Ecal$ corresponding to an edge $e$. A directed edge from $[e]$ to $[e']$ corresponds to a level-$1$ subedge of $e$ of type $\bfe'$. We call $\Ecal$ the {\it directed graph of edge subdivision of $\R$}. The next proposition is straightforward from the definitions.

\begin{prop}\label{prop:edgesubdivision}
There is an 1-1 correspondence between the paths in $\Ecal$ of length $n$ starting from $[e]$ and the level-$n$ subedges of $e$. Thus, the number of level-$n$ subedges is equal to $P_\Ecal([e],n)$, the number of paths of length $n$ starting from $[e]$.
\end{prop}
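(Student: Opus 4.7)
The plan is to establish the bijection by induction on $n$, making essential use of the fact that $f$ is cell-wise homeomorphic and that higher-level subdivisions are defined iteratively via $\Rcal^n(S_\Rcal) = \Rcal(\Rcal^{n-1}(S_\Rcal))$.

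\textbf{Base case} ($n=1$). By the very definition of $\Ecal$, directed edges from $[e]$ are in bijection with the level-$1$ subedges of $e$, where an edge from $[e]$ to $[e']$ corresponds to a level-$1$ subedge of type $\mathbf{e}'$. A path of length $1$ from $[e]$ is a single such directed edge, so the claim holds.

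\textbf{Inductive step.} Suppose the correspondence has been established for length $n-1$. Any path of length $n$ starting at $[e]$ decomposes uniquely as a first directed edge from $[e]$ to some $[e^{(1)}]$ (recording a level-$1$ subedge $e^1 \subset e$ with $f(e^1) = e^{(1)}$ and $e^{(1)}$ a level-$0$ edge of type $\mathbf{e}^{(1)}$), followed by a path of length $n-1$ starting from $[e^{(1)}]$. By the inductive hypothesis, the latter path corresponds to a level-$(n-1)$ subedge $\tilde e$ of $e^{(1)}$. Since $f|_{e^1} : e^1 \to e^{(1)}$ is a homeomorphism and the level-$n$ subdivision of $e^1$ is the pullback by $f$ of the level-$(n-1)$ subdivision of $e^{(1)}$, there is a unique level-$n$ subedge $e^n \subset e^1 \subset e$ with $f(e^n) = \tilde e$. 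Assign this $e^n$ to the given path.

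For the inverse map, given a level-$n$ subedge $e^n$ of $e$, there is a unique level-$1$ subedge $e^1$ of $e$ containing $e^n$, which records the first directed edge of the path (from $[e]$ to $[f(e^1)]$). The image $f(e^n)$ is then a level-$(n-1)$ subedge of the level-$0$ edge $f(e^1)$, so induction produces the remaining path of length $n-1$ from $[f(e^1)]$. The two constructions are clearly inverse to each other, completing the bijection and hence the count.

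There is no real obstacle here beyond bookkeeping; the only point worth verifying carefully is the compatibility statement that level-$n$ subedges of $e^1$ are in bijection, via $f$, with level-$(n-1)$ subedges of $f(e^1)$. This is immediate from $\Rcal^n(S_\Rcal) = \Rcal(\Rcal^{n-1}(S_\Rcal))$ and the cell-wise homeomorphism property of $f$.
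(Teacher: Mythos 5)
Your proof is correct. The paper itself does not give a proof of this proposition — it states only that it is ``straightforward from the definitions'' — and your induction on $n$ is the natural way to make that explicit, so the approach is essentially the same. The one thing worth being slightly more careful about in the inductive step is that by ``the level-$n$ subdivision of $e^1$'' you mean the partition of the level-$1$ edge $e^1$ into the level-$n$ subedges of $e$ contained in it (i.e.\ the cells of $\R^n(S_\R)$ lying in $e^1$); with that reading, the compatibility fact you invoke is exactly the statement that $f:\R^n(S_\R)\to\R^{n-1}(S_\R)$ is cell-wise homeomorphic, which follows from $\R^n(S_\R)$ being the pullback of $\R^{n-1}(S_\R)$ through $f$. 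You should also note, implicitly or explicitly, that $\Ecal$ is a directed multigraph — a single level-$0$ edge may have several level-$1$ subedges of the same type, each giving its own directed edge — which is needed for the base case and the decomposition in the inductive step to genuinely be bijections rather than surjections. Your argument handles this correctly since each directed edge is defined to be a specific level-$1$ subedge.
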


\begin{defn}[Periodic and recurrent edges]\label{defn:PeriodicRecurrentEdges}
Let $\R$ be a finite subdivision rule and $\Ecal$ be the directed graph of edge subdivisions of $\R$. We define level-$0$ {\it periodic edges} and {\it recurrent level-$n$ edges} as follows.
\begin{itemize}
    \item A level-$0$ edge $e$ is {\it periodically (resp.\@ preperiodically) subdividing}, or simply {\it periodic (resp.\@ preperiodic)}, if $[e] \in \V(\Ecal)$ is periodic (resp.\@ preperiodic). Equivalently, $e$ is periodic if and only if there exists a level-$n$ subedge of $e$ of type $\bfe$ for some $n>0$.
    \item A level-$n$ edge $e^n$ is a {\it recurrent subedge} of $e$ if it corresponds, by Proposition \ref{prop:edgesubdivision}, to a recurrent path in $\Ecal$ which starts from $[e]$ and has length $n$, or equivalently if a further subdivision of $e^n$ contains a subedge of type $\bfe$. If $\bfe'$ is the type of a recurrent subedge of $e$, then there is a cycle passing through both $[e']$ and $[e]$. Only periodic level-0 edges have recurrent subedges.
    \item We also refer to periodic level-$0$ edges as recurrent level-$0$ edges, which is sometimes useful for concise statements.
\end{itemize}
\end{defn}


\subsubsection{Directed graphs of bands} Let $\Bcal$ be a directed graph such that $\V(\Bcal)$ is the set of level-$0$ bands $\band$. To avoid confusion, we use bracket $[\band]$ to denote vertices of $\Bcal$. Every directed edge from $[\band]$ to $[\bandp]$ corresponds to a level-$1$ subband of $\band$ of type $\bandp$. We call $\Bcal$ the {\it directed graph of bands} of $\R$. The following proposition is an analogue to Proposition \ref{prop:edgesubdivision}.

\begin{prop}\label{prop:bandproperlyembedding}
There is an 1-1 correspondence between the paths in $\Bcal$ of length $n$ starting from $[\band]$ and the level-$n$ subbands of $\band$.
\end{prop}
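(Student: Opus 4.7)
The plan is to prove the bijection by induction on $n\ge 0$, paralleling the proof of Proposition \ref{prop:edgesubdivision}. For $n=0$ the statement is tautological ($\band$ is its own unique level-$0$ subband, matched with the length-$0$ path consisting of the vertex $[\band]$), and for $n=1$ it is just the definition of the edges of $\Bcal$.

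For the inductive step, I would use that every level-$(n+1)$ subband of $\band$ is contained in a unique level-$n$ subband of $\band$. More precisely, if $\bandn$ is a level-$n$ subband of $\band$ and $\bandp$ is its type (so that $f^n(\bandn)=\bandp$ in the obvious sense, with $t'=f^n(t^n)$ and $e_i'=f^n(e_i^n)$), then the restriction $f^n|_{t^n}$ is a cell-wise homeomorphism between the subdivision of $t^n$ induced from $\R^{n+1}(S_\R)$ and the subdivision of $t'$ induced from $\R(S_\R)$. This identifies level-$(n+1)$ subbands of $\bandn$ with level-$1$ subbands of $\bandp$, which are in turn exactly the outgoing edges from $[\bandp]$ in $\Bcal$. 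Combining this with the inductive hypothesis --- that length-$n$ paths from $[\band]$ are in bijection with level-$n$ subbands of $\band$ --- we see that appending such an outgoing edge to a length-$n$ path produces exactly the data of a level-$(n+1)$ subband of $\band$, and every level-$(n+1)$ subband arises uniquely this way. Injectivity follows because distinct length-$(n+1)$ paths differ either in their length-$n$ prefix (in which case the inductive hypothesis distinguishes the level-$n$ subbands, whose level-$(n+1)$ sub-subbands lie in disjoint tiles) or in their final edge (in which case they select distinct level-$1$ subbands inside a common level-$n$ subband).

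The main care is bookkeeping: verifying that the ``type'' of a level-$n$ band is well-defined, which follows from the cell-wise homeomorphicity of $f^n$, and consistently applying the convention from the excerpt that a level-$n$ band always means a level-$n$ subband of a level-$0$ band. No new ingredient beyond Proposition \ref{prop:edgesubdivision} is required: a band is simply a tile together with an ordered pair of boundary edges, and each piece of this data subdivides coherently under $\R$.
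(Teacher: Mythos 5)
Your proof is correct. The paper gives no argument for this proposition, simply declaring it ``an analogue to Proposition \ref{prop:edgesubdivision}'' (which in turn is said to be ``straightforward from the definitions''); your induction on $n$ --- using that $f^n|_{t^n}$ conjugates the $\R^{n+1}$-subdivision of a level-$n$ tile $t^n$ to the $\R$-subdivision of its type, so that level-$(n+1)$ subbands of a level-$n$ subband correspond exactly to outgoing edges from the corresponding vertex of $\Bcal$ --- is precisely the natural way to make that claim explicit, and it contains no gaps.
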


\begin{defn}[Periodic and recurrent bands and bones]\label{defn:RecurrentBands}
Let $\R$ be a finite subdivision rule and $\Bcal$ be the directed graph of bands of $\R$. We define level-$0$ periodic bands and level-$n$ recurrent subbands as we did for edges.

\begin{itemize}
    \item A level-$0$ band $\band$ is {\it periodic (resp.\@ preperiodic)}, if $[\band]\in \V(\Bcal)$ is periodic (resp.\@ preperiodic). Equivalently, $\band$ is periodic if and only if there exists a level-$n$ band $\bandn$ of type $\band$ which is a subband of $\band$ for some $n>0$.
    \item A level-$n$ subband $\bandn$ of $\band$ is a {\it recurrent subband} of $\band$ if it corresponds, by Proposition \ref{prop:bandproperlyembedding}, to a recurrent path of length $n$ starting from $[\band]$, or, equivalently, if $\bandn$ is a subband of $\band$ and has a subband in its further subdivision which is also a subband of $\band$. If $\bandp$ is the type of a recurrent subband of $\band$, then there is a cycle passing through both $[\bandp]$ and $[\band]$. Only periodic level-$0$ bands have recurrent subbands.
    \item We also refer to periodic level-$0$ bands as recurrent level-$0$ bands, which is sometimes useful for concise statements.
\end{itemize}
\noindent We say that a level-$n$ bone is recurrent if its corresponding level-$n$ band is recurrent.
\end{defn}

A continuous map between two directed graphs is {\it a graph homomorphism} if it sends vertices to vertices and edges to edges preserving directions. From a finite subdivision rule $\R$, we have defined two directed graphs $\Ecal$ and $\Bcal$. There are natural graph homomorphisms $\iota,\tau:\Bcal \rightarrow \Ecal$ defined by $\iota([\band])=[e_1]$ and $\tau([\band])=[e_2]$. The next lemma follows from the fact that $\iota$ and $\tau$ are homomorphisms.

\begin{lem}\label{lem:recurrentbandandedge}
If $\band$ is a periodic level-$0$ band, then $e_1$ and $e_2$ are periodic edges. If $\bandn$ is a level-$n$ recurrent subband of $\band$, then the sides $e_1^n$ and $e_2^n$ of $\bandn$ are level-$n$ recurrent subedges of $e_1$ and $e_2$.
\end{lem}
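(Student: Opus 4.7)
The plan is to exploit the fact that $\iota$ and $\tau$ are graph homomorphisms from $\Bcal$ to $\Ecal$. Recall that any graph homomorphism sends paths to paths of the same length (with matching initial and terminal vertices mapped) and cycles to closed walks. From this, two general facts follow immediately that drive the entire argument: if $v\in \V(\Bcal)$ lies on a cycle, then its image lies on a closed walk and hence on a cycle in $\Ecal$; and if $p$ is a recurrent path in $\Bcal$, meaning its initial and terminal vertices lie on a common cycle $C$, then the image path has initial and terminal vertices lying on the closed walk that is the image of $C$, and hence on a common cycle in $\Ecal$.

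For the first claim, I would observe that $\band$ is periodic exactly when $[\band]$ lies on a cycle $C$ in $\Bcal$. Applying $\iota$, the closed walk $\iota(C)$ contains $[e_1]$, so $[e_1]$ lies on a cycle in $\Ecal$ and $e_1$ is periodic; applying $\tau$ instead yields the same conclusion for $e_2$.

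For the second claim, Proposition \ref{prop:bandproperlyembedding} provides a bijection between paths of length $n$ from $[\band]$ in $\Bcal$ and level-$n$ subbands of $\band$. Under this bijection, $\bandn$ corresponds to a path $p$ of length $n$ from $[\band]$ to $[\bandp]$, where $\bandp$ is the type of $\bandn$. By the definition of recurrent subband, $p$ is recurrent, so $[\band]$ and $[\bandp]$ lie on a common cycle in $\Bcal$. Applying $\iota$, the image path $\iota(p)$ goes from $[e_1]$ to $[e_1']$ in $\Ecal$, and the image under $\iota$ of the common cycle shows that $[e_1]$ and $[e_1']$ lie on a common cycle in $\Ecal$; thus $\iota(p)$ is a recurrent path, and by Proposition \ref{prop:edgesubdivision} it corresponds to a recurrent level-$n$ subedge of $e_1$, which is exactly $e_1^n$. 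The argument for $e_2^n$ is identical using $\tau$.

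There is essentially no obstacle: the only non-routine check is verifying that the bijections of Propositions \ref{prop:edgesubdivision} and \ref{prop:bandproperlyembedding} are compatible with $\iota$ and $\tau$ in the sense that the side-$e_i^n$ of the subband corresponding to a path $p$ really corresponds to the path $\iota(p)$ or $\tau(p)$, but this is immediate from the definitions of $\iota$ and $\tau$ on the edges of $\Bcal$.
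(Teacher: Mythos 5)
Your proposal is correct and takes exactly the approach the paper indicates: the paper gives no explicit proof beyond the remark that the lemma ``follows from the fact that $\iota$ and $\tau$ are homomorphisms,'' and you have simply spelled out the details (graph homomorphisms preserve paths and cycles, and the bijections of Propositions \ref{prop:edgesubdivision} and \ref{prop:bandproperlyembedding} are compatible with $\iota$ and $\tau$). The only cosmetic remark is that under the paper's own definitions a ``cycle'' is already any closed path, so the distinction you draw between closed walks and cycles is unnecessary once one rotates the closed path to begin at the vertex of interest.
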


\subsection{Parents and children}\label{sec:ParentsChildren}
We define parents and children for various objects regarding finite subdivision rules. The following are some important properties of the parent-child relationship.  For any $i>0$, $\bf{X}^i$ stands for a level-$i$ object, which can be an edge, a band, or a curve consisting of the bones of bands.

\begin{itemize}
    \item[] {\bf (Transitivity)} For $n>m>l\ge0$, if $\bf{X}^n$ is a child of $\bf{X}^m$ and $\bf{X}^m$ is a child of object $\bf{X}^l$, then $\bf{X}^n$ is a child of $\bf{X}^l$. A similar statement holds for parents.
    \item [] {\bf (Unique existence of parents)} For $n>m\ge0$ every level-$n$ object $\bf{X}^n$ has a unique level-$m$ parent $\bf{X}^m$. If $\bf{X}^n$ is recurrent, then so is $\bf{X}^m$.
    \item[] {\bf (Existence of recurrent children)} For $n>m\ge0$ every level-$m$ recurrent $\bf{X}^m$ has at least one level-$n$ child $\bf{X}^n$ that is also recurrent. We note that it does not work for non-expanded curves consisting of more than one bones in general.
\end{itemize}

\proofstep{Edges}
Suppose that a level-$n$ edge $e^n$ is a subedge of a level-$m$ edge $e^m$ where $n>m$. Then we say that $e^n$ is a {\it level-$n$ child} of $e^m$ and $e^m$ is a {\it level-$m$ parent} of $e^n$.

The transitivity is straightforward. If both $e^n$ and $e^m$ are subedges of a level-$0$ edge $e$, then they correspond to directed paths in $\Ecal$ of length $n$ and $m$, say $p$ and $p'$ respectively, such that both $p$ and $p'$ start from $[e]$ and $p'$ is the first length-$m$ restriction of $p$. Then the unique existence of parents follow. The existence of recurrent children follows from Proposition \ref{prop:ExtensionRecurrentPath}.

\proofstep{Bands and bones} Suppose that a level-$n$ band $b^n$ is a subband of a level-$m$ band $b^m$ for some $n>m$. Then we say that $b_n$ is a {\it child} of $b_m$ and $b_m$ is a {\it parent} $b_n$. The transitivity, the unique existence of parents, and the existence of recurrent children follow from a similar argument used in the case of edges.

We define parents and children for bones according to the parents-children relationship of their corresponding bands.

\proofstep{Non-expanded curves}
Let $I$ be a closed interval with integer ends, such as $[k,l]$, $(-\infty,k]$, $[k,\infty)$, or $(-\infty,\infty)$ for $k<l\in \Zbb$. For $n>m\ge0$, let $\gamma^n:I\to \R^n(S_\R)$ and $\gamma^m:I\to \R^m(S_\R)$ be level-$n$ and level-$m$ non-expanded curves respectively. Recall that $\gamma^n([i,i+1])$ (resp. $\gamma^m([i,i+1])$) is a level-$n$ (resp. level-$m$) bone for every $i\in \Zbb$ with $[i,i+1]\subset I$. If $\gamma^n([i,i+1])$ is a level-$n$ child of $\gamma^n([i,i+1])$ for every $i\in \Zbb$ with $[i,i+1]\subset I$, then we say that $\gamma^n$ is a {\it level-$n$ child of $\gamma^m$} and {\it $\gamma^m$ is a level-$m$ parent of $\gamma^n$}.

The transitivity and the unique existence of parents follow from a similar argument used before. However, the existence of recurrent children does not work for curves in general; The level-$n$ children of level-$m$ bones constituting $\gamma^m$ may not be joined as they are at level-$m$.

\begin{defn}[Genealogical sequence of non-expanded curves]
Let $\R$ be a finite subdivision rule. Let $I$ denote a closed interval $[k,l]$, $(-\infty, k]$, $[k,\infty)$, or $(-\infty,\infty)$ for $k<l\in \Zbb$. A sequence of level-$n$ non-expanded curves $\{\gamma^n:I\to \R^n(S_\R)\}_{n\ge0}$ is {\it genealogical} if $\gamma^{n+1}$ is a child of $\gamma^n$ for every $n\ge0$.
\end{defn}

\section{Levy cycle and genealogical sequence of homotopically infinite curves}\label{sec:alg aspect of Levy cycle}

The purpose of this section is to prove the following theorem.

\begin{thm}\label{thm:LevyHomoInf}
Let $\R$ be a finite subdivision rule and $A\subset \V(S_\R)$ be a set of marked points. Suppose that the subdivision map $f:\R(S_\R)\to S_\R$ is not doubly covered by a torus endomorphism. Then $f:(S^2,A)\righttoleftarrow$ has a Levy cycle if and only if there is a genealogical sequence of non-expanded recurrent bi-infinite curves $\{\gamma^n:(-\infty,\infty) \to \R^n(S_\R)\}$ such that each $\gamma^n$ is homotopically infinite with respect to a hyperbolic orbisphere structure $\ord:A\to [2,\infty]_\Zbb$ (hence with respect to any hyperbolic orbisphere structure because the definition of Levy cycles is independent of the choice of orbisphere structures).
\end{thm}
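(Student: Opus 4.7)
For the forward direction I plan a direct length-stabilization argument. Fix $\gamma=\gamma_1$ of the Levy cycle, and for each $n\ge0$ choose a $\R^n(S_\R)$-taut representative $\gamma^n$ of (a suitable $\V(\R^n(S_\R))$-class inside) the isotopy class of $\gamma$ relative to $A$, with combinatorial length $l_n=l_{\R^n(S_\R)}(\gamma^n)$. Proposition~\ref{prop:monoton length} gives $l_0\le l_1\le\cdots$. On the other hand, $k$-fold iteration of the Levy pullback produces a component $\tilde\gamma$ of $f^{-k}(\gamma)$, isotopic to $\gamma$ relative to $A$, on which $f^k$ is a homeomorphism; since $f^k\colon\R^k(S_\R)\to S_\R$ is cellwise homeomorphic, $\tilde\gamma$ is $\R^k(S_\R)$-taut with exactly $l_0$ inner bones, so $l_k\le l_0$ and hence $l_n=l_0$ for every $n$. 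The equality clause of Proposition~\ref{prop:monoton length} now shows that $\gamma^{n+1}$ is a level-$(n+1)$ child of $\gamma^n$, i.e.\ the sequence is genealogical; reparameterizing each closed $\gamma^n$ periodically as $(-\infty,\infty)\to\R^n(S_\R)$ gives the bi-infinite curves. Recurrence of the bones follows because the Levy pullback identifies types $[b^{jk}]=[b^0]$ along the chain, so the corresponding path in $\Bcal$ sits on a length-$k$ cycle. Finally, $\gamma$ is essential and non-peripheral, so on a hyperbolic orbisphere it has infinite order and its bi-infinite lift to the universal cover is unbounded, giving homotopical infiniteness.

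For the reverse direction I will use the orbisphere-biset framework of \cite{BartholdiDudko_expanding}. Consecutive $\gamma^n$ and $\gamma^{n+1}$ are combinatorially equivalent at level $n$ (the child relation is realized by a cellular isotopy within each level-$n$ cell), hence isotopic relative to $A$, so the entire sequence represents a single conjugacy class $[g]\in\pi_1^{orb}(S^2\setminus A,\ord)$, of infinite order by homotopical infiniteness. The plan is to show $[g]$ is non-contracting in the orbisphere biset $M_f$: the level-$n$ curve $\gamma^n$, being non-expanded, provides a representative of the $n$-th iterated preimage of $[g]$ under the virtual endomorphism whose combinatorial (hence word) length is the uniform constant $l_0$, and recurrence places all these representatives in the nucleus of $M_f$. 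The Bartholdi--Dudko characterization of B\"{o}ttcher expanding maps (whose hypothesis is precisely that $f$ is not doubly covered by a torus endomorphism) then equates this non-contraction with the existence of a Levy cycle; concretely, the Levy cycle is extracted by pigeonhole in the finite directed graph $\Bcal$ to locate a periodic orbit of bone-types, giving a closed curve whose $f^p$-preimage contains a component isotopic to it on which $f^p$ restricts to a homeomorphism.

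The main obstacle is bridging the combinatorial/geometric genealogical data and the algebraic biset language needed to invoke \cite{BartholdiDudko_expanding}: the refinement chain $\{\gamma^n\}$ lives on the same $S^2$ with finer CW structures, whereas the biset records how loops lift under the dynamics of $f$. Reconciling these views requires identifying the level-$n$ cells with $f^{-n}$-preimages of level-$0$ cells and consistently choosing basepoints and path representatives so that the word-length accounting in the biset matches the combinatorial length $l_0$. A secondary technicality is that bi-infinite $\gamma^n$ need not be periodic; extracting a consistent periodic sub-pattern across levels relies on a diagonal pigeonhole over the finiteness of level-$n$ subbands of each fixed level-$0$ type.
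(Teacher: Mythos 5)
Your forward direction is close in spirit to the paper's but has a gap at the critical step. You invoke Proposition~\ref{prop:monoton length} to get $l_0\le l_1\le\cdots$ and then use its equality clause to conclude $\gamma^{n+1}$ is a child of $\gamma^n$; however that proposition requires $\gamma^n$ and $\gamma^{n+1}$ to be $\R^n(S_\R)$-\emph{combinatorially equivalent}, which is strictly stronger than being isotopic relative to $A$. Your parenthetical ``(a suitable $\V(\R^n(S_\R))$-class inside)'' names the difficulty but does not resolve it: one must explain how to choose the taut representative at each level so that consecutive ones lie in the same combinatorial equivalence class. The paper gets this by iterating the Levy pullback, applying Lemma~\ref{lem:FinUptoCombiEq} to pigeonhole two iterates $\gamma_{k_1 p},\gamma_{k_2 p}$ into the same $S_\R$-combinatorial class, and re-indexing; then the genealogical chain is manufactured by lifting a fixed isotopy through $f^p$, not by choosing independent taut representatives at each level. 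Without this step your $\gamma^n$ need not form a genealogical sequence even when all $l_n$ agree.

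The reverse direction has a more serious gap. You assert that the genealogical sequence of bi-infinite curves ``represents a single conjugacy class $[g]\in\pi_1^{orb}$,'' but a homotopically infinite \emph{bi-infinite} curve need not be periodic and therefore determines no conjugacy class of $\pi_1$ at all; this is exactly why the conclusion is non-trivial. You then propose to extract the Levy cycle concretely by locating a periodic orbit of bone-types in $\Bcal$. A periodic orbit of bone-types does not by itself yield a closed curve (the consecutive bones at each level must also be joined compatibly), and even when it does, one still has to certify it is essential and non-peripheral. The paper explicitly flags this difficulty and proceeds \emph{non-constructively}: assuming no Levy cycle, Theorem~\ref{thm:Bartholdi-dudko} gives a contracting biset with finite nucleus $\Ncal$; one then builds, from the finite restrictions $\gamma^n|_{[0,k]}$, elements $g_k=g_{b_1}\cdots g_{b_k}$ whose word-length tends to $\infty$ (using homotopic infiniteness), pairs each with an $h_k\in\Ncal$ within uniformly bounded distance of $g_k$ (using the semi-conjugacy to a B\"{o}ttcher expanding map and Lemma~\ref{lem:BasisUnifBdd}), and derives the contradiction that $\Ncal$ is infinite. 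Your sketch uses the same algebraic framework but mischaracterizes what object is shown to violate contraction and understates the key quantitative step that makes the nucleus blow up.
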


The ``\,only if\,'' direction is not hard. We can use a Levy cycle to construct the desired genealogical sequence of non-expanded curves. The other direction, however, is non-trivial. Even if we have a genealogical sequence of non-expanded curves, it is difficult to explicitly find a Levy cycle. We prove the existence of a Levy cycle in a non-constructive way using an algebraic machinery, called self-similar groups \cite{Nek_selfsimilargroup}. We use the term ``orbisphere bisets'' rather than self-similar groups in order to be consistent with our main reference \cite{BartholdiDudko_expanding}.

\subsection{Contracting orbisphere bisets}\label{sec:ContOrbiBiset}
Let $A$ be a finite subset of the sphere $S^2$. An {\it orbisphere structure on $(S^2,A)$} is an order function $\ord:A\to [2,\infty]_\Zbb$. We say that $\ord$ is an {\it orbisphere structure of a \pcf branched covering $\TmapA$} if it satisfies
\begin{enumerate}
    \item [(1)] $\ord(a)\cdot \deg_f(a)~|~\ord(f(a))$ for every $a\in S^2$ where $\ord(a)=1$ for $a\notin A$, and
    \item [(2)] $\ord(a)= \infty$ only if $a\in A$ is a Fatou point.
\end{enumerate}

\noindent In (1), $\infty$ is considered as a multiple of any integer or $\infty$ itself. It follows that $\ord(a)=\infty$ for every $a$ in a periodic cycle containing a critical point. The triple $(S^2,A,\ord)$ is called an {\it orbisphere}.

The {\it orbisphere group} $\pi_1(S^2,A,\ord)$ of an orbisphere $(S^2,A,\ord)$ is defined by
\[
    \pi_1(S^2,A,\ord)=\pi_1(S^2\setminus A) \left/\, \left<\{\gamma_a^{\ord(a)}~|~a\in A~\mathrm{and}~\ord(a)\neq \infty \}\right>\right.
\]
where $\gamma_a$ is a peripheral loop of $a\in A$ and $\gamma_a^{\ord(a)}=1$ if $\ord(a)=\infty$.

\begin{rem}
When $A=P_f$, the order of $x$, $\ord(x)$, is usually defined as the least common multiple of $\{\deg_{f^n}(y)~|~y\in f^{-n}(x)~\mathrm{for}~n \in \Nbb\}$. When $A$ contains periodic points which do not belong to $P_f$, however, the least common multiples of their degrees equal to one so that it is not an orbisphere structure we care about. The reason that we require $\ord(a)>1$ for every $a\in A$ is that if $\ord(a)=1$ then $\gamma_a$ vanishes in $\pi_1(S^2,A,\ord)$ so that algebraic properties of $\pi_1(S^2,A,\ord)$ cannot carry any information of the $a\in A$.
\end{rem}

The {\it Euler characteristic $\chi(S^2,A,\ord)$} of an orbisphere $(S^2,A,\ord)$ is defined by
\begin{equation}
    \chi(S^2,A,\ord)=2+\sum_{a\in A}\left(\frac{1}{\ord(a)}-1\right).
\end{equation}
The orbisphere $(S^2,A,\ord)$ is {\it hyperbolic} if $\chi(S^2,A,\ord)<0$.

\vspace{5pt}
Let $p$ be a base point of $\pi_1(S^2,A,\ord)$. Define a set $B(f,A,\ord)$ by
\[
\left\{ \gamma:[0,1]\to S^2 \setminus A ~|~ \gamma(0)=f(\gamma(1))=p \right\}/~\mathrm{homotopy~relative~to}~(A,\ord).
\]
By the homotopy relative to $(A,\ord)$ we mean a homotopy relative to $A$ together with one more homotopy condition: For any $a\in A$ with $\ord(a)<\infty$, the $\ord(a)^{th}$ power of the peripheral loop of $a$ is considered to be homotopically trivial.

There is a natural $\pi_1(S^2,A,\ord)$-action on $B(f,A,\ord)$ from both left and right. More precisely, for $\gamma_1,\gamma_2\in \pi_1(S^2,A,\ord)$ and for $\delta \in B(f,A)$, the product $\gamma_1 \cdot \delta \cdot \gamma_2$ is the concatenation of $\gamma_1$, $\delta$, and the lift of $\gamma_2$ through $f$ starting at the endpoint of $\delta$, in order. The left action is free, and the right action is transitive. The set $B(f,A)$ equipped with the left and right actions is called {\it the orbisphere biset} of $(S^2,A,\ord)$. If an orbisphere structure $\ord: A \to [2,\infty]_\Zbb$ is given, we implicitly assume that $B(f,A,\ord)$ has the left and right $\pi_1(S^2,A,\ord)$-actions. When an orbisphere $(S^2,A,\ord)$ is understood in the context, we simply write $B(f)$ for $B(f,A,\ord)$.

\begin{caut}
There are two conventions depending on whether you concatenate curves from right to left or from left to right in the operation of orbisphere group. Many documents, including \cite{Nek_selfsimilargroup}, follow the ``from right to left'' convention, but we will follow the ``from left to right'' convention for the sake of convenience in citing \cite{BartholdiDudko_expanding}. Thus a biset has a free left action and a transitive right action, which is opposite to a bimodule in \cite{Nek_selfsimilargroup}.
\end{caut}

A {\it tensor square} $B(f) \otimes B(f)$ can be defined in two different ways. Topologically, the tensor product $\delta_1 \otimes \delta_2$ for $\delta_1,\delta_2\in B(f)$ is defined as a concatenation of $\delta_1$ and the lift of $\delta_2$ starting at the endpoint of $\delta_1$. Algebraically, it is a quotient of $B(f) \times B(f)$ by the relation $(\delta_1 \cdot \gamma) \otimes \delta_2=\delta_1 \otimes (\gamma \cdot \delta_2)$. The left and right actions naturally extend to $B(f) \otimes B(f)$. Similarly, $B(f)^{\otimes n}$ has a left free and a right transitive $\pi_1(S^2,A,\ord)$-actions for any $n \ge 1$.

A {\it basis $X$ for $B(f)$} is a collection of representatives of left orbits of the biset $B(f)$. Its cardinality $|X|$ is the same as the degree of $f$. For any $n\ge 1$, the tensor power $X^{\otimes n}$ of $X$ is a basis for $B(f)^{\otimes n}$. Topologically, a basis is a choice of curves from the base point $p$ of $\pi_1(S^2,A,\ord)$ to the $d$ preimages $f^{-1}(p)=\{p_1,p_2,\dots,p_d\}$ where $d=\deg(f)$. Let $\delta_i$ be a curve from $p$ to $p_i$ for $i\in[1,d]_\Zbb$. Then $\{\delta_1,\delta_2,\dots,\delta_d\}$ be a basis for $B(f)$, and every basis of $B(f)$ is of this form. Fix $n\ge 1$. Let $i_1,i_2,\dots,i_n\in[1,d]_\Zbb$. We simply write
\[
    \delta_{i_1i_2\dots i_n}:=\delta_{i_1} \otimes \delta_{i_2} \otimes \dots \otimes \delta_{i_n},
\]
which gives a bijection $([1,d]_\Zbb)^n\leftrightarrow X^{\otimes n}$.

\begin{defn}[Contracting biset and nucleus]
Let $f:(S^2,A)\righttoleftarrow$ be a marked post-critically finite branched covering and $\ord:A \to [2,\infty]_\Zbb$ be an orbisphere structure. Let $X$ be a basis for $B(f)$. The orbisphere biset $B(f)$ is {\it contracting} if there exists a finite subset $\Ncal \subset \pi_1(S^2,A,\ord)$ satisfying the following: For every $g \in \pi_1(S^2,A,\ord)$, the inclusion $X^{\otimes n} \cdot g\subset \Ncal \cdot X^{\otimes n}$ holds for every sufficiently large $n>0$. The minimal $\Ncal$ satisfying this property is the {\it nucleus} of $(B(f),X)$.
\end{defn}

The contracting property does not depend on the choice of basis \cite[Corollary 2.11.7]{Nek_selfsimilargroup}, but the nucleus does. See Remark \ref{rem:IndepOrbistr} for the independence of the choice of orbisphere structures.

\begin{defn}[B\"{o}ttcher expanding map and Local rigidity]\label{defn:BottcherExpLocalRigid}
Let $\TmapA$ be a \pcf branched covering and $\ord:A\to [2,\infty]_\Zbb$ be an orbisphere structure. Denote by $A^\infty$ the subset of $A$ consisting of $a\in A$ with $\ord(a)=\infty$. Then $\TmapA$ is {\it B\"{o}ttcher (metrically) expanding} if there is a length metric $\mu$ on $S^2\setminus A^\infty$, satisfying the following conditions.
\begin{itemize}
    \item For every rectifiable curve $\gamma:[0,1]\to S^2\setminus A^\infty$, the length of any lift of $\gamma$ through $f$ is strictly less the length of $\gamma$, and
    \item ({\it Local rigidity near critical cycles}) For every periodic point $a\in A^\infty$, the first return map of $f$ near $a$ is locally topologically conjugate to $z\mapsto z^{\deg_a(f^n)}$, where $n$ is the period of $a$.
\end{itemize}
\end{defn}

Every B\"{o}ttcher expanding map $\TmapA$ also has the Fatou set and the Julia set, which have similar properties of the Fatou and Julia sets of rational maps, see \cite{BartholdiDudko_expanding}.

A \pcf rational map $f$ is B\"{o}ttcher expanding since it has the B\"{o}ttcher coordinates and enjoys the Schwarz lemma about the conformal metric. The next theorem, which follows from \cite[Theorem A, Corollary 1.2]{BartholdiDudko_expanding}, is an analogue of Thurston's characterization and rigidity.

\begin{thm}[{\cite[Theorem\,A,~Corollary\,1.2]{BartholdiDudko_expanding}}] \label{thm:Bartholdi-dudko}
Let $f:(S^2,A) \righttoleftarrow$ be a \pcf branched covering which is not doubly covered by a torus endomorphism and $\ord: A \to [2,\infty]_\Zbb$ be an orbisphere structure. Then the following are equivalent
\begin{itemize}
    \item [(1)] $\TmapA$ is combinatorially equivalent to a B\"{o}ttcher expanding map.
    \item [(2)] The orbisphere biset $B(f,A,\ord)$ is contracting.
    \item [(3)] $\TmapA$ has degree>1 and does not have a Levy-cycle.
\end{itemize}
Moreover, if exists, the B\"{o}ttcher expanding map is unique in the combinatorial equivalent class up to topological conjugacy.
\end{thm}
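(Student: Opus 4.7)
The plan is to prove the cycle $(1)\Rightarrow(3)\Rightarrow(2)\Rightarrow(1)$ together with the uniqueness statement. The implication $(1)\Rightarrow(3)$ is the quickest: if $f$ is combinatorially equivalent to a B\"ottcher expanding map $g$ with length metric $\mu$, and $\{\gamma_1,\dots,\gamma_n\}$ were a Levy cycle, then transporting the curves to $g$ via the combinatorial equivalence would yield, for each $i$, a component $\gamma'_i$ of $g^{-1}(\gamma_i)$ isotopic to $\gamma_{i+1}$ and mapped homeomorphically to $\gamma_i$. Taking $\mu$-geodesic representatives in each isotopy class and iterating, the strict length-contraction of lifts would force the $\mu$-lengths of the $\gamma_i$ in the cycle to decrease under $f^{\circ n}$; but since the cycle is periodic up to isotopy and geodesic representatives are unique length minimizers, this is a contradiction. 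The degree bound $\deg f>1$ follows because a degree-one branched covering has no Levy obstruction to being a homeomorphism but is also never B\"ottcher expanding.

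For $(2)\Rightarrow(1)$ the plan is to invoke Nekrashevych's limit dynamical system construction: a contracting biset $B(f)$ with nucleus $\mathcal{N}$ gives rise to a limit space $\mathcal{J}(B(f))$ on which the shift map descends to a well-defined self-map $\hat f$, and contraction of the biset is precisely what makes this map topologically expanding. One then extends the limit Julia set to an orbisphere by adjoining the periodic Fatou cycles (the points with $\ord=\infty$) via local models $z\mapsto z^d$; the local rigidity clause in Definition~\ref{defn:BottcherExpLocalRigid} is tailored so that these local models glue to $\hat f$ consistently. Verifying that the resulting map is combinatorially equivalent to $f$ is done by comparing bisets: the biset of $\hat f$ with respect to an appropriate marked set reproduces $B(f,A,\ord)$, and two \pcf branched coverings with isomorphic orbisphere bisets are combinatorially equivalent (this is the biset version of Thurston equivalence, and the assumption ruling out torus-covered Thurston maps is exactly what allows the biset to determine the map up to combinatorial equivalence).

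The hard part will be $(3)\Rightarrow(2)$: promoting the combinatorial non-existence of a Levy cycle to the algebraic contracting property of the biset. The strategy is contrapositive and quantitative. If $B(f)$ is not contracting, one would run Nekrashevych's algorithm of iteratively enlarging a candidate nucleus by conjugating elements of $\pi_1(S^2,A,\ord)$ through tensor powers $X^{\otimes n}$; non-contraction means an element $g$ exists whose conjugates never fall into any finite set. Using the word-length/translation-length interplay on the orbisphere group (which is Gromov hyperbolic when the orbisphere is hyperbolic), one extracts a subsequence of tensor decompositions $X^{\otimes n}\cdot g \subset g_n\cdot X^{\otimes n}$ in which the geodesic representatives of $g_n$ have uniformly bounded hyperbolic translation length. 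Passing to a periodic limit (possible because the space of simple multicurves modulo the action of $f$ on homotopy classes is controlled) produces a collection of essential simple closed curves that form a Levy cycle in the sense of the definition, because the homeomorphic-restriction condition on $f$ corresponds exactly to the conjugating element on the biset side being a pure peripheral word of order controlled by $\ord$. The torus-double-cover hypothesis is needed to exclude degenerate cases where the orbisphere is Euclidean and the argument collapses.

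Finally, for uniqueness, suppose $g_1$ and $g_2$ are two B\"ottcher expanding maps combinatorially equivalent to $f$. The combinatorial equivalence gives an isomorphism of orbisphere bisets $B(g_1)\cong B(g_2)$; this isomorphism is realized by a unique (up to the finite group of automorphisms of the nucleus) topological conjugacy on the limit Julia sets by the expansion property, and then extends uniquely over the Fatou cycles using the local rigidity clause and the B\"ottcher coordinate normal forms near super-attracting cycles. Patching these pieces gives a topological conjugacy between $g_1$ and $g_2$ on all of $S^2$.
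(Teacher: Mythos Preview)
This theorem is not proved in the paper at all: it is quoted verbatim from \cite[Theorem~A, Corollary~1.2]{BartholdiDudko_expanding} and used as a black box (see the remarks immediately following the statement). There is therefore no ``paper's own proof'' to compare your proposal against. What you have written is an outline of how the Bartholdi--Dudko argument goes, and as such it is reasonable at the level of a road map for $(1)\Rightarrow(3)$ and $(2)\Rightarrow(1)$, both of which follow the standard Nekrashevych limit-space machinery.

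Your sketch of $(3)\Rightarrow(2)$, however, has a genuine gap. You assert that the orbisphere group is Gromov hyperbolic when the orbisphere is hyperbolic, and then use ``word-length/translation-length interplay'' to extract bounded-translation-length elements. But when some $\ord(a)=\infty$ (which is forced at any periodic critical cycle), the orbisphere group has parabolic elements and is only \emph{relatively} hyperbolic, so translation length alone does not control conjugacy classes in the way you need. More seriously, the passage from ``the biset is not contracting'' to ``there is a Levy cycle'' requires showing that the obstructing elements can be represented by \emph{simple} closed curves that are \emph{essential} and form a genuine periodic cycle under taking preimages; your appeal to ``passing to a periodic limit'' and ``the space of simple multicurves modulo the action of $f$'' does not supply a mechanism for this. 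In Bartholdi--Dudko this step goes through a careful analysis of minimal obstructing multicurves and their Thurston matrices (or, in other treatments, through pullback iteration on Teichm\"uller space), not through a coarse-geometric compactness argument of the kind you sketch.
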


\begin{rem}
In \cite{BartholdiDudko_expanding}, the orbisphere structure used in Theorem A is required that $\ord(a)=\infty$ if and only if $a$ is a periodic Fatou point, which is a little stronger than the definition of orbisphere structures in this paper. But this slight generalization follows almost immediately.
\end{rem}

\begin{rem}\label{rem:IndepOrbistr}
The definition of $B(f,A,\ord)$ depends on the orbisphere structure $\ord:A\to [2,\infty]_\Zbb$, but the definition of Levy cycles of $\TmapA$ doesn't. Hence Theorem \ref{thm:Bartholdi-dudko} implies that whether or not $B(f,A,\ord)$ is contracting is also independent of the choice of orbisphere structure.
\end{rem}

\subsection{Semi-conjugacy to B\"{o}ttcher expanding maps}
The idea of semi-conjugacy was introduced by Rees \cite{Rees_semiconj} and Shishikura \cite{ShiShi_Rees} to show that, for any mateable pair of \pcf polynomials, the topological mating is topologically conjugate to the corresponding rational map. Then the idea was further developed by Cui-Peng-Tan \cite{CPT_ReesShishikura} to a form that can be applied for not only matings but also general \pcf branched coverings and rational maps. We slightly further generalize the theorem of Cui-Peng-Tan by applying Bartholdi-Dudko's recent work on B\"{o}ttcher expanding maps \cite{BartholdiDudko_expanding}.

The next theorem is a generalization of \cite[Theorem 1.1, Corollary 1.2]{CPT_ReesShishikura} replacing rational maps by B\"{o}ttcher expanding maps.

\begin{thm}[Semi-conjugacies to B\"{o}ttcher expanding maps]\label{thm:semiconj}
Let $f:(S^2,A)\righttoleftarrow$ be a \pcf branched covering which is locally rigid near critical cycles. Suppose $f$ is combinatorially equivalent to a B\"{o}ttcher expanding map $F:(S^2,B)\righttoleftarrow$. Let $\Fcal_F$ and $\Jcal_F$ denote the Fatou and the Julia sets of $F$. Then there exists a semi-conjugacy $h:(S^2,A) \to (S^2,B)$ from $f$ to $F$, i.e., $h \circ f=F\circ h$, such that the following properties are satisfied.
\begin{itemize}
    \item $h^{-1}(w)$ is a singleton for $w\in \Fcal_F$ and a full continuum for $w \in \Jcal_F$.
    \item For $x,y\in S^2$ with $F(x)=y$, the set $h^{-1}(x)$ is a connected component of $f^{-1}(h^{-1}(y))$. Moreover, the degree of the map $f:h^{-1}(x)\to h^{-1}(y)$ is equal to $\deg_x(F)$; more precisely, for every $w\in h^{-1}(y)$ we have
    \[
        \sum\limits_{z\in h^{-1}(x) \cap f^{-1}(w)} \deg_z f=\deg_x(F).
    \]
    \item If $E\subset S^2$ is a continuum, then $h^{-1}(E)$ is a continuum.
    \item $f(h^{-1}(E))=h^{-1}(F(E))$ for every $E \subset S^2$.
    \item $f^{-1}(\widehat{E})=\widehat{f^{-1}(E)}$ for every $E\subset S^2$, where $\widehat{E}:=h^{-1}(h(E))$.
\end{itemize}
\end{thm}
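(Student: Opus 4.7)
The plan is to closely parallel the Cui-Peng-Tan construction \cite{CPT_ReesShishikura} but substitute the B\"{o}ttcher expanding map $F$ (with its expanding length metric $\mu$ on $S^2\setminus B^\infty$ and Fatou/Julia decomposition from \cite{BartholdiDudko_expanding}) for the rational map in all places where conformality or the Schwarz lemma was previously used. First, fix a combinatorial equivalence realized by homeomorphisms $\phi_0,\phi_1:(S^2,A)\to (S^2,B)$ with $F\circ \phi_0=\phi_1\circ f$ and $\phi_1$ homotopic to $\phi_0$ rel $A$. By iterated lifting of the homotopy through the branched coverings $f$ and $F$, construct a sequence of homeomorphisms $(\phi_n)_{n\ge 0}$ satisfying $F\circ \phi_{n+1}=\phi_n\circ f$ and $\phi_{n+1}=\phi_n$ on $f^{-n}(A)$.

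The second step is convergence of $(\phi_n)$ to a continuous map $h$. Away from the backward orbit $\bigcup_n f^{-n}(A^\infty)$, I would compare $\phi_n$ and $\phi_{n+1}$: they are related by pulling back via one more iterate of $F$, and because $\mu$ strictly contracts under $F$-lifts, the distance $\mu(\phi_n(x),\phi_{n+1}(x))$ shrinks by a uniform factor on compact sets. This yields a uniform Cauchy sequence outside any neighborhood of $\bigcup_n f^{-n}(A^\infty)$. Near periodic Fatou points the expanding metric is unavailable, so here I would invoke the local rigidity assumption on $f$ and the analogous local normal form for $F$ from \cite{BartholdiDudko_expanding}: both first-return maps are locally conjugate to $z\mapsto z^d$, and the sequence $\phi_n$ can be realigned inside a standard local B\"{o}ttcher coordinate to extend the limit continuously across the critical cycles. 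The semi-conjugacy equation $h\circ f=F\circ h$ passes to the limit.

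The third step is fiber analysis. On the Fatou set $\Fcal_F$, I would use the expanding metric and the fact that $\phi_n,\phi_{n+1}$ differ by lifts of homotopies whose $F^n$-images shrink, to deduce that $h$ is injective over $\Fcal_F$ (singleton fibers). On $\Jcal_F$, the fibers $h^{-1}(w)$ are obtained as nested intersections of pullbacks of neighborhoods of $w$ under $f^n$, and the shrinking of diameters of $F^{-n}$-images of Julia neighborhoods (plus the fact that Julia points of $F$ are limits of repelling preimages) forces the fibers to be full continua. The formula $f(h^{-1}(E))=h^{-1}(F(E))$ is then immediate from $h\circ f=F\circ h$ and fiberwise surjectivity, and $f^{-1}(\widehat{E})=\widehat{f^{-1}(E)}$ follows from the fact that connected components of $f^{-1}$-preimages of fibers are themselves fibers of $h$. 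The local degree statement $\sum_{z\in h^{-1}(x)\cap f^{-1}(w)}\deg_z f=\deg_x F$ comes from comparing local multiplicities on matched fibers using the commutative diagram.

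The main obstacle will be the treatment near critical cycles in the Fatou set, where no global expansion is available and the convergence of $(\phi_n)$ has to be set up using the local rigidity hypothesis to transport the problem into the explicit $z\mapsto z^d$ model. A secondary technical difficulty is the proof that Julia fibers are \emph{full} continua, for which I would use the openness of $F$ together with the fact that the family of full continua is preserved by taking connected components of $f$-preimages, bootstrapping from the nested intersection description of the fibers.
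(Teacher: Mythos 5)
Your proposal follows the same route as the paper's own proof: both rest on the observation that the Cui--Peng--Tan semi-conjugacy construction uses the complex structure only (i) to have an expanding metric and (ii) to have B\"{o}ttcher normal forms at the critical cycles, and both substitute the B\"{o}ttcher expanding map's Riemannian orbifold metric and the local-rigidity hypothesis on $f$ in exactly those two places. The paper compresses this into a citation plus those two remarks, while you unpack what the CPT argument actually does (the Thurston pullback sequence $\phi_n$, contraction away from the Fatou cycles, the local-model gluing near them, and the fiber/degree analysis), but the conceptual approach is identical.
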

\begin{proof}
In \cite{CPT_ReesShishikura}, the complex structure of the Riemann sphere was used for two purposes: (i) the conformal metric is expanding, and (ii) there are B\"{o}ttcher coordinates near critical cycles. Since B\"{o}ttcher expanding maps also have these two properties, the proof in \cite{CPT_ReesShishikura} still works for this setting. For example,
\begin{itemize}
\item In \cite{CPT_ReesShishikura}, they use \pcf branched coverings on the Riemann sphere $\hCbb$ that are holomorphic near critical cycles. Given a \pcf branched covering (on the topological sphere) which is locally rigid near critical cycles, we may define a holomorphic structure on the sphere so that the branched covering is holomorphic near critical cycles.
\item The orbifold metric in \cite[Section 2]{CPT_ReesShishikura} can be replaced by the Riemannian orbifold metric in \cite{BartholdiDudko_expanding}. Then we still have the expansion property of homotopic lengths of paths.
\end{itemize}

\end{proof}

\begin{defn}[Homotopic length]
Let $(X,\mu)$ be a metric space and $\gamma$ be a curve joining $x$ to $y$. Then the {\it homotopic length} $l_\mu([\gamma])$ is the infimum of the lengths of rectifiable curves that joins $x$ to $y$ and homotopic to $\gamma$ relative to $\{x,y\}$.
\end{defn}

\begin{lem}\label{lem:BasisUnifBdd}
Let $\TmapA$ be a \pcf branched covering of degree $d\ge 2$ which is not doubly covered by a torus endomorphism. Suppose $\ord:A \to [2,\infty]_\Zbb$ is an orbisphere structure and $X$ be a basis for the biset $B(f)$. Suppose that $f$ does not have a Levy cycle such that there exists a semi-conjugacy $h:(S^2,A)\to (S^2,B)$ where $F:(S^2,B)\righttoleftarrow$ is a B\"{o}ttcher expanding map, expanding about a metric $\mu$, that is combinatorially equivalent to $f$. Then there exists $C>0$ such that $l_\mu([h(w)])<C$ for every $n>0$ and $w\in X^{\otimes n}$. Here $w$ is considered as a curve joining the base point $p$ of $\pi_1(X,A,\ord)$ to a point in the preimage $f^{-n}(p)$, as described in Section \ref{sec:ContOrbiBiset}.
\end{lem}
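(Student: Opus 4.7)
The overall plan is to express $h(w)$ as a concatenation of successive $F$-lifts of a finite list of fixed curves, then bound its homotopic length by a geometric series whose ratio is a uniform contraction factor of $F$. To set up, by Theorem \ref{thm:Bartholdi-dudko} combined with Theorem \ref{thm:semiconj} I obtain a B\"{o}ttcher expanding representative $F:(S^2,B)\righttoleftarrow$, its expanding length metric $\mu$ on $S^2\setminus B^\infty$ (where $B^\infty=\ord^{-1}(\infty)\subset B$), and a semi-conjugacy $h:(S^2,A)\to(S^2,B)$ satisfying $h\circ f=F\circ h$. Set $q=h(p)$, $q_i=h(p_i)\in F^{-1}(q)$, and $\eta_i=h(\delta_i)$, so each $\eta_i$ is a curve from $q$ to $q_i$ in $S^2\setminus B$, and put $L:=\max_i l_\mu([\eta_i])<\infty$.

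Unfolding the tensor structure: writing $w=\delta_{i_1}\otimes\cdots\otimes\delta_{i_n}$, the topological realization of $w$ is the concatenation $\delta_{i_1}\cdot\tilde\delta_{i_2}^{(1)}\cdots\tilde\delta_{i_n}^{(n-1)}$, where $\tilde\delta_{i_k}^{(k-1)}$ is the $f^{k-1}$-lift of $\delta_{i_k}$ starting at the endpoint of the preceding segment. Since $h\circ f^{k-1}=F^{k-1}\circ h$, applying $h$ produces the concatenation
\[
    h(w)\;=\;\eta_{i_1}\cdot\tilde\eta_{i_2}^{(1)}\cdots\tilde\eta_{i_n}^{(n-1)},
\]
where $\tilde\eta_{i_k}^{(k-1)}$ is the $F^{k-1}$-lift of $\eta_{i_k}$ starting at the terminal point of the previous piece. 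Hence $l_\mu([h(w)])\le \sum_{k=1}^{n} l_\mu\bigl([\tilde\eta_{i_k}^{(k-1)}]\bigr)$.

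The heart of the argument is to produce $\lambda\in(0,1)$, independent of $i$ and $k$, such that every $F^{k-1}$-lift of each $\eta_i$ has homotopic length at most $\lambda^{k-1}L$. B\"{o}ttcher expansion gives strict shortening of length under $F$-lifts; on any compact set $K\subset S^2\setminus B^\infty$ this can be upgraded to uniform contraction with some rate $\lambda_K<1$ by compactness and continuity. Near each cusp $b\in B^\infty$, local rigidity (Definition \ref{defn:BottcherExpLocalRigid}) makes $F$ locally conjugate to $z\mapsto z^d$, under which the cusp piece of the Riemannian orbifold metric is contracted by a definite factor. Combining these two regions yields a single uniform $\lambda<1$ valid for any rectifiable curve of homotopic length $\le L$ with endpoints in $\bigcup_{k\ge 0}F^{-k}(q)$. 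Granted this, summing the geometric series gives $l_\mu([h(w)])\le L/(1-\lambda)=:C$.

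The main obstacle is precisely this uniform contraction step: the lifts $\tilde\eta_{i_k}^{(k-1)}$ can accumulate toward $B^\infty$ or toward the Julia set as $n\to\infty$, and $\mu$ degenerates at $B^\infty$, so only pointwise or compact-set expansion does not immediately suffice; one must invoke the precise cusp geometry of the Riemannian orbifold metric constructed in \cite{BartholdiDudko_expanding}. Once that uniform contraction is in hand, the rest of the argument is the straightforward telescoping above.
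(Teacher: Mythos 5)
Your overall strategy is the same as the paper's: unfold the tensor element $w=\delta_{i_1}\otimes\cdots\otimes\delta_{i_n}$ as a concatenation of successive lifts, push forward through $h$, bound the homotopic length of each lifted piece by a uniformly contracting factor, and sum the geometric series. Up to that point the two arguments agree.

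Where you diverge is in the justification of the uniform contraction, and this is where your proposal has a genuine gap. You observe (correctly) that pointwise or compact-set expansion alone doesn't suffice, and you propose to repair this by combining compact-set contraction with a direct analysis of the cusp geometry at $B^\infty$. That is both more complicated and incomplete as written. The paper avoids the cusp analysis entirely: it chooses a neighborhood $U$ of $B^\infty$ small enough that $M:=S^2\setminus U$ satisfies $F^{-1}(M)\subset M$ (possible because $B^\infty$ is a union of attracting cycles for $F$), so that the base point $q$ and \emph{all} of its backward orbit $\bigcup_k F^{-k}(q)$ lie in $M$. Then every $\eta_i=h(\delta_i)$ and every $F^k$-lift of $\eta_i$ can be taken (up to relative homotopy, which is all that matters for $l_\mu([\,\cdot\,])$, since the homotopic length is approximated by geodesic representatives that stay away from the blowing-up locus $B^\infty$) inside the fixed compact set $M$, on which a single expansion constant $\lambda>1$ holds. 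Your telescoping sum then applies verbatim. So the remedy for your identified obstacle is not to study the cusp geometry, but to arrange that the curves never need to enter the cusp region. If you want a complete proof, you should replace the cusp-geometry paragraph by the choice of $M$ with $F^{-1}(M)\subset M$ and the observation that all relevant homotopy classes have representatives in $M$.
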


\begin{proof}

Since the metric $\mu$ blows up near marked points of infinite order, we should take a compact subset away the points of infinite order. Let $B^\infty=h(A^{\infty})$ where $A^\infty$ is the subset of $A$ consisting of elements having infinite order. There exists a small neighborhood $U$ of $B^\infty$ such that for $M:=S^2\setminus U$ and $M'=f^{-1}(M)$ we have $M'\subset M$ and $f:M'\to M$ being a branched covering which has a uniform expanding constant $\lambda>1$ in the following sense: For every curve $\gamma\subset M$ and any of its lifting $\gamma'$ through $f$, we have
\[
    \lambda \cdot l_\mu([\gamma'])< l_\mu([\gamma]).
\]

Let $X=\{\delta_1,\delta_2,\dots, \delta_d\}$ where each $\delta_i$ joins the base point $p$ of $\pi_1(S^2,A,\ord)$ to one of the $d$ preimages $f^{-1}(p)$. Define $D>0$ by
\[
    D=\max_{1 \le j \le d} l_\mu([h(\delta_j)]).
\]
Let $w=\delta_{i_1i_2\dots i_n}\in X^{\otimes n}$ where $i_l\in\{1,2,\dots,d\}$. Then $w$ is the concatenation of $\delta_{i_1}$, a lift of $\delta_{i_2}$ through $f$, a lift of $\delta_{i_3}$ through $f^2$, and so one. Every curve $\delta_i$ and its any lifting can be contained in $M$ up to homotopy. Hence we have
\[
    l_\mu([h(w)]<D\cdot\left(1+\frac{1}{\lambda}+\frac{1}{\lambda^2}+\cdots \right)=D\cdot \frac{\lambda}{\lambda-1}.
\]

\end{proof}

\subsection{Homotopically infinite non-expanded curves and Levy cycles}

\begin{defn}[Homotopically infinite curves]
Let $(S^2,A,\ord)$ be a hyperbolic orbisphere and $p:\Dbb \to S^2\setminus A^\infty$ is the orbifold universal covering map. A closed curve $\gamma:[0,1]\to S^2 \setminus A$ is {\it homotopically infinite with respect to $\ord$} if for a connected component $\widetilde{\gamma}$ of $p^{-1}(\gamma)$, both ends of $\widetilde{\gamma}$ have a limit point on the boundary $\partial \Dbb$. A half-infinite curve $\gamma:[0,\infty) \to S^2\setminus A$ (resp.\@ bi-infinite curve $\gamma:(-\infty, \infty) \to S^2 \setminus A$) is {\it homotopically infinte with respect to $\ord$} if the end (resp.\@ both ends) of its lift $\widetilde{\gamma}$ has a limit point. 
\end{defn}

The next proposition is immediate from standard properties of the hyperbolic geometry.

\begin{prop}
	Let $(S^2,A,\ord)$ be a hyperbolic orbisphere. A closed curve $\gamma$ is homotopically infinite if and only if $\gamma$ is neither homotopically trivial in $S^2\setminus A$ nor homotopic relative to $A$ to some iterate of the peripheral loop of $a\in A$ with $\ord(a)<\infty$.
\end{prop}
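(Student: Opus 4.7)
The plan is to translate the topological statement into an algebraic one using the identification of $\pi_1(S^2,A,\ord)$ with a Fuchsian group $\Gamma\subset\Aut(\Dbb)$ via the orbifold universal covering $p\colon \Dbb\to S^2\setminus A^\infty$. After fixing a basepoint and one of its lifts, any closed curve $\gamma$ lifts to a path in $\Dbb$, and the component $\widetilde{\gamma}$ of $p^{-1}(\gamma)$ containing that lift is the $\langle[\gamma]\rangle$-orbit of a single lift joined end to end. Hence $\widetilde{\gamma}$ is either a closed loop in $\Dbb$ (precisely when $[\gamma]\in\Gamma$ has finite order) or a properly embedded line.

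First I would handle the ``only if'' direction. If $\gamma$ is homotopically trivial, then $\widetilde{\gamma}$ is compact and cannot limit to $\partial\Dbb$. If $\gamma$ is homotopic to an iterate $\gamma_a^k$ of the peripheral loop at a cone point $a$ with $\ord(a)=m<\infty$, then the defining relation $\gamma_a^m=1$ in $\pi_1(S^2,A,\ord)$ forces $[\gamma]$ to have finite order dividing $m$, so again $\widetilde{\gamma}$ is a compact loop in $\Dbb$ and $\gamma$ is not homotopically infinite.

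For the ``if'' direction I would argue as follows. Suppose $\gamma$ is neither homotopically trivial nor homotopic to an iterate of a peripheral loop of a finite-order cone point; the main claim is then that $[\gamma]\in\Gamma$ has infinite order. I would invoke the classical Fuchsian uniformization of hyperbolic $2$-orbifolds: the torsion elements of $\Gamma$ are exactly the elliptic isometries, and each such element fixes some point of $\Dbb$ lying over a finite-order cone point; the stabilizer of a lift of $a$ with $\ord(a)=m<\infty$ is cyclic of order $m$, generated by a lift of $\gamma_a$, and points over the regular locus have trivial stabilizer. Consequently every torsion element is conjugate in $\Gamma$ to an iterate $\gamma_a^k$. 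Our hypothesis therefore forces $[\gamma]$ to be hyperbolic or parabolic in $\Aut(\Dbb)$. In the hyperbolic case the cyclic subgroup $\langle[\gamma]\rangle$ has orbits converging to the two axis endpoints on $\partial\Dbb$ as $n\to\pm\infty$; in the parabolic case both directions converge to the unique parabolic fixed point. Either way both ends of $\widetilde{\gamma}$ have limit points on $\partial\Dbb$, so $\gamma$ is homotopically infinite.

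The main obstacle is the torsion classification invoked in the third paragraph: although standard, it relies on the existence of a genuinely hyperbolic uniformization (ensured by the hypothesis $\chi(S^2,A,\ord)<0$) and on the identification of the abstract orbisphere group $\pi_1(S^2,A,\ord)$ with the Fuchsian deck group of $p$. Once this identification is granted, everything else reduces to the trichotomy of isometries of $\Dbb$ into elliptic, parabolic, and hyperbolic types together with the elementary fact that infinite cyclic orbits in $\Dbb$ accumulate on $\partial\Dbb$ at both ends.
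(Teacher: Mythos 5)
Your outline follows the natural Fuchsian-group argument, and the ``only if'' direction is fine. The ``if'' direction, however, has a gap coming from a mismatch between two different notions of free homotopy. The hypothesis of the proposition says $\gamma$ is not homotopic to $\gamma_a^k$ \emph{in $S^2\setminus A$}, i.e.\ that $[\gamma]$ is not conjugate to $\gamma_a^k$ in $\pi_1(S^2\setminus A)$. The Fuchsian torsion classification you (correctly) quote produces conjugacy \emph{in $\Gamma=\pi_1(S^2,A,\ord)$}. Since $\Gamma$ is a proper quotient of $\pi_1(S^2\setminus A)$ whenever some $\ord(a)<\infty$, there are strictly more $\Gamma$-conjugacies, so ``not $\pi_1(S^2\setminus A)$-conjugate to a peripheral power'' does not imply ``not $\Gamma$-conjugate to a peripheral power.'' The sentence ``our hypothesis therefore forces $[\gamma]$ to be hyperbolic or parabolic'' is exactly where this unjustified inference occurs.

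To see the step genuinely fail, take a hyperbolic orbisphere with $A=\{a_1,a_2,a_3\}$ and orders $(\ord(a_1),\ord(a_2),\ord(a_3))=(2,3,7)$, so that $\pi_1(S^2\setminus A)$ is free on $\gamma_{a_1},\gamma_{a_2}$ with $\gamma_{a_3}=(\gamma_{a_1}\gamma_{a_2})^{-1}$, and let $\gamma$ be a closed curve whose class is $\gamma_{a_1}^2\gamma_{a_2}^3$. This word is cyclically reduced, nontrivial, and not a cyclic permutation of any power of $\gamma_{a_1}$, $\gamma_{a_2}$, or $\gamma_{a_1}\gamma_{a_2}$; hence $\gamma$ is neither nullhomotopic in $S^2\setminus A$ nor freely homotopic in $S^2\setminus A$ to any iterate of a peripheral loop. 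Yet its image in $\Gamma$ is trivial (there $\gamma_{a_1}^2=\gamma_{a_2}^3=1$), so every lift of $\gamma$ to $\Dbb$ closes up and $\gamma$ is \emph{not} homotopically infinite. What your Fuchsian argument really proves is the cleaner statement that $\gamma$ is homotopically infinite if and only if the image of $[\gamma]$ in $\Gamma$ is non-torsion; recovering the proposition exactly as worded would require showing that every $\pi_1(S^2\setminus A)$-conjugacy class whose image in $\Gamma$ is torsion already contains a peripheral power, and the example above shows that this additional claim is false.
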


Each of the following two propositions is each direction of the equivalence in Theorem \ref{thm:LevyHomoInf}. We split them because the ideas of the proofs are quite different.

\begin{prop}\label{prop:LevytoGeneaSeq}
Let $\R$ be a finite subdivision rule and $A\subset \V(S_\R)$ be a set of marked points. Suppose that the subdivision map $f:\R(S_\R)\to S_\R$ is not doubly covered by a torus endomorphism. If $\TmapA$ has a Levy cycle, then there is a genealogical sequence of non-expanded closed curves $\{ \gamma^n:I \to \R^n(S_\R)\}_{n \ge 0}$ that are recurrent and homotopically infinite with respect to any hyperbolic orbisphere structure $\ord:A\to[0,\infty]_\Zbb$. Moreover, by iterating travelling along the closed curves, we may assume that each $\gamma^n$ is a bi-infinite curve. 
\end{prop}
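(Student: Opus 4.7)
The plan is to pass from the given Levy cycle to the required genealogical sequence by a K\"onig's lemma argument on a tree of bounded-length non-expanded curves whose isotopy classes come from the Levy cycle. First I would take the Levy cycle $\{\gamma_1,\ldots,\gamma_N\}$ with $\gamma_i'\subset f^{-1}(\gamma_i)$ isotopic to $\gamma_{i+1}$, and replace each $\gamma_i$ by its level-$0$ taut representative. Because $f|_{\gamma_i'}$ is a homeomorphism, $\gamma_i'$ is taut at level $1$ with $l_{\R^1}(\gamma_i')=l_\R(\gamma_i)$; applying Proposition \ref{prop:monoton length} around the Levy cycle then forces a common length $k:=l_\R(\gamma_i)$ for all $i$, and the equality case says that every level-$1$ bone of $\gamma_i'$ is a subbone of a level-$0$ bone of $\gamma_{i+1}$. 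Translating into the band graph $\Bcal$, the edges $[B^0_{i+1,j}]\to[B^0_{i,j}]$ (where $B^0_{i,j}$ is the $j$-th level-$0$ band traversed by $\gamma_i$) assemble, for each position $j$, into a length-$N$ cycle in $\Bcal$. Iterating the Levy preimage next produces, for every $m\ge 0$ and every $i$, a level-$m$ simple closed curve $\delta^m_i$ on which $f^m$ restricts to a homeomorphism onto $\gamma_i$; this $\delta^m_i$ is taut at level $m$, isotopic to $\gamma_{(i+m) \bmod N}$, has length exactly $k$, and its bones trace length-$m$ segments of the length-$N$ cycle above, so they are recurrent.

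With these witness curves in hand, I would form a rooted tree $T$ whose nodes at level $m$ are combinatorial equivalence classes of level-$m$ non-expanded closed curves of length $k$ that are isotopic to some $\gamma_i$ and have all bones recurrent. The parent of a level-$m$ node is the level-$(m-1)$ non-expanded curve obtained by replacing each level-$m$ bone by its unique level-$(m-1)$ parent bone; that parent stays in the same isotopy class, keeps length $k$, and keeps its bones recurrent, because the length-$(m-1)$ prefix of a recurrent length-$m$ path in $\Bcal$ is again recurrent (append the deleted last edge to the return path of the full length-$m$ path). By Lemma \ref{lem:FinUptoCombiEq} each level of $T$ is finite, so $T$ is finitely branching, and the family $\{\delta^m_i\}$ places at least one node of $T$ at every level, making $T$ infinite. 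K\"onig's lemma then delivers an infinite descending path $[\eta^0],[\eta^1],[\eta^2],\ldots$ in $T$, and compatible representatives of these classes yield the desired genealogical sequence of recurrent non-expanded closed curves.

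Each $\eta^m$ is isotopic to some $\gamma_i$, which is essential relative to $A$ by the very definition of Levy cycle, so by the characterization stated just above the proposition, $\eta^m$ is homotopically infinite with respect to any hyperbolic orbisphere structure on $A$. To upgrade from closed to bi-infinite I would traverse each $\eta^m$ infinitely many times in both directions, which preserves being non-expanded, recurrent, genealogical, and homotopically infinite. The main subtlety I expect is that the iterated preimages $\delta^m_i$ are themselves \emph{not} genealogical (as $m$ advances, $\delta^m_i$ cycles through different Levy isotopy classes and so its bones live in different level-$0$ bands), so one cannot simply set $\gamma^m:=\delta^m_1$; the indirect K\"onig's lemma argument is exactly what sidesteps this, and the length-$N$ band-graph cycle is what certifies that nodes supplied by the $\delta^m_i$ land in the ``all-bones-recurrent'' subtree to which K\"onig is applied.
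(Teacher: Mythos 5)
Your proof is correct, but it takes a noticeably different route from the paper's own argument. The paper first records the Levy cycle as a single essential curve $\gamma$ together with a period $p$, then applies a pigeonhole argument via Lemma \ref{lem:FinUptoCombiEq} to replace $(\gamma,p)$ by an iterated preimage and a larger period so that $\gamma$ becomes $S_\R$-combinatorially equivalent to its Levy preimage; once that is arranged, the genealogical sequence is obtained directly by lifting the isotopy between $\gamma$ and its preimage through successive iterates of $f^p$, and recurrence falls out of an explicit rotation argument on the resulting circle. You instead retain the whole cycle $\{\gamma_1,\dots,\gamma_N\}$, use the equality case of Proposition \ref{prop:monoton length} applied cyclically to produce cycles in the band graph $\Bcal$, and then extract the genealogical sequence from an infinite, finitely-branching tree via K\"onig's lemma, invoking Lemma \ref{lem:FinUptoCombiEq} only for the level-by-level finiteness. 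Both arguments are sound; yours avoids enlarging the period but is nonconstructive, while the paper's (after passing to a possibly much larger period) produces an explicitly periodic genealogical sequence. Two small points are worth tightening: the bands traversed by the $\gamma_i$ assemble into $\Bcal$-cycles whose length is in general a \emph{multiple} of $N$ rather than exactly $N$, since the position index can be permuted as one goes once around the Levy cycle; and the assertion that the bones of $\delta^m_i$ ``trace segments of the cycle'' (hence are recurrent) deserves a sentence of justification, for instance by observing that the level-$l$ parent of $\delta^m_i$ is an $\R^l(S_\R)$-taut closed curve with $k$ segments isotopic to $\gamma_{i+m}$, so by uniqueness of taut representatives it equals $\delta^l_{i+m-l}$, whence the length-$m$ path in $\Bcal$ attached to each bone of $\delta^m_i$ consists precisely of Levy-structure edges and therefore lies on one of the cycles.
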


\begin{proof}

Assume there exists a Levy cycle, i.e., there are an integer $p>0$ and an essential simple closed curve $\gamma$ of $(S^2,A)$ such that a connected component $\gamma'$ of $f^{-p}(\gamma)$ is isotopic to $\gamma$ relative to $A$ and $\deg(f^p|_{\gamma'})=1$. We may assume $\gamma$ is $S_\R$-taut so that $\gamma'$ is $\R^p(S_\R)$-taut.

\vspace{5pt}
\noindent{\it Claim: We may assume that $\gamma$ and $\gamma'$ are $S_\R$-combinatorially equivalent.}
\vspace{-5pt}
\begin{proof}[Proof of Claim]
For every $k\ge1$, $f^{-kp}(\gamma)$ has a connected component $\gamma_{kp}$ that is isotopic to $\gamma$ relative to $A$ and $\deg(f^{kp}|_{\gamma_{kp}})=1$. For every $k\ge 1$, it follows from Proposition \ref{prop:monoton length} that $l_0(\gamma_{kp}) \le l_{kp}(\gamma_{kp})$ and from $\deg(f^{kp}|_{\gamma_{kp}})=1$ that $l_{kp}(\gamma_{kp})=l_0(\gamma)$, where $l_n(\cdot)$ means $l_{\R^n(S_\R)}(\cdot)$. By Lemma \ref{lem:FinUptoCombiEq}, there exist $k_1>k_2>0$ such that $\gamma_{k_1p}$ and $\gamma_{k_2p}$ are combinatorially equivalent relative to $S_\R$. Then we can replace $\gamma$ by $\gamma_{k_2 p}$ and $p$ by $(k_1 - k_2)p$.
\end{proof}
It follows from the claim that we can parametrize $\gamma$ and $\gamma'$ such that $\gamma':I\to \R^p(S_\R)$ is a level-$p$ non-expanded closed curve and $\gamma:I \to S_\R$ is the level-0 parent of $\gamma'$ for $I=[0,l]$ for some $l\in \Zbb_{>0}$. Being essential relative to $A$, $\gamma$ and $\gamma'$ are, in particular, homotopically infinite relative to any hyperbolic orbisphere structure.

Let $\gamma^0:=\gamma$ and $\gamma^p:=\gamma'$. By lifting an isotopy between $\gamma^0$ and $\gamma^p$ through $f^p$, we have an isotopy from $\gamma^p$ to a level-$2p$ non-expanded curve $\gamma^{2p}:I\to \R^{2p}(S_\R)$ such that $\gamma^p$ is the level-$p$ parent of $\gamma^{2p}$. This way, we obtain a sequence of level-$kp$ non-expanded curves $\{\gamma^{kp}:I\to \R^{kp}(S_\R)\}_{k\ge0}$ such that (1) $\gamma^{kp}$ is the level-$kp$ parent of $\gamma^{(k+1)p}$ for every $k\ge0$ and (2) $f^p:\gamma^{(k+1)p}\to \gamma^{kp}$ is a homeomorphism. If we identify $\gamma^p$ with $\gamma^0$ via an isotopy preserving the 1-skeleton of $S_\R$, the map $f^p:\gamma^p \to \gamma^0$ can be considered as a rotation of a circle of length $l$ by an integer. Hence, there exists $k_0>0$ such that for every $n>0$ the level-$nk_0 p$ bone $\gamma^{k_0 np}([i,i+1])$ is mapped to $\gamma^0([i,i+1])$ by $f^{k_0np}$, which implies that $\gamma^{kp}$ is recurrent for every $k\ge 0$. For every $m>0$ that is not a multiple of $p$, we define $\gamma^m$ as the level-$m$ parent of $\gamma^{kp}$ for some $k>0$ with $kp>m$, which is well-defined up to $\R^m(S_\R)$-combinatorial equivalence.

Since each $\gamma^m$ is homotopic to an essential simple closed curve of $(S^2,A)$, it is homotopically infinite with respect to any hyperbolic orbisphere structure.
\end{proof}

\begin{prop}\label{prop:GeneaSeqtoLevy}
Let $\R$ be a finite subdivision rule and $A\subset \V(S_\R)$ be a set of marked points. Suppose that the subdivision map $f:\R(S_\R)\to S_\R$ is not doubly covered by a torus endomorphism. Let $\ord:A\to[2,\infty]_\Zbb$ be a hyperbolic orbisphere structure. If there is a genealogical sequence of non-expanded bi-infinite curves $\{ \gamma^n:(-\infty,\infty) \to \R^n(S_\R)\}_{n \ge 0}$ that are recurrent and homotopically infinite with respect to $\ord$, then $\TmapA$ has a Levy cycle.
\end{prop}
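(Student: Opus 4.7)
The plan is to prove the contrapositive via the algebraic characterization of Levy cycles. Suppose $f:(S^2,A)\righttoleftarrow$ has no Levy cycle. By Theorem \ref{thm:Bartholdi-dudko}, $f$ is combinatorially equivalent to a B\"{o}ttcher expanding map $F:(S^2,B)\righttoleftarrow$ with expanding length metric $\mu$ on $S^2\setminus B^\infty$, and the orbisphere biset $B(f,A,\ord)$ is contracting with a finite nucleus $\Ncal$. Theorem \ref{thm:semiconj} then provides a semi-conjugacy $h:(S^2,A)\to(S^2,B)$ from $f$ to $F$ with connected fibers, and Lemma \ref{lem:BasisUnifBdd} yields a uniform constant $C>0$ such that $l_\mu([h(w)])<C$ for every $n\ge 0$ and every $w\in X^{\otimes n}$, for a chosen basis $X$ of $B(f)$.

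The next step is to translate the combinatorial data of the sequence $\{\gamma^n\}$ into the biset language. Since each level-$n$ bone of $\gamma^n$ maps under $f^n$ to its level-$0$ parent bone, $\gamma^n$ is a lift of $\gamma^0$ through $f^n$. I would use the recurrence of $\gamma^0$ together with the finiteness of level-$0$ bone types to pass, via a pigeonhole argument, to a sub-arc $\gamma^0|_{[i,j]}$ whose endpoints lie on bones of the same type. Closing this sub-arc up to a loop through a fixed basepoint $p$ produces a single group element $g\in\pi_1(S^2,A,\ord)$, whose lifts through $f^n$ (starting at preimages of $p$) recover sub-arcs of $\gamma^n$ up to bounded endpoint corrections. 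Concatenating many such closed-up chunks gives powers $g^k$, whose level-$n$ lifts recover arbitrarily long sub-arcs of $\gamma^n$ as biset elements of the form $u_n\otimes (\text{lift of } g^k)$ for some $u_n\in X^{\otimes n}$.

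The contradiction then follows from contraction. For each fixed $g$, the contracting condition gives some $n_0$ such that for all $n\ge n_0$ and all $w\in X^{\otimes n}$, we have $w\cdot g^k\in \Ncal\cdot X^{\otimes n}$ for every $k\ge 1$. Consequently, the element in $B(f)^{\otimes n}$ representing the sub-arc $\gamma^n|_{[i_n,i_n+k(j-i)]}$ factors, up to left multiplication by elements of the finite set $\Ncal$, as an element of $X^{\otimes n}$. Applying $h$ and invoking Lemma \ref{lem:BasisUnifBdd}, the $\mu$-homotopic length of the $h$-image of every such sub-arc is bounded uniformly by $C$ plus the maximum $\mu$-length of loops in $h(\Ncal)$. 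Since homotopic infinity in the hyperbolic orbisphere $(S^2,A,\ord)$ is preserved under $h$ (fibers are connected and collapse to points of the Julia set of $F$, which is hyperbolically compact away from $B^\infty$), the $\mu$-homotopic length of sub-arcs of $h(\gamma^n)$ must diverge as the parameter interval grows---a contradiction.

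The main obstacle is the middle step: carefully identifying long sub-arcs of $\gamma^n$ with elements of $\Ncal\cdot X^{\otimes n}$ in a way compatible with the base-point choice, so that the contracting property can be invoked uniformly in the length $k$. The recurrence hypothesis on the $\gamma^n$ is essential here, as it is what permits closing up a sub-arc of $\gamma^0$ into an honest loop $g$ whose iterated lifts through $f^n$ capture arbitrarily long pieces of $\gamma^n$; without recurrence one could only produce increasingly varied (rather than iterated) group elements and the nucleus would not suffice to absorb them.
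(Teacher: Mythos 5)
Your opening moves are right: the contrapositive, the contracting orbisphere biset from Theorem~\ref{thm:Bartholdi-dudko}, the semi-conjugacy and the uniform basis bound from Lemma~\ref{lem:BasisUnifBdd} are exactly the tools the paper deploys. But the middle of your argument has a genuine gap that the recurrence hypothesis does not repair.

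The central problem is that the bi-infinite curve $\gamma^0$ is not assumed to be \emph{periodic}, only that its bones are recurrent (each corresponding band lies on a cycle in the directed graph $\Bcal$). Different bones of $\gamma^0$ may lie on different cycles, so closing up one sub-arc $\gamma^0|_{[i,j]}$ into a loop $g$ and then taking powers $g^k$ produces a curve that in general has nothing to do with $\gamma^0|_{[i,\,i+k(j-i)]}$. Your claim that the ``level-$n$ lifts [of $g^k$] recover arbitrarily long sub-arcs of $\gamma^n$'' therefore fails. Your concluding remark also misdiagnoses the role of recurrence: you write that without it ``the nucleus would not suffice to absorb'' increasingly varied group elements, but the contracting property applies to \emph{every} $g\in\pi_1(S^2,A,\ord)$ (for $n$ large depending on $g$), not only to powers of a fixed element. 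The paper's proof indeed handles precisely these increasingly varied elements: it sets $g_k:=g_{b_1}\cdots g_{b_k}$, where $g_{b_j}=\delta_{e_{j,1}}\cdot\operatorname{bone}(b_j)\cdot\overline{\delta}_{e_{j,2}}$ records the level-$0$ type of the $j$-th bone of $\gamma^0$, connected to a basepoint by fixed paths $\delta_e$. A pigeonhole argument on the (finite) band types yields, for each $k$, a subsequence $n_1<n_2<\cdots$ along which one can lift $g_k$ through $f^{n_i}$, producing elements $h_{k,n_i}$ that differ from $g_k$ by pre- and post-composition with loops of uniformly bounded complexity (this is where Lemma~\ref{lem:BasisUnifBdd} enters). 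Thus each $g_k$ lies within a fixed distance $C$ of some $h_k\in\Ncal$, while $\|g_k\|\to\infty$ because $\gamma^0$ is homotopically infinite; this contradicts the finiteness of $\Ncal$ in the word metric, with no need to invoke $\mu$-lengths of arcs or any claim that $h$ preserves homotopic infinity.

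Two further technical problems in your sketch. First, the assertion that $w\cdot g^k\in\Ncal\cdot X^{\otimes n}$ ``for all $n\ge n_0$ and all $k\ge1$'' is not a consequence of the contracting property: the threshold $n_0$ depends on the group element, hence on $k$. Unwinding the recursion gives $w\cdot g^k=h_1\cdots h_k\cdot w_k$ with each $h_i\in\Ncal$, but the product $h_1\cdots h_k$ need not lie in $\Ncal$. Second, the step ``homotopic infinity in $(S^2,A,\ord)$ is preserved under $h$'' is asserted but not justified; the semi-conjugacy of Theorem~\ref{thm:semiconj} collapses nondegenerate continua, and one would have to rule out that a homotopically infinite curve becomes trivial after pushing forward. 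The paper sidesteps this entirely by carrying out the contradiction in the word metric on $\pi_1(S^2,A,\ord)$ rather than in $\mu$-length on the target sphere.
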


\begin{proof}
Suppose that $\TmapA$ does not have a Levy cycle. Then for any basis $X$ for the biset $B(f)$, there is a nucleus $\Ncal$, which is a finite set. For every $k>0$, we use the sequence of finite restrictions $\{\gamma^n|_{[0,k]}\}_{n\ge0}$ to obtain an element $h_k\in \Ncal$ such that $\|h_k\|\to \infty$. Here $\| \cdot \|$ is a distance on $\pi_1(S^2,A,\ord)$ with respect to a generating set, which does not need to be specified. Then the nucleus $\Ncal$ has infinitely many elements, which contradicts to the assumption that the biset is contracting.

Recall that we use $p$ for the base point of $\pi_1(S^2,A,\ord)$, each element of a basis $X$ for $B(f)$ is a curve from $p$ to one of its $f$-preimages $f^{-1}(p)$, and an element $w\in X^{\otimes n}$ is a concatenation of curves connecting an $f^i$-preimage to $f^{i+1}$-preimage that are liftings of elements in $X$.

\proofstep{Step 1: Construction of $h_k$}
Fix $k>0$. There is an infinite sequence $n_1<n_2<\cdots$, which depends on $k$, so that $\gamma^{n_i}([0,k])$ consists of the bones of the bands of the same types, i.e., there exists level-$0$ bands $b_0,b_1,\dots b_{k-1}$ (possibly repeated) such that for every $j\in [0,k-1]$, $\gamma^{n_i}([j,j+1])$ is the bone of a level-$n_i$ band of type $b_j$, which is independent of $i>0$.

For every level-$0$ edge $e$, we fix a point $m_e \in \mathrm{int}(e)$ and call it the {\it midpoint} of $e$. We assume that a bone of a level-$0$ band is chosen in the homotopy class in such a way that their endpoints are the midpoints of level-$0$ edges. For every level-$0$ edge $e$, we also choose a path $\delta_e$ from $p$ to $m_e$. Then, for each level-$0$ band $b=\band$, we can assign an element
\[
    g_{b_i}:=\delta_{e_1} \cdot bone(b_i) \cdot \overline{\delta}_{e_2} \in \pi_1(S^2,A,\ord),
\]
where the overline $\overline{\,\cdot\,}$ means the reverse of the orientation of a curve and $bone(b_i)$ means the bone of a band $b_i$.

For $g\in \pi_1(S,A,\ord)$, we define $N(g)\subset \pi_1(S,A,\ord)$ by the collection of elements $h$ with the following property: For infinitely many $n>0$ there exist $v,w\in X^{\otimes n}$ such that $h \cdot v = w \cdot g$. We remark that $N(g) \subset \Ncal$.

\begin{figure}[h!]
    \centering
       \def\svgwidth{0.7\textwidth}
        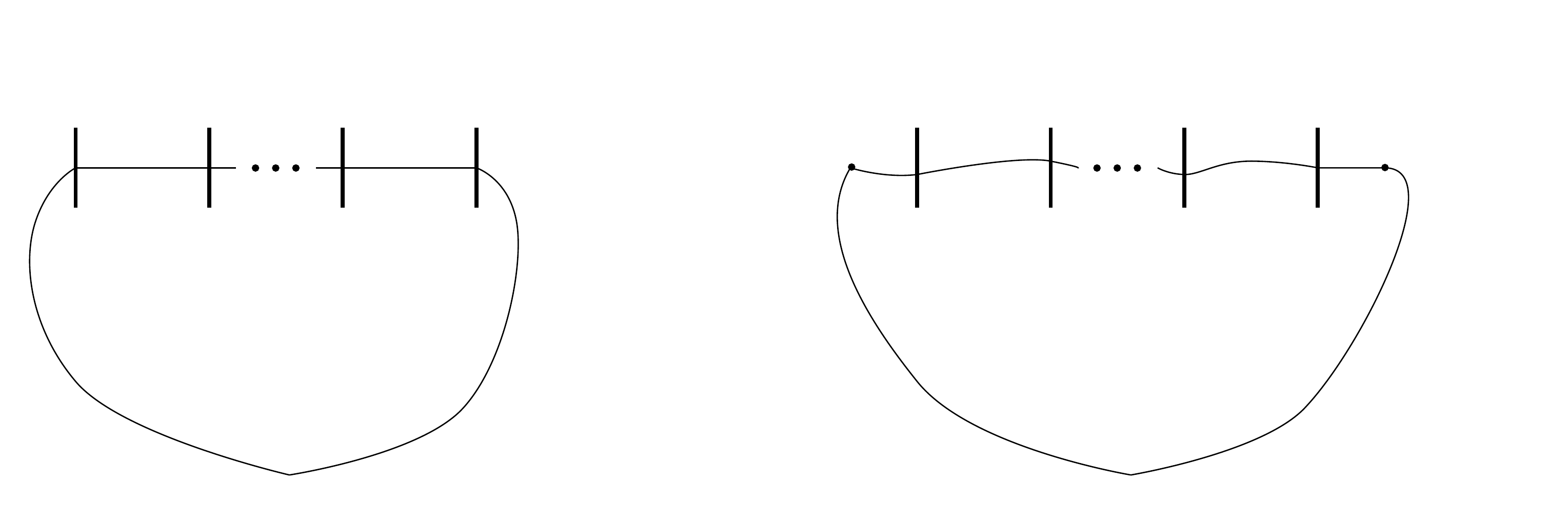
	\caption{The left figure is $g_k$ drawn in $S_\R$, and the right figure is $h_{k,n_i}$ drawn in $\R^{n_i}(S_\R)$. The bold line segments are portions of $1$-skeletons of $S_\R$ and $\R^{n_i}(S_\R)$.}
    \label{fig:liftingsametype}
\end{figure}

\noindent {\it Claim: There exists $C>0$, independent of $k$, such that $N(g_k:=g_{b_1}\cdots g_{b_k})$ contains at least one element $h_k$ with $d(h_k,g_k) < C$.}
\vspace{-5pt}
\begin{proof}[Proof of Claim]
Recall that for every $i>0$ and $j\in [0,k-1]_\Zbb$, $\gamma^{n_i}([j,j+1])$ is the bone of a level-$n_i$ band of type $b_j$, which is independent of $i$. Let $e$ and $e'$ be level-$0$ edges such that their midpoints $m_e$ and $m_{e'}$ are the endpoints of $\gamma^0([0,k])$.

Let $w_{n_i}\in X^{\otimes n_i}$, which will be specified soon. Let $v_{n_i}\in X^{\otimes n_i}$ and $h_{k,n_i}\in \pi_1(S^2,A,\ord)$ be defined by
\[
    h_{k,n_i}\cdot v_{n_i}=w_{n_i}\cdot g_k.
\]
Let $\tilde{g}_k$ be the lift of $g_k$ through $f^{n_i}$ starting from the terminal point of $w_{n_i}$. Let $\tilde{\delta}_e$ and $\tilde{\delta}_{e'}$ be the parts of $\tilde{g}_k$ where $\delta_e$ and $\delta_{e'}$ are lifted. We specify $w_{n_i}$ as an element of $X^{\otimes n_i}$ satisfying the following: the curve $g_k$ with $\delta_e$ and $\delta_{e'}$ being truncated is $S_\R$-combinatorially equivalent to $\tilde{g}_k$ with $\tilde{\delta}_e$ and $\tilde{\delta}_{e'}$ being truncated. See Figure \ref{fig:liftingsametype}.

Then $h_{k,n_i}$ and $g_k$ differ by the pre- and post-composition with loops $w_{n_i}\cdot\tilde{\delta}_e \cdot \overline{\delta}_e$ and $\overline{\delta}_{e'} \cdot \delta_{e'} \cdot \overline{v}_{n_i}$. We have (1) a uniform upper bound on the homotopic length of $v_{n_i}$ and $w_{n_i}$ (in the projection to the B\"{o}ttcher expanding map) by Lemma \ref{lem:BasisUnifBdd}, (2) a uniform upper bound on the intersection between $\delta_e$ and $S_\R^{(1)}$, and (3) the upper bound in (2) is also an upper bound of the intersection between any level-$n$ lift $\tilde{\delta}_e$ and $\R^n(S_\R)^{(1)}$. Therefore, we have $\|w_{n_i}\cdot\tilde{\delta}_e \cdot \overline{\delta}_e\|, \|\overline{\delta}_{e'} \cdot \delta_{e'} \cdot \overline{v}_{n_i}\|< C/2$ for some $C$ so that $d(g_k,h_{k,n_i})<C$. Since there are only finitely many elements of $\pi_1(S^2,A,\ord)$ within the distance $C$ from $g_k$, there exists $h_k$ such that $h_k=h_{k,n_i}$ for infinitely many $i$'s.
\end{proof}

\proofstep{Step 2: Proof of $\|h_k\|\to \infty$}
Let $g_k$ and $h_k$ be as defined in Step 1. Since $d(h_k,g_k)<C$, it suffices to show $\|g_k\|\to \infty$ as $k$ tends to $\infty$.

Let $e_0$ and $e_k$ be the level-$0$ edges whose midpoints are the endpoints of $\gamma^0([0,k])$ so that $g_k=\delta_{e_0} \cdot \gamma^0([0,k]) \cdot \overline{\delta}_{e_k}$. Then $\|g_k\|\to \infty$ follows from the condition that $\gamma^0$ is homotopically infinite.

\end{proof}

\section{Non-expanding spines}\label{sec:nonexpspine}

From Theorem \ref{thm:LevyHomoInf}, we know that the existence of a Levy cycle is equivalent to the existence of a genealogical sequence of homotopically infinite recurrent non-expanded curves. Then, how can we detect the existence such a sequence? The direct search for the genealogical sequence could be more complicated then the search for the Levy cycle. One motivation for non-expanding spines is to have a simpler object with which we can efficiently detect the genealogical sequence.

Since recurrent non-expanded level-$n$ curves are concatenations of level-$n$ recurrent bones, i.e., the bones of level-$n$ recurrent bands, it is natural to consider the union of level-$n$ recurrent bones. The level-$n$ non-expanding spine $N^n$ is, roughly speaking, the union of level-$n$ recurrent bones equipped with a natural train-track structure.

\proofstep{Motivation for the use of train-tracks} Let us see Figure \ref{fig:TripotTraintrack}. Let $t$ denote the hexagonal tile and $e_1,e_2,e_3$ denote three of its boundary edges. Suppose that $b_1=(t;e_1,e_2)$ and $b_2=(t;e_1,e_3)$ are level-0 recurrent bands so that each has two level-1 recurrent subbands of type $b_1$ and $b_2$. Then each level-1 recurrent subband of type $b_1$ or $b_2$ has two level-$2$ recurrent subbands of type $b_1$ and $b_2$. Assume that we draw the bones of these bands. At level-0, we have two curves each of which joins $e_1$ to $e_2$ or $e_3$. As a ``collection'' of these two curves, we might consider a tripod whose leaves are on $e_1,e_2,$ and $e_3$. However, the tripod contains an unintended curve which joins $e_2$ to $e_3$. To exclude such a curve, we use the idea of train-tracks, which makes the curve joining $e_2$ and $e_3$ illegal.

\begin{figure}[h!]
    \centering
        \def\svgwidth{0.8\textwidth}
        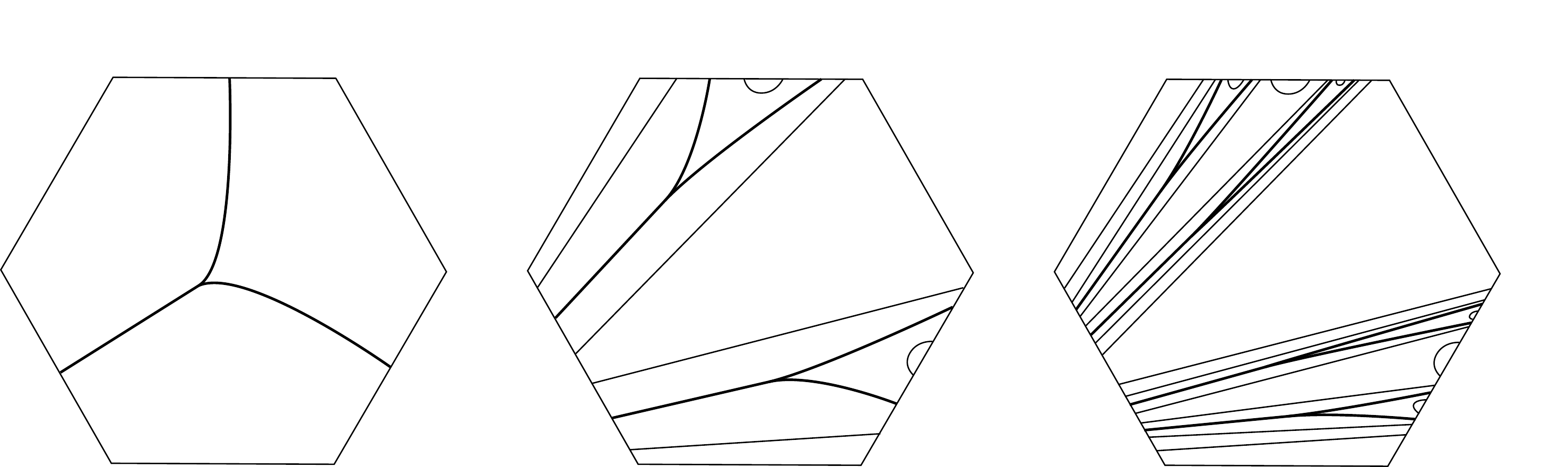
	\caption{}
    \label{fig:TripotTraintrack}
\end{figure}

\subsection{Train-track}

Let $G$ be a graph and $v\in G$ be a point, which could be a vertex or a point in the interior of an edge. A {\it direction at $v$} is a germ of continuous curves starting from $v$. The number of directions at $v$ is equal to the number of connected components of $U\setminus \{v\}$ where $U$ is a sufficiently small neighborhood of $v$. Denote the set of directions at $v$ by $D_v$.

\begin{defn}[Train-tracks and gates]
Let $G$ be a finite graph. For any vertex $v$ of $G$. A {\it train-track structure $\tau$ on $G$} is an assignment of an equivalence relation on $D_v$ for each $v\in \V(G)$. A {\it train-track} is a finite graph $G$ equipped with a train-track structure $\tau$ and denoted by $(G,\tau)$. For any $v\in G\setminus \V(G)$, $D_v$ has two directions and we define each equivalence class of $D_v$ to have each direction. Below is a list of definitions regarding train-tracks.
\begin{itemize}
    \item Each equivalence class of $D_v$ is called a {\it gate at $v$}.
    \item A {\it train path} is an oriented curve $\gamma$ in $G$ such that at every vertex $v$, the gate through which $\gamma$ comes to $v$ is different from the gate through which $\gamma$ goes out.
\end{itemize}
\end{defn}



\begin{defn}[Train-track map]
Let $T_1=(G_1,\tau_1)$ and $T_2=(G_2,\tau_2)$ be train-tracks. A {\it train-track map} $\phi:T_1\to T_2$ is a continuous map $\phi:G_1\to G_2$ that is locally injective on each edge such that for every $v \in G_1$ and $d_1,d_2\in D_v$, $\phi(d_1)$ and $\phi(d_2)$ are in the same gate at $\phi(v)$ if $d_1$ and $d_2$ are in the same gate.
\end{defn}

\begin{defn}[Homotopy relative to $(\partial S,X)$]
Let $f,g:(S,\partial S, X) \to (S,\partial S, X)$ be two continuous maps. We say that $f$ and $g$ are homotopic (resp.\@ isotopic) relative to $(\partial S,X)$ if there is a homotopy (resp. \@ isotopy) $\{H_t:S\to S\}_{t\in[0,1]}$ such that
\begin{itemize}
    \item $H_0=f$ and $H_1=g$,
    \item $H_t|_X:X\to X=id_X:X\to X$, and
    \item For every $t\in[0,1]$, $H_t$ sends every connected component of $\partial S \setminus X$ to itself.
\end{itemize}
\end{defn}

\begin{defn}[Graphs properly embedded in $(S,\partial S,X)$]\label{defn:ProperEmb}
Let $S$ be a compact surface with a finite set of marked points $X$. Some marked points may be on the boundary $\partial S$. A graph $G\subset S$ is {\it properly embedded in $(S,\partial S, X)$} if $(G,\partial G)$ is properly embedded in $(S,\partial S)$ and $G \subset S\setminus X$, where $\partial G$ is the set of leaves of $G$. We say that two graphs $G$ and $H$ properly embedded in $(S, \partial S, X)$ are {\it homotopic (resp. isotopic)} if there is an ambient homotopy $\{H_t:S\to S\}_{t\in[0,1]}$ (resp. isotopy) relative to $(\partial S,X)$ such that $H_0=id_S:S\to S$ and $H_1(G)=H$.
\end{defn}

\begin{defn}[Train-tracks in surfaces]
Let $S$ be a compact surface possibly with boundary $\partial S$ and a finite set of marked points $X\subset S$. By a train-track in $(S,\partial S, X)$, we mean a train-track $(G,\tau)$ of a graph $G$ satisfying (1) $G$ is properly embedded in $(S,\partial S,X)$ and (2) the train-track structure $\tau$ is {\it compatible with the planar structure} in the following sense: For every $v\in \V(G)$, $D_v$ has a cyclic order defined by a local orientation of $S$ near $v$. Then every gate at $v$ consists of edges that are consecutive respect to the cycle order. We note that the consecutiveness is independent of the choice of local orientations, thus the definition also works for non-orientable surface $S$.
\end{defn}

\begin{rem}
Train-tracks are commonly used to describe complicated curves or foliations. For these purposes, train-tracks are often assumed to have additional properties, such as that every vertex $v$ has degree $3$, the number of gates at each vertex is always two, and the complement of a train-track is hyperbolic, all of which are not assumed in this article. See \cite{PennerHarer_TrainTrack}.
\end{rem}

\begin{defn}[Carrying between train-tracks]
Let $S$ be a compact surface with a finite set of marked points $X \subset S$. Let $T_1$ and $T_2$ are train-tracks in $(S,\partial S, X)$. We say that {\it $T_2$ carries $T_1$} if there is a train-track map $\phi:T_1 \to T_2$ such that $\phi$ can be extended to a map $\phi:S\to S$ that is ambient homotopic relative to $(\partial S,X)$ to the identity map. In particular, considering a possibly non-closed curve $\gamma:I\to S$ properly embedded in $(S,\partial S,X)$ as a train-track with no vertex, we can say that a train-track $T$ {\it carries} $\gamma$ if $\gamma$ is contained in $S$ up to homotopy in $(S,\partial S, X)$. 
\end{defn}

\begin{defn}[Homotopically infinite train-tracks]
	Let $(S^2,A,\ord)$ be a hyperbolic orbisphere. A train-track $T$ in $(S^2,A)$ is {\it homotopically infinite} if $T$ carries a homotopically infinite closed curve with respect to the orbisphere structure $\ord$.
\end{defn}

\subsection{Decomposition of graph with crossing condition on the unit disk} In this subsection, we investigate a graph theoretic property which will be used to define non-expanding spines.

Let us consider a unit disk in the Euclidean plane and its boundary circle. A {\it chord} is an Euclidean line segment joining to point on the circle. We say that two chords {\it intersect} if they intersect in the interior of the disk. Similarly, for two sets of chords $S_1$ and $S_2$, we say that {\it $S_1$ and $S_2$ intersect} if there exist chords $s_1\in S_1$ and $s_2 \in S_2$ so that $s_1$ and $s_2$ intersect.

Fix $n\ge 2$ points on the boundary circle $C$ of a unit disk. There are $n(n-1)/2$ different chords joining the $n$ points. Let $S$ be a collection of these chords. The collection $S$ can also be considered as a graph. We abuse notation and use $S$ to indicate the graph also. A {\it decomposition of a graph} $G$ is a collection of subgraphs $G_1,G_2,\dots, G_k$ which gives a partition on the set of edges.

\begin{lem}\label{lem:CrossCondiCplGraph}
Let $v_1,v_2,\dots,v_n$ be $n\ge 2$ points on a circle. Let $S$ be a collection of chords joining pairs of $v_i$'s. Suppose $S$ satisfies the following crossing condition:
\begin{itemize}
    \item [](Crossing condition) If two chords $s,s'\in S$ are intersecting, then all the six chords joining any pairs of four endpoints of $s,s'$ are also contained in $S$.
\end{itemize}
Then, as a graph, $S$ is decomposed into mutually non-intersecting (1) complete graphs with at least four vertices and (2) chords, which can also be considered as complete graphs with two vertices.
\end{lem}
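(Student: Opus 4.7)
The plan is to decompose $S$ by identifying its \emph{clusters} and grouping them into complete-graph blocks. A cluster is a $4$-subset $V\subseteq\{v_1,\dots,v_n\}$ whose two diagonals (the two crossing chords in its cyclic order on the circle) both lie in $S$; by the crossing condition, a cluster in fact supports the whole $K_4$ in $S$. On the chords of $S$ that lie in at least one cluster, I would put the equivalence relation generated by ``\,lying in a common cluster\,'', and declare every chord of $S$ that is crossed by no other chord of $S$ to be its own singleton class. For each class $[s]$, set $V([s])$ to be the union of vertex sets of the clusters in $[s]$ (or the endpoints of the chord, in the singleton case), and take the candidate block of $[s]$ to be the complete graph on $V([s])$.

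The key technical ingredient is the following extension lemma, which I plan to prove geometrically: if $V\subseteq\{v_1,\dots,v_n\}$ spans $K_m$ ($m\ge 4$) in $S$, $w\notin V$, and some chord $wv\in S$ crosses a chord $v'v''$ of $V$'s $K_m$, then $V\cup\{w\}$ spans $K_{m+1}$ in $S$. The initial crossing and the crossing condition give $K_4$ on $\{w,v,v',v''\}$ in $S$, producing two new $w$-chords $wv',wv''$. I would then iterate: with the vertices of $V$ placed in cyclic order on the circle, each newly obtained $w$-chord $wv_j$ creates a crossing with an adjacent $V$-chord $v_jv_k$, and the crossing condition adds $wv_k$ to $S$. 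The process propagates around the cyclic order until $wv_i\in S$ for every $v_i\in V$.

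With the extension lemma in hand, I would prove by induction on the number of clusters making up a class $[s]$ that $V([s])$ spans $K_{|V([s])|}$ in $S$. The base case is a single cluster ($K_4$). For the inductive step, let $V$ be the clique built from the first $k{-}1$ clusters and $C$ a new cluster sharing at least one chord---hence at least two vertices---with some previous cluster, and set $r:=|C\setminus V|\in\{0,1,2\}$. If $r\le1$, $C$ itself is a cluster through the new vertex inside $V\cup (C\setminus V)$, so the extension lemma applies directly. If $r=2$, one analyzes the four arcs cut on the circle by $C$: since $|V|\ge4$, some vertex of $V$ lies in an arc of $C$ that provides a chord from one of the two new vertices $w^*$ to a $V$-vertex crossing some $V$-chord; the extension lemma then adds $w^*$ to $V$, after which $C$ becomes a cluster through $w^{**}$ inside $V\cup\{w^*\}$, and a second application of the extension lemma completes the step.

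Mutual non-intersection of the blocks is then automatic: if chords $s,s'$ in distinct blocks crossed, their four endpoints would form a cluster containing both chords, which would merge the two classes and so contradict their distinction; and single-chord blocks are by construction crossed by nothing in $S$. The main obstacle is the arc case analysis in the extension lemma and in the $r=2$ stage of the induction, where one has to verify that the cyclic positions of the new vertices relative to $V$ on the circle always admit a propagating crossing, and this requires treating several subcases according to which arc of $C$ contains which $V$-vertices.
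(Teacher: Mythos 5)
Your extension lemma is correct (and your cyclic propagation, though more involved than the paper's single-step observation that for any $u'\in V(S')\setminus\{v,w\}$ one of $\overline{u'v'},\overline{u'w'}$ must cross $s$, does close). The genuine gap is in the $r=2$ step of your induction, and the root cause is that the equivalence relation you put on clusters---merge two clusters whenever they share a chord---is too coarse. Concretely: take $v_1,v_2,v_3,v_4$ in cyclic order and put $a,b$ on the arc strictly between $v_1$ and $v_2$; let $S$ consist of all six chords of the complete graph on $\{v_1,v_2,v_3,v_4\}$ together with all six chords of the complete graph on $\{v_1,a,b,v_2\}$, the chord $\overline{v_1v_2}$ belonging to both. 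The only crossing pairs inside $S$ are $\overline{v_1v_3}\times\overline{v_2v_4}$ and $\overline{v_1b}\times\overline{av_2}$, so $S$ satisfies the crossing condition. Both $4$-sets are clusters, they share $\overline{v_1v_2}$, hence your rule puts them in one class with $V([s])=\{v_1,\dots,v_4,a,b\}$; but $\overline{av_3}\notin S$, so $V([s])$ does not span $K_6$ in $S$ and your inductive claim fails. This is exactly your $r=2$ case with $C=\{v_1,a,b,v_2\}$ and $V=\{v_1,\dots,v_4\}$: no $S$-chord out of $a$ or out of $b$ crosses any $V$-chord, because both new points lie in a single arc of $V$, so your extension lemma never fires and the asserted arc analysis has no crossing to propagate.

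The difference from the paper's argument is the merging criterion: the paper grows a complete subgraph $S'$ only when some chord of $S\setminus S'$ genuinely \emph{crosses} $S'$; two cliques that share a chord without crossing are left as separate (and, by maximality, mutually non-crossing) blocks. The fix for your approach is therefore to replace ``share a chord'' by ``cross'' as the merging rule---and to accept, as the example above shows, that distinct maximal cliques may then overlap in a common chord, which is compatible with mutual non-intersection since intersection here is measured by crossings in the interior of the disk.
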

\begin{proof}
The condition implies that if $S$ contains two intersecting chords, then it contains the complete graph of the four vertices. Suppose that a subset $S'$ of $S$ forms a complete graph. We first show that if there is a chord $s$ in $S \setminus S'$ that intersects $S'$, then $S$ also contains the complete graph generated by $S' \cup \{s\}$. Denote by $v$ and $w$ the endpoints of $s$. Since $s$ intersects $S'$, there exists $s'$ that intersects $s$. Let $v'$ and $w'$ denote the endpoints of $s'$. For any vertex $u'$ of $S'$, as a graph, that is not $v$ or $w$, either $\overline{u'v'}$ or $\overline{u'w'}$ intersects $s$. In particular, by the crossing condition, the chords $\overline{u'v}$ and $\overline{u'w}$ are contained in $S$. Since $u'$ was taken arbitrarily, the complete graph with vertex set $\V(S')\cup \{v,w\}$ is also contained in $S$.

Then every complete graph in $S$ can be extended until when it does not intersect other chords in $S$, which proves the conclusion.
\end{proof}

\begin{prop}\label{prop:PolyTrain}
Let $t$ be an $n$-gon for $n\ge 2$. For a curve $\gamma$ properly embedded in $(t,\partial t, \V(t))$, we call the boundary edges of $t$ that contain the endpoints of $\gamma$ the side edges of $\gamma$. Let $S$ be a collection of homotopy classes of properly embedded curves joining different boundary edges. Suppose that $S$ satisfies the crossing condition:
\begin{itemize}
    \item [] (Crossing condition) If $\left< [\alpha], [\beta] \right>=1$, then $S$ contains the six homotopy classes of curves connecting any pairs of the four side edges of $\alpha$ and $\beta$. 
\end{itemize}
Then there is a train-track $T$ properly embedded in $(t,\partial t, \V(t))$ such that for any curve $\gamma$ properly embedded in $(t,\partial t, \V(t))$, $\gamma$ is carried by $T$ if and only if $[\gamma]\in S$.
\end{prop}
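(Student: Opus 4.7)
The plan is to translate the problem into the combinatorial chord setting of Lemma \ref{lem:CrossCondiCplGraph} and then assemble the train-track piece by piece from the resulting decomposition. First, I would set up a bijection between homotopy classes of properly embedded curves in $(t,\partial t,\V(t))$ joining distinct boundary edges and chords in a disk with $n$ marked points on the boundary circle, one point per boundary edge of $t$. Under this bijection, two homotopy classes $[\alpha],[\beta]$ satisfy $\langle [\alpha],[\beta]\rangle = 1$ precisely when the corresponding chords cross (the cases of a shared side edge or an unlinked pair both give intersection number $0$), so the crossing condition on $S$ translates verbatim to that of Lemma \ref{lem:CrossCondiCplGraph}. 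Applying the lemma yields a decomposition $S=S_1\sqcup\dots\sqcup S_m$ into mutually non-intersecting complete subgraphs, with chords counted as $K_2$.

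Next, for each piece $S_j$, let $k_j\ge 2$ be the number of boundary edges of $t$ involved, and let $E_{j,1},\dots,E_{j,k_j}$ denote those edges. For $k_j\ge 3$, I would build a star train-track $T_j$ inside $t$ consisting of a single interior vertex $v_j$ together with $k_j$ disjoint arcs from $v_j$ to an interior point of each $E_{j,i}$, and declare every direction at $v_j$ to be its own gate; for $k_j=2$, take $T_j$ to be a single arc joining the two boundary edges. Since each gate at $v_j$ is a singleton, any train path must enter $v_j$ along one arc and exit along a distinct one, so the homotopy classes of curves carried by $T_j$ are exactly the $\binom{k_j}{2}$ classes connecting pairs of distinct $E_{j,i}$'s, namely those in $S_j$. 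The planar compatibility of the train-track structure is automatic because each gate is already a single direction.

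Then I would embed the $T_j$'s disjointly in $t$ and define $T:=\bigcup_j T_j$ with the combined gate structure. Because the decomposition pieces are pairwise non-crossing as chord families, I expect to proceed by induction on $m$: select an ``outermost'' piece $S_j$ whose boundary edges occur consecutively along $\partial t$ (such a piece exists by a standard ear-argument on non-crossing complete subgraphs of a cycle), place its star inside the corresponding ear of $t$, and apply induction to the remaining pieces supported on the complementary subdisk. The equivalence ``$\gamma$ is carried by $T$ iff $[\gamma]\in S$'' then reduces, piece by piece, to the carrying statement already verified for each $T_j$.

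The main obstacle I expect is this final embedding and combination step: one must verify that the combinatorial non-crossing of the pieces produced by Lemma \ref{lem:CrossCondiCplGraph} indeed translates into geometric disjointness of the star train-tracks inside $t$, with particular care when two pieces share a vertex of the chord diagram so that their leaves land on the same boundary edge of $t$ (they must then be placed on disjoint sub-arcs of that edge, which is permitted by the nesting structure coming from the ear-decomposition). Once this planar layout is in place, no gate from one piece is ever identified with a gate from another, so the carrying criterion is inherited piece-wise and the proposition follows.
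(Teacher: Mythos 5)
Your argument is correct and follows the same core route as the paper: reduce to the chord diagram, invoke Lemma \ref{lem:CrossCondiCplGraph} to split $S$ into pairwise non-crossing complete graphs and chords, and replace each complete-graph piece by a star with singleton gates at the center (which is exactly the paper's Step (2) and gate rule (1)). The place where you and the paper diverge is the final combination. The paper keeps all pieces anchored at a single chosen point on each boundary edge of $t$ — so several stars and chords meet at that point — and then performs the ``zip-up'' of Step (3): at each such boundary meeting point $p$, it introduces one new vertex $v$ joined to $p$ by a short arc and assigns two gates, one containing all the directions coming from the interior and one containing the direction toward $p$. You instead separate the pieces onto disjoint sub-arcs of the shared boundary edges so that $T$ becomes a disjoint union of stars and arcs, and then argue the disjoint planar placement exists via an ear-type peeling of the laminar family of non-crossing pieces.

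Both constructions yield a valid $T$ for Proposition \ref{prop:PolyTrain}, so your proof is sound, though a couple of details should be tightened. First, Lemma \ref{lem:CrossCondiCplGraph} produces complete graphs with \emph{at least four} vertices, never three, so the case split should read $k_j\ge 4$ versus $k_j=2$ (the tripod shown in Figure \ref{fig:TripotTraintrack} arises from zip-up, not from a $3$-vertex complete graph). Second, the ear criterion ``whose boundary edges occur consecutively along $\partial t$'' is too strong as stated — a non-crossing chord such as $(1,4)$ in an octagon has no other piece in its way yet its endpoints are not consecutive — but the peeling idea works because the non-crossing pieces form a laminar family on the circle (nested or disjoint, sharing at most boundary vertices), so they can be thickened to pairwise disjoint regions. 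Finally, it is worth noting a reason the paper uses the zip-up rather than disjoint placement: the zipped $T^n(t)$ has exactly one boundary point per level-$n$ edge, which the later definition of the non-expanding spine $N^n$ relies on when identifying boundary points of $T^n(t)$ and $T^n(t')$ across a shared edge. Your disjoint-union $T$ would leave that gluing ambiguous, so if you adopt your version you would need to adjust that subsequent definition accordingly.
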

\begin{proof}
We may consider $t$ as a closed disk. We also choose a point on each boundary edge and take a representative of a homotopy class of curves properly embedded in $t$ as a chord joining the chosen points on the edges. Then $S$ can be considered as a graph, see Figure \ref{fig:PolyTrain}. By Lemma \ref{lem:CrossCondiCplGraph}, $S$ is decomposed into complete graphs (with at least 4 vertices) and curves that are mutually non-intersecting. 

Suppose that $S'\subset S$ forms a complete graph with $k\ge4$ vertices in the decomposition. Then we transform $S'$ into a star-like tree $T'$ whose leaves are the $k$ vertices of $S'$. We transform all the complete graphs in the decomposition of $S$ to star-like trees as above, see the second figure in Figure \ref{fig:PolyTrain}. Then we have a graph that is the union of some star-like trees and curves which can intersect only in the boundary of $t$.

We define a train-track as follows:
\begin{itemize}
    \item [(1)] Let $v$ be the center of a star-like tree. We define gates at $v$ in such a way that each gate has only one edge.
    \item [(2)] At each intersection in $\partial t$, we zip the intersecting curves up a little bit as the third figure in Figure \ref{fig:PolyTrain}. More precisely, assume that $k$ edges, say $e_1,e_2,\dots e_k$ intersect at a boundary point $p$. The transformation generates one vertex $v$ of degree $d+1$; the vertex $v$ is incident to the (deformed) $k$ edges $e_1,e_2,\dots,e_k$, and to one new edge, say $e$ which joins $p$ to $v$. There are two gates at $v$: $\{\mathrm{the~direction~along~}e_i~|~i\in[1,k]_\Zbb\}$ and $\{\mathrm{the~direction~along~}e\}$.
\end{itemize}
\begin{figure}[h!]
    \centering
        \def\svgwidth{0.8\textwidth}
        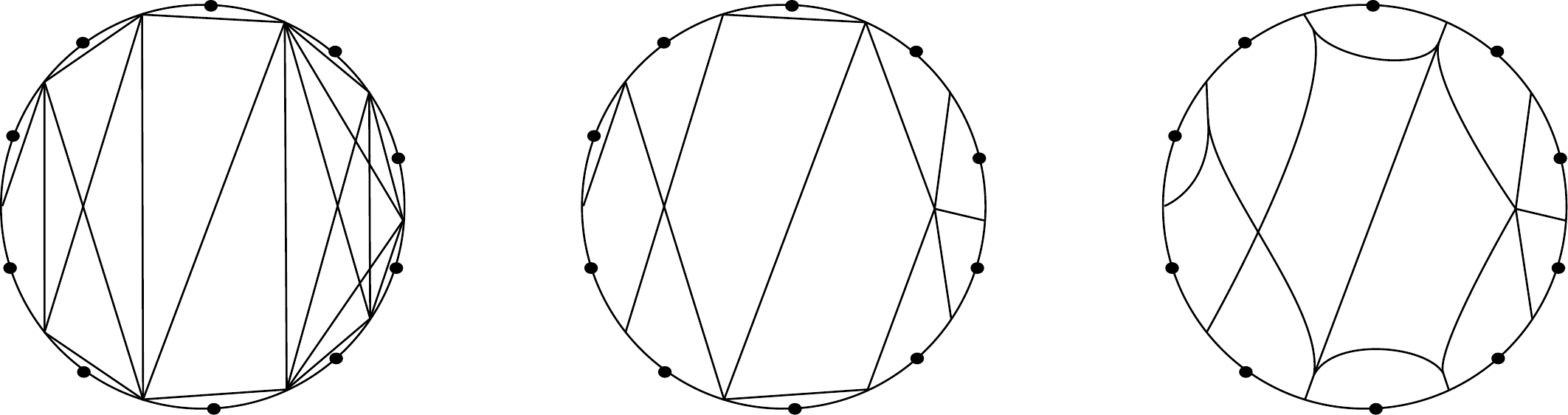
        \caption{Transformation from a graph to a train-track. The dots on the boundary are vertices of a polygon. The graph contains two complete graphs with more than 3 vertices. These graphs are transformed into star-like trees of degree $4$ and $5$ respectively. We ``zip-up'' at boundary points to define a train-track structure.}
        \label{fig:PolyTrain}
\end{figure}
It is immediate from the construction that the train-track satisfies the desired property.
\end{proof}

\subsection{Non-expanding spines of tiles}

Let $t$ be a level-$0$ tile of a finite subdivision rule $\R$ and $n\ge0$. For simplicity, we assume that $t$ is homeomorphic to a closed 2-disc, i.e., boundary edges are not identified. We will define the level-$n$ non-expanding spine of $t$ as a train-track properly embedded in $t$ which is roughly speaking the union of level-$n$ recurrent bones in $t$. If boundary edges are identified, we first define a train-track in the closed 2-disc $\bft$, which is the domain of the characteristic map $\phi_t:\bft\to t$, and then define the non-expanding spine as the image of the train-track by $\phi_t$. 

For a level-$0$ tile $t$ and a level-$n$ band $b^n=\bandn$, we say that {\it $b^n$ is a subband of $t$} if $t^n\subset t$ and $e^n_1,e^n_2\subset \partial t$. We say that two level-$n$ subbands $b^n_0$ and $b^n_1$ of a level-$0$ tile $t$ {\it intersect} if their bones have non-zero intersection number, which must be one, of the homotopy classes of curves properly embedded in $(t, \R^n(\partial t), \V(\R^n(\partial t)))$. It is immediate that if two level-$n$ subbands $b^n_0$ and $b^n_1$ intersect then they are bands of the same level-$n$ subtile of $t$.

The next lemma, in particular the property (3), implies that the set of level-$n$ recurrent bones of $t$ satisfies the crossing condition in the statement of Lemma \ref{lem:CrossCondiCplGraph} or Proposition \ref{prop:PolyTrain}.

\begin{lem}\label{lem:CrossingCondiTile}
Let $t$ be a level-0 tile of a finite subdivision rule. For a level-$0$ recurrent band $b_0=(t;e_{0,1},e_{0,2})$, let $A^0:=\{b_i=(t;e_{i,1},e_{i,2})~|~i=1,\dots, k\}$ be the collection of level-$0$ recurrent bands that intersect $b_0$. Suppose that $A^0$ is non-empty. Then we have the following properties.
\begin{itemize}
    \item [(1)] For any $n \ge 0$ and $i\in [0,k]_\Zbb$, there is a level-$n$ subtile $t^n$ of $t$ such that each $b_i$ has a unique level-$n$ subband $b^n_i=(t^n;e^n_{i,1},e^n_{i,2})$, which is also recurrent. That is, in the directed graph $\Bcal$ of bands, $[b_i]$ belongs to only one cycle, say $C$, and there is no directed path from $C$ to another cycle.
    \item [(2)] There exists $p>0$ such that $b^{np}_i$ is of type $b_i$ for every $i\in[0,k]_\Zbb$ and any $n>0$. Moreover, $p$ can be chosen as the common period of the cycles in $\Bcal$ containing $[b_i]$'s.  
    \item [(3)] For every pair $(i,j), (i',j') \in [0,k]_\Zbb \times \{1,2\}$ with $e_{i,j}\neq e_{i',j'}$, the band $(t^n;e^n_{i,j},e^n_{i',j'})$ is a level-$n$ recurrent subband of $t$ for every $n\ge 0$.
\end{itemize} 
\end{lem}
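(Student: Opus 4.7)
The plan is to track a common intersection point of the bones through the nested sequence of level-$n$ subtiles containing it, and then use the pigeonhole principle together with the recurrence of each $b_i$ to extract the desired periodic structure.

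First I would exploit the intersection hypothesis to produce a common point. Since each $b_i$ with $i \ge 1$ intersects $b_0$ once inside the polygon $t$, by isotoping bones within their homotopy classes relative to $(\partial t, \V(\partial t))$ I can arrange $\gamma_0, \gamma_1, \dots, \gamma_k$ to all pass through a single interior point $x \in t$. I would take $x$ in general position, avoiding the $1$-skeleton of $\R^n(S_\R)$ for every $n$, and take each $\gamma_i$ to be $\R^n(S_\R)$-taut at every level. Define $t^n$ as the unique level-$n$ subtile of $t$ containing $x$, and $b_i^n = (t^n; e_{i,1}^n, e_{i,2}^n)$ as the level-$n$ subband of $b_i$ whose bone is the sub-arc of $\gamma_i$ inside $t^n$. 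Tautness and the intersection at $x$ ensure $b_i^n$ is a genuine band of $t^n$ with $e_{i,j}^n \subset e_{i,j}$.

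Next I would obtain the period by pigeonhole. The sequence of tuples $(T_n, B_n^{(0)}, \dots, B_n^{(k)})$, where $T_n$ is the tile type of $t^n$ and $B_n^{(i)}$ is the band type of $b_i^n$, lies in a finite set, so is eventually periodic of some period $p$. Because each $b_i$ is recurrent, $[b_i] \in \V(\Bcal)$ is periodic, and the path $([b_i^n])_{n \ge 0}$ in $\Bcal$ starts at a vertex lying on a cycle. Using Proposition \ref{prop:ExtensionRecurrentPath} combined with the pigeonhole periodicity I would conclude that this path remains on the cycle through $[b_i]$ from the outset, and that $p$ can be taken as a common multiple of the cycle lengths, giving $(T_{mp}, B_{mp}^{(i)}) = (\bft, b_i)$ for every $m \ge 0$ and every $i$. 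This proves (2) and the existence of a recurrent $b_i^n$ at every level in (1). The uniqueness in (1) (equivalent to the graph-theoretic conditions on $[b_i]$) then follows by contradiction: two distinct cycles through $[b_i]$, or a directed path from its cycle to another cycle, would by Theorem \ref{thm:expsubexp} force $P([b_i], n)$ to grow exponentially; yet every recurrent level-$n$ subband of $b_i$ must lie in a subtile containing $x$, of which there is exactly one at each level. For (3), at levels $n = mp$ the subtile $t^{mp}$ is of type $\bft$ and $\partial t^{mp}$ contains subedges of every $e_{i,j}$, so the band $(t^{mp}; e_{i,j}^{mp}, e_{i',j'}^{mp})$ projects under $f^{mp}$ to $(t; e_{i,j}, e_{i',j'})$, which intersects $b_0$ through $x$ and therefore lies in $A^0 \cup \{b_0\}$; the parent--child structure spreads recurrence from the levels $mp$ to all intermediate levels.

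The main obstacle I anticipate is aligning the geometric sequence $(t^n, b_i^n)$ determined by the single point $x$ with the combinatorial cycles in $\Bcal$. A priori the path $([b_i^n])$ could drift onto a cycle not containing $[b_i]$, or onto a preperiodic tail, and the delicate point is that the intersection rigidity at $x$ together with the recurrence of every $b_j \in A^0 \cup \{b_0\}$ forces the nested-subtile sequence and the cycle of $[b_i]$ to agree from level $0$. Making this alignment rigorous, via the interplay of tautness, pigeonhole, and Proposition \ref{prop:ExtensionRecurrentPath}, is the crux of the proof.
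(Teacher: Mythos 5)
Your strategy of tracking a common intersection point $x$ through nested subtiles is in the same spirit as the paper's proof, which also pins down a unique level-$n$ subtile $t^n$ containing all recurrent children. But there are several gaps, two of which I think are serious.

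First, the uniqueness argument for part (1) does not work as written. You argue that if $[b_i]$ lay on two cycles, or if there were a directed path from its cycle to another cycle, then Theorem~\ref{thm:expsubexp} would force $P([b_i],n)$ to grow exponentially. This is wrong in the second case: if the cycle $C$ through $[b_i]$ has a path to a \emph{disjoint} cycle $C'$, neither case (1) nor case (2) of Theorem~\ref{thm:expsubexp} applies, and $P([b_i],n)$ only grows polynomially (of degree $\ge 1$). Yet that situation still produces more than one recurrent level-$n$ subband of $b_i$ once $n$ is large, so you need to rule it out. Moreover, the intended contradiction ``every recurrent level-$n$ subband of $b_i$ must lie in the subtile containing $x$, of which there is exactly one'' only bounds the number of such subbands by the number of bands of $t^n$ with sides on $e_{i,1},e_{i,2}$, which is not $1$ a priori: $\partial t^n$ can contain many level-$n$ subedges of $e_{i,1}$ and of $e_{i,2}$. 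The paper instead gets the bound via a counting argument: at levels $n_0$ where $t^{n_0}$ has type $\mathbf t$, the map $f^{n_0}$ injects the recurrent level-$n_0$ subbands of the $b_i$'s into $A^0$ (their images are recurrent bands of $t$ intersecting $b_0$), so $|A^{n_0}|\le k$; combined with the surjective parent maps $A^{n+1}\to A^n$, this forces $|A^n|=k$ for all $n$, hence uniqueness. You would need this or some substitute for it.

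Second, your appeal to Proposition~\ref{prop:ExtensionRecurrentPath} to force the path $([b_i^n])_{n\ge 0}$ to stay on the cycle from the outset is circular: the uniqueness clause of that proposition assumes $P([b_i],n)$ grows polynomially, which is equivalent to what you are trying to prove. The pigeonhole argument only gives \emph{eventual} periodicity, and nothing you cite upgrades that to periodicity from level $0$. The paper's route is again the $|A^n|=k$ count, together with a planarity/cyclic-order argument (the ``closeness to $b_0$'' ordering on $A^n$) to get part (2).

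A smaller issue: the geometric setup in your first paragraph is asserted rather than established. That the sub-arc of $\gamma_i$ inside $t^n$ is the bone of a subband of $b_i$ (i.e.\ has endpoints on subedges of $e_{i,1},e_{i,2}$) requires that the $\R^n$-taut representative of $\gamma_i$ has $\R^n$-length exactly $1$ and lives in the subtile containing $x$. The length-$1$ part does follow from recurrence of $b_i$, but you never say so, and constructing a \emph{single} $\gamma_i$ that is $\R^n$-taut for all $n$ simultaneously and passes through a common $x$ needs a nested-intersection argument. The paper avoids all of this by arguing purely combinatorially: level-$n$ children of $b_0$ and $b_i$ still have alternating sides on $\partial t$, hence still intersect, hence lie in a common subtile. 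Finally, in your sketch of (3), the claim that $(t;e_{i,j},e_{i',j'})$ ``intersects $b_0$ through $x$ and therefore lies in $A^0\cup\{b_0\}$'' is false when $e_{i,j}$ and $e_{i',j'}$ lie on the same arc of $\partial t\setminus\{e_{0,1},e_{0,2}\}$; the correct conclusion (recurrence of $(t^n;e^n_{i,j},e^n_{i',j'})$) follows directly from the fact that, by (2), the level-$np$ band $(t^{np};e^{np}_{i,j},e^{np}_{i',j'})$ is a subband of $(t;e_{i,j},e_{i',j'})$ of its own type, with no need to place it in $A^0$.
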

\begin{proof}
For any $n>0$, every level-$n$ child of $b_0$ intersects any level-$n$ child of any $b_i$. Hence, all the level-$n$ children of $b_0,b_1,\dots,b_k$ are bands of the same level-$n$ subtile, say $t^n$, of $t$.

For $n>0$, we define a set $A^n$ as the collection of level-$n$ recurrent subbands of the level-$0$ bands in $A^0$. Since every recurrent level-$n$ band has at least one recurrent child at each higher level, we have a sequence of surjections $A^0\leftarrow A^1\leftarrow \cdots$ which map recurrent subbands to their parents.

To show the uniqueness in (1), it suffices to show that the surjections ($A^{n+1}\to A^n$)'s are actually bijections. Since $b_0$ is recurrent, for infinitely many $n_0>0$ the level-$0$ band $b_0$ has a level-$n_0$ recurrent subband $b^{n_0}_0$ of type $b_0$. In particular, $t^{n_0}$ is of type $\bft$. Then the types of level-$n_0$ subbands of $b_i$ for $i\in [1,k]_\Zbb$ injectively correspond to $b_j$ for $j\in [1,k]_\Zbb$. This implies that $A^0\leftarrow \cdots \leftarrow A^{n_0}$ is a sequence of bijections. Since there are infinitely many such $n_0$'s, $A^0\leftarrow A^1 \leftarrow \cdots$ is a sequence of bijections also.

(2) Let $p$ be the least positive integer satisfying that $b^p_0=(t^p;e^p_{0,1},e^p_{0,2})$ is of type $b_0=(t;e_{0,1},e_{0,2})$. Such a $p$ exists because $b_0$ is recurrent. We claim that $b^{np}_i$ is of type $b_i$ for every $n \ge 1$. Here is a sketch of the proof and we left the details to the reader. Since tiles and bands are objects embedded in the sphere, we can define an order on $A^n$ according to how close $b^n_i$ is to $b^n_0$. The order is preserved by the bijections $A^{m+1}\to A^m$'s we discussed in (1), which implies $b^{np}_i$ is of type $b_i$ for every $n>0$ and $i\in [1,k]_\Zbb$. By exchanging the roles of $b_0$ with $b_i$ for any $i\in[1,k]_\Zbb$, we obtain that $p$ is the common period of the cycles in $\Bcal$ containing $[b_i]$'s.

(3) Let $p>0$ be the number determined in (2). It follows from (2) that for every $n>0$ and $(i,j)\in [0,k]_\Zbb \times \{1,2\}$, the level-$np$ subtile $t^{np}$ is of type $\bft$ and its boundary edge $e^{np}_{i,j}$ is a subedge of $e_{i,j}$ and of type $\bfe_{i,j}$. Hence, for every pair $(i,j), (i',j') \in [0,k] \times \{1,2\}$ with $e_{i,j}\neq e_{i',j'}$, the level-$0$ band $(t;e_{i,j},e_{i',j'})$ has a level-$np$ subband $(t^{np},e^{np}_{i,j},e^{np}_{i',j'})$ which is of type $(t;e_{i,j},e_{i',j'})$. Then $(t^n,e^n_{i,j},e^n_{i',j'})$ is recurrent for every $n\ge 1$.
\end{proof}

\begin{prop}\label{prop:TrainTile}
Let $t$ be a level-0 tile of a finite subdivision rule $\R$. Then there is a train-track $T^n(t)$ whose underlying graph is properly embedded in $(t,\R^n(\partial t),\V(\R^n(\partial t)))$ such that any curve $\gamma$ properly embedded in $(t,\R^n(\partial t),\V(\R^n(\partial t)))$ is carried by $T^n(t)$ if and only if $\gamma$ is the bone of a level-$n$ recurrent subband of $t$.
\end{prop}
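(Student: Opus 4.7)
The plan is to construct $T^n(t)$ subtile by subtile: for each level-$n$ subtile $t^n$ of $t$, build a train-track $T_{t^n}$ inside $t^n$ using Proposition~\ref{prop:PolyTrain}, and then take the union over all such subtiles. Fix $t^n$ and view it as a polygon whose boundary edges are the level-$n$ subedges of $\partial t^n$; interior edges of $t^n$ inside $t$ play no role since a subband of $t$ must have both sides on $\partial t$. Let $S_{t^n}$ denote the collection of homotopy classes of bones of level-$n$ recurrent subbands of $t$ contained in $t^n$.

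The key step is to verify that $S_{t^n}$ satisfies the crossing condition of Proposition~\ref{prop:PolyTrain}. Suppose $[\alpha],[\beta]\in S_{t^n}$ have intersection number one. Since $f^n$ is cell-wise homeomorphic and maps $t^n$ onto $t$, the projections $f^n(\alpha)$ and $f^n(\beta)$ are bones of level-$0$ recurrent subbands $b_0=(t;e_{0,1},e_{0,2})$ and $b_i=(t;e_{i,1},e_{i,2})$ of $t$; moreover $b_0$ and $b_i$ must also intersect, since their sides are interleaved along $\partial t$. Applying Lemma~\ref{lem:CrossingCondiTile} to $b_0$ together with the intersecting band $b_i$, part~(3) yields that each of the six bands $(t^n;e^n_{a,b},e^n_{a',b'})$ joining two distinct edges among the four sides of $\alpha$ and $\beta$ is a level-$n$ recurrent subband of $t$. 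Hence the six corresponding homotopy classes all belong to $S_{t^n}$, which is precisely the crossing condition.

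Applying Proposition~\ref{prop:PolyTrain} produces a train-track $T_{t^n}$ properly embedded in $t^n$, with leaves on $\partial t\cap\partial t^n$, that carries exactly the homotopy classes in $S_{t^n}$. Define
\[
    T^n(t) := \bigcup_{t^n\subset t} T_{t^n},
\]
the union ranging over all level-$n$ subtiles $t^n$ of $t$. The pieces $T_{t^n}$ are pairwise disjoint: each sits in the interior of its subtile except for its leaves, which lie inside level-$n$ subedges of $\partial t$, and each such subedge is adjacent to only one level-$n$ subtile of $t$. Thus $T^n(t)$ is a train-track properly embedded in $(t,\R^n(\partial t),\V(\R^n(\partial t)))$.

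The main obstacle is the crossing-condition verification above; once that is in hand, the desired equivalence follows routinely. Any level-$n$ recurrent bone of $t$ is contained in some $t^n$ and is carried by $T_{t^n}\subset T^n(t)$. Conversely, if $\gamma$ is carried by $T^n(t)$, disjointness forces $\gamma$ to be carried by a single $T_{t^n}$, so $[\gamma]\in S_{t^n}$ by Proposition~\ref{prop:PolyTrain}, and $\gamma$ is a bone of a level-$n$ recurrent subband of $t$.
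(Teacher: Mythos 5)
Your decomposition of $T^n(t)$ into subtile pieces and the disjointness observation are fine (bones of level-$n$ subbands of $t$ lying in distinct level-$n$ subtiles are homotopically disjoint in $(t,\R^n(\partial t),\V(\R^n(\partial t)))$), and that route is a harmless repackaging of the paper's direct application of Proposition \ref{prop:PolyTrain} to the whole polygon $(t,\R^n(\partial t),\V(\R^n(\partial t)))$. The gap is in your verification of the crossing condition. You assert that $f^n$ ``maps $t^n$ onto $t$''; this is false for a general level-$n$ subtile: $f^n$ carries $t^n$ homeomorphically onto its type $t':=f^n(t^n)$, which is some level-$0$ tile but typically not $t$. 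Consequently your $b_0$ and $b_i$ are level-$0$ bands of $t'$, and applying Lemma \ref{lem:CrossingCondiTile} to them yields level-$n$ recurrent subbands of $t'$ inside a level-$n$ subtile of $t'$ --- not the level-$n$ recurrent subbands of $t$ inside your $t^n$ that you need. And even setting that aside, recurrence of the projected level-$0$ bands does not by itself make the six level-$n$ bands in $t^n$ recurrent: recurrence of a level-$n$ band is a property of the length-$n$ path in $\Bcal$ issuing from its unique level-$0$ \emph{parent} in $t$, and a length-$n$ path can end on a cycle in $\Bcal$ without beginning on that cycle.

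The intended argument stays in $t$. If $\alpha,\beta$ are bones of intersecting level-$n$ recurrent subbands of $t$ sitting in a common subtile $t^n$, then their level-$0$ parents $\hat b_\alpha=(t;e_{\alpha,1},e_{\alpha,2})$ and $\hat b_\beta=(t;e_{\beta,1},e_{\beta,2})$ are intersecting recurrent level-$0$ bands of $t$; applying Lemma \ref{lem:CrossingCondiTile} to $\hat b_\alpha$ with $\hat b_\beta$ among its intersecting bands, part (1) identifies $t^n$ as the unique subtile housing all the relevant level-$n$ recurrent subbands, and part (3) then supplies exactly the six recurrent level-$n$ subbands of $t$ in $t^n$ required by the crossing condition. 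You should also address the case where $t$ has identified boundary edges --- the paper then works in the polygon $\bft$ and pushes the train-track forward by the characteristic map $\phi_t$ --- which your argument silently assumes away.
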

\begin{proof}
We first assume that the tile $t$ is a closed disk, i.e., its boundary edges are not identified. It follows from Lemma \ref{lem:CrossingCondiTile}-(3) that the collection of spines $B^n(t)$ satisfies the crossing condition in Proposition \ref{prop:PolyTrain}. Hence we have the desired train-tack by Proposition \ref{prop:PolyTrain}.

If $t$ is not a closed disk. We can use Proposition \ref{prop:PolyTrain} for its domain $\bft$, which is a closed disk, of the characteristic map $\phi_t:\bft\to t$ to obtain a train-track $T^n(\bft)$ in $\bft$. Then we define $T^n(t)$ as the $\phi_t$-image of $T^n(\bft)$. If two level-$n$ edges that are identified by $\phi_t$ have boundary points of $T^n(\bft)$, then we also identify the boundary points when we define $T^n(t)$.
\end{proof}

\begin{defn}[Non-expanding spine of a tile]
Let $t$ be a level-0 tile of a finite subdivision rule $\R$. The {\it level-$n$ train-track of $t$}, denoted by $T^n(t)$, is a train-track properly embedded in $(t,\R^n(t),\V(\R^n(t)))$ defined in Proposition \ref{prop:TrainTile}. Simply, $T^n(t)$ defined is by the following procedure:
\begin{itemize}
    \item [(1)] Draw the bones of level-$n$ recurrent subbands of $t$.
    \item [(2)] Merge intersecting bones, which form complete graphs by Lemma \ref{lem:CrossCondiCplGraph}, to a star-like trees.
    \item [(3)] Zip-up bones meeting at a boundary point of $t$ as in Figure \ref{fig:PolyTrain}.
\end{itemize}
\end{defn}

\begin{defn}(Non-expanding spines) Let $\R$ be a finite subdivision rule. For every $n\ge0$, the {\it level-$n$ non-expanding spine $N^n$ of $\R$} is a train-track that is defined by the union of level-$n$ non-expanding spines $T^n(t)$ of all the level-$0$ tiles $t$. When two tiles $t$ and $t'$ have the common level-$n$ edge $e^n$ such that both $T^n(t)$ and $T^n(t')$ have boundary points on $e^n$, then we identify the boundary points when we take the union to define $N^n$.
\end{defn}

\begin{rem}\label{rem:NonexpspineUnionofSpines}
From the definition, we may consider bones of level-$n$ recurrent bands as curves contained in the level-$n$ non-expanding spine $N^n$ or non-expanding spine $T^n(t)$ of tiles $t$. We will in particular consider curves supported in $N^n$ as a concatenation of bones of bands.
\end{rem}

\begin{defn}[Essential non-expanding spines]
Let $\R$ be a finite subdivision rule and $f:\R(S_\R)\to S_\R$ be its subdivision map. Let $A\subset \V(S_\R)$ be a set of marked points. We say that the level-$n$ non-expanding spine $N^n$ of $\R$ is {\it essential relative to $A$} if it contains (more precisely carries as a train-track) a closed curve that is homotopic relative to $A$ neither to a point nor to some iterate of a peripheral loop of a Julia point in $A$.
\end{defn}

\begin{eg}\label{eg:nonexpspine1}
\begin{figure}[h!]
    \centering
       \def\svgwidth{0.5\textwidth}
        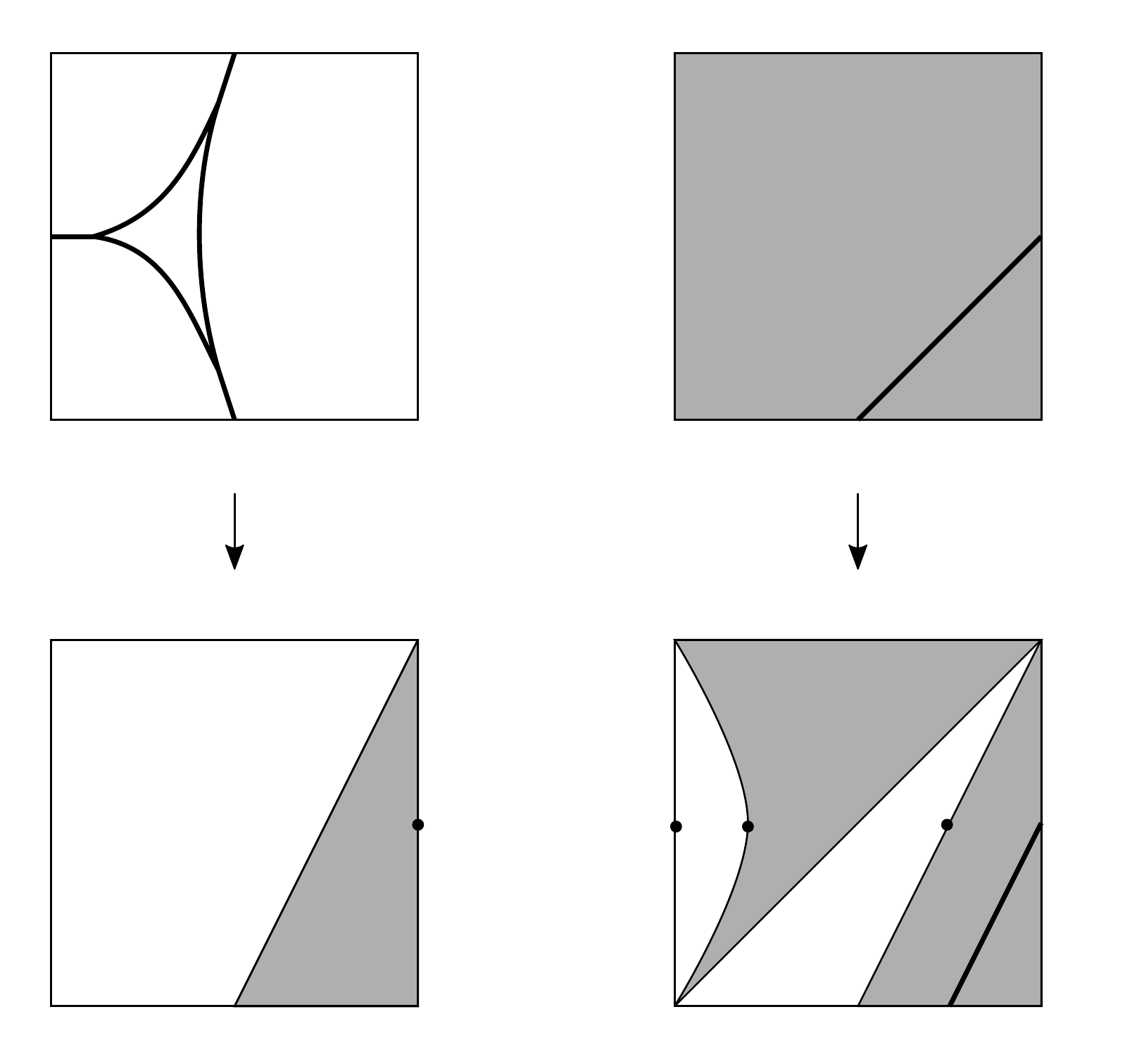
	\caption{An example of level-$0$,$1$ non expanding spines, which are non-essential.}
    \label{fig:nonexpspine1}
\end{figure}
See Figure \ref{fig:nonexpspine1}. The upper two squares are level-$0$ tiles (or tile types). One tile is shaded and the other is not shaded. There are four level-$0$ edges (or edge types) $A,\,B,\,C,$ and $D$. The lower two squares are subdivisions at level-$1$. The bi-recurrent components of level-$0$ and level-$1$ non-expanding spine are both homotopic to a peripheral loop of a Julia vertex. By Theorem \ref{thm:nonexpspine}, the subdivision map does not have a Levy cycle. Later, in Example \ref{eg:nonexpspine1Continued}, we will show that the subdivision map also does not have a Thurston obstruction.

\end{eg}

\begin{defn}[Nested sequence of non-expanding spines]\label{defn:NestSeqNonExpSpine}
For any $n>m\ge0$, there is a natural map $\phi^n_m:N^n\to N^m$ sending every level-$n$ recurrent bone to its level-$m$ parent. Since $\R^n(S_\R)$ is a subdivision of $\R^m(S_\R)$, we may consider $N^n$ and $N^m$ as train-tracks in the same complex $\R^m(S_\R)$. Then the map $\phi^n_m$ is ambient homotopic to the the identity map relative to the $1$-skeleton $\R^m(S_\R)^{(1)}$ of the level-$m$ subdivision complex, i.e., there is an extension $\phi^n_m:\R^m(S_\R)\to \R^m(S_\R)$ that sends, possibly non-homeomorphically, every edge to the same edge and fixes vertices point-wisely. It is straightforward from definitions that the map $\phi^n_m:N^n\to N^m$ is a train-track map. Then we have a sequence of train-track maps
\[
    N^0 \xleftarrow[]{\phi^1_0} N^1 \xleftarrow[]{\phi^2_1} N^2 \xleftarrow[]{\phi^3_2} \cdots.
\]
We call this sequence the {\it nested sequence of non-expanding spines.}
\end{defn}

For example, in Figure \ref{fig:TripotTraintrack}, each tripod is mapped to a curve by $\phi^1_0$ and $\phi^2_1$.

\begin{prop}
Let $\R$ be a finite subdivision rule and $N^n$ be the level-$n$ non-expanding spine of $\R$. For any $n>m$, if $N^n$ is essential relative to $A$ then $N^m$ is also essential relative to $A$
\end{prop}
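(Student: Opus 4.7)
The plan is to push an essential curve downward through the natural train-track maps $\phi^n_m:N^n\to N^m$ described in Definition \ref{defn:NestSeqNonExpSpine}, using the fact that these maps are ambient homotopic to the identity relative to the $1$-skeleton of $\R^m(S_\R)$, which in particular contains $A$.

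First, suppose $N^n$ is essential relative to $A$. By definition there exists a closed curve $\gamma$ carried by $N^n$ that is not homotopic, relative to $A$, either to a point or to some iterate of a peripheral loop of a Julia point. Realize $\gamma$ as a closed train-path $\gamma:S^1\to N^n$; this is possible by the definition of being carried. Next, consider the image $\gamma':=\phi^n_m\circ \gamma:S^1\to N^m$. Because $\phi^n_m:N^n\to N^m$ is a train-track map, the gate-preservation property forces legal turns at every vertex of $N^m$ that $\gamma'$ traverses, so $\gamma'$ is itself a train-path in $N^m$, hence carried by $N^m$.

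It remains to argue that $\gamma'$ is still essential in the required sense. The extension $\phi^n_m:\R^m(S_\R)\to \R^m(S_\R)$ described in Definition \ref{defn:NestSeqNonExpSpine} is ambient homotopic to the identity relative to the $1$-skeleton $\R^m(S_\R)^{(1)}$. Since $A\subset \V(S_\R)\subset \R^m(S_\R)^{(1)}$, this homotopy fixes every marked point. Consequently $\gamma'=\phi^n_m\circ \gamma$ is freely homotopic to $\gamma$ relative to $A$. Because homotopy classes of closed curves relative to $A$ are preserved, $\gamma'$ is neither homotopic to a point nor to an iterate of a peripheral loop of a Julia point in $A$. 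Hence $N^m$ carries the essential curve $\gamma'$ and is therefore essential relative to $A$.

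No real obstacle is expected: the two ingredients (train-track maps send train-paths to train-paths, and a map homotopic to the identity rel $A$ preserves homotopy classes rel $A$) are both immediate from the relevant definitions already set up in this section.
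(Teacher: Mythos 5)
Your proof takes the same approach as the paper's: push an essential closed curve carried by $N^n$ down to $N^m$ via the map $\phi^n_m$, and use that $\phi^n_m$ is ambient homotopic to the identity relative to $A$ to conclude the image has the same homotopy type; you make explicit the homotopy-preservation step that the paper's three-sentence proof only implicitly invokes.

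The one place to be careful is your claim that ``the gate-preservation property forces legal turns at every vertex of $N^m$.'' The paper's definition of a train-track map only requires a one-sided implication: directions in the same gate are sent to directions in the same gate. As written, it does not rule out that a legal turn of $\gamma$ (incoming and outgoing directions in distinct gates at a vertex of $N^n$) is collapsed by $\phi^n_m$ into an illegal one (both directions landing in a single gate at the image vertex). To conclude that $\phi^n_m$ sends train paths to train paths you need either the converse implication in the definition, or a direct check that the specific maps $\phi^n_m$ never merge gates along the image of a train path. The paper's own assertion that $\phi^n_m(\gamma)$ is ``supported in $N^m$ with the same property'' is silent on exactly this point, so your write-up is at the same level of detail as the original; under the natural reading where a train-track map respects the gate partition in both directions, your argument is complete.
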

\begin{proof}
Since $N^n$ is essential, there is a close curve $\gamma$ supported in $N^n$ such that $\gamma$ is neither homotopically trivial nor homotopic to some iterates of the peripheral loop of a Julia point in $A$. Then $\phi^n_m(\gamma)$ is a closed curve supported in $N^m$ with the same property. Then $N^m$ is essential.
\end{proof}

\proofstep{Restriction of the ranges of  non-expanded recurrent curves to $N^n$} Let $\R$ be a finite subdivision rule. Let $A\subset \V(S_\R)$ be a set of marked points. It is straightforward from Proposition \ref{prop:TrainTile} that a closed curve $\gamma:I\to S^2\setminus A$ is homotopic relative to $A$ to a level-$n$ recurrent non-expanded curve if and only if it is carried by $N^n$ in $(S^2,A)$. Hence, when considering a level-$n$ non-expanded recurrent curve $\gamma:I\to \R^n(S_\R)$, we can restrict the range to $N^n \subset \R^n(S_\R)$ and think of $\gamma$ as a curve supported in $N^n$.

\begin{prop}\label{prop:InfGenSeq}
	Let $\R$ be a finite subdivision rule and $\TmapA$ be the subdivision map where $A\subset \V(S_\R)$ is a set of marked points. Let $\ord:A\to [2,\infty]_\Zbb$ be a hyperbolic orbisphere structure of $\TmapA$. Then the level-$n$ non-expanding spine $N^n$ is homotopically infinite for every $n\ge 0$ if and only if there is a genealogical sequence of level-$n$ homotopically infinite non-expanded recurrent curves $\{\gamma^n:(-\infty,\infty)\to N^n \subset \R^n(S_\R)\}_{n \ge 0}$.
\end{prop}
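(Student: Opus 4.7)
The implication ($\Leftarrow$) is immediate: any $\gamma^n$ in such a genealogical sequence is, up to cyclic parameterization, a homotopically infinite closed non-expanded recurrent curve carried by $N^n$, so $N^n$ is homotopically infinite for every $n$.

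For ($\Rightarrow$), my plan is a diagonal compactness argument. For each $n\ge 0$, use the hypothesis to pick a closed non-expanded recurrent curve $\ell^n$ supported in $N^n$ of infinite order in $\pi_1(S^2,A,\ord)$, and iterate it to form a periodic bi-infinite curve $\delta^n:\mathbb{R}\to N^n$, which is then homotopically infinite. For $n\le N$, let $\delta^{N,n}$ denote the level-$n$ parent of $\delta^N$; this is again a non-expanded recurrent bi-infinite curve (parents of recurrent bones are recurrent), and it shares the free homotopy class of $\delta^N$, since parent and child bones in a common band are homotopic with endpoints sliding along the two side edges, and these local homotopies glue along shared endpoints. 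In particular $\delta^{N,n}$ is homotopically infinite. Because the set of level-$n$ recurrent bones is finite for every $n$, a standard Cantor diagonal procedure then yields a subsequence $N_1<N_2<\cdots$ along which, for every $n$ and every $i\in\mathbb{Z}$, the bone $\delta^{N_j,n}([i,i+1])$ is eventually constant in $j$. Defining $\gamma^n([i,i+1])$ to be this eventual value produces a candidate sequence satisfying (i) each $\gamma^n$ is a non-expanded recurrent bi-infinite curve carried by $N^n$, and (ii) $\gamma^{n+1}$ is a level-$(n+1)$ child of $\gamma^n$, by the transitivity of the parent--child relation among the $\delta^{N_j,n}$'s.

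The essentially only remaining task, which I expect to be the principal obstacle, is to verify that each $\gamma^n$ is homotopically infinite. Because a level-$n$ child of $\gamma^m$ has the same free homotopy class in $S^2\setminus A$ as $\gamma^m$, it suffices to establish this at level $n=0$. The danger is that the periods $L_{N_j}$ of $\delta^{N_j,0}$ tend to infinity, so a naive parameterization may cause the pointwise limit $\gamma^0$ to consist of long iterations of a null-homotopic or finite-order peripheral sub-loop of $N^0$, yielding a curve whose lift to the orbisphere universal cover remains bounded in $\mathbb{D}$. To handle this I plan to reparameterize each $\delta^{N_j}$ by a shift $i_{N_j}\in\mathbb{Z}$ before diagonalizing, chosen so that the window $\delta^{N_j,0}|_{[i_{N_j}-M,i_{N_j}+M]}$ realizes an element of infinite order in $\pi_1(S^2,A,\ord)$ with $M$ independent of $j$. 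The existence of such shifts relies on the discreteness of the hyperbolic orbisphere group $\pi_1(S^2,A,\ord)$ in $\mathrm{Isom}(\mathbb{H}^2)$, which gives a positive lower bound on translation lengths of loxodromic elements and a positive minimum cusp displacement for parabolic ones, combined with the finiteness of the train-track $N^0$ and Lemma \ref{lem:FinUptoCombiEq}; together these imply that every homotopically infinite closed train path in $N^0$ admits a cyclic shift whose initial train-sub-path of length bounded by a constant depending only on $N^0$ and $\ord$ realizes an infinite-order element of $\pi_1(S^2,A,\ord)$. Aligning these bounded windows at index $0$ via the shifts $i_{N_j}$ and then carrying out the diagonal extraction produces a limit $\gamma^0$ whose every sufficiently large window realizes an infinite-order element, so both ends of its lift to $\mathbb{D}$ escape every compact set. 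Hence $\gamma^0$, and therefore every $\gamma^n$, is homotopically infinite, and the sequence $\{\gamma^n\}_{n\ge 0}$ is the required genealogical sequence.
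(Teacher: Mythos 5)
The overall architecture of your proof (diagonal compactness on non-expanded recurrent bi-infinite curves supported in $N^n$, combined with the observation that parent and child curves are freely homotopic in $S^2\setminus A$) is in the same spirit as the paper's, which phrases it as an inverse limit of compact ultrametric spaces. You also correctly identify the central danger: a naive pointwise limit of periodic curves of growing period can collapse onto iterates of a homotopically finite sub-loop of $N^0$. However, the repair you propose does not close the gap.

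There are two separate problems with the shift-and-align step. First, aligning a single window at index $0$ via the shifts $i_{N_j}$ only controls $\gamma^0|_{[-M,M]}$; the diagonal extraction then populates the windows far from $0$ with whatever the tails of the $\delta^{N_j,0}$ happen to be, and nothing in your argument forces those tails to be well-behaved. Your conclusion that the limit has ``every sufficiently large window'' good is therefore unsupported. Second, and more fundamentally, even if every window $[-L,L]$ of $\gamma^0$ did (after closing up by a bounded arc) represent an infinite-order element of $\pi_1(S^2,A,\ord)$, this would not imply that both ends of a lift of $\gamma^0$ to $\Dbb$ escape to $\partial\Dbb$. Consider a bi-infinite train path of the form $\cdots aaab\,aaa\cdots$ in $N^0$ where $a$ is a train loop that is null-homotopic in $\pi_1(S^2,A,\ord)$ and $b$ is a train path making the concatenation represent an infinite-order element. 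Every window $[-L,L]$ containing $b$ represents the same infinite-order element $b$ up to conjugation, yet the lift of this bi-infinite curve is bounded: the two ends stay in bounded neighborhoods of two lifts of the $a$-cycle. This is exactly the kind of degenerate limit that can arise when the periods of the $\delta^{N_j,0}$ grow, with the degenerate piece appearing in the part of $\delta^{N_j,0}$ that you did not align. So the implication ``infinite-order windows $\Rightarrow$ escaping lift'' fails, and the constructed $\gamma^0$ need not be homotopically infinite.

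The paper evades this by building compactness into a smaller space: it fixes $C^n$ to be the set of bi-infinite non-expanded recurrent curves carried by $N^n$ that \emph{contain no homotopically finite closed subcurve}, and observes that for such a curve every lift to $p^{-1}(N^n)\subset\Dbb$ is an embedded path. Since each ball $B(\widetilde{x},r)$ meets only finitely many edges of $p^{-1}(N^n)$, an embedded lifted path is forced to exit $B(\widetilde{x},r)$ after a bounded number of bones, and the diagonal limit of such lifts automatically has both ends escaping every compact set. The no-finite-subloop condition is exactly what your construction lacks, and it is what makes the compactness argument go through. To fix your proof you would need either to show that the $\ell^{N_j}$ can be chosen (and shifted) so that all the $\delta^{N_j,0}$ eventually agree everywhere, or, more robustly, to pass to reduced curves (i.e.\ replace each $\delta^{N_j,0}$ by a bi-infinite curve in $N^0$ that lifts to an embedded arc) before diagonalizing, as the paper does.
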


\begin{proof}

If $N^n$ is not homotopically infinite, then every closed curve is homotopically finite, i.e., either trivial or some iterate of the peripheral loop of $a\in A$ with $\ord(a)< \infty$. Since $N^n$ consists of finitely many bones, any infinite curve in $N^n$ is approximated by closed curves. Then any infinite curve supported in $N^n$ also cannot be homotopically infinite, so the desired genealogical sequence of curves does not exist.

Suppose that $N^n$ is homotopically infinite for every $n\ge0$. We are going to define a set $C^n$ of homotopically infinite non-expanded recurrent curves supported $N^n$ with maps $\{\phi^n_m:C^n\to C^m~|~n>m\ge0\}$ which send any level-$n$ non-expanded recurrent curves to their level-$m$ parents. We will also (1) define a metric $d_n$ on each $C^n$ for which the continuity of $\phi^n_m$ easily follows and (2) show that every $C^n$ is compact. Then the inverse limit of $\{\phi^n_m:C^n\to C^m~|~n>m\ge0\}$ is non-empty whose every element yields a desired genealogical sequence of homotopically infinite non-expanded recurrent curves.

Recall that a level-$n$ non-expanded curve $\alpha:(-\infty,\infty) \to \R^n(S_\R)$ is defined to satisfy that for any $n\in \Zbb$, $\alpha([n,n+1])$ is a level-$n$ bone. Let us define the set $C^n$ as the collection of (parametrized) level-$n$ homotopically infinite non-expanded recurrent curves $\gamma^n:(-\infty,\infty)\to N^n$ up to reparametrization such that $\gamma^n$ does not contain a homotopically finite closed curve, which is a closed curve whose free homotopy class corresponds to the conjugate class of a torsion element of $\pi_1(S^2,A,\ord)$. Each $C^n$ is non-empty because $N^n$ is homotopically infinite. We define a metric $d_n$ on $C^n$ by $d_n(\gamma^n,\delta^n)=2^{-m}$ where $\gamma^n,\delta^n\in C^n$ and $m>0$ is the minimal integer satisfying $\gamma^n([-m,m])\neq \delta^n([-m,m])$ as unions of level-$n$ bones. It is easy to show that for $\alpha^n,\beta^n,\gamma^n\in C^n$,
\[
    d_n(\alpha^n,\gamma^n) = \max( d_n(\alpha^n,\beta^n), d_n(\beta^n,\gamma^n) )
\]
so that $d_n$ is indeed a metric. It is immediate from the definition that $\phi^n_m:(C^n,d_n)\to (C^m,d_m)$ is distance non-increasing. Hence $\phi^n_m$ is uniformly continuous.

Lastly, let us show that $(C^n,d_n)$ is sequentially compact. Suppose that $\{\gamma^n_i\in C_n\}_{i\ge 1}$ is any sequence in $C^n$. Recall that $\gamma^n_i([0,1])$ is a bone of level-$n$ band so that, in particular, $\gamma^n_i(0)$ is a point in $N^n \cap \R^n(S_\R)^{(1)}$, which is a finite set. By dropping to a subsequence, we may assume that there exists $x\in N^n$ such that $\gamma^n_i(0)=x$ for any $i>0$. Let $p:\Dbb \to S^2 \setminus A^\infty$ is the orbifold universal covering map of $(S^2,A,\ord)$, where $A^\infty=\{a\in A~|~ \ord(a)=\infty\}$. Choose $\widetilde{x} \in p^{-1}(x)$. For every $i>0$, there exists a unique lifting $\widetilde{\gamma}^n_i:(-\infty,\infty)\to p^{-1}(N^n)$ with $\widetilde{\gamma}^n_i(0)=\widetilde{x}$. Define $S_0:=\{\widetilde{\gamma}^n_i\}_{i\ge 1}$. 

Fix $r>0$. Let $B(\widetilde{x},r)\subset \Dbb$ denote the hyperbolic ball of radius $r>0$ with the center at $\widetilde{x}$. For any $i>0$ and some $k_i,l_i\in \Zbb_{\ge0}$, we say that $\widetilde{\gamma}^n_i([-k_i,l_i])$ is the {\it longest initial subcurve of $\widetilde{\gamma}^n_i$ staying in $B(\widetilde{x},r)$} if $\widetilde{\gamma}^n_i([-k_i,l_i])\subset B(\widetilde{x},r)$ and $\widetilde{\gamma}^n_i(-k_i-1), \widetilde{\gamma}^n_i(l_i+1) \notin B(\widetilde{x},r)$. The intersection  $B(\widetilde{x},r)\cap p^{-1}(N^n)$ has at most finitely many edges of $p^{-1}(N^n)$. Since every element of $C^n$ does not contain a homotopically finite closed curve, any $\widetilde{\gamma}^n_i$ does not contain a closed curve. It follows that there exists $m_1<m_2<\cdots$ such that every element of the subsequence $\{\widetilde{\gamma}^n_{m_i}\}_{i>0}$ of $S_0$ has the same longest initial subcurve staying in $B(\widetilde{x},r)$.

Take a sequence $0<r_1<r_2<\cdots$ with $r_i\to \infty$. For any $m>0$ define $S_m$ as a subsequence of $S_{m-1}$ whose elements have the same longest initial subcurve staying in $B(\widetilde{x},r_m)$. By taking the diagonal of a sequence of subsequences $S_0,S_1,S_2,\dots$, we have a subsequence $\{\widetilde{\gamma}^n_{m_i}\}$ of $S_0$ with the following property: There exist two strictly increasing sequences of positive integers $\{a_i\}_{i\ge0}$ and $\{b_i\}_{i\ge0}$ such that
\begin{itemize}
    \item[(1)] $\widetilde{\gamma}^n_{m_i}([-a_i,b_i])$ is the longest initial subcurve staying in $B(\widetilde{x},r_i)$ and
    \item[(2)] $\widetilde{\gamma}^n_{m_j}([-a_i,b_i])=\widetilde{\gamma}^n_{m_i}([-a_i,b_i])$ for any $j>i$, i.e., the initial subcurves are accumulated.
\end{itemize}
We define a curve $\widetilde{\gamma}^n:(-\infty,\infty)\to p^{-1}(N^n)$ by $\widetilde{\gamma}^n|_{[-a_i,b_i]}=\widetilde{\gamma}^n_i|_{[-a_i,b_i]}$ for every $i\ge 1$, which is well-defined by (2). It follows from (1) that for any $i\ge 1$ we have
\[
    d_\Dbb(\widetilde{x},\widetilde{\gamma}^n(-a_i-1)),~d_\Dbb(\widetilde{x},\widetilde{\gamma}^n(b_i+1))>r_i,
\]
where $d_\Dbb$ is the hyperbolic metric on $\Dbb$. Hence, $\gamma^n:=p \circ \widetilde{\gamma}^n$ is homotopically infinite. Then $\gamma^n_{m_i}\to\gamma^n\in C^n$, which implies $(C^n,d_n)$ is sequentially compact.

\end{proof}

\begin{thm}\label{thm:nonexpspine}
Let $\R$ be a finite subdivision rule and $f:\R(S_\R)\to S_\R$ be its subdivision map which is not doubly covered by a torus endomorphism. Let $A\subset \V(S_\Rcal)$ be a set of marked points, i.e., $P_f \cup f(A)\subset A$. Then the \pcf branched covering $\TmapA$ has a Levy cycle if and only if the level-$n$ non-expanding spine $N^n$ is essential relative to $A$ for every $n\ge0$.
\end{thm}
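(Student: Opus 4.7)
The plan is to concatenate Theorem \ref{thm:LevyHomoInf} with Proposition \ref{prop:InfGenSeq}. Together they almost immediately give the equivalence in the theorem, except that they are phrased in terms of ``homotopically infinite'' curves (which ignores peripheral loops around $a\in A$ with $\ord(a)<\infty$), whereas the theorem uses ``essential relative to $A$'' (which ignores peripheral loops at Julia points of $A$). The only real work is to choose a hyperbolic orbisphere structure that aligns the two notions.

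First I would fix an orbisphere structure $\ord:A\to [2,\infty]_\Zbb$ with $\ord(a)=\infty$ exactly when $a$ is a Fatou point. Such an $\ord$ exists: on Fatou points the value is forced to be $\infty$, and on Julia points I would pick finite values that are sufficiently large common multiples so that $\ord(a)\cdot \deg_f(a)\mid \ord(f(a))$ holds (for instance, a uniform multiple of the local degrees along the finite forward orbits through Julia points). Hyperbolicity of $(S^2,A,\ord)$ can then be arranged by making these finite weights large enough, using that $f$ is not doubly covered by a torus endomorphism to rule out the degenerate low-orbifold cases. With this choice, a closed curve is homotopically infinite iff it is neither nullhomotopic nor homotopic to an iterate of a peripheral loop around a Julia point of $A$, which is exactly the essentiality condition in the definition of ``$N^n$ is essential relative to $A$''. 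Thus ``$N^n$ is essential relative to $A$'' is equivalent to ``$N^n$ carries a homotopically infinite closed curve with respect to $\ord$''.

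The theorem then follows by chaining equivalences. The hypothesis that $N^n$ is essential relative to $A$ for every $n\ge 0$ is equivalent, by the previous paragraph, to $N^n$ being homotopically infinite for every $n\ge 0$. By Proposition \ref{prop:InfGenSeq}, this is equivalent to the existence of a genealogical sequence of homotopically infinite non-expanded recurrent bi-infinite curves $\{\gamma^n:(-\infty,\infty)\to N^n \subset \R^n(S_\R)\}_{n\ge 0}$. By Theorem \ref{thm:LevyHomoInf}, whose conclusion is explicitly independent of the choice of hyperbolic orbisphere structure, the existence of such a genealogical sequence is in turn equivalent to $\TmapA$ having a Levy cycle. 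The only point requiring care is the existence of a suitable hyperbolic orbisphere structure in the first step; once that is set up, no further dynamical or combinatorial input is needed and the theorem is a formal consequence of the two earlier results.
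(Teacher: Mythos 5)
Your proposal is correct and follows essentially the same route as the paper: fix a hyperbolic orbisphere structure with $\ord(a)=\infty$ exactly at Fatou points of $A$ so that ``homotopically infinite'' matches ``essential relative to $A$'', then chain Theorem \ref{thm:LevyHomoInf} (equivalently, Propositions \ref{prop:LevytoGeneaSeq} and \ref{prop:GeneaSeqtoLevy}) with Proposition \ref{prop:InfGenSeq}. The only cosmetic difference is that the paper cites the two propositions directly rather than their packaged form in Theorem \ref{thm:LevyHomoInf}.
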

\begin{proof}
Let $\ord:A\to[2,\infty]_\Zbb$ be an orbisphere structure. Any multiplication of $\ord$ by a positive integer gives rise to another orbisphere structure with strictly decreased Euler characteristic. Similarly, changing the order of every Fatou point in $A$ into the infinity also yields an orbisphere structure with strictly decreased Euler characteristic, if some order was actually changed. Hence, we always have a hyperbolic orbisphere structure $\ord:A\to[2,\infty]_\Zbb$ with the property that $\ord(a)=\infty$ if and only if $a\in A$ is a Fatou point. Then a closed curve is homotopically infinite with respect to $\ord$ if and only if it is neither homotopic relative to $A$ to a point nor to some iterate of a peripheral loop of a Julia point in $A$. Then the theorem follows from Proposition \ref{prop:LevytoGeneaSeq}, \ref{prop:GeneaSeqtoLevy}, and \ref{prop:InfGenSeq}.
\end{proof}

\section{Graph intersecting obstruction}

\subsection{Graph intersecting obstructions}\label{sec:graph int obs}
Suppose that $\TmapA$ is a \pcf branched covering. A graph $G\subset S^2$ is {\it forward invariant under $f$ up to isotopy relative to $A$} if there exist a subgraph $H$ of $f^{-1}(G)$ and a homeomorphism $\phi:S^2\to S^2$ such that $\phi(H)=G$ and $\phi$ is isotopic to the identity map relative to $A$. A graph $G$ is {\it forward invariant under $f$} if $f(G)\subset G$. A multicurve $\Gamma$ on $(S^2,A)$ is {\it forward invariant under $f$ up to isotopy relative to $A$} if it is so as a graph. A multicurve $\Gamma$ is {\it backward invariant under $f$ up to isotopy relative to $A$}, or {\it $f$-stable}, if every connected component of $f^{-1}(\gamma)$ for $\gamma \in \Gamma$ is either isotopic relative to $A$ to an element of $\Gamma$ or peripheral to $A$. When $f$ and $A$ are understood, we omit ``under $f$'' and ``relative to $A$''.

\begin{prop}\label{prop:inv upto iso to inv}
Let $f:(S^2,A)\righttoleftarrow$ be a \pcf branched covering and $G$ be a graph that is forward invariant up to isotopy. Then there exists $\iota:(S^2,A)\righttoleftarrow$ which is isotopic to $id_{S^2}:(S^2,A)\righttoleftarrow$ relative to $A$, such that $G$ is forward invariant under a \pcf branched covering $g:(S^2,A)\righttoleftarrow$ defined by $g:=f \circ \iota$. Especially, $f$ and $g$ are combinatorially equivalent by $id_{S^2}$ and $\iota$.
\end{prop}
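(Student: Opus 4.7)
The plan is to unpack the hypothesis directly. By the definition of forward invariance up to isotopy, there is a subgraph $H\subset f^{-1}(G)$ and a homeomorphism $\phi:(S^2,A)\to(S^2,A)$ isotopic to $\mathrm{id}_{S^2}$ relative to $A$ with $\phi(H)=G$. I would set $\iota:=\phi^{-1}$; reversing the isotopy shows $\iota$ is also isotopic to $\mathrm{id}_{S^2}$ relative to $A$, so in particular $\iota|_A=\mathrm{id}_A$. Defining $g:=f\circ\iota$, the key computation $g(G)=f(\iota(G))=f(\phi^{-1}(G))=f(H)\subset G$ yields forward invariance of $G$ under $g$ for free, since $H\subset f^{-1}(G)$.

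Next I would verify that $g:(S^2,A)\righttoleftarrow$ is a legitimate marked \pcf branched covering. Since $\iota$ is an unramified homeomorphism fixing $A$ pointwise, the critical set satisfies $\Omega_g=\iota^{-1}(\Omega_f)=\Omega_f\subset A$, as $\Omega_f\subset P_f\subset A$ is pointwise fixed by $\iota$. Using $\iota|_A=\mathrm{id}_A$ together with the fact that each $f^n(\Omega_f)$ lies in $P_f\subset A$, a short induction on $n$ gives $g^n(\Omega_g)=f^n(\Omega_f)$ for every $n\geq 1$, whence $P_g=P_f\subset A$ and $g(A)=f(\iota(A))=f(A)\subset A$, as required.

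For the combinatorial equivalence, the cleanest choice of witnesses is $\phi_0:=\iota^{-1}$ and $\phi_1:=\mathrm{id}_{S^2}$. Both homeomorphisms fix $A$ setwise and are isotopic to $\mathrm{id}_{S^2}$ relative to $A$ (hence isotopic to each other), and the square commutes on the nose:
\[
g\circ\phi_0=(f\circ\iota)\circ\iota^{-1}=f=\mathrm{id}\circ f=\phi_1\circ f.
\]
This exhibits $f$ and $g$ as combinatorially equivalent; since $\iota^{-1}\simeq\iota\simeq\mathrm{id}_{S^2}$ relative to $A$, the witnesses lie in the pair of isotopy classes $(\mathrm{id}_{S^2},\iota)$ advertised by the statement.

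There is essentially no obstacle in this proof; it is a two-line unpacking of the definition, with the only care needed being bookkeeping of the composition order. Specifically, the position of $\iota$ in $g=f\circ\iota$ forces $\iota^{-1}$ (rather than $\iota$ itself) into the top row of the commutative square, but this is purely a matter of selecting an isotopic representative and does not affect the conclusion. Every substantive claim in the proposition reduces to the single identity $\phi\circ\phi^{-1}=\mathrm{id}$ combined with $H\subset f^{-1}(G)$.
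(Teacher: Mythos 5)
Your approach matches the paper's essentially step for step: take the ambient isotopy $\phi$ from the definition of forward invariance up to isotopy, set $\iota:=\phi^{-1}$ so that $\iota(G)=H\subset f^{-1}(G)$, define $g:=f\circ\iota$, observe $g(G)=f(H)\subset G$, and read off the commutative square from $g=f\circ\iota$ (the paper records this as $\mathrm{id}\circ g=f\circ\iota$). Your witnesses $(\iota^{-1},\mathrm{id})$ lie in the same isotopy classes as the paper's $(\iota,\mathrm{id})$, so this is the same argument, just with the inversion made explicit.

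One small slip in your extra verification that $g$ is a marked pcf covering: the inclusion $\Omega_f\subset P_f$ does not hold in general (e.g.\ for $z^2+i$, where $0\in\Omega_f\setminus P_f$), so $\iota$ need not fix $\Omega_f$ pointwise, and $\Omega_g=\iota^{-1}(\Omega_f)$ may genuinely differ from $\Omega_f$. This does not damage your conclusion: what you actually use is $\iota(\Omega_g)=\Omega_f$, whence $g(\Omega_g)=f(\iota(\Omega_g))=f(\Omega_f)\subset P_f\subset A$; since $g(A)\subset A$, it follows that $g^k(\Omega_g)\subset A$ for all $k\ge1$, so $P_g\subset A$ and $P_g$ is finite. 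Your induction $g^n(\Omega_g)=f^n(\Omega_f)$ for $n\ge1$ is still correct for this same reason (once the orbit lands in $A$, where $\iota$ is the identity). Just delete the unjustified chain $\Omega_f\subset P_f\subset A$.
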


\begin{proof}
Let $H$ be a subgraph of $f^{-1}(G)$ isotopic to $G$ rel $\V(G)$. By extending the isotopy to $S^2$, we have $\iota:(S^2,A) \to (S^2,A)$ such that $\iota(G)=H$ and $\iota$ and $id_{S^2}$ are isotopic relative to $A$. Let $g:=f \circ \iota$. Then $id \circ g=f \circ \iota$ and $g(G)=f(\iota(G))=f(H) \subset G$.
\end{proof}

Due to Proposition \ref{prop:inv upto iso to inv}, we may consider forward invariant graphs instead of graphs that are forward invariant up to isotopy when discussing properties of combinatorial equivalence classes, such as Levy cycles and Thurston obstructions.

\vspace{5pt}
Let $\Gamma$ be a multicurve in $S^2 \setminus A$. The {\it Thurston linear transformation of $\Gamma$} is a linear map $f_\Gamma:\Rbb^\Gamma \to \Rbb^\Gamma$ defined by
\[
f_\Gamma (\gamma)=\sum\limits_{\gamma'\subset f^{-1}(\gamma)}\frac{1}{\deg (f|_{\gamma'}:\gamma' \to \gamma)} [\gamma']_\Gamma
\]
where $\gamma'$ is a connected component of $f^{-1}(\gamma)$ and $[\gamma']_\Gamma$ is an element of $\Gamma$ isotopic to $\gamma'$ if exists. If no such connected component exists, then the sum is defined to be zero. Since $f_\Gamma$ is a non-negative matrix, it has a non-negative real eigenvalue $\lambda(f_\Gamma)$ that is the spectral radius of $f_\Gamma$. If $\lambda(f_\Gamma)\ge 1$, then $\Gamma$ is a {\it Thurston obstruction}. A $n\times n$ non-negative square matrix $M$ is {\it irreducible} if for each $i,\,j$ with $1 \le i,\,j\le n$ there exists $k\ge1$ such that the $(i,\,j)$-entry of $M^k$ is positive. An {\it irreducible multicurve  $\Gamma$} is a multicurve whose Thurston linear transformation $f_\Gamma$ is irreducible. An {\it irreducible Thurston obstruction} is an irreducible multicurve that is a Thurston obstruction.

\begin{rem}
A Thurston obstruction $\Gamma$ is usually assumed to be $f$-stable. For any multicurve $\Gamma$ with $\lambda(f_\Gamma)\neq 0$, there exists a sub-multicurve $\Gamma' \subset \Gamma$ such that $\Gamma'$ is irreducible and $\lambda(f_{\Gamma'})=\lambda(f_\Gamma)$. Such $\Gamma'$ is determined as the multicurve of an irreducible diagonal block $A_i$ of the upper-triangular block form \eqref{eqn:uppper_tri_block} of $f_\Gamma$ with $\lambda(A_i)=\lambda(f_\Gamma)$. By Lemma \ref{lem:forwardinvariant to invariant}, $\Gamma'$ extends to a $f$-invariant multicurve $\Gamma''$ with $\lambda(f_{\Gamma''})\ge \lambda(f_{\Gamma})$ . Hence we may drop the $f$-condition condition from Thurston's characterization.
\end{rem}

\begin{lem}\label{lem:irred is forward inv}
If a multicurve $\Gamma$ is irreducible, then $\Gamma$ is forward invariant up to isotopy.
\end{lem}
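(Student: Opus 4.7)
The strategy is to unpack irreducibility of the Thurston transition matrix $M := f_\Gamma$ into a combinatorial statement about preimage components, then pick one such component per curve in $\Gamma$ and assemble them into the desired subgraph $H \subset f^{-1}(\Gamma)$ which is isotopic to $\Gamma$.

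First I would observe that irreducibility forces every row of $M$ to be nonzero. Writing $\Gamma = \{\gamma_1,\dots,\gamma_n\}$, the definition $M_{ji} > 0$ says precisely that some connected component of $f^{-1}(\gamma_i)$ is isotopic rel $A$ to $\gamma_j$. If the $j$-th row of $M$ were identically zero, then $(M^k)_{jj} = 0$ for every $k \ge 1$, contradicting the existence, guaranteed by irreducibility, of $k$ with $(M^k)_{jj} > 0$. Hence for every $j$ there is some index $i = i(j)$ and some connected component $\gamma_j' \subset f^{-1}(\gamma_{i(j)})$ with $\gamma_j'$ isotopic rel $A$ to $\gamma_j$.

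Next I would assemble these choices into a sub-multicurve $H := \bigcup_{j=1}^n \gamma_j' \subset f^{-1}(\Gamma)$. The components of $f^{-1}(\Gamma)$ are pairwise disjoint simple closed curves (different $\gamma_i$'s are disjoint and so are their preimages, and distinct components of $f^{-1}(\gamma_i)$ are disjoint by definition), so $H$ is itself a multicurve. Moreover distinct $\gamma_j'$ represent distinct isotopy classes, since they are isotopic to the distinct $\gamma_j \in \Gamma$ (recall that a multicurve contains no two isotopic curves). Thus $H$ has exactly $n$ components, and they pair up with the components of $\Gamma$ via isotopy rel $A$.

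Finally I would promote the component-wise isotopies $\gamma_j' \simeq \gamma_j$ to a single ambient isotopy of $(S^2,A)$. This is a standard fact for disjoint collections of essential simple closed curves on a punctured surface (for instance via the bigon criterion in \cite{Primer_MCG}): any bijection between two such multicurves matching components by free homotopy class extends to an ambient isotopy of $S^2$ rel $A$. This produces a homeomorphism $\phi:(S^2,A)\to (S^2,A)$ isotopic to the identity rel $A$ with $\phi(H) = \Gamma$, verifying the definition of ``forward invariant up to isotopy rel $A$.'' The only non-bookkeeping step is the last one, and it is immediate from classical surface topology, so I do not expect any real obstacle.
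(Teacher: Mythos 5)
Your proof is correct and takes essentially the same approach as the paper. The paper argues by contrapositive: if some $\gamma\in\Gamma$ is isotopic to no component of $f^{-1}(\gamma')$ for any $\gamma'$, then $f_\Gamma$ maps $\Rbb^\Gamma$ into $\Rbb^{\Gamma\setminus\{\gamma\}}$, so $f_\Gamma$ is not irreducible; you run the same observation forward (irreducibility means no row of the transition matrix vanishes, hence every $\gamma_j$ appears as a preimage component) and then explicitly assemble the chosen components into the subgraph $H$ and invoke the standard fact that component-wise isotopies of disjoint simple closed curves extend to an ambient isotopy — details the paper leaves implicit.
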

\begin{proof}
For a contradiction, assume there exists $\gamma$ such that for every $\gamma'\in \Gamma$ no connected component of $f^{-1}(\gamma')$ is isotopic to $\gamma$. Then $f_\Gamma:\Rbb^\Gamma \to \Rbb^{\Gamma \setminus \{\gamma\}} \subset \Rbb^{\Gamma}$, thus $f_\Gamma$ is not irreducible.
\end{proof}

\begin{lem}[{\cite[Lemma~2.2]{Tan_quad_mating}}]\label{lem:forwardinvariant to invariant}
For any multicurve $\Gamma$ of $(S^2,A)$ that is forward invariant up to isotopy, there exists a multicurve $\Gamma'$ which is backward invariant up to isotopy such that $\Gamma' \supset \Gamma$ and $\lambda(f_{\Gamma'}) \ge \lambda(f_{\Gamma})$.
\end{lem}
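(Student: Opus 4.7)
The plan is to saturate $\Gamma$ under iterated pullback of isotopy classes, show the process terminates by a finiteness argument on multicurves of $(S^2,A)$, and compare spectral radii via a principal-submatrix argument from Perron--Frobenius theory.

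Specifically, I would define inductively $\Gamma_0 := \Gamma$ and, for $n \ge 0$, let $\Gamma_{n+1}$ be $\Gamma_n$ together with one representative from each isotopy class (relative to $A$) of essential, non-peripheral simple closed curves occurring as a connected component of $f^{-1}(\gamma)$ for some $\gamma \in \Gamma_n$ and not already isotopic to an element of $\Gamma_n$. The key claim to verify at each stage is that $\Gamma_{n+1}$ is a multicurve (pairwise disjoint up to isotopy) and is again forward invariant up to isotopy. Forward invariance of $\Gamma_n$ gives, by definition, an ambient isotopy $\phi_n$ relative to $A$ moving $\Gamma_n$ into $f^{-1}(\Gamma_n)$; after applying $\phi_n$, the curves of $\Gamma_n$ sit disjointly among the components of $f^{-1}(\Gamma_n)$, which are themselves mutually disjoint since they are preimages of disjoint curves. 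Taking the new isotopy-class representatives from among these components yields a simultaneously disjoint realization of $\Gamma_{n+1}$, and forward invariance of $\Gamma_{n+1}$ then follows because every one of its elements is isotopic to a component of $f^{-1}(\Gamma_n) \subset f^{-1}(\Gamma_{n+1})$.

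Because the number of pairwise disjoint, pairwise non-isotopic, essential, non-peripheral simple closed curves in $S^2 \setminus A$ is bounded (by $|A|-3$), the ascending chain $\Gamma_0 \subset \Gamma_1 \subset \dots$ stabilizes at some $\Gamma' := \Gamma_N$. Stabilization $\Gamma_{N+1} = \Gamma_N$ is precisely the statement that every essential non-peripheral component of $f^{-1}(\gamma)$, for each $\gamma \in \Gamma'$, is isotopic to an element of $\Gamma'$; that is, $\Gamma'$ is backward invariant up to isotopy, i.e., $f$-stable. For the spectral radius comparison, order $\Gamma'$ so that the curves of $\Gamma$ come first. From the definition of the Thurston linear transformation, for $\gamma_i,\gamma_j \in \Gamma$ the $(i,j)$-entry of $f_{\Gamma'}$ counts only the components of $f^{-1}(\gamma_j)$ isotopic to $\gamma_i$, weighted by $1/\deg$, and therefore equals the $(i,j)$-entry of $f_\Gamma$. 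Thus $f_\Gamma$ is a principal submatrix of $f_{\Gamma'}$, and for nonnegative matrices, extending a Perron eigenvector of a principal submatrix by zeros yields a Collatz--Wielandt test vector for the full matrix, giving $\lambda(f_\Gamma) \le \lambda(f_{\Gamma'})$.

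The main obstacle is the disjointness step in the induction: a priori, newly added essential preimage components need not be disjoint from the curves of $\Gamma_n$ themselves, and the construction only goes through because the forward-invariance hypothesis supplies the ambient isotopy that simultaneously places $\Gamma_n$ inside the disjoint family $f^{-1}(\Gamma_n)$. Once this geometric point is granted, the termination and the spectral estimate are formal.
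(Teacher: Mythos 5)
Your proof is correct and follows essentially the same route as the paper: iterate the pullback to saturate $\Gamma$, invoke the $|A|-3$ bound to terminate, and compare spectral radii via a standard Perron--Frobenius monotonicity fact. The paper states the spectral-radius step as entrywise domination of nonnegative matrices after zero-padding $f_\Gamma$, whereas you phrase it as a principal-submatrix/Collatz--Wielandt argument; these are equivalent, and your intermediate verification that each $\Gamma_n$ is again a forward-invariant multicurve spells out details the paper leaves implicit.
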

\begin{proof}
Let $\Gamma_0=\Gamma$ and $\Gamma_n$ be the set of homotopy classes of essential curves in $f^{-n}(\Gamma_0)$. By the forward invariance up to isotopy, $\Gamma_0 \subset \Gamma_1 \subset \dots$ is an increasing sequence of multicurves. Note that $|A|-3$ is the maximal number of non-homotopic essential simple closed curves that can be disjointly embedded into $S^2\setminus A$. Hence there exists $n$ such that $\Gamma_n$ is $f$-invariant. The inequality $\lambda(f_{\Gamma'}) \ge \lambda(f_{\Gamma})$ follows from the following: for non-negative square matrices $M$ and $N$ if $M_{ij} \ge N_{ij}$ for every $(i,\,j)$ then $\lambda(M) \ge \lambda(N)$. 
\end{proof}

\begin{thm}[Arcs intersecting obstructions {\cite[Theorem~3.2]{PilgrimTan}}]\label{thm:arc int obs} Let $f:(S^2,A)\righttoleftarrow$ be a \pcf branched covering and $G$ be an invariant graph such that $f|_G:G \to G$ is a graph automorphism. Then every irreducible Thurston obstruction intersecting $G$ is a Levy cycle.
\end{thm}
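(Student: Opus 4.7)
The plan is to produce a vector $\vec{v} > 0$ satisfying $f_\Gamma^T \vec{v} \le \vec{v}$, built from geometric intersection numbers with $G$. For each $\gamma \in \Gamma$ set $v_\gamma := i(\gamma, G)$, the minimum intersection number in the isotopy class of $\gamma$. The central input from the hypothesis is the identity
\[
|f^{-1}(\gamma) \cap G| \;=\; |\gamma \cap G|,
\]
valid for $\gamma$ in minimum position with $G$: since $f|_G : G \to G$ is a graph automorphism, hence a bijection with $f(G) = G$, each point of $\gamma \cap G$ has exactly one preimage in $G$. From this I will deduce $f_\Gamma^T \vec{v} \le \vec{v}$; Perron--Frobenius will then force $\lambda(f_\Gamma) = 1$ with equality throughout, yielding the Levy cycle.

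\emph{Positivity of $\vec{v}$.} By hypothesis some $\gamma_0 \in \Gamma$ meets $G$ essentially, so $v_{\gamma_0} > 0$. For any $\delta \in \Gamma$, irreducibility of $\Gamma$ supplies an integer $m \ge 1$ and a component $\gamma'' \subset f^{-m}(\delta)$ isotopic to $\gamma_0$; iterating the identity above yields
\[
v_{\gamma_0} \;\le\; i(\gamma'', G) \;\le\; |\gamma'' \cap G| \;\le\; |f^{-m}(\delta) \cap G| \;=\; v_\delta.
\]

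\emph{The key estimate.} Fix $\gamma \in \Gamma$ in minimum position with $G$ and decompose $f^{-1}(\gamma)$ into disjoint components $\gamma'$. For any $\gamma' \subset f^{-1}(\gamma)$ isotopic to some $\delta \in \Gamma$, one has $v_\delta \le i(\gamma', G) \le |\gamma' \cap G|$; combined with $\deg(f|_{\gamma'}) \ge 1$,
\[
(f_\Gamma^T \vec{v})_\gamma \;=\; \sum_{\delta \in \Gamma} v_\delta \sum_{\substack{\gamma' \sim \delta\\ \gamma' \subset f^{-1}(\gamma)}} \frac{1}{\deg(f|_{\gamma'})} \;\le\; \sum_{\gamma' \subset f^{-1}(\gamma)} |\gamma' \cap G| \;=\; |f^{-1}(\gamma) \cap G| \;=\; v_\gamma.
\]

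\emph{Extracting the Levy cycle.} Since $f_\Gamma$ is irreducible, $\lambda(f_\Gamma) = \lambda(f_\Gamma^T)$, and $f_\Gamma^T \vec{v} \le \vec{v}$ with $\vec{v} > 0$ forces $\lambda(f_\Gamma^T) \le 1$; the obstruction hypothesis gives $\lambda(f_\Gamma) \ge 1$, hence $\lambda(f_\Gamma) = 1$, and Perron--Frobenius makes every inequality above an equality. In particular $\deg(f|_{\gamma'}) = 1$ for every component $\gamma' \subset f^{-1}(\gamma)$ whose isotopy class lies in $\Gamma$. Irreducibility of $f_\Gamma$ makes the directed graph on $\Gamma$ with edge $\gamma \to \delta$ whenever some component of $f^{-1}(\gamma)$ is isotopic to $\delta$ strongly connected, so it contains a cycle $\gamma_1 \to \cdots \to \gamma_n \to \gamma_1$; together with the degree-one lifts, this cycle is exactly a Levy cycle. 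The main subtlety, and the point at which the full hypothesis that $f|_G$ is an automorphism is essential, is the bijection identity $|f^{-1}(\gamma) \cap G| = |\gamma \cap G|$; the generalization to graphs with zero topological entropy (Theorem~\ref{thm:graph intersecting obs}) will require replacing this exact identity by a weaker bound sufficient to still drive the Perron--Frobenius argument, presumably by passing to a power of $f$ and to a subgraph on which the restriction becomes a bijection.
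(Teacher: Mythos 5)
Your proof is correct. Worth flagging: the paper does not reprove this statement (it cites it directly from Pilgrim--Tan) but instead proves the generalization, Theorem~\ref{thm:graph intersecting obs}, whose proof specializes to cover the automorphism case. Your argument and the paper's take genuinely different routes. You build a single scalar $v_\gamma = i(\gamma,G)$ per curve, exploit the bijection $f|_G$ to get the exact conservation law $|f^{-1}(\gamma)\cap G| = |\gamma\cap G|$, and then run a Perron--Frobenius subinvariance argument ($f_\Gamma^T \vec v \le \vec v$ with $\vec v > 0$ forces $\lambda \le 1$, and rigidity of the subinvariance inequality at $\lambda = 1$ forces all degrees to be $1$). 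The paper instead tracks a whole \emph{vector} of intersection counts $(\gamma)_G$ with the individual edges $e_1,\dots,e_n$ of $G$, dominates the unweighted transformation by $T_G^k \cdot (\gamma_i)_G \ge \sum_j (f_{\#,\Gamma}^k)_{ij}\,[\gamma_j]_G$, uses the zero-entropy hypothesis to bound $T_G^k$ polynomially, and then invokes Lemma~\ref{lem:integerirredmatrix} to conclude $f_{\#,\Gamma}$ is a permutation. Your scalar-plus-subinvariance route is cleaner for the automorphism case precisely because it uses the exact counting identity, which is unavailable when $f|_G$ is merely Markov of entropy zero; the paper's per-edge vector bookkeeping is the weakening you anticipate at the end of your write-up, and that extra granularity is what buys the generalization. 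Two small stylistic points: you could shortcut the final cycle extraction by observing that once all relevant degrees equal $1$, $f_\Gamma$ is a non-negative integer irreducible matrix with spectral radius $1$ and hence a permutation by the same Lemma~\ref{lem:integerirredmatrix} the paper uses; and in the positivity step the first inequality $v_{\gamma_0} \le i(\gamma'',G)$ is actually an equality, since geometric intersection number is an isotopy invariant and $\gamma'' \sim \gamma_0$.
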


We generalize it to a case when $G$ is an $f$-invariant graph with $h_{top}(f|_G)=0$.

\begin{thm}[Graph intersecting obstruction]\label{thm:graph intersecting obs} Let $f:(S^2,A)\righttoleftarrow$ be a \pcf branched covering and $G$ be a forward invariant graph such that $h_{top}(f|_G)=0$. Then every irreducible Thurston obstruction intersecting $G$ is a Levy cycle.
\end{thm}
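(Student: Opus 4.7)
The plan is to adapt the Pilgrim--Tan argument (Theorem \ref{thm:arc int obs}) to the entropy-zero setting using the combinatorial structure from Proposition \ref{prop:entropyzerograph}. In the Pilgrim--Tan case, $f|_G$ being a graph automorphism ensures that for any component $\gamma' \subseteq f^{-1}(\gamma)$ isotopic to $\delta\in\Gamma$, one has the intersection inequality $i(\delta, G) \leq i(\gamma, G)$, because the induced map $f\colon \gamma'\cap G \to \gamma\cap G$ is injective (each $y \in G$ has a unique $f$-preimage inside $G$). The weaker hypothesis $h_{top}(f|_G) = 0$ only allows us to relax this injectivity to a polynomial-bounded multiplicity, so the whole argument has to absorb that loss.

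First, I would prove the polynomial intersection inequality: for any component $\gamma' \subseteq f^{-n}(\gamma)$ isotopic to $\delta\in\Gamma$,
\[
 i(\delta, G) \;\le\; M(n)\cdot i(\gamma, G),
\]
where $M(n) := \max_{y\in G} \bigl|(f^n|_G)^{-1}(y)\bigr|$ bounds the maximum multiplicity of $G$-preimages under $f^n$. Proposition \ref{prop:entropyzerograph}(4) gives $(A_{f|_G}^n)_{ij} = O(n^d)$, so $M(n) = O(n^d)$. The inequality itself follows because each point of $\gamma'\cap G$ projects under $f^n$ to a point of $\gamma\cap f^n(G) \subseteq \gamma\cap G$ with fiber size at most $M(n)$.

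Next, I would apply the Perron--Frobenius theorem to $f_\Gamma$. Since $\lambda(f_\Gamma)\ge 1$ and $\Gamma$ is irreducible, $(f_\Gamma^n)_{\gamma,\gamma} \sim c\cdot \lambda^n\cdot v_\gamma w_\gamma$ for some $c>0$, where $v,w$ are the right and left Perron eigenvectors. On the other hand $(f_\Gamma^n)_{\gamma,\gamma} = \sum_{\gamma'} 1/\deg(f^n|_{\gamma'})$ over components of $f^{-n}(\gamma)$ isotopic to $\gamma$. Using the polynomial intersection bound together with Proposition \ref{prop:entropyzerograph} to pass to a suitable iterate $f^N$ (where $N$ is divisible by all cycle lengths of $\Dcal_{f|_G}$), and, if needed, Proposition \ref{prop:inv upto iso to inv} to make the recurrent subgraph $G_{\mathrm{rec}}\subseteq G$ literally forward invariant, I would force the degrees $\deg(f^n|_{\gamma'})$ along a Perron cycle to be polynomially bounded in $n$. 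Since degrees are positive integers, this forces $\lambda=1$, and the equality case of the intersection inequality restricted to the recurrent part $G_{\mathrm{rec}}$ upgrades injectivity to bijectivity; degree-counting on $G_{\mathrm{rec}}$ then yields $\deg(f|_{\gamma'})=1$ along the cycle, and irreducibility of $\Gamma$ propagates the Levy structure to all of $\Gamma$.

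The principal obstacle will be the transition from the polynomial bound $M(n)$ to the strict conclusion $\deg(f|_{\gamma'})=1$. In the Pilgrim--Tan case the constant bound $M(n)\equiv 1$ forces the equality instantly; here the polynomial growth of $M(n)$ competes with the asymptotic equality $(f_\Gamma^n)_{\gamma,\gamma} \sim c\lambda^n$, and one must show that the mixing between recurrent and transient edges under $f^{-n}|_G$ does not obstruct the extraction of a Levy cycle. I expect this to require either (i) passing to a subsequence of iterates along which preimage chains stabilize and isolating the contribution from $G_{\mathrm{rec}}$, or (ii) a direct argument using that the Perron cycle, once degree-polynomially bounded, must in fact be realized with $\prod \deg = 1$ because $\Gamma$ intersects $G$ transversally in a bounded number of points invariant along the cycle.
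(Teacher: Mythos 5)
The strategy is right in spirit (relax Pilgrim--Tan by absorbing polynomial losses from $h_{top}(f|_G)=0$), but the execution misses the combinatorial object that actually closes the argument, and the ``polynomial intersection inequality'' as you state it carries no information. You write $i(\delta,G)\le M(n)\cdot i(\gamma,G)$ with $\delta,\gamma\in\Gamma$ fixed; both sides are constants independent of $n$, so the inequality is vacuously satisfied for $n$ large. What one actually needs to bound is the \emph{number of preimage components} of $\gamma_i$ that are isotopic to a given $\gamma_j$, i.e.\ the entries of the unweighted Thurston transformation $f_{\#,\Gamma}$. The paper gets this from the edge-by-edge inequality
\[
T_G^{\,k}\,(\gamma_i)_G \;=\; (f^{-k}(\gamma_i))_G \;\ge\; [f^{-k}(\gamma_i)]_G \;\ge\; \sum_j (f_{\#,\Gamma}^{\,k})_{ij}\,[\gamma_j]_G,
\]
where $T_G$ is the incidence matrix of $f|_G$. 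The left side grows polynomially by Proposition~\ref{prop:entropyzerograph}; since $\Gamma\cap G\neq\emptyset$, some $[\gamma_j]_G\neq 0$, and irreducibility then forces every entry of $f_{\#,\Gamma}^{\,k}$ to be polynomially bounded.

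From there the conclusion is immediate and requires none of the machinery you invoke in your second and third paragraphs. By Lemma~\ref{lem:integerirredmatrix}, an irreducible non-negative \emph{integer} matrix with polynomially growing powers must be a permutation; so $f_{\#,\Gamma}$ is a permutation. Since $0\le (f_\Gamma)_{ij}\le (f_{\#,\Gamma})_{ij}$ entrywise with the same zero pattern, $\lambda(f_\Gamma)\ge 1$ forces $f_\Gamma = f_{\#,\Gamma}$, i.e.\ every degree $\deg(f|_{\gamma'})=1$, and $\Gamma$ is a Levy cycle. The Perron--Frobenius asymptotics on $f_\Gamma$ itself, the passage to a recurrent subgraph $G_{\mathrm{rec}}$, the choice of a special iterate $f^N$, and the attempt to show ``$\deg(f^n|_{\gamma'})$ is polynomially bounded'' are all unnecessary detours, and---as you admit in your final paragraph---they do not actually resolve the equality case. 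The missing idea is to transfer the entropy bound from $T_G$ to the \emph{integer} matrix $f_{\#,\Gamma}$ and apply the rigidity statement of Lemma~\ref{lem:integerirredmatrix}, which handles both the spectral radius and the degree-one conclusion in one stroke.
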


\begin{rem}
The graphs in Theorem \ref{thm:arc int obs} and Theorem \ref{thm:graph intersecting obs} are possibly disconnected. Moreover, the same statement works for graphs which are forward invariant up to isotopy by Proposition \ref{prop:inv upto iso to inv} with a slight modification to define $h_{top}(f|_G)$.
\end{rem} 

An {\it arc} of  $(S^2,A)$ is a curve embedded in $S^2$ such that its interior is embedded in $S^2 \setminus A$ and its endpoints are in $A$. A {\it geometric intersection number} $\gamma \cdot \gamma'$ between curves (arcs and simple closed curves) is defined as the minimal number of intersection points in their isotopy classes relative to $A$.

\vspace{5pt}
For a multicurve $\Gamma$, the {\it unweighted Thurston transformation} $f_{\#,\Gamma}:\Rbb^\Gamma \to \Rbb^\Gamma$ is defined by

\[
f_{\#,\Gamma}(\gamma)=\sum\limits_{\gamma'\subset f^{-1}(\gamma)} [\gamma']_\Gamma
\]
where $\gamma'$ is a connected component of $f^{-1}(\gamma)$ and $[\gamma']_\Gamma$ is an element of $\Gamma$ isotopic to $\gamma'$ if exists. If there is no such element, then the sum is defined to be zero. For every $(i,\,j)$, (1) $0 \le (f_\Gamma)_{ij} \le (f_{\#,\Gamma})_{ij}$ and (2) $(f_\Gamma)_{ij}=0$ if and only if $(f_{\#,\Gamma})_{ij}=0$. So $f_{\#,\Gamma}$ is irreducible if and only if $f_\Gamma$ is irreducible.

\begin{proof}[Proof of Theorem \ref{thm:graph intersecting obs}]
Let $\mathrm{Edge}(G)=\{e_1,e_2,\dots,e_n\}$. For any simple closed curve $\gamma \subset S^2 \setminus A$, define

\[
(\gamma)_G=\left(\begin{array}{c}\# \{\gamma \cap e_1\} \\ \# \{\gamma \cap e_2\} \\ \vdots \\ \# \{\gamma \cap e_n\} \end{array}\right)
~~~~~~
[\gamma]_G= \left( \begin{array}{c}\gamma \cdot e_1 \\ \gamma \cdot e_2 \\ \vdots \\ \gamma \cdot e_n \end{array}\right)
\]
where $\gamma \cdot e_i$ means the geometric intersection number of $\gamma$ and $e_i$ relative to $A$. The $(~)_G$ and $[~]_G$ are linearly extended to weighted multicurves. Let $T_G$ be the incidence matrix of $f|_G$. Let $\Gamma=\{\gamma_1,\gamma_2,\dots,\gamma_m\}$ be an irreducible Thurston obstruction. From $\# \{f^{-k}(\gamma_i) \cap e_j\} \ge f^{-k}(\gamma_i) \cdot e_j$, for every $k\ge 1$,  we have
\begin{equation}
{T_G}^k \cdot (\gamma_i)_G = (f^{-k}(\gamma_i))_G \ge [f^{-k}(\gamma_i)]_G \ge \sum_{j=1}^m({f_{\#,\Gamma}}^k)_{ij}[\gamma_j]_G.    
\end{equation}
The third term counts the intersection of $G$ with all connected components of $f^{-k}(\gamma_i)$, but the last term counts the intersection of $G$ with connected components of $f^{-k}(\gamma_i)$ isotopic relative to $A$ to some connected components of $\Gamma$.

It follows from Proposition \ref{prop:entropyzerograph} that entries of ${T_G}^k$ grows at most polynomially fast, so  $(f_{\#,\Gamma}^k)_{ij}$ grows at most polynomially fast too. Since $f_{\#,\Gamma}$ is an irreducible non-negative integer matrix, $f_{\#,\Gamma}$ is a permutation by Lemma \ref{lem:integerirredmatrix}. Recall that (1) $0 \le (f_\Gamma)_{ij} \le (f_{\#,\Gamma})_{ij}$ and (2) $(f_\Gamma)_{ij}>0$ if and only if $(f_{\#,\Gamma})_{ij}>0$. Hence the only way to have $\lambda(f_\Gamma) \ge 1$ is $f_{\#,\Gamma}=f_\Gamma$. Then $\Gamma$ is a Levy cycle.
\end{proof}

\subsection{Application in the mating of polynomials}$ $

\proofstep{Formal mating}Let $f$ and $g$ be \pcf polynomials of degree $d$. Consider $f$ and $g$ as maps from the complex plane $\Cbb$ to itself. Let $\overline{\Cbb}$ be the compactification of $\Cbb$ by the circle $S^1$ each point of which corresponds to a linear direction to the infinity. Then, $f$ and $g$ extend to the boundary $S^1$ as the angle $d$-times map. We can parametrize $S^1$ by $\theta\in[0,1]/\{0\sim1\}$ where $\theta$ indicates the angle of an external ray. Let us use subscriptions $-_f$ and $-_g$ to distinguish two compactified complex planes where $f$ and $g$ act on respectively, such as $\overline{\Dbb}_f:=\Cbb_f \cup S^1_f$, $\overline{\Dbb}_g:=\Cbb_g \cup S^1_g$, $f: \overline{\Dbb}_f \righttoleftarrow$ and $g:\overline{\Dbb}_g \righttoleftarrow$. Define a sphere $S^2_{f\uplus g}$ by gluing two compactified planes $\overline{\Cbb}_f$ and $\overline{\Cbb}_g$ by the equivalence relation $\theta_f\sim -\theta_g$ for any $\theta_f\in S^1_f$ and $\theta_g\in S^1_g$ with $\theta_f=\theta_g$ as numbers in $[0,1)$. The dynamics of $f$ and $g$ also glue together to induce a dynamics $f\uplus g:S^2_{f\uplus g}\righttoleftarrow$, which is also a \pcf branched self-covering of the sphere. We call $f\uplus g:S^2_{f\uplus g}\righttoleftarrow$ the {\it formal mating of $f$ and $g$}.

\proofstep{Ray-equivalence class}
Let $f\uplus g:S^2_{f\uplus g}\righttoleftarrow$ be the formal mating of \pcf polynomials $f$ and $g$. External rays of $f$ and $g$ forms a foliation on $S^2_{f\uplus g} \setminus (K_f \cup K_g)$ where $K_f \subset \Cbb_f$ and $K_f \subset \Cbb_g$ are filled Julia sets. Every leaf of the foliation is called a {\it ray-equivalence class} of the formal mating $f \uplus g$. Each ray-equivalence class consists of external rays of $f$ and $g$ of the same period and pre-period.

\proofstep{Degenerate mating} If $f$ or $g$ (or both) is non-hyperbolic, there could be an obvious Levy cycle of $f \uplus g$ which could be removed by collapsing some ray equivalence classes.

Let $F:=f \uplus g$. Suppose that $f$ is not hyperbolic. Then the post-critical set $P_f$ is in the Julia set $\Jcal_f$ so that each post-critical point of $f$ is contained in a ray-equivalence class. Suppose that there is a periodic ray-equivalence class $\xi$ that contains two points of $P_F$ such that $\xi$ is topologically a tree. Then the boundary of a small neighborhood of $\xi$ generates a Levy cycle. Hence, we will collapse $\xi$ to a point. To obtain a topological branched covering on the quotient sphere, we need a little more careful construction as follows, see \cite{ShiShi_Rees} for details.

Let $Y'$ be the set of ray-equivalence classes containing at least to points in $\Omega_F \cup P_F$. Define $Y$ be the set of ray-equivalence class $\xi'$ containing at least one point of $\Omega_F \cup P_F$ such that $F^m(\xi')=F^n(\xi)$ for some $m,n\ge0$ and $\xi\in Y$. If $Y \neq \emptyset$ and every element of $Y$ is topologically a tree, then we define $S'^2$ as the quotient of $S^2_{f \uplus g}$ by collapsing every ray-equivalence class in $Y$ to a point. The map $F$ induces a degree $d$ self-map on $S'^2$ which is not a branched covering near $F^{-1}(\xi)$ for $\xi\in Y$. But we can take a homotopy near $F^{-1}(\xi)$ for $\xi \in Y$ to obtain a branched covering $F':(S'^2,P_{F'})\righttoleftarrow$, which is called the {\it degenerate mating} of $f$ and $g$. We also denote the degenerate mating by $f \uplus' g:S^2_{f\uplus'g}\righttoleftarrow$. When both $f$ and $g$ are hyperbolic, the degenerate mating is equal to the formal mating.

\begin{eg}[$f_{1/2} \uplus' f_{1/4}$] For $\theta\in \Qbb \cap [0,1)$, let $f_{\theta}$ denote the \pcf polynomial at the landing point of the external ray of angle $\theta$ in the parameter plane of the quadratic polynomials $z^2+c$. Let $f=f_{1/2}$ and $g=f_{1/4}$. Let us denote by $R_f(\theta)$ and $R_g(\theta)$ the external rays of $f$ and $g$ of angle $\theta$.

The set $Y'$ defined above consists of 3 ray-equivalence class: $\xi_0:=R_f(0) \cup R_g(0)$, $\xi_1:=R_f(1/2)\cup R_g(1/2)$, and $\xi_2:=R_f(1/4)\cup R_f(3/4) \cup R_g(1/4) \cup R_g(3/4)$. The set $Y$ has one more ray-equivalence class $\xi_3:=R_f(3/8) \cup R_f(7/8) \cup R_g(1/8)\cup R_g(5/8)$ than $Y'$.

Let $F=f \uplus g$ be the formal mating. The boundary of a small disk neighborhood of $\xi_0$ is a Levy cycle of period one. Let us also use $\xi_i$ to indicate the collapsed points in $S^2_{f\uplus'g}$. The degenerate mating $F'$ maps $\xi_i$ to $\xi_{i-1}$ for $i=1,2,3$, where $\xi_2$ and $\xi_3$ are critical points of degree two.
\end{eg}

\begin{defn}
For \pcf polynomials $f$ and $g$ of the same degree, we say that $f$ and $g$ are {\it mateable} if the degenerate mating $F':=f \uplus' g:(S^2_{F'},P_{F'})\righttoleftarrow$ is combinatorially equivalent to a \pcf rational map.
\end{defn}

\begin{coro}\label{cor:mating}
Let $f$ and $g$ be post-critically finite hyperbolic (resp.\@ possibly non-hyperbolic) polynomials such that at least one of $f$ and $g$ has  core entropy zero. Then $f$ and $g$ are mateable if and only if the formal mating (resp.\@ degenerate mating) does not have a Levy cycle.
\end{coro}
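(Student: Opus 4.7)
The plan is to derive the corollary from Theorem \ref{thm:graph intersecting obs} combined with Thurston's characterization. The ``only if'' direction is immediate: if $f$ and $g$ are mateable, the mating is combinatorially equivalent to a rational map, which admits no Levy cycle by Theorem \ref{thm:Bartholdi-dudko}.

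For the ``if'' direction, let $F$ denote the formal mating $f \uplus g$ (in the hyperbolic case) or the degenerate mating $f \uplus' g$ (in the possibly non-hyperbolic case), and suppose $F$ has no Levy cycle. By Thurston's characterization it suffices to show that $F$ has no Thurston obstruction; equivalently, no irreducible Thurston obstruction, since any Thurston obstruction contains an irreducible sub-multicurve with spectral radius at least $1$, which extends to an $F$-stable irreducible Thurston obstruction by Lemma \ref{lem:forwardinvariant to invariant}. Without loss of generality suppose $f$ has core entropy zero, so that $h_{top}(f|_{H_f})=0$ for the Hubbard tree $H_f$ of $f$.

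The key step is to construct a forward invariant graph $G\subset S^2_F$ with $P_F\subset G$ satisfying (i) $h_{top}(F|_G)=0$, and (ii) every simple closed curve essential relative to $P_F$ intersects $G$. Granting such a $G$, any irreducible Thurston obstruction of $F$ must intersect $G$ and hence be a Levy cycle by Theorem \ref{thm:graph intersecting obs}, contradicting the hypothesis. I would construct $G$ by starting with $H_f$ (which already contains $P_f$) and augmenting it by a finite $f$-invariant collection of external rays of $f$ running from $H_f$ out to the equator $S^1_f$; via the mating identification $\theta_f \sim -\theta_g$ these rays extend across the equator as external rays of $g$, landing on points of $K_g$. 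On the $\overline{\Cbb}_g$ side I would further adjoin a finite union of external rays of $g$ together with internal rays inside any Fatou components of $g$ that contain post-critical points (using B\"{o}ttcher coordinates), chosen so that the resulting $G$ reaches every point of $P_g$ and cuts $S^2_F \setminus G$ into open disks. Forward invariance holds because $F$ sends external rays to external rays by angle doubling, internal rays to internal rays by B\"{o}ttcher dynamics, and $H_f$ is $f$-invariant. For (i), the Markov dynamics of $F|_G$ decomposes into $f|_{H_f}$ (zero entropy by hypothesis) together with the action on the finite set of rays, which is eventually periodic so that the irreducible diagonal blocks of its adjacency matrix are permutation matrices; Proposition \ref{prop:entropyzerograph} then yields $h_{top}(F|_G)=0$.

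The principal technical obstacle is the explicit construction of $G$ on the $\overline{\Cbb}_g$ side: the spider-like arrangement of rays must be chosen so that $G$ is simultaneously $F$-forward invariant and spanning in the sense of (ii), which requires careful use of the ray portrait of $g$ and, in the non-hyperbolic case, compatibility with the collapse of the ray-equivalence classes $\xi \in Y$ involved in forming the degenerate mating. Once $G$ is built, the argument terminates cleanly via Theorem \ref{thm:graph intersecting obs}.
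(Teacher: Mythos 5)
Your ``only if'' direction and the reduction to irreducible obstructions are correct. For the ``if'' direction, however, you take a substantially heavier route than the paper does, and the gap you flag is real. You want a single forward-invariant graph $G\supset P_F$ with $h_{top}(F|_G)=0$ whose complement is a union of disks, so that every irreducible obstruction meets $G$ and is therefore a Levy cycle by Theorem~\ref{thm:graph intersecting obs}. The pieces you propose to union together do not automatically glue into a \emph{connected} invariant graph: the spider rays of $g$, continued across the equator, land on $J_f$ at points that need not lie on $H_f$, so $H_f\cup\mathrm{spider}(g)$ is generally disconnected; any arcs you add to connect them must themselves be forward invariant, i.e.\ lie in some $f$-invariant tree in $K_f$ strictly larger than $H_f$, and you would then have to re-check that this larger tree still has zero entropy (it does, because the extra arms only contribute a nilpotent/permutation block to the transition matrix, but this must be said). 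None of that is carried out in your sketch, and you yourself name it as the principal obstacle.

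The paper's argument sidesteps the spanning requirement entirely by applying Theorem~\ref{thm:graph intersecting obs} with $G=H_f$ \emph{alone}. Since $h_{top}(f|_{H_f})=0$ by hypothesis, the theorem forces any irreducible Thurston obstruction $\Gamma$ that is not a Levy cycle to be \emph{disjoint} from $H_f$. But $H_f$ is a connected tree containing $P_f$, so $S^2_{f\uplus g}\setminus H_f$ is a disk; $\Gamma$ may be isotoped into $\Cbb_g$, where $F$ restricts to $g$ and $F^{-1}(\Cbb_g)=\Cbb_g$, so $\Gamma$ descends to a Thurston obstruction of the polynomial $g$ on $(\hCbb,P_g\cup\{\infty\})$ --- impossible, since $g$ is already rational. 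In the non-hyperbolic case one runs the same argument after the collapse $\pi:S^2_{f\uplus g}\to S^2_{f\uplus' g}$, using that $\pi$ is injective away from $H'_f\cap H'_g$, so an obstruction of the degenerate mating disjoint from $H'_f$ lifts to one of the formal mating disjoint from $H_f$. You do not need a spanning graph: the step you are missing is that once $\Gamma$ avoids the zero-entropy Hubbard tree, it becomes an obstruction for the \emph{other} polynomial, which has none.
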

\begin{proof}
Assume $f$ and $g$ are hyperbolic and the core entropy of $f$ is zero. Suppose the formal mating of $f$ and $g$ does not have a Levy cycle but has a Thurston obstruction $\Gamma$. We may assume that $\Gamma$ is irreducible. We can think of Hubbard trees $H_f$ and $H_g$ of $f$ and $g$ as invariant trees in the glued sphere $S^2_{f\uplus g}$. By Theorem \ref{thm:graph intersecting obs}, $\Gamma$ is disjoint from $H_f$. Then $\Gamma$ yields a Thurston obstruction of the polynomial $g$, which is a contradiction.

Suppose that $f$ and $g$ may not be hyperbolic and $f$ has core entropy zero. Let $\pi:S^2_{f \uplus g} \to S^2_{f \uplus' g}$ denote the projection from the sphere of the formal mating to the sphere of the degenerate mating. Let $H_f$ and $H_g$ denote the Hubbard tress embedded in $S^2_{f \uplus g}$, and let $H'_f$ and $H'_g$ denote their $\pi$-image in $S^2_{f \uplus' g}$. Some points of $H_f$ and $H_g$ are identified by $\pi$, but $H'_f$ still has entropy zero. By the argument in the previous paragraph, if there is an irreducible Thurston $\Gamma$ obstruction of the degenerate mating that is not a Levy cycle, the $\Gamma$ is disjoint from $H'_f$. For $x\in S^2_{f \uplus' g}$, if $\pi^{-1}(x)$ is not a singleton, then $x\in H'_f \cap H'_g$. Hence the multicurve $\Gamma$ can be lifted to a Thurston obstruction of the formal mating $f\uplus g$ with still being disjoint from $H_f$. Then $\Gamma$ again yields a Thurston obstruction of the polynomial $g$, which is a contradiction.
\end{proof}

\section{Finite subdivision rules with polynomial growth of edge subdivisions}

\begin{defn}[Polynomial growth of edge subdivisions]
Let $\R$ be a finite subdivision rule and $e$ be a level-$0$ edge. The edge $e$ {\it has sub-exponential growth of subdivisions} if
\[
\lim\limits_{n\rightarrow \infty}  \# \left\{ \textup{level-}n~\textup{subedges~of}~e \right\} ^{1/n} = 1.
\]
We say that $\R$ has {\it sub-exponential growth of edge subdivisions} if every level-$0$ edge has sub-exponential growth of subdivisions. By Proposition \ref{prop:subexpedgesubdiv}, we can substitute the term ``sub-exponential'' for ``{\it polynomial}\,''.
\end{defn}

Recall that we defined the directed graph of edge subdivisions $\Ecal$ in Section \ref{sec:DirectedGraphs}. Also recall that a level-$0$ edge $e$ is called periodic (or also called recurrent) if the corresponding vertex $[e]$ in $\Ecal$ is contained in a cycle.

\begin{prop}\label{prop:subexpedgesubdiv}
A finite subdivision rule $\Rcal$ has sub-exponential growth of edge subdivisions if and only if the cycles in $\Ecal$ are disjoint. In this case, for each level-$0$ edge $e$, $\# \left\{ \textup{level-}n~\textup{subedges~of}~e \right\}$ grows polynomially fast as $n\to \infty$.
\end{prop}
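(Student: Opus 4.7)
The plan is to translate the statement about counting subedges into a statement about counting paths in the directed graph $\Ecal$, and then apply the dichotomy in Theorem \ref{thm:expsubexp} directly. By Proposition \ref{prop:edgesubdivision}, for every level-$0$ edge $e$ we have $\#\{\text{level-}n\text{ subedges of } e\}=P_\Ecal([e],n)$, so the growth of subedges of $e$ coincides with the path-counting growth rate at the vertex $[e]\in\V(\Ecal)$.

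For the ``only if'' direction, I would argue by contrapositive. Suppose two distinct cycles in $\Ecal$ share a vertex $w\in\V(\Ecal)$. Let $e$ be the level-$0$ edge with $[e]=w$. Then $[e]$ satisfies condition $(1)$ of Theorem \ref{thm:expsubexp}, so $P_\Ecal([e],n)$ grows exponentially, which contradicts the hypothesis that every edge has sub-exponential growth of subdivisions.

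For the ``if'' direction, assume the cycles in $\Ecal$ are pairwise disjoint (as vertex sets). I want to show that for an arbitrary vertex $[e]\in\V(\Ecal)$, conditions $(1)$ and $(2)$ of Theorem \ref{thm:expsubexp} both fail. Condition $(1)$ fails at every vertex because, by disjointness, no vertex lies on two distinct cycles. Condition $(2)$ also fails: if there were a vertex $w$ with two distinct cycles through $w$ and a path from $[e]$ to $w$, those two cycles would share the vertex $w$, again contradicting disjointness. Hence Theorem \ref{thm:expsubexp}$(3)$ applies to $[e]$, so $P_\Ecal([e],n)$ grows polynomially of some degree $d\ge -1$. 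In particular $\lim_{n\to\infty} P_\Ecal([e],n)^{1/n}=1$, proving sub-exponential growth, and the ``moreover'' clause about polynomial growth is exactly the content of Theorem \ref{thm:expsubexp}$(3)$.

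There is no substantive obstacle here; the proposition is essentially a dictionary translation between the geometric notion of edge subdivision growth and the combinatorial growth rate in $\Ecal$, with Theorem \ref{thm:expsubexp} doing all the work. The only point worth checking carefully is that the definition of ``sub-exponential growth'' as $\lim P^{1/n}=1$ is incompatible with the exponential-growth conclusion of Theorem \ref{thm:expsubexp}$(1)$–$(2)$, which follows immediately from the bound $D<|\log a_n|$ for some $D>0$ in the definition of exponential growth given in the paper.
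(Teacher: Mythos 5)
Your proof is correct and takes the same route as the paper: the paper's own proof just says ``It is straightforward from Theorem \ref{thm:expsubexp} and Proposition \ref{prop:edgesubdivision},'' and you have spelled out exactly that translation. One small remark: the displayed inequality $D<|\log a_n|<C$ in the paper's definition of exponential growth is evidently a typo (it should be something like $D<a_n^{1/n}<C$), so the incompatibility with $\lim P^{1/n}=1$ that you invoke is cleaner to justify directly from the $2^{n}$ paths of length $npq$ constructed in the proof of Theorem \ref{thm:expsubexp}(1), which gives $\limsup P_\Ecal([e],n)^{1/n}\ge 2^{1/(pq)}>1$.
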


\begin{proof}
It is straightforward from Theorem \ref{thm:expsubexp} and Proposition \ref{prop:edgesubdivision}.
\end{proof}

Let $f^{(1)}:\R^{(1)} \rightarrow \R^{(1)}$ be the restriction of $f$ to the $1$-skeleton $\R^{(1)}$. Then $f^{(1)}$ is a Markov map. The adjacency matrix of the directed graph of edge subdivision $\Ecal$ coincides with the incidence matrix of the Markov map $f^{(1)}$. The following proposition is immediate from Proposition $\ref{prop:entropyzerograph}$.

\begin{prop}\label{prop:entpysubexpedgesubdiv}
A finite subdivision rule $\R$ has polynomial growth of edge subdivisions if and only if $h_{top}(f^{(1)})=0$.
\end{prop}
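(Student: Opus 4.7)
The plan is to reduce the statement to the two preceding propositions by identifying the adjacency matrix of the Markov map $f^{(1)}$ with that of the directed graph $\Ecal$ of edge subdivisions.

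First I would verify that $f^{(1)}:S_\R^{(1)}\to S_\R^{(1)}$ is a well-defined Markov map. Viewing $f:\R(S_\R)\to S_\R$ as a branched self-cover of $S^2$, the inclusion $S_\R^{(1)}\subset \R(S_\R)^{(1)}$ (since $\R(S_\R)$ refines $S_\R$) combined with the cell-wise homeomorphism property shows that $f$ carries each level-$1$ edge of $\R(S_\R)$ homeomorphically onto some level-$0$ edge of $S_\R$; restricting to $S_\R^{(1)}$ then yields a self-map that is a homeomorphism on each component of $S_\R^{(1)}\setminus (f^{(1)})^{-1}(\V(S_\R))$, namely on each level-$1$ subedge. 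Thus the level-$0$ edges of $\R$ form a Markov partition for $f^{(1)}$.

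Next I would compare adjacency matrices. By definition, the $(i,j)$-entry of $A_{f^{(1)}}$ counts how many times $f^{(1)}(e_i)$ covers $e_j$, which equals the number of level-$1$ subedges of $e_i$ of type $\bfe_j$. But by construction of $\Ecal$ (Section \ref{sec:DirectedGraphs}), this is precisely the number of directed edges from $[e_i]$ to $[e_j]$, i.e., the $(i,j)$-entry of the adjacency matrix of $\Ecal$. Hence $A_{f^{(1)}}$ coincides with the adjacency matrix of $\Ecal$, and so $\Ecal$ is (up to renaming vertices) the directed graph $\Dcal_{f^{(1)}}$ associated with the Markov map $f^{(1)}$.

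Now I would simply chain the two previous propositions. By Proposition \ref{prop:entropyzerograph}, $h_{top}(f^{(1)})=0$ if and only if $\Dcal_{f^{(1)}}$ has disjoint cycles, equivalently (by the identification above) $\Ecal$ has disjoint cycles. By Proposition \ref{prop:subexpedgesubdiv}, this latter condition is equivalent to $\R$ having polynomial (equivalently sub-exponential) growth of edge subdivisions. Combining the two equivalences yields the statement. There is essentially no obstacle here: once the identification $A_{f^{(1)}}=A_\Ecal$ is noted, the proposition is a one-line consequence of the earlier results, which is presumably why it is stated as a corollary-style remark following Proposition \ref{prop:subexpedgesubdiv}.
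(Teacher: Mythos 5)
Your proposal is correct and follows exactly the route taken in the paper: the paper notes (just before the proposition) that $f^{(1)}$ is a Markov map whose incidence matrix coincides with the adjacency matrix of $\Ecal$, and then declares the result immediate from Proposition~\ref{prop:entropyzerograph}; you have simply filled in the routine verifications (that $f^{(1)}$ is Markov, that the two adjacency matrices agree entrywise) and made the chaining through Proposition~\ref{prop:subexpedgesubdiv} explicit.
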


Let $e$ be a level-$0$ periodic edge. For every $n>0$, $e$ has at least one level-$n$ child (subedge) that is recurrent, see Section \ref{sec:ParentsChildren}. If $e$ has polynomial growth of subdivisions, then the recurrent subedges are unique at each level. The same statement also works for periodic bands.

\begin{prop}[Unique recurrent children]\label{prop:uniquerecurrent}
Let $\R$ be a finite subdivision rule and $e$ be a  level-$0$ periodic edge with polynomial growth of subdivisions. For any $n \in 1$, $e$ has a unique level-$n$ subedge that is recurrent. For a level-$0$ periodic band $\band$, if $e_1$ and $e_2$ have polynomial growth of subdivisions, then for any $n>0$ there exists a unique level-$n$ subband of $\band$ that is recurrent.
\end{prop}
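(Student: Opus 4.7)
The plan is to reduce both statements to the growth dichotomy for directed graphs (Theorem \ref{thm:expsubexp}) applied to $\Ecal$ and $\Bcal$, combined with the disjoint-cycle characterization of polynomial growth in Proposition \ref{prop:subexpedgesubdiv}. For the edge statement, since $e$ is periodic, $[e]$ lies on a cycle $C$ of $\Ecal$. Polynomial growth of edge subdivisions forces the cycles of $\Ecal$ to be pairwise disjoint, so $C$ is the only cycle through $[e]$; the same disjointness also forbids parallel edges inside $C$, because swapping one parallel edge for another would produce a second cycle sharing every vertex of $C$. Now any recurrent path starting from $[e]$ must terminate at a vertex from which $[e]$ is reachable, and applying this condition along the path shows that every vertex visited must lie on $C$. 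Since $C$ has no parallel edges and a unique outgoing edge at each vertex, the walk of length $n$ on $C$ starting at $[e]$ is unique, so by Proposition \ref{prop:edgesubdivision} there is exactly one recurrent level-$n$ subedge of $e$.

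For bands, the key preliminary step is to propagate polynomial growth from $\Ecal$ to $\Bcal$. Every level-$n$ subband $\bandn$ of $\band$ is determined by its first side $e_1^n \subset e_1$ together with the choice of subtile $t^n \subset t$ adjacent to $e_1^n$ (of which there are at most a constant $K$ depending only on the CW-structure of $\bft$) and of a boundary edge of $t^n$ contained in $e_2$ (again boundedly many). By Propositions \ref{prop:edgesubdivision} and \ref{prop:bandproperlyembedding} this gives
\[
P_\Bcal([\band], n) \le K' \cdot P_\Ecal([e_1], n),
\]
and the right-hand side is polynomial in $n$ under the hypothesis that $e_1$ has polynomial growth of subdivisions. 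Hence $[\band]$ has polynomial growth in $\Bcal$, so the argument from the edge case applies verbatim to $\Bcal$: the cycle through $[\band]$ is unique and free of parallel edges, and any recurrent path from $[\band]$ is forced to travel around it, giving uniqueness of the recurrent level-$n$ subband. Existence of at least one recurrent level-$n$ child is already supplied by the ``existence of recurrent children'' principle in Section \ref{sec:ParentsChildren}.

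The substantive new ingredient is just the estimate $P_\Bcal([\band], n) = O(P_\Ecal([e_1], n))$; everything else is a combinatorial consequence of disjoint cycles. The only subtle point worth stating explicitly is the absence of parallel edges along cycles of $\Ecal$ and $\Bcal$, which is immediate from disjointness but is what actually forces uniqueness at length one (where Proposition \ref{prop:ExtensionRecurrentPath} does not directly apply, since it requires an initial recurrent path of positive length). Once that observation is in place, the extension uniqueness at all larger lengths also follows from Proposition \ref{prop:ExtensionRecurrentPath} applied to any length-$1$ recurrent starting segment, giving a second clean route to the conclusion.
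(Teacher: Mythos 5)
Your proof is correct. For the edge case you follow the same route as the paper: polynomial growth of $P_\Ecal([e],n)$ forces a unique cycle through $[e]$, any recurrent path from $[e]$ is confined to it, and the length-$n$ walk along it is unique, yielding a unique recurrent level-$n$ subedge via Proposition \ref{prop:edgesubdivision}; your extra remark about parallel edges (which would create a second cycle through $[e]$, contradicting polynomial growth) fills in a detail the paper leaves implicit. The band case, though, is a genuinely different route. The paper's proof is structural: Lemma \ref{lem:recurrentbandandedge} shows the sides $e_1^n, e_2^n$ of a recurrent subband $\bandn$ are themselves recurrent subedges of $e_1, e_2$, hence uniquely determined by the edge case, and then the subband is pinned down because $t^n$ is the unique level-$n$ subtile of $t$ adjacent to $e_1^n$. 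You instead propagate polynomial growth from $\Ecal$ to $\Bcal$ via the counting estimate $P_\Bcal([\band],n)\le K'\cdot P_\Ecal([e_1],n)$ (valid because the data of a subband beyond its first side is bounded by the CW-structure) and then rerun the edge-case reasoning inside $\Bcal$. Both are sound; the paper's route is shorter and exploits the band-to-side relationship directly, while yours establishes a polynomial bound on $P_\Bcal([\band],n)$ that the paper never states and treats $\Ecal$ and $\Bcal$ uniformly under the growth dichotomy, at the cost of a small bookkeeping step bounding how many subbands share a given first side.
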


\begin{proof}
    By Proposition \ref{prop:subexpedgesubdiv}, there exists a unique cycle in $\Ecal$ passing through $[e]$. Hence, for any $n>0$, there is only one path of length $n$ from $[e]$ and supported within the cycle, which determines a unique level-$n$ recurrent subedge by Proposition \ref{prop:edgesubdivision}. The uniqueness of recurrent subedge can also follow from Proposition \ref{prop:uniquerecurrent}.
    
    If $\band$ is periodic, then it has at least one level-$n$ unique subband $\bandn$. By Lemma \ref{lem:recurrentbandandedge}, the level-$n$ edges $e^n_1$ and $e^n_2$ are recurrent subedges of $e_1$ and $e_2$, which are unique by the previous paragraph. Hence the recurrent subbands are unique at each level.
\end{proof}

\begin{prop}\label{prop:nonexpspinesubexpedge}
Suppose a finite subdivision rule $\R$ has polynomial growth of edge subdivisions. Then every train-track map $\phi^{n+1}_n:N^{n+1} \to N^n$ in the nested sequence of non-expanding spines, defined in Definition \ref{defn:NestSeqNonExpSpine},
\[
    N^0 \xleftarrow[]{\phi^1_0} N^1 \xleftarrow[]{\phi^2_1} N^2 \xleftarrow[]{\phi^3_2} \cdots,
\]
is a homeomorphism.
\end{prop}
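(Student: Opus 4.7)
The plan is to show that under polynomial growth of edge subdivisions, the parent map on bones (which determines $\phi^{n+1}_n$) is a bijection that commutes with both combinatorial operations---complete-subgraph contraction to star-trees and boundary zip-ups---used to build $N^n$ and $N^{n+1}$. Since both train-tracks are finite CW-complexes, a bijective train-track map between them is automatically a homeomorphism, so this will suffice.

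First, by Proposition \ref{prop:uniquerecurrent}, each level-$n$ recurrent band has a unique level-$(n+1)$ recurrent subband, and each level-$n$ recurrent edge has a unique level-$(n+1)$ recurrent subedge. Combined with the unique existence of parents from Section \ref{sec:ParentsChildren}, this makes the parent relation a bijection between level-$n$ and level-$(n+1)$ recurrent bones. In general only surjectivity would hold; injectivity is exactly where the polynomial-growth hypothesis is used.

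Second, I would verify that the crossing graphs match within each level-$0$ tile $t$. Any level-$(n+1)$ recurrent bone in $t$, viewed as a properly embedded curve in $(t,\partial t,\V(\partial t))$, is isotopic in this class to the bone of its level-$n$ parent, since two properly embedded arcs in a disk with the same pair of boundary side edges are isotopic rel endpoints. Geometric intersection numbers are therefore preserved, so by Lemmas \ref{lem:CrossingCondiTile} and \ref{lem:CrossCondiCplGraph} the decomposition of intersecting bones into complete subgraphs corresponds bijectively at the two levels, and the star-tree replacements of Proposition \ref{prop:PolyTrain} produce isomorphic subgraphs. For the boundary zip-ups: two level-$n$ recurrent bones share an endpoint on $\partial t$ iff the relevant sides of their bands coincide as a level-$n$ edge $e^n$, and by uniqueness of the recurrent level-$(n+1)$ subedge of $e^n$, their unique recurrent children have sides on this same subedge, hence also share an endpoint. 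The converse direction follows from the unique existence of level-$n$ parents in the same way.

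Putting these pieces together, $\phi^{n+1}_n$ is a continuous bijection of finite graphs that matches the star-center and zip-up vertices as well as the gate structures at each of them, and hence is a homeomorphism of train-tracks. The main subtlety is the second step, where one must rule out any ``new'' or ``missing'' intersection appearing when passing from level $n$ to level $n+1$; this is precisely what the isotopy-invariance of geometric intersection number, together with the uniqueness of recurrent children, controls. Without polynomial growth, a single recurrent level-$n$ band can have multiple recurrent level-$(n+1)$ subbands, in which case $N^{n+1}$ carries strictly more bone data than $N^n$ and the bone-level map ceases to be injective.
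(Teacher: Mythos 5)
Your argument is correct and follows essentially the same route as the paper's proof: the core step is Proposition \ref{prop:uniquerecurrent} together with Lemma \ref{lem:recurrentbandandedge}, which gives a bijection between recurrent bones at consecutive levels respecting shared sides, from which the homeomorphism of train-tracks follows. You spell out more of the combinatorial bookkeeping (preservation of the crossing graph, the star-tree contractions, and the zip-ups) that the paper leaves implicit in its phrase ``made up of the same number of bones of bands which are glued in the same way,'' but the underlying idea and the ingredients used are identical.
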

\begin{proof}
 Let $\band$ be a level-$0$ periodic band. It follows from Proposition \ref{prop:uniquerecurrent} and Lemma \ref{lem:recurrentbandandedge} that for any $n$ there exists a unique level-$n$ recurrent band of $\band$ such that its sides are unique level-$n$ recurrent subedges of $e_1$ and $e_2$. If two level-$0$ periodic bands share a side $e$ then the level-$n$ recurrent bands also share a side which is the level-$n$ recurrent subedge of $e$. Hence $N^n$ and $N^0$ are made up of the same number of bones of bands which are glued in the same way.
\end{proof}

\begin{thm}\label{thm:nonexpspine_polyedgesubdiv}
Let $\R$ be a finite subdivision rule with polynomial growth of edge subdivisions and $f$ be its subdivision map which is not doubly covered by a torus endomorphism. Let $A\subset \V(S_\R)$ be a set of marked point, i.e., $f(A) \cup P_f \subset A$. Then the followings are equivalent.
\begin{enumerate}
    \item[(1)] $f:(S^2,A)\righttoleftarrow$ does not have a Levy cycle.
    \item[(2)] The level-$0$ non-expanding spine $N^0$ does not carry a closed curve that is neither homotopic relative to $A$ to a point nor to some iterate of a peripheral loop of a Julia point in $A$.
    \item[(3)] $f:(S^2,A)\righttoleftarrow$ is combinatorially equivalent to a unique rational map up to conjugation by M\"{o}bius transformations, i.e., $f$ does not have a Thurston obstruction.
\end{enumerate}
\end{thm}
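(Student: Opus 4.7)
\proofstep{Proof proposal for Theorem \ref{thm:nonexpspine_polyedgesubdiv}}

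The plan is to establish the cycle of implications (1) $\Leftrightarrow$ (2), then (3) $\Rightarrow$ (1), and finally (1) $\Rightarrow$ (3), leveraging the three main tools developed earlier in the paper: Theorem~\ref{thm:nonexpspine}, Proposition~\ref{prop:nonexpspinesubexpedge}, and Theorem~\ref{thm:graph intersecting obs}.

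For (1) $\Leftrightarrow$ (2), the point is that the polynomial growth hypothesis collapses the entire tower of non-expanding spines. By Proposition~\ref{prop:nonexpspinesubexpedge}, each train-track map $\phi^{n+1}_n\colon N^{n+1}\to N^n$ is a homeomorphism, and since each $\phi^{n+1}_n$ extends to a self-map of $S^2$ ambient isotopic to $\mathrm{id}_{S^2}$ relative to $A$, the train-track $N^n$ carries a closed curve homotopically non-trivial and non-peripheral (relative to Julia points) if and only if $N^0$ does. Therefore ``$N^n$ essential for every $n\ge 0$'' reduces to ``$N^0$ essential,'' and Theorem~\ref{thm:nonexpspine} delivers the equivalence with the absence of a Levy cycle.

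The implication (3) $\Rightarrow$ (1) is standard: any Levy cycle is a Thurston obstruction (rational maps are B\"{o}ttcher expanding by the Schwarz lemma, hence cannot support a homeomorphically periodic curve), so a rational map has no Levy cycle. For (1) $\Rightarrow$ (3), I would argue by contraposition. Suppose $f$ has a Thurston obstruction; after passing to an irreducible sub-multicurve with the same leading eigenvalue, we may take an irreducible Thurston obstruction $\Gamma$. Consider the forward invariant graph $G := S_\R^{(1)}$; by Proposition~\ref{prop:entpysubexpedgesubdiv} the polynomial growth hypothesis gives $h_{top}(f|_G)=0$. The curves of $\Gamma$ are essential in $(S^2,A)$, and since $A\subset \V(S_\R)\subset G$, any simple closed curve disjoint from $G$ lies in the interior of a single closed $2$-cell of $S_\R$, bounds a disk in $S^2$ disjoint from $A$, and is thus inessential; consequently every curve of $\Gamma$ must intersect $G$. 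Theorem~\ref{thm:graph intersecting obs} then forces $\Gamma$ to be a Levy cycle, contradicting (1). Hence (1) implies no Thurston obstruction, and Thurston's characterization (using the hypothesis that $f$ is not doubly covered by a torus endomorphism) upgrades this to combinatorial equivalence with a rational map, unique up to M\"{o}bius conjugation by Thurston's rigidity.

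The main obstacle I foresee is the careful verification in (1) $\Rightarrow$ (3) that $\Gamma$ actually intersects $G=S_\R^{(1)}$ in a topologically useful sense, not merely set-theoretically after isotopy; one must isotope $\Gamma$ to minimize intersection with $G$ and argue that the resulting intersection pattern is still non-trivial, relying on the fact that marked points sit on vertices and that any face of $S_\R$ is a topological disk in $S^2$ whose interior is disjoint from $A$. Once this is in place, the rest of the argument is a straightforward assembly of the previously proved machinery.
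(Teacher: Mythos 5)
Your proposal is correct and follows the same route the paper takes: (1)$\Leftrightarrow$(2) via Proposition~\ref{prop:nonexpspinesubexpedge} together with Theorem~\ref{thm:nonexpspine}, and the equivalence with (3) via Proposition~\ref{prop:entpysubexpedgesubdiv} and Theorem~\ref{thm:graph intersecting obs}. The "obstacle'' you flag at the end is in fact already handled by your own observation that $A\subset\V(S_\R)$ forces any essential curve to have nonzero geometric intersection number with $S_\R^{(1)}$.
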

\begin{proof}
$(1) \Leftrightarrow (2)$ follows from Theorem \ref{thm:nonexpspine} and Proposition \ref{prop:nonexpspinesubexpedge}. The equivalence with $(3)$ follows from Theorem \ref{thm:graph intersecting obs} and Proposition \ref{prop:entpysubexpedgesubdiv}.
\end{proof}

\begin{eg}[Example \ref{eg:nonexpspine1} continued]\label{eg:nonexpspine1Continued} Removing the edge type $C$ from Figure \ref{fig:nonexpspine1}, we have a finite subdivision rule with bounded edge subdivisions. Since the subdivision maps is unchanged, there is no Levy cycle by the discussion in Example \ref{eg:nonexpspine1}. It follows from Theorem \ref{thm:nonexpspine_polyedgesubdiv} that there is also no Thurston obstruction.
\end{eg}

\section{Examples}\label{sec:eg}

\subsection{Critically fixed rational maps}\label{sec:CritFix}

A rational map is {\it critically fixed} if every critical point is a fixed point. It was recently shown that there is a one-to-one correspondence between critically fixed rational functions and planar graphs. The idea started from \cite{PilgrimTan} and was completed in \cite{Hlushchanka_criticallyfixed}.

\begin{thm}[Hlushchanka, Pilgrim et.\@ al.]\label{thm:CritFix}
There is a one-to-one correspondence between the holomorphic conjugacy classes of critically fixed rational functions and the planar isotopy classes of connected planar graphs without loops.
\end{thm}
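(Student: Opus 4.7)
The plan is to establish the correspondence in two directions, using the finite subdivision rule machinery developed earlier in the paper, and then show the two assignments are mutually inverse.

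\textbf{Forward direction (map to graph).} Given a critically fixed rational map $F$, form the \emph{Tischler graph} $G_F$ as the union, taken over the critical fixed points $c$, of the internal rays (in the Böttcher coordinate of the immediate basin of $c$) that land at points of $F^{-1}(c)\cap\partial U_c$. Standard arguments in complex dynamics show that $G_F$ is a connected planar graph with no loops, whose vertex set is the union of the critical points with their boundary preimages, and that $G_F$ is preserved setwise by $F$ with every edge fixed. Holomorphic conjugation carries $G_F$ to $G_{F'}$ by an orientation-preserving homeomorphism of $S^2$, giving a well-defined map on conjugacy classes.

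\textbf{Backward direction (graph to map).} Given a connected planar graph $G\subset S^2$ without loops, I would build a finite subdivision rule $\R_G$ whose subdivision complex $S_{\R_G}$ has $G$ as its $1$-skeleton: the $2$-cells are the faces of $G$, each $k$-gon face $F$ is subdivided into $k$ sub-tiles by inserting a central vertex joined to the midpoint of each boundary edge of $F$, and the subdivision map $f_G:\R_G(S_{\R_G})\to S_{\R_G}$ sends each sub-tile homeomorphically onto the face it ``points at''. By construction, every vertex $v\in\V(G)$ is fixed with local degree equal to its valence, no edge of $G$ is subdivided, and $G$ itself is forward invariant under $f_G$ with $f_G|_G$ a graph automorphism (in fact the identity on edges), so $h_{\mathrm{top}}(f_G|_G)=0$. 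Taking $A=\V(G)$ gives a marked \pcf branched covering $f_G:(S^2,A)\righttoleftarrow$.

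\textbf{No obstruction via the paper's tools.} Because $\R_G$ subdivides no edge, it trivially has polynomial growth of edge subdivisions, so Theorem~\ref{thm:nonexpspine_polyedgesubdiv} applies: combinatorial equivalence to a unique rational map is equivalent to the level-$0$ non-expanding spine $N^0$ not being essential relative to $A$. The recurrent level-$0$ bands $(F;e,e')$ correspond to pairs of edges on a common face $F$ fixed by the edge dynamics; the hypotheses that $G$ is connected and has no loops are exactly what is needed to show that every closed curve carried by $N^0$ is homotopic into a peripheral loop around a Fatou vertex (valence-$\ge 2$ vertex of $G$), so $N^0$ is inessential. This step, together with checking the case analysis of recurrent bands when $G$ has a loop or a cut-vertex, is the main obstacle: one must argue that the presence of a loop at $v$ produces a bone pairing two copies of the same edge across the loop, yielding a genuinely essential curve in $N^0$, and dually that this is the \emph{only} way to produce essential curves. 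Granted this, Theorem~\ref{thm:nonexpspine_polyedgesubdiv} yields a unique rational map $F_G$ combinatorially equivalent to $f_G$; it is critically fixed because $f_G$ fixes each vertex with local degree equal to the valence.

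\textbf{Bijectivity.} To close the correspondence I would verify that the two constructions invert each other. Starting from $G$, the Tischler graph of $F_G$ coincides up to isotopy with $G$, since the Fatou components of $F_G$ are in bijection with the faces of $G$ and the internal rays reconstruct the edges. Starting from $F$, the finite subdivision rule $\R_{G_F}$ is combinatorially equivalent to $F$ as a Thurston map (the Tischler graph being invariant), so the uniqueness clause of Thurston's characterization (reused via the uniqueness in Theorem~\ref{thm:nonexpspine_polyedgesubdiv}) promotes combinatorial equivalence to holomorphic conjugacy. Isotopy of planar graphs corresponds precisely to combinatorial equivalence of the resulting subdivision rules, so the bijection descends to isotopy and conjugacy classes, giving the claimed correspondence.
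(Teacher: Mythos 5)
The paper does not actually prove Theorem~\ref{thm:CritFix}; it is cited from \cite{PilgrimTan} and \cite{Hlushchanka_criticallyfixed}. Immediately after the statement the paper merely constructs a finite subdivision rule $\R_G$ from a given loopless planar graph $G$ via the \emph{blowing-up an arc} construction (so that $f_G$ is the identity on $G$ and $\deg_v(f_G)=\deg_G(v)+1$), and invokes \cite[Corollary~3]{PilgrimTan} for unobstructedness. So you are attempting a genuinely new proof using the paper's non-expanding-spine machinery, which is a reasonable idea, but your backward construction has concrete errors.

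First, your construction of $\R_G$ is internally inconsistent. You subdivide each $k$-gon face by ``inserting a central vertex joined to the midpoint of each boundary edge,'' which subdivides every edge of $G$ at its midpoint, and then two sentences later you assert ``no edge of $G$ is subdivided.'' Second, the degree bookkeeping does not close. If a $k$-gon face is cut into $k$ sub-tiles and each sub-tile is to map homeomorphically onto a single face, then a branched covering of constant degree $d$ requires every face of $G$ to have exactly $d$ preimage sub-tiles, which fails for a generic planar graph. Third, the claim that each vertex $v$ is fixed with ``local degree equal to its valence'' is the wrong normalization; for critically fixed maps associated to planar graphs one has $\deg_v(F)=\deg_G(v)+1$ (a valence-$k$ vertex must carry a superattracting fixed point of local degree $k+1$, whose immediate basin has $k$ fixed internal rays). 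The paper's blowing-up construction produces exactly this, since each blow-up along an edge incident to $v$ raises the local degree at $v$ by one.

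Even granting a correct construction, the crucial step --- showing $N^0$ is inessential under only the hypothesis that $G$ is connected and loopless --- is exactly what you flag as ``the main obstacle,'' and you leave it unargued. Note that this is delicate: for the closely related \emph{face-inversion} subdivision maps in Theorem~\ref{thm:faceinversion} of the paper, the correct combinatorial hypothesis is $3$-edge-connectedness of $G$, not merely the absence of loops. A bridge or a pair of parallel edges in $G$ produces a recurrent band whose bone pairs two boundary edges of a face in a way that must be ruled out by a genuinely different mechanism in the critically fixed (blowing-up) setting; you do not explain why loops are the only source of essential curves in $N^0$. Finally, the bijectivity argument is too compressed: Thurston rigidity (the uniqueness clause you quote) gives uniqueness of the rational map within a combinatorial class, but you still need that the Tischler graph of the resulting map is canonically isotopic to $G$, and that this assignment is well defined on isotopy classes --- this is the content of the cited references, not a consequence of Theorem~\ref{thm:nonexpspine_polyedgesubdiv}.
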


Let $G$ be a planar graph without loops and $f$ be the corresponding critically fixed rational map in Theorem \ref{thm:CritFix}. At the end of this subsection, we construct a finite rule $\R_G$ such that (1) its subdivision map is $f$ and (2) every edge never subdivides.

Let $f$ be a critically fixed rational map. The {\it Tischler graph} of $f$ is a graph embedded in $\hCbb$ whose edge set is the collection of fixed internal rays in the immediate attracting basins of all critical points. It follows from \cite{Hlushchanka_criticallyfixed} that the Tischler graph any critically fixed rational map is connected.

To construct a critically fixed rational function from a planar graph without loops, we use {\it blowing-up an arc} construction, that is firstly introduced in \cite{PilgrimTan}.

\vspace{5pt}
\proofstep{Blowing-up an arc.}
Let $f:(S^2,A) \righttoleftarrow$ be a \pcf branched covering and $\gamma$ be an arc fixed by $f$. Let $D \subset S^2$ be an open $2$-disc contained in a small neighborhood of $\gamma$ with $\gamma \subset \partial D$. Let $\gamma'= \partial D - \mathrm{int}(\gamma)$. Define an orientation-preserving continuous map $g:S^2 \setminus D \rightarrow S^2$ in such a way that $g$ maps $\gamma$ and $\gamma'$ to $\gamma$, with endpoints fixed. Define another orientation-preserving continuous map $h:\overline{D} \rightarrow S^2$ in a similar way so that $h$ maps $\gamma$ and $\gamma'$ to $\gamma$, with endpoints fixed, and maps the $D$ to $S^2 \setminus \gamma$ homeomorphically. A new branched covering $f_\gamma:(S^2,A) \righttoleftarrow$ is defined by $f_\gamma|_{S^2\setminus D}=f \circ g$ and $f_\gamma|_{\overline{D}}=f \circ h$. We call $f_\gamma$ the {\it $f$ blown-up along an arc $\gamma$}. Note that $\deg(f_\gamma)=\deg(f)+1$.

\vspace{5pt}
Let $G$ be a planar graph without loops and $A=\V(G)$. Define a \pcf branched covering $f_G:(S^2,A)\righttoleftarrow$ by blowing up the identity map $id_{S^2}:(S^2,A)\righttoleftarrow$ along all edges of $G$. The combinatorial equivalence class dis independent of the order of blowing-up. Each vertex $v$ of $G$ is a critical point of $f_G$ such that $\deg_v(f_G)=\deg(v)+1$. If follows from \cite[Corollary 3] {PilgrimTan} that $f_G$ is combinatorially equivalent to a rational map. Because $f_\gamma$ fixes $\gamma$, the branched covering $f_G$ is the identity on $G$. Define a finite subdivision rule $\R_G$ such that

\begin{itemize}
    \item [(1)] $S_{\R_G}$ be the CW-complex whose $1$-skeleton is $G$
    \item [(2)] $\R_G(S_{\R_G})$ be the CW-complex whose $1$-skeleton is $f^{-1}(G)$, and
    \item [(3)] $f_G:\R_G(S_{\R_G}) \rightarrow S_{\R_G}$ is the subdivision map of $\R_G$.
\end{itemize}

\begin{rem}
When an edge is blown-up, there are two choices for $D$, depending on which side of $\gamma$ the disk $D$ is. But the combinatorial equivalence class of the resulting branched covering is independent.
\end{rem}

\begin{eg}
See Figure \ref{fig:critfixed}. The graph $G$ is a triangle with one more edge attached. Figure \ref{fig:critfixed1} indicates the disk that we use in the blowing-up the edge $\gamma$. The other two figures indicate the CW-complex structures at level-$0$ and $1$. The shaded triangles in Figure \ref{fig:critfixed3} are mapped to the shaded triangle in Figure \ref{fig:critfixed2} under the subdivision map $f_G$.

\begin{figure}[h!]
    \centering
        \begin{subfigure}[b]{0.105\textwidth}
        \def\svgwidth{\textwidth}
        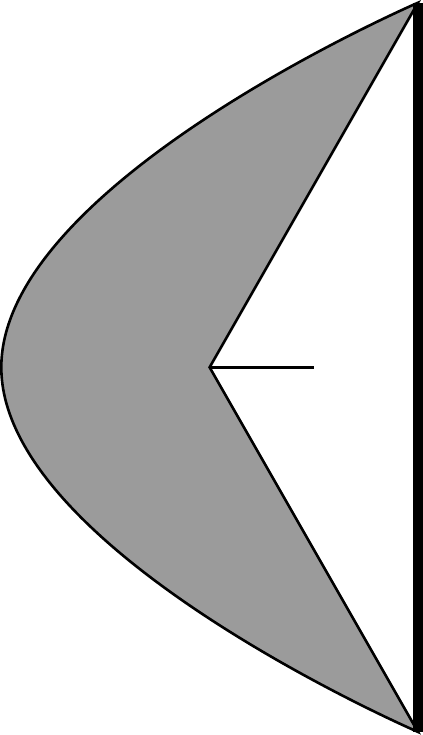
        \caption{}
        \label{fig:critfixed1}
        \end{subfigure}
        \hspace{50pt}
        \begin{subfigure}[b]{0.3\textwidth}
        \def\svgwidth{\textwidth}
        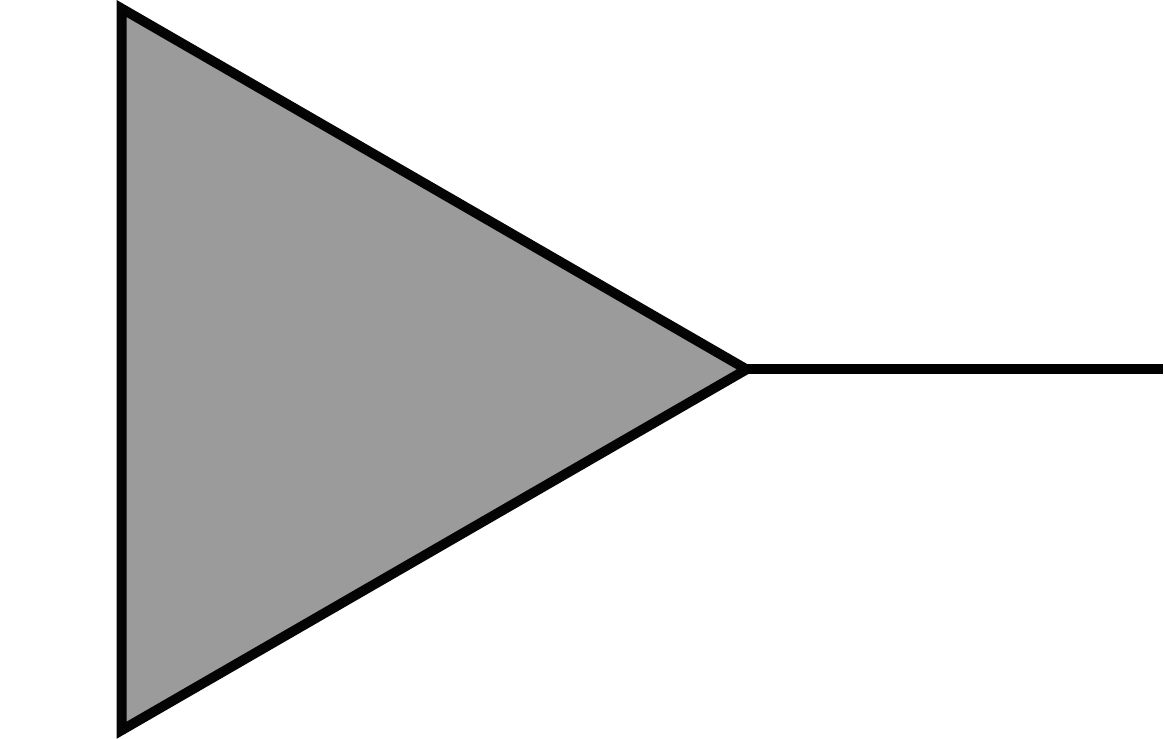
        \caption{$S_{\R_G}$}
        \label{fig:critfixed2}
        \end{subfigure}
        \hspace{15pt}
        \begin{subfigure}[b]{0.35\textwidth}
        \def\svgwidth{\textwidth}
        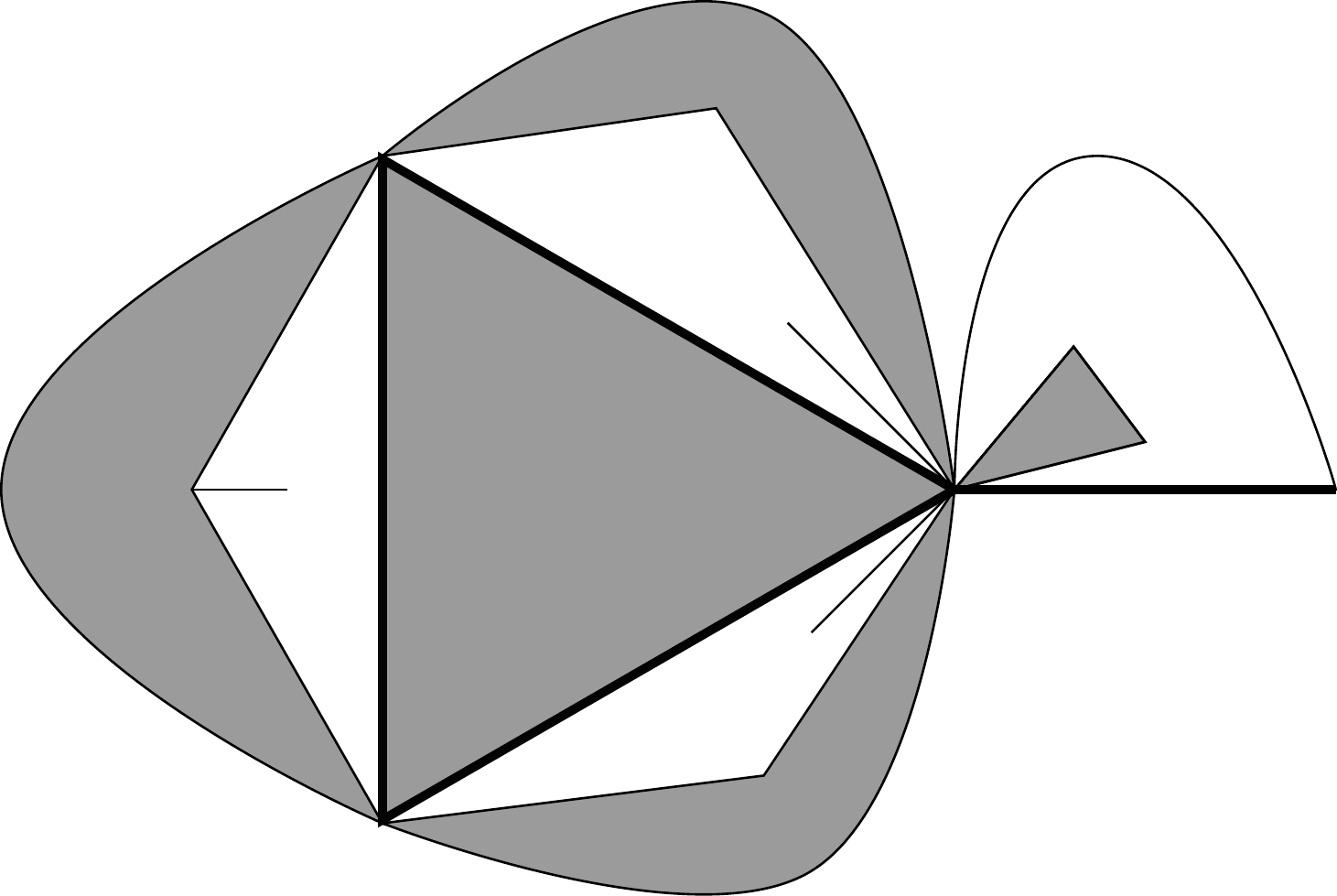
        \caption{$\R_G(S_{\R_G})$}
        \label{fig:critfixed3}
        \end{subfigure}
        
    \caption{}
    \label{fig:critfixed}
\end{figure}
\end{eg}

\subsection{Face-inversion constructions and critically fixed anti-rational maps}\label{sec:faceinversion}

The construction in this section was also investigated in \cite{Geyer_CritFixAnti} and \cite{LLM_KissCritFixAnti} in the study of critically fixed anti-rational maps.

\vspace{5pt}
Let $G$ be a finite graph in the $2$-sphere $S^2$. The graph $G$ defines the CW-complex structure $\Tcal$ with $\Tcal^{(1)}=G$. A graph is {\it $k$-vertex-connected} or {\it $k$-edge-connected} if it is not disconnected by the removal of fewer than $k$ vertices or (open) edges respectively. For the characteristic map $\phi_t:\mbf{t} \rightarrow t$ of a closed $2$-cell $t$, we say that the {\it boundary vertices or edges of $t$ are identified} if more than one vertex or edge are identified under $\phi_t$. The followings are characterizations of $2$- or $3$-connectivity of graphs embedded in $S^2$.

\begin{itemize}
    \item $G$ is $2$-vertex-connected if and only if boundary vertices of every $2$-cell are not identified, i.e., the boundary of every $2$-cell is a Jordan curve.
    
    \item $G$ is $2$-edge-connected if and only if the boundary edges of every $2$-cell are not identified. The $2$-vertex connectedness implies the $2$-edge connectedness.
    
    \item $G$ is $3$-edge-connected if and only if it is $2$-edge-connected and any two $2$-cells do not share more than one edge. It is also equivalent to the dual graph having no cycle of length $\le 2$.
\end{itemize}

Assume $G$ is $2$-vertex-connected and $\deg(v)\ge 3$ for every $v\in \V(G)$. Let $t$ be a $2$-cell of $\Tcal$ and $\sigma_t$ be the reflection of $S^2$ in $\partial t$. This is possible because the $2$-vertex-connectedness implies that $\partial t$ is a simple closed curve. Then $\sigma_t(G)$ is a graph isomorphic to $G$ such that $\sigma_t(G)\cap G = \partial t$. Define a graph $H$ by
\[
H=\bigcup\limits_{t~\textup{is~a~2-cell~of}~\Tcal} \sigma_t(G).
\]
Let $\Tcal'$ be the CW-complex structure on $S^2$ with $\Tcal'^{(1)}=H$. We define a finite subdivision rule as follows: Let $S_\R=\Tcal$ and $\R(S_\R)=\Tcal'$. Define an orientation reversing branched self-covering $f:S^2\righttoleftarrow$ defined by $f|_t=\sigma_t|_t$ for every $2$-cell $t$ of $S_\R$. Then $f$ becomes a subdivision map $f:\R(S_\R) \rightarrow S_\R$. Note that every edge does not subdivide. The degree of $f$ is equal to the number of $2$-cells of $\Tcal$ minus one. We call $\R$ the finite subdivision rule of {\it face-inversion of $G$}. Every vertex $v\in \V(G)$ is a fixed critical point of $\deg_v(f)=\deg(v)-1$.

A natural way to obtain an orientation preserving finite subdivision rule is to take square of the subdivision $\R^2$ and the subdivision map $f^2:\R^2(\Tcal)\rightarrow \Tcal$. We denote by $\R_{sq}$ this squared orientation preserving subdivision rule. Another way is to post-compose with an orientation reversing automorphism of $G$. An automorphism $\tau \in \Aut(G)$ is called {\it orientation reversing} if it extends to an orientation reversing homeomorphism of $S^2$. For any orientation reversing automorphism $\tau\in \Aut(G)$, we have an orientation preserving subdivision map $f_\tau:=\tau \circ f:\R(S_\R) \rightarrow S_\R$ defined on the same subdivision complexes as $\R$. Denote this finite subdivision rule by $\R_\tau$.

\begin{thm}\label{thm:faceinversion}
Let $G$ be a $2$-vertex-connected graph in $S^2$ such that $\deg(v)\ge 3$ for every $v\in \V(G)$. Let $\R$ be the finite subdivision rule of the face-inversion of $G$ and $f:\R(S_\R)\rightarrow S_\R$ be its subdivision map. Let $\tau$ be any orientation reversing automorphism of $G$. Then the followings are equivalent:

\begin{itemize}
    \item [(1)] $G$ is $3$-edge-connected.
    \item [(2)] $f^2:(S^2,\V(G))\righttoleftarrow$ does not have a Levy cycle.
    \item [(2')] $f^2:(S^2,\V(G))\righttoleftarrow$ does not have a Thurston obstruction. 
    \item [(3)] $f_\tau:(S^2,\V(G))\righttoleftarrow$ does not have a Levy cycle.
    \item [(3')] $f_\tau:(S^2,\V(G))\righttoleftarrow$ does not have a Thurston obstruction.
\end{itemize}

\end{thm}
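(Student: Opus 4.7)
The plan is to split the equivalences into three pieces. Since $f$ is a reflection in each face boundary, it fixes the $1$-skeleton $G = S_\R^{(1)}$ pointwise; hence $f^2$ is the identity on $G$, and $f_\tau = \tau \circ f$ acts on $G$ as the finite-order automorphism $\tau$. In both subdivision rules $\R_{sq}$ and $\R_\tau$ every edge is therefore periodic and never subdivides, so the directed graph $\Ecal$ of edge subdivisions has trivial growth (polynomial of degree $0$). Consequently Theorem~\ref{thm:nonexpspine_polyedgesubdiv} applies and yields both (2)$\Leftrightarrow$(2') and (3)$\Leftrightarrow$(3'), and further reduces each of (2) and (3) to the question of whether the level-$0$ non-expanding spine $N^0$ (for $\R_{sq}$ or $\R_\tau$) is non-essential relative to $A = \V(G)$.

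The heart of the argument is then (1)$\Leftrightarrow$(2) and (1)$\Leftrightarrow$(3), both obtained by analyzing which level-$0$ bands are periodic. Since $G$ is $2$-vertex-connected with all degrees $\ge 3$, the boundary of every tile is an embedded Jordan curve with distinct edges, so each band $(t;e_1,e_2)$ has $e_1 \neq e_2$. Because edges do not subdivide, a level-$n$ subband of $(t;e_1,e_2)$ has the form $(T^n;e_1,e_2)$ with $T^n$ a level-$n$ subtile of $t$ whose boundary still contains $e_1$ and $e_2$. Any such $T^n$ lies in a level-$1$ subtile of $t$, which has the form $\sigma_t(t_1)$ for some face $t_1 \neq t$, and $e_i \subset \partial\sigma_t(t_1)$ forces $e_i \subset \partial t_1$. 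Hence a periodic band $(t;e_1,e_2)$ (in either $\R_{sq}$ or $\R_\tau$) exists if and only if there is a face $t_1\neq t$ with $\{e_1,e_2\} \subset \partial t_1$. By the standard planar duality between $2$-edge-cuts of $G$ and length-$2$ cycles in the dual $G^*$, such a pair of faces sharing two edges exists exactly when $G$ fails to be $3$-edge-connected. Consequently, if $G$ is $3$-edge-connected then $N^0 = \emptyset$ and carries no closed curve, giving (1)$\Rightarrow$(2),(3) via Theorem~\ref{thm:nonexpspine_polyedgesubdiv}.

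For the converse, suppose $G$ is not $3$-edge-connected and let $t,t'$ be distinct faces sharing edges $e_1,e_2$. The concatenation of the bones of $(t;e_1,e_2)$ and $(t';e_1,e_2)$ forms a simple closed curve $\gamma$ crossing $e_1$ and $e_2$ transversally once; since $\{e_1,e_2\}$ is an edge-cut, $\V(G)$ lies on both sides of $\gamma$, so $\gamma$ is essential in $(S^2,\V(G))$. A direct preimage computation using $f|_t = \sigma_t$ shows that $f^{-1}(\gamma)$ has a connected component consisting of the bones of $(\sigma_t(t');e_1,e_2)$ and $(\sigma_{t'}(t);e_1,e_2)$; this component is isotopic to $\gamma$ relative to $\V(G)$ and mapped homeomorphically to $\gamma$ by $f$, so $\{\gamma\}$ is a Levy cycle for $f$ and hence for $f^2$. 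For $f_\tau$, the same curve $\gamma$ satisfies $f_\tau(\gamma) \simeq \tau(\gamma)$, and $\tau$ permutes the finite set of curves dual to $2$-edge-cuts of $G$; taking the family of dual curves along a single $\tau$-orbit produces a Levy cycle for $f_\tau$. This establishes $\neg$(1)$\Rightarrow$$\neg$(2),(3).

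The step I expect to require the most care is the inductive argument that under $3$-edge-connectedness no level-$n$ subtile of $t$ can carry two distinct boundary edges of $t$ on its boundary, which is where the planar dual interpretation of $2$-edge-cuts is used in an essential way; once this combinatorial fact is in hand, everything else reduces to book-keeping with Theorem~\ref{thm:nonexpspine_polyedgesubdiv} and an explicit preimage computation.
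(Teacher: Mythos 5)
Your proposal is correct and follows essentially the same route as the paper: both arguments reduce everything to Theorem~\ref{thm:nonexpspine_polyedgesubdiv} after observing that edges never subdivide, and both identify periodic level-$0$ bands $(t;e_1,e_2)$ with pairs of distinct faces sharing the two edges $e_1,e_2$, which in turn corresponds (via the dual-graph characterization of $3$-edge-connectivity) to the failure of $3$-edge-connectivity. The only cosmetic differences are that the paper stops once it knows $N^0$ is empty or essential and lets Theorem~\ref{thm:nonexpspine_polyedgesubdiv} finish, whereas you additionally carry out the explicit preimage/Levy-cycle computation for $f$ and $f_\tau$; and the paper handles (2)$\Leftrightarrow$(3) by noting that the level-$2n$ spine of $\R_\tau$ coincides with the level-$n$ spine of $\R_{sq}$, while you instead build the $f_\tau$-Levy cycle directly from a $\tau$-orbit of the dual curve — both of these are sound and neither adds a genuinely new idea.
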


\begin{proof}
A level-$0$ band $b=\band$ is non-separating if and only if there is another \mbox{level-$0$} band $b'=\bandp$ such that $e_1=e_1'$, $e_2=e_2'$ and $t\neq t'$. If such bands $b$ and $b'$ exist, the removal of two edges of $G$ intersecting the bones of these bands disconnects $G$, i.e., $G$ is not $3$-edge-connected. Conversely, if $G$ is not $3$-edge-connected, then such level-$0$ bands $b$ and $b'$ exist. Hence $G$ is $3$-edge-connected if and only if every level-$0$ band is non-separating. In the case, the level-$0$ non-expanding spine for $\R_{sq}$ is an empty set. Then $(1)\Rightarrow(2)\Leftrightarrow(2')$ follow from Theorem \ref{thm:nonexpspine_polyedgesubdiv}. 

Assume $G$ is not $3$-edge-connected so that there are bands $b$ and $b'$ described as in the previous paragraph. The union of bones of $b$ and $b'$ is a homotopically infinite circle contained in the level-$0$ non-expanding spine $N^0$ of $\R_{sq}$. Hence $(2) \Rightarrow (1)$ follows from Theorem \ref{thm:nonexpspine_polyedgesubdiv}. 

The equivalence $(2) \Leftrightarrow (3)$ follows from the fact that the subdivisions $\R^n(S_\R)$ and $\R_\tau^n(S_{\R_\tau})$ have the same CW-complex structure. The level-$2n$ non-expanding spine of $\R_\tau$ is equal to the level-$n$ non-expanding spine of $\R_{sq}$ for $n\ge 0$.
\end{proof}

\begin{rem}
The equivalence $(1)\Leftrightarrow (2) \Leftrightarrow (2')$ is also shown in \cite[Theorem 5.8]{Geyer_CritFixAnti} and \cite[Proposition 4.10]{LLM_KissCritFixAnti}.
\end{rem}

\begin{rem}
For an orientation revering branched covering $f$, $f^2$ is combinatorially equivalent to a rational map if and only if $f$ is combinatorially equivalent to a anti-rational map.
See \cite[Theorem 3.9]{Geyer_CritFixAnti} and \cite[Proposition 6.1]{LLMM_GasketSchwRef}. \end{rem}

\begin{rem}
Even if there exists a vertex $v$ with $\deg(v)=2$, the construction is still well-defined, but $v$ is not a critical point. Note that such vertex $v$ can be removed from the vertex set without any change in the face-inversion construction.
\end{rem}

\begin{eg}\label{eg:fsrwithbigon}
\begin{figure}[h!]
    \centering
    
      \def\svgwidth{0.7\textwidth}
        {\scriptsize
        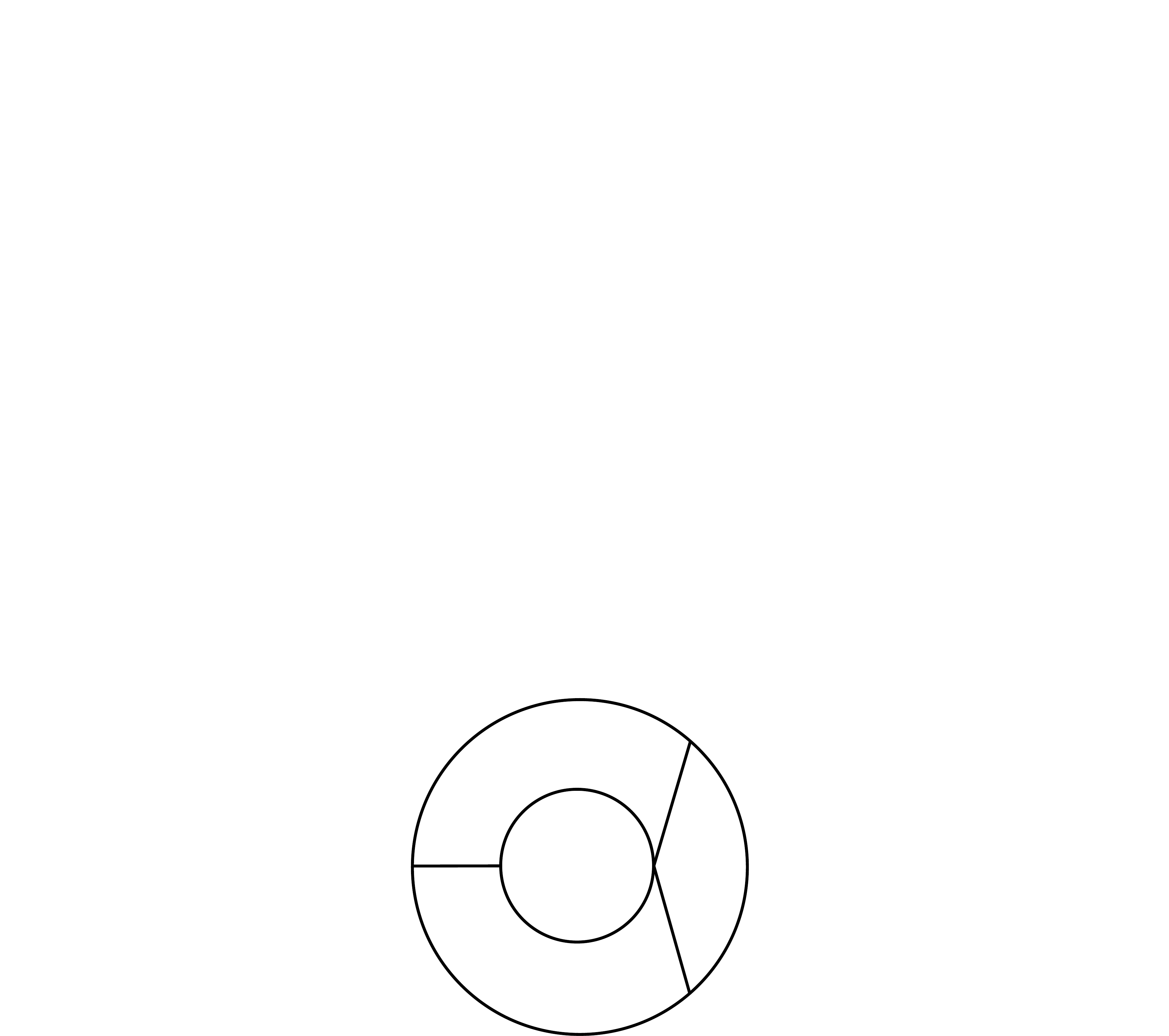
        }
    \caption{Finite subdivision rules defined from the face-inversion of a planar graph.}
    \label{fig:fsrwithbigon}
\end{figure}

\begin{figure}[h!]
    \centering
    \includegraphics[width=0.7\textwidth]{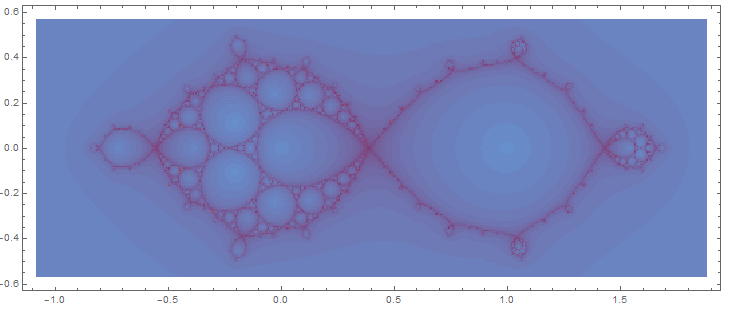}
    \caption{Julia set of $z\mapsto -\frac{1.50351z^2(z^2-1.15757z-0.596204)}{z+0.133305}$}
    \label{fig:hfsrwithbigon_julia}
\end{figure}
See Figure \ref{fig:fsrwithbigon}. Let $G$ be the graph in the bottom and $\tau$ the reflection along the middle horizontal line. Then the left and right subdivisions represent $\R_\tau$ and $\R$ respectively.

In order to obtain an explicit formula of $f_\tau(z)=\frac{p(z)}{q(z)}$, we normalize three vertices on the axis of $\tau$ in Figure \ref{fig:fsrwithbigon} from left to right to $0,1$, and $\infty$. Note that $\deg_z(f_\tau)$ is $2$ at $z=0$ or $1$, and $\deg_z(f_\tau)$ is $3$ at $z=\infty$. Since $0$ and $\infty$ are fixed points, $p(z)$ is a quartic polynomial divided by $z^2$, and $q(z)$ is a linear polynomial. We may assume that $q(z)$ is monic. The conditions that (1) $z=1$ is a critical fixed point and (2) the other two critical points are exchanged by $f(z)$ give rise to a system of equation about coefficients of $p(z)$ and $q(z)$. Solving this numerically, we have 
\[
f_\tau(z)=-\frac{1.50351z^2(z^2-1.15757z-0.596204)}{z+0.133305}.
\]

See Figure \ref{fig:hfsrwithbigon_julia} for the Julia set.

\end{eg}

\subsection{Finite subdivision rules with essential non-expanding spines at higher levels}\label{sec:fsr_needbackite_}
In this subsection, we prove Proposition \ref{prop:fsr_needbackite_} by constructing an example.

\begin{prop}\label{prop:fsr_needbackite_}
For every $N>0$, there exists a finite subdivision rule $\R_N$ with the subdivision map $f_{\R_N}:\R_N(S_{\R_N}) \to S_{\R_N}$ of degree $6$ such that (1) $\V(S_{\R_N})=P_{f_{\R_N}}$, (2) the level-$k$ non-expanding spine $N^k$ is essential relative to $\V(S_{\R_N})$ for $k<N$, and (3) $N^k$ is not essential relative to $\V(S_{\R_N})$ for $k\ge N$.
\end{prop}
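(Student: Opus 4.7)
The plan is to construct the family $\R_N$ explicitly. The governing principle is to engineer the directed graph $\Bcal$ of bands so that a certain ``delayed collapse'' happens at exactly level $N$: there should exist a band $b$ whose descendants at every level $k<N$ are recurrent and contribute a non-trivial bone to $N^k$, but whose level-$N$ subbands are all non-recurrent. Moreover, the level-$k$ recurrent bones for $k<N$ must assemble into an essential closed curve, while at level $N$ the surviving recurrent bones may only support homotopically trivial or peripheral closed curves.

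Concretely, I would begin with a base sphere CW-complex whose $1$-skeleton contains a distinguished simple closed curve $\gamma_0$ which is essential relative to the vertex set, and insert inside a selected tile $t_0$ a chain of $N$ nested refinements (a ``delay gadget''). The subdivision rule is arranged so that: (i) the subdivision map $f_{\R_N}$ acts on the chain by shifting one rung per level, so that a band $b$ crossing $t_0$ between two edges $e_1,e_2$ yields a unique recurrent path of pre-period $N$ in $\Bcal$; (ii) the marked set $A=\V(S_{\R_N})$ coincides with $P_{f_{\R_N}}$ by forcing critical points to lie at endpoints of the chain and to map onto existing vertices; (iii) the local combinatorial pattern of the subdivision is chosen so that the total degree equals $6$. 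After $N$ shifts, the descendant of $b$ lands in a ``sink'' subtile whose boundary no longer contains both descendants of $e_1$ and of $e_2$, so $b$ has no recurrent level-$N$ subband.

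Verification splits into three steps, the first two of which are routine given the construction. For $k<N$, the level-$k$ descendant of $b$ is recurrent and its bone, concatenated with recurrent bones arising from the rest of $S_{\R_N}$, is homotopic to $\gamma_0$; hence $N^k$ carries an essential closed curve and is essential relative to $A$. The condition $\V(S_{\R_N})=P_{f_{\R_N}}$ is arranged by placing critical orbits entirely within the prescribed vertex set. The main obstacle will be the last step: showing that $N^N$ is \emph{not} essential. This requires ruling out \emph{every} potential essential closed curve in $N^N$, not just the particular one that existed at lower levels. One must enumerate the cycles of the level-$N$ band graph—equivalently, the connected components of recurrent bones in the delay gadget at level $N$—and verify geometrically that each closed curve carried by $N^N$ is homotopically trivial in $S^2\setminus A$ or isotopic to an iterate of a peripheral loop around a Julia vertex. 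By the monotonicity $N^n$ essential $\Rightarrow N^m$ essential for $m<n$, non-essentiality at level $N$ then propagates to all higher levels, completing the proof.
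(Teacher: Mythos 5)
Your high-level picture---a ``shift one rung per level'' gadget whose essential circle survives for exactly $N$ iterations---is in the same spirit as the paper's construction, which takes a base finite subdivision rule whose $N^0$ is an essential circle while $N^1$ is homotopically trivial, drops $n$ disjoint copies of the relevant annulus into $S_{\R'}$, and post-composes the subdivision map with a $1/n$-rotation $\sigma$ that cycles the copies. The degree stays $6$ because $\sigma$ is a homeomorphism.

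However, the mechanism you describe cannot work as stated, and the gap is at the heart of the claim. You posit a band $b$ ``whose descendants at every level $k<N$ are recurrent and contribute a non-trivial bone to $N^k$, but whose level-$N$ subbands are all non-recurrent,'' and later, that after $N$ shifts the descendant of $b$ ``has no recurrent level-$N$ subband.'' This is incompatible with the framework: recurrence of a level-$k$ subband means the corresponding length-$k$ path in $\Bcal$ starts from $[b]$ \emph{and stays in a cycle through $[b]$}, so $[b]$ must itself be a periodic vertex of $\Bcal$. By the existence-of-recurrent-children property from Section~\ref{sec:ParentsChildren} (or Proposition~\ref{prop:ExtensionRecurrentPath}), a recurrent level-$k$ subband always extends to a recurrent level-$n$ subband for every $n>k$. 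There is no such thing as a ``recurrent path of pre-period~$N$'': recurrent paths have no pre-period, and a preperiodic band contributes no recurrent bone at \emph{any} level. In particular, once $b$ has a recurrent level-$k$ subband for some $k$, it has them for all $k$, and the bone of $b$ appears in $N^n$ for every $n$.

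What actually happens in the paper's example is different: the recurrent bones never disappear; rather, the way they are joined inside $N^n$ changes with $n$, so that the closed curve carried by $N^{n}$ changes its homotopy class (from essential to contractible). In the base rule, $N^1$ is a homotopically trivial closed curve---still built from recurrent bones---not an empty set. Your verification step (3) should therefore not try to show that recurrent subbands vanish at level $N$; it should show that the recurrent bones at level~$N$ assemble only into curves that are trivial or peripheral. Your invocation of the monotonicity $N^n$ essential $\Rightarrow$ $N^m$ essential for $m<n$ to propagate non-essentiality upward is correct and matches the paper.
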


\begin{figure}[h!]
    \centering
    \def\svgwidth{\textwidth}
\begingroup%
  \makeatletter%
  \providecommand\color[2][]{%
    \errmessage{(Inkscape) Color is used for the text in Inkscape, but the package 'color.sty' is not loaded}%
    \renewcommand\color[2][]{}%
  }%
  \providecommand\transparent[1]{%
    \errmessage{(Inkscape) Transparency is used (non-zero) for the text in Inkscape, but the package 'transparent.sty' is not loaded}%
    \renewcommand\transparent[1]{}%
  }%
  \providecommand\rotatebox[2]{#2}%
  \newcommand*\fsize{\dimexpr\f@size pt\relax}%
  \newcommand*\lineheight[1]{\fontsize{\fsize}{#1\fsize}\selectfont}%
  \ifx\svgwidth\undefined%
    \setlength{\unitlength}{892.29452315bp}%
    \ifx\svgscale\undefined%
      \relax%
    \else%
      \setlength{\unitlength}{\unitlength * \real{\svgscale}}%
    \fi%
  \else%
    \setlength{\unitlength}{\svgwidth}%
  \fi%
  \global\let\svgwidth\undefined%
  \global\let\svgscale\undefined%
  \makeatother%
  \begin{picture}(1,0.28655312)%
    \lineheight{1}%
    \setlength\tabcolsep{0pt}%
    \put(0,0){\includegraphics[width=\unitlength,page=1]{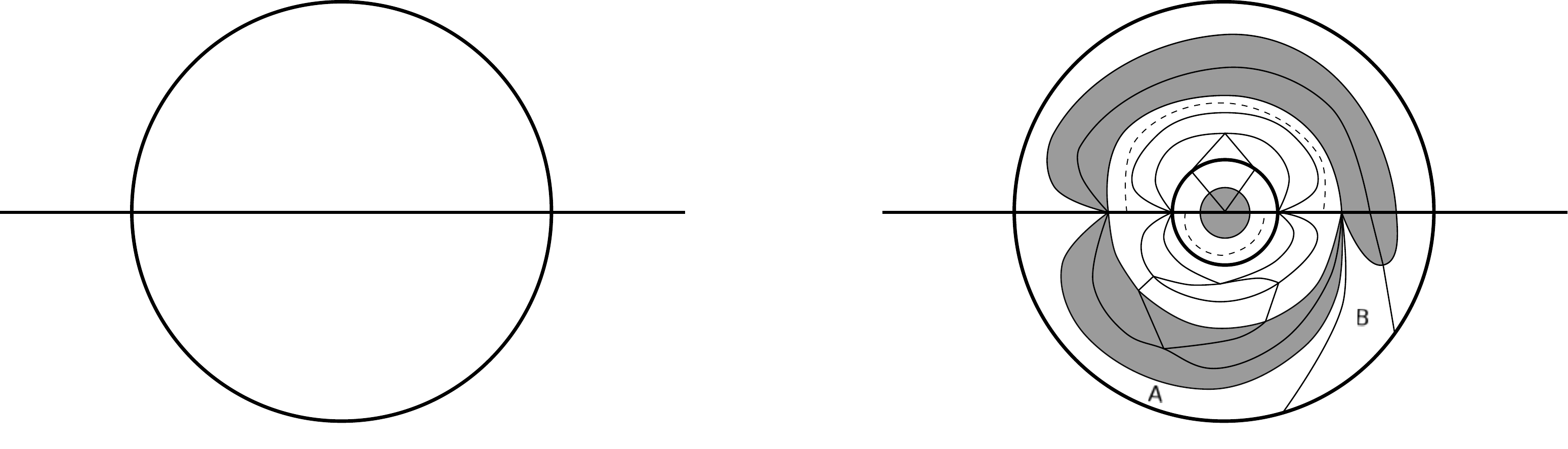}}%
    \put(0.20976831,0.19933369){\color[rgb]{0,0,0}\rotatebox{-3.2358668}{\makebox(0,0)[lt]{\lineheight{1.25}\smash{\begin{tabular}[t]{l}B\end{tabular}}}}}%
    \put(0.21101718,0.08260848){\color[rgb]{0,0,0}\rotatebox{-3.2358668}{\makebox(0,0)[lt]{\lineheight{1.25}\smash{\begin{tabular}[t]{l}A\end{tabular}}}}}%
    \put(0,0){\includegraphics[width=\unitlength,page=2]{fsr_needbackite_.pdf}}%
    \put(0.49467804,0.16381072){\color[rgb]{0,0,0}\makebox(0,0)[lt]{\lineheight{1.25}\smash{\begin{tabular}[t]{l}$f$\end{tabular}}}}%
    \put(0,0){\includegraphics[width=\unitlength,page=3]{fsr_needbackite_.pdf}}%
  \end{picture}%
\endgroup%

        \caption{A degree $6$ finite subdivision rule with six tile types.}
        \label{fig:fsr_needbackite_}
\end{figure}

Let us see the finite subdivision rule $\R$ in Figure \ref{fig:fsr_needbackite_}. The $1$-skeleton at level-$0$ is drawn by bold curves. The non-expanding spines $N^0$ and $N^1$ are drawn by dotted curves. The $N^0$ is essential but $N^1$ is homotopically trivial. Let $f$ be the subdivision map described in Figure \ref{fig:fsr_needbackite_}. We modify the finite subdivision rule $\Rcal$ into $\Rcal'$ as follows:

\begin{enumerate}
    \item Change the labels $A$ and $B$ into $A_1$ and $B_1$.
    \item For $2 \le i\le n$, we draw $n-1$ copies of annuli consisting of $A_i$ and $B_i$ in a row on the left of the annulus consisting of $A_1$ and $B_1$ at level-$0$. Denote by $S_{\R'}$ the modified level-$0$ CW-complex. Define $\R'(S_{\R'})=f^{-1}(S_{\R'})$.
    
    \begin{figure}[h!]
    \centering
    \def\svgwidth{0.6\textwidth}
        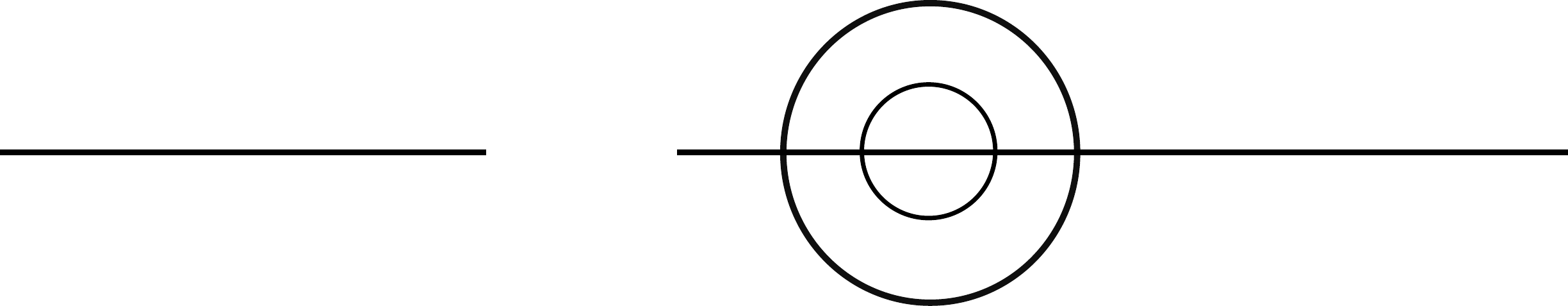
    \end{figure}

    \item Let $\sigma$ be an orientation preserving homeomorphism of the $2$-sphere such that $\sigma(S_{\R'})=S_{\R'}$ and $\sigma(A_i)=A_{i+1}$ and $\sigma(B_i)=B_{i+1}$ for any $1 \le i\le n$, where indices are considered modulo $n$. That is, $\sigma$ is a $1/n$-rotation. Define the subdivision map $f':\R'(S_{\R'})\to S_{\R'}$ by $f'=\sigma \circ f$.
\end{enumerate}

Let $N'^i$ be the level-$i$ non-expanding spine of $\R'$. The $N'^0$ is $n$-copies of circles. The $N'^1$ is the union of $n-1$-copies of circles with three non-closed curves. Similarly, for $k<n$, the level-$k$ non-expanding spine $N'^k$ has $n-k$ circles and some non-closed curves. Therefore, $N'^i$ is essential if $i<n$ and non-essential if $i\ge n$. 


\subsection{Edge-edge expansion vs. edge subdivisions}\label{sec:EdgeEdgeExpVsEdgeSubdiv}
	Let us further investigate the equivalence between the existence of Levy cycles and of Thurston obstructions. Recall that the coefficients of Thurston linear transformation are defined by
	
	\[
	f_\Gamma (\gamma)=\sum\limits_{\gamma'\subset f^{-1}(\gamma)}\frac{1}{\deg (f|_{\gamma'}:\gamma' \to \gamma)} [\gamma']_\Gamma.
	\]
	
	In the setting of finite subdivision rules, the summands $\frac{1}{\deg}$ are related to the expansion between edges and the number of summands is related to the growth rate of edge subdivisions. Hence, we can expect there is no Thurston obstruction if the edge-edge expansion dominates the edge subdivisions. See \cite[Theorem 8.4]{CuiPengTan_Wandering} for a similar comparison.
	
	\vspace{10pt}	
	Let $\R$ be a finite subdivision rule.
	\begin{description}
		\item[Edge-edge expansion] We say that $\R$ is {\it edge-edge $\lambda$-expanding} for $\lambda\ge1$ if there exists $C>0$ such that for any $n\ge0$ and any bone $\gamma$ of level-$0$ band, the level-$n$ subdivision complex $\R^n(S_\R)$ subdivides $\gamma$ into at least $C\cdot \lambda^n$ segments .
		\item[Edge subdivision rate] For any level-$0$ edge $e$, the {\it exponential growth rate of subdivisions of $e$} is the number $\nu_e\ge1$ with $\lim_{n\to\infty}(\#\{\mathrm{level\mbox{-}n~subedges~of~}e\})^{1/n}= \nu_e$.
		The maximum $\nu:=\max~\nu_e$ over level-$0$ edges $e$ is called the {\it maximal exponential growth rate of edge subdivisions}.
	\end{description}
	
	\begin{prop}\label{prop:EdgeEdgeExpWinEdgeSubdiv}
		Let $\R$ be a finite subdivision rule. Let $\nu$ be the maximal exponential growth rate of edge subdivisions. If $\R$ is edge-edge $\lambda$-expanding for some $\lambda>1$, then the non-expanding spine of $\R$ is empty so that the subdivision map $\TmapA$ does not have a Levy cycle for any set of marked points $A$. Moreover, if $\lambda>\nu$, then the subdivision map $\TmapA$ does not have a Thurston obstruction for any set of marked points $A$.
	\end{prop}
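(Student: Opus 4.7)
For the first assertion, the plan is to show that no level-$0$ band of $\R$ is periodic; once that is established, the level-$n$ non-expanding spine $N^n$ is empty for every $n\ge 0$ by construction, and Theorem \ref{thm:nonexpspine} yields the absence of Levy cycles for every marked set $A$. I would argue by contradiction: if $b=(t;e_1,e_2)$ is a level-$0$ periodic band, then for some $p>0$ there exists a level-$p$ subband $b^p=(t^p;e_1^p,e_2^p)$ of type $b$ whose bone $\gamma^p$ is $\R^p$-taut with $l_p(\gamma^p)=1$. Because the closed $2$-cell $t$ is a disk, $\gamma^p$ and the bone $\gamma$ of $b$ lie in the same $\R^0$-combinatorial equivalence class of curves properly embedded in $(t,\partial t)$ with endpoints on the interiors of $e_1,e_2$. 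Edge-edge $\lambda$-expansion applied to $\gamma$ forces the unique $\R^p$-taut representative in this class to have $\R^p$-length at least $C\lambda^p$; by the uniqueness clause of Proposition \ref{prop:monoton length} this gives $l_p(\gamma^p)\ge C\lambda^p$, contradicting $l_p(\gamma^p)=1$ once $C\lambda^p>1$.

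For the second assertion, assume $\lambda>\nu$ and, for contradiction, suppose $\Gamma=\{\gamma_1,\ldots,\gamma_k\}$ is an irreducible Thurston obstruction. Place each $\gamma_j$ in $S_\R$-taut form, set $L_j=l_0(\gamma_j)$, and write $d_{\gamma'}:=\deg(f^m|_{\gamma'})$ for every component $\gamma'\subset f^{-m}(\gamma_i)$. The plan is to show that every matrix entry $(f_\Gamma^m)_{ji}$ is $O((\nu/\lambda)^m)$, forcing $\lambda(f_\Gamma)\le\nu/\lambda<1$ and contradicting $\lambda(f_\Gamma)\ge 1$. For each $\gamma'\simeq\gamma_j$ I combine two estimates. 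First, the identity $l_m(\gamma')=d_{\gamma'}L_i$ (since $f^m|_{\gamma'}$ is a degree-$d_{\gamma'}$ covering of $\gamma_i$) together with edge-edge $\lambda$-expansion applied to a bone $S_\R$-segment of $\gamma'$ gives $l_m(\gamma')\ge C\lambda^m$ and hence $1/d_{\gamma'}\le L_i/(C\lambda^m)$. Second, to bound the number $N_{ji,m}$ of such $\gamma'$, I fix a level-$0$ edge $e$ crossed by $\gamma_j$; each $\gamma'\simeq\gamma_j$ then meets $e$ at least once by isotopy-invariance of the geometric intersection number, so $N_{ji,m}\le|f^{-m}(\gamma_i)\cap e|$. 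Using that $f^m$ restricts to a homeomorphism on each level-$m$ subedge of $e$,
\[
|f^{-m}(\gamma_i)\cap e|=\sum_{e^m\subset e}|\gamma_i\cap f^m(e^m)|\le L_i\cdot\#\{\text{level-}m\text{ subedges of }e\}\le L_iC'\nu^m.
\]
Multiplying these two estimates yields $(f_\Gamma^m)_{ji}\le(C'/C)L_i^2(\nu/\lambda)^m$, so $\|f_\Gamma^m\|\to 0$ as $m\to\infty$, producing the desired contradiction.

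The main technical obstacle I anticipate is making the lower bound $l_m(\gamma')\ge C\lambda^m$ fully rigorous: edge-edge $\lambda$-expansion applies directly only to bones of level-$0$ bands, so one must first argue that every $\R^m$-taut representative of an essential, non-peripheral class contains at least one $S_\R$-segment that is an honest bone rather than a U-turn around a marked vertex. This should follow by comparing $\gamma'$ with its $S_\R$-taut representative $\gamma_j$ via bigon removal, together with the observation that U-turn removal can only decrease the number of bone segments; the potentially degenerate situation in which all $S_\R$-segments of $\gamma_j$ are U-turns wrapping marked vertices would need separate handling, for instance by ruling it out under the irreducibility hypothesis on $\Gamma$ or by refining the expansion estimate.
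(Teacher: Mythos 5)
Your argument for the first assertion fills in what the paper leaves implicit, and it is essentially correct: a periodic band would have, at some level $p$, a taut subband bone of $\R^p$-length $1$ in the same $S_\R$-combinatorial class as the level-$0$ bone, contradicting edge-edge $\lambda$-expansion once $C\lambda^p>1$. (You should be careful when $t$ is not a disc, working in the domain $\mathbf t$ of the characteristic map, but this is a routine modification.)

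The second assertion contains a genuine gap in the counting step. You fix a single level-$0$ edge $e$ crossed by $\gamma_j$ and assert that every component $\gamma'$ of $f^{-m}(\gamma_i)$ isotopic to $\gamma_j$ must physically meet $e$, ``by isotopy-invariance of the geometric intersection number.'' But $e$ is an arc whose endpoints are vertices of $S_\R$, which need not lie in the marked set $A$, and the geometric intersection number of a closed curve with such an arc is \emph{not} an invariant of the isotopy class rel $A$: the curve can be slid across an unmarked endpoint of $e$. Even if $\gamma_j$ is $S_\R$-taut and crosses $e$, a specific preimage $\gamma'\simeq\gamma_j$ can avoid $e$ entirely. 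Worse, there may be no single edge $e$ for which $\gamma_j\cdot e>0$ rel $A$. The paper resolves this by choosing a concatenation $\alpha$ of level-$0$ edges joining two marked points $a_1,a_2\in A$ that lie in different complementary discs of the curve being counted; such a path exists since the $1$-skeleton is connected and contains $A$, and then \emph{every} simple closed curve isotopic to that curve rel $A$ separates $a_1$ from $a_2$ and hence crosses $\alpha$. With $\alpha$ in place of your $e$, the rest of your count goes through with the single constant $L_i$ replaced by a product involving $|\Edge(S_\R)|$, which does not affect the $(\nu/\lambda)^m\to 0$ conclusion.

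The obstacle you flag at the end --- justifying $l_m(\gamma')\ge C\lambda^m$ by exhibiting an $S_\R$-segment of $\gamma'$ that is an honest bone --- is a legitimate concern; an $\R^m$-taut curve is not automatically $S_\R$-taut, since a level-$0$ U-turn may enclose level-$m$ vertices. The paper states the lower bound $l_n(\gamma_i')\ge C_1\lambda^n l_0(\gamma_j)$ somewhat tersely, but the cleanest justification is that $l_n$ is a class invariant equal to $l_n$ of the representative obtained by making the $S_\R$-taut $\gamma_j$ simultaneously $\R^n$-taut by a cellular isotopy, which keeps all $l_0(\gamma_j)$ segments as bones. Once the edge $e$ is replaced by a marked-to-marked path $\alpha$, your proof becomes essentially the same argument as the paper's.
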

	\begin{proof}
		Let $\Gamma=\{\gamma_1,\gamma_2,\dots,\gamma_k\}$ be a multicurve of $(S^2,A)$. For a closed curve $\gamma$ transverse to $\R^n(S_\R)$, we denote by $l_n(\gamma)$ the cardinality of the intersection $(\R^n(S_\R))^{(1)} \cap \gamma$. Let $C_0:=\min_{i\neq j} \frac{l_0(\gamma_i)}{l_0(\gamma_j)}$.
		
		Let $\gamma_i\in \Gamma$ and $\gamma_i'$ be a connected component $f^{-n}(\gamma_i)$ that is isotopic to $\gamma_j$. Since $\R$ is edge-edge $\lambda$-expanding, there exists $C_1>0$, which is independent of the choices of $\gamma_i$ and $\gamma'_i$, such that

		The first part about the emptiness of non-expanding spines immediately follows from the definition of non-expanding spine. 
		Let us assume $\lambda>\nu$ and show the second part. Let $\Gamma=\{\gamma_1,\gamma_2,\dots,\gamma_k\}$ be a multicurve of $(S^2,A)$. For a closed curve $\gamma$ transverse to $\R^n(S_\R)$, we denote by $l_n(\gamma)$ the cardinality of the intersection $(\R^n(S_\R))^{(1)} \cap \gamma$. Let $C_0:=\min_{i\neq j} \frac{l_0(\gamma_i)}{l_0(\gamma_j)}$. Since $\R$ is edge-edge $\lambda$-expanding, there exists $C_1>0$ such that for any $\gamma_i\in \Gamma$ and for any connected component $\gamma_i'$ of $f^{-n}(\gamma_i)$, we have

		\[
		\begin{array}{ccl}
			l_n(\gamma'_i) &\ge &C_1\dot \lambda^n \cdot l_0(\gamma_j)\\
			& \ge & C_0 C_1 \dot \lambda^n \cdot l_0(\gamma_i)
		\end{array}
		\]
		for any $n\ge0$ where $\gamma_j\in \Gamma$ is isotopic to $\gamma'_i$. Then $\deg(f:\gamma_i'\to \gamma_i)\ge C_0C_1\lambda^n$. 
		
	Let $C_2$ be the minimal number satisfying $|e\cap \Gamma|\le C_2$ for any level-$0$ edge $e$. It follows that (a) for any level-$n$ edge $e^n$, $|e^n \cap f^{-n}(\Gamma)| \le C_2$. By the definition of $\nu$, there exists $C_3>0$ such that for any level-$0$ edge $e$ and any $n\ge0$, the number of level-$n$ subedges of $e$ is at most $C_3 \cdot {\nu'}^n$ for some ${\nu'}$ with $\lambda > \nu'>\nu$. Consider a concatenation $\alpha$ of level-$0$ edges connecting two points $a_1,a_2\in A$ with $a_1$ and $a_2$ being in the different Jordan domains of $\gamma_i$. Note that (b) every simple closed curve isotopic to $\gamma_i$ has at least one intersection point with $\alpha$. We have
	\[
	\begin{array}{ccl}
		\# \{\mathrm{components~of}~ f^{-n}(\Gamma)~\mathrm{homotopic~to~} \gamma_i\} & \le &\mathrm{the~cardinality~of~}f^{-n}(\Gamma) \cap \alpha\\
		&\le& C_2 \cdot \#\{\mathrm{level\mbox{-}n~subedges~of~}\alpha\}\\
		&\le& C_2 C_3\cdot|\Edge(S_\R)|\cdot {\nu'}^n.
	\end{array}
	\]
	The first inequality follows from (b), the second follows from (a) and the third follows from the fact that $\alpha$ is a concatenation of at most $|\Edge(S_\R)|$ edges at level-$0$. 
		
	Hence, every entry in the Thurston linear transformation is bounded from above by
	\[
		\frac{C_2 C_3\cdot|\Edge(S_\R)|\cdot {\nu'}	^n}{C_0C_1\lambda^n},
	\]
	which tends to 0 as $n \to \infty$. Then the map $f$ does not allow any Thurston obstruction.
	\end{proof}

There are many ways to improve Proposition \ref{prop:EdgeEdgeExpWinEdgeSubdiv}. Here are two possible directions.

\begin{description}[leftmargin=0cm]
    \item[Suggestion 1] Proposition \ref{prop:EdgeEdgeExpWinEdgeSubdiv} can be compared with \cite[Proposition 5.1]{finite_subdivision_rule_exp1}, which states that if $\R$ is edge separating and vertex separating, then the subdivision map does not have a Levy cycle. One difference is that the subdivision map in Proposition \ref{prop:EdgeEdgeExpWinEdgeSubdiv} has to be of hyperbolic-type, i.e., every critical point is a Fatou point, but \cite[Proposition 5.1]{finite_subdivision_rule_exp1} works for any subdivision maps. The definition of edge separation in \cite{finite_subdivision_rule_exp1} is the edge-edge expansion, defined in this article, only for pairs of edges that do not share end points. The vertex separation might be necessary only for Julia vertices. One might be able to obtain a stronger result by combining these two propositions.\vspace{5pt}
    \item[Suggestion 2]
	It would be possible to combine Theorem \ref{thm:graph intersecting obs} and Proposition \ref{prop:EdgeEdgeExpWinEdgeSubdiv} to obtain a stronger sufficient condition for the equivalence between the existence of Levy and Thurston obstructions. We might be able to (1) have the equivalence on the part where edges subdivide polynomially fast and (2) exclude Thurston obstructions where edges subdivide exponentially fast by assuming the condition in Proposition \ref{prop:EdgeEdgeExpWinEdgeSubdiv}.
\end{description}

\begin{eg}
    \begin{figure}[h!]
        \centering 
        \def\svgwidth{0.5\textwidth}
\begingroup%
  \makeatletter%
  \providecommand\color[2][]{%
    \errmessage{(Inkscape) Color is used for the text in Inkscape, but the package 'color.sty' is not loaded}%
    \renewcommand\color[2][]{}%
  }%
  \providecommand\transparent[1]{%
    \errmessage{(Inkscape) Transparency is used (non-zero) for the text in Inkscape, but the package 'transparent.sty' is not loaded}%
    \renewcommand\transparent[1]{}%
  }%
  \providecommand\rotatebox[2]{#2}%
  \newcommand*\fsize{\dimexpr\f@size pt\relax}%
  \newcommand*\lineheight[1]{\fontsize{\fsize}{#1\fsize}\selectfont}%
  \ifx\svgwidth\undefined%
    \setlength{\unitlength}{549.87465734bp}%
    \ifx\svgscale\undefined%
      \relax%
    \else%
      \setlength{\unitlength}{\unitlength * \real{\svgscale}}%
    \fi%
  \else%
    \setlength{\unitlength}{\svgwidth}%
  \fi%
  \global\let\svgwidth\undefined%
  \global\let\svgscale\undefined%
  \makeatother%
  \begin{picture}(1,0.44672864)%
    \lineheight{1}%
    \setlength\tabcolsep{0pt}%
    \put(0,0){\includegraphics[width=\unitlength,page=1]{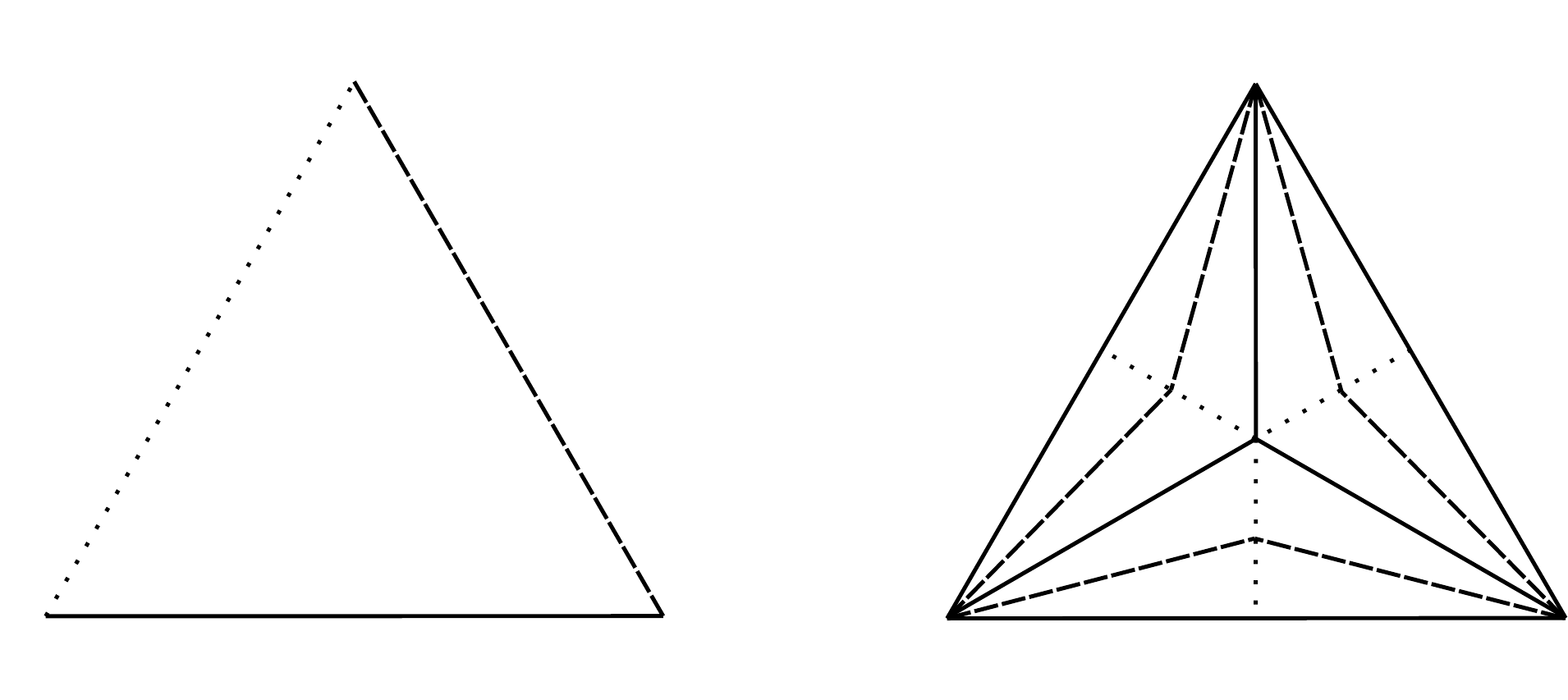}}%
    \put(0.20515545,0.40474478){\color[rgb]{0,0,0}\makebox(0,0)[lt]{\lineheight{1.25}\smash{\begin{tabular}[t]{l}$u$\end{tabular}}}}%
    \put(-0.00412912,0.00961686){\color[rgb]{0,0,0}\makebox(0,0)[lt]{\lineheight{1.25}\smash{\begin{tabular}[t]{l}$v$\end{tabular}}}}%
    \put(0.39525385,0.0096171){\color[rgb]{0,0,0}\makebox(0,0)[lt]{\lineheight{1.25}\smash{\begin{tabular}[t]{l}$w$\end{tabular}}}}%
  \end{picture}%
\endgroup%

        \caption{A finite subdivision rule with $\lambda>\nu$}\label{fig:EdgeEdgeExp}
    \end{figure}
See Figure \ref{fig:EdgeEdgeExp}. We think of the doubles of the the left triangle and get the level-$0$ subdivision complex $S_\R$ with two tiles. Similarly, take the double of the right large triangle, which is subdivided into 12 small triangles, and define it as the level-$1$ complex $\R(S_\R)$. Then Figure \ref{fig:EdgeEdgeExp} defines a finite subdivision rule $\R$ with the subdivision map $f:\R(S_\R)\to S_\R$ which is defined by a map sending each small triangle on the right to a triangle on the left or its copy with the types of edges being preserved. Then $\deg(f)=12$ and $f$ has three critical values $u,v$, and $w$, which are vertices of the level-$0$ triangles. Moreover, $f(u)=f(v)=f(w)=w$.

It is immediate that the non-expanding spine is an empty set. By Theorem \ref{thm:nonexpspine}, $f$ does not have a Levy cycle. Since $\R$ has exponential growth rate of edge subdivisions, we cannot apply Theorem \ref{thm:nonexpspine_polyedgesubdiv} to claim that $f$ does not have a Thurston obstruction. However, it is easy to show that $\lambda=4$ and $\nu=2$, and then $f$ does not have a Thurston obstruction by Proposition \ref{prop:EdgeEdgeExpWinEdgeSubdiv}.
\end{eg}

\bibliography{hfsr}
\bibliographystyle{amsalpha}

\end{document}